\renewcommand \thesection {\thechapter.\@arabic\c@section}
\newtheorem{theorem}{Theorem}
\newtheorem{lemma}[theorem]{Lemma}
\newtheorem{corollary}[theorem]{Corollary}
\newtheorem{remark}[theorem]{Remark}
\newtheorem{note}[theorem]{Note}
\newtheorem{example}[theorem]{Example}
\numberwithin{theorem}{section}
\numberwithin{equation}{chapter}
\begin{document}

\def\thechapter{10}


\title{Lattice Path Enumeration}



\author{Christian Krattenthaler}

\address{Fakult\"at f\"ur Mathematik, Universit\"at Wien,
Oskar-Mor\-gen-\break stern-Platz~1, A-1090 Vienna, Austria.\newline\indent
WWW: \tt http://www.mat.univie.ac.at/\~{}kratt.}

\maketitle

{
\chapter{Lattice Path Enumeration}

\def\Z{{\mathbb Z}}
\def\R{{\mathbb R}}
\def\C{{\mathbb C}}
\def\Q{{\mathbb Q}}
\def\al{\alpha}
\def\be{\beta}
\def\de{\delta}
\def\De{\Delta}
\def\et{\eta}
\def\ga{\gamma}
\def\ep{\varepsilon}
\def\la{\lambda}
\def\th{\vartheta}
\def\om{\omega}
\def\si{\sigma}
\def\ph{\varphi}
\def\rh{\rho}
\def\sgn{\operatorname{sgn}}
\def\maj{\operatorname{maj}}
\def\({\left(}
\def\){\right)}

\def\suml{\sum\limits}
\def\bvert{\big\vert}
\def\Bvert{\Big\vert}
\def\area{\operatorname{area}}
\def\run{\operatorname{run}}
\def\Expr{\operatorname{Expr}}
\def\less{\operatorname{less}}
\def\xmaj{\operatorname{xmaj}}
\def\ymaj{\operatorname{ymaj}}
\def\stat{\operatorname{stat}}
\let\hacek\v
\def\vv#1{\left\vert #1\right\vert}
\def\V#1{\left\Vert #1\right\Vert}
\def\fl#1{\left\lfloor #1\right\rfloor}
\def\cl#1{\left\lceil #1\right\rceil}
\def\LL#1{L\big(#1\big)}
\def\P{{\mathbf P}}
\def\A{{\mathbf A}}
\def\E{{\mathbf E}}
\def\Sch{{\mathcal S}}
\def\SS{{\mathbb S}}
\def\Al{{\boldsymbol\alpha}}
\def\Be{{\boldsymbol\beta}}
\def\La{{\boldsymbol\lambda}}
\def\Mu{{\boldsymbol\mu}}
\def\x{{\mathbf x}}
\def\y{{\mathbf y}}
\def\coef#1{\langle #1\rangle}
\def\floor#1{\lfloor#1\rfloor}
\def\GF#1{GF\big(#1\big)}
\def\Pf{\operatorname{Pf}}
\def\NSEW{\text{NSEW}}
\def\Bau{\mathcal B}
\def\Rel{\mathcal R}
\def\Mo{\operatorname{\it M\!o}}
\def\Monomer{\mathcal M}
\def\Dimer{\mathcal D}

\def\qbinom#1#2#3{\mathchoice
   {\begin{bmatrix} #1\\#2\end{bmatrix}_#3}
          {\left[\begin{smallmatrix} #1\\#2\end{smallmatrix}\right]_#3}
          {\left[\begin{smallmatrix} #1\\#2\end{smallmatrix}\right]_#3}
          {\left[\begin{smallmatrix} #1\\#2\end{smallmatrix}\right]_#3}}

\catcode`\@=11
\def\iddots{\mathinner{\mkern1mu\raise\p@\hbox{.}\mkern2mu
    \raise4\p@\hbox{.}\mkern2mu\raise7\p@\vbox{\kern7\p@\hbox{.}}\mkern1mu}}
\catcode`\@=12

\def\DDSSchritt{\leavevmode\raise-.4pt\hbox to0pt{%
  \hbox to0pt{\hss.\hss}\hskip.4\Einheit
  \raise.2\Einheit\hbox to0pt{\hss.\hss}\hskip.4\Einheit
  \raise.4\Einheit\hbox to0pt{\hss.\hss}\hskip.4\Einheit
  \raise.6\Einheit\hbox to0pt{\hss.\hss}\hskip.4\Einheit
  \raise.8\Einheit\hbox to0pt{\hss.\hss}\hss}}
\def\ddSSchritt{\leavevmode\raise-.4pt\hbox to0pt{%
  \hbox to0pt{\hss.\hss}\hskip.4\Einheit
  \raise-.2\Einheit\hbox to0pt{\hss.\hss}\hskip.4\Einheit
  \raise-.4\Einheit\hbox to0pt{\hss.\hss}\hskip.4\Einheit
  \raise-.6\Einheit\hbox to0pt{\hss.\hss}\hskip.4\Einheit
  \raise-.8\Einheit\hbox to0pt{\hss.\hss}\hss}}
\def\SDPfad(#1,#2),#3\endSDPfad{\unskip\leavevmode
  \xcoord#1 \ycoord#2 \ZeichneSDPfad#3\endSDPfad}
\def\ZeichneSDPfad#1{\ifx#1\endSDPfad\let\next\relax
  \else\let\next\ZeichneSDPfad
    \ifnum#1=1
      \raise\ycoord \Einheit\hbox to0pt{\hskip\xcoord \Einheit
         \hSSchritt\hss}%
      \advance\xcoord by 1
    \else\ifnum#1=2
      \raise\ycoord \Einheit\hbox to0pt{\hskip\xcoord \Einheit
        \hbox{\hskip-2pt \vSSchritt}\hss}%
      \advance\ycoord by 1
    \else\ifnum#1=3
      \raise\ycoord \Einheit\hbox to0pt{\hskip\xcoord \Einheit
         \DDSSchritt\hss}
      \advance\xcoord by 1
      \advance\ycoord by 1
    \else\ifnum#1=4
      \raise\ycoord \Einheit\hbox to0pt{\hskip\xcoord \Einheit
         \ddSSchritt\hss}
      \advance\xcoord by 1
      \advance\ycoord by -1
    \fi\fi\fi\fi
  \fi\next}

\newcounter{saveeqn}
\newcommand{\alphaeqn}{\setcounter{saveeqn}{\value{equation}}%
\setcounter{equation}{0}%
\global\def\theequation{\mbox{\thechapter.\arabic{saveeqn}\alph{equation}}}}
\newcommand{\reseteqn}{\setcounter{equation}{\value{saveeqn}}%
\global\def\theequation{\thechapter.\arabic{equation}}}

\catcode`\@=11
\font\tenln    = line10
\font\tenlnw   = linew10

\thinlines
\newskip\Einheit \Einheit=0.6cm
\newcount\xcoord \newcount\ycoord
\newdimen\xdim \newdimen\ydim \newdimen\PfadD@cke \newdimen\Pfadd@cke
\PfadD@cke1pt \Pfadd@cke0.5pt
\def\PfadDicke#1{\PfadD@cke#1 \divide\PfadD@cke by2 \Pfadd@cke\PfadD@cke \multiply\PfadD@cke by2}
\long\def\LOOP#1\REPEAT{\def\BODY{#1}\ITERATE}
\def\ITERATE{\BODY \let\next\ITERATE \else\let\next\relax\fi \next}
\let\REPEAT=\fi
\def\Punkt{\hbox{\raise-2pt\hbox to0pt{\hss\scriptsize$\bullet$\hss}}}
\def\DuennPunkt(#1,#2){\unskip
  \raise#2 \Einheit\hbox to0pt{\hskip#1 \Einheit
          \raise-2.5pt\hbox to0pt{\hss\normalsize$\bullet$\hss}\hss}}
\def\NormalPunkt(#1,#2){\unskip
  \raise#2 \Einheit\hbox to0pt{\hskip#1 \Einheit
          \raise-3pt\hbox to0pt{\hss\large$\bullet$\hss}\hss}}
\def\DickPunkt(#1,#2){\unskip
  \raise#2 \Einheit\hbox to0pt{\hskip#1 \Einheit
          \raise-4pt\hbox to0pt{\hss\Large$\bullet$\hss}\hss}}
\def\Kreis(#1,#2){\unskip
  \raise#2 \Einheit\hbox to0pt{\hskip#1 \Einheit
          \raise-4pt\hbox to0pt{\hss\Large$\circ$\hss}\hss}}
\def\Diagonale(#1,#2)#3{\unskip\leavevmode
  \xcoord#1\relax \ycoord#2\relax
      \raise\ycoord \Einheit\hbox to0pt{\hskip\xcoord \Einheit
         \unitlength\Einheit
         \line(1,1){#3}\hss}}
\def\AntiDiagonale(#1,#2)#3{\unskip\leavevmode
  \xcoord#1\relax \ycoord#2\relax 
      \raise\ycoord \Einheit\hbox to0pt{\hskip\xcoord \Einheit
         \unitlength\Einheit
         \line(1,-1){#3}\hss}}
\def\Pfad(#1,#2),#3\endPfad{\unskip\leavevmode
  \xcoord#1 \ycoord#2 \thicklines\ZeichnePfad#3\endPfad\thinlines}
\def\ZeichnePfad#1{\ifx#1\endPfad\let\next\relax
  \else\let\next\ZeichnePfad
    \ifnum#1=1
      \raise\ycoord \Einheit\hbox to0pt{\hskip\xcoord \Einheit
         \vrule height\Pfadd@cke width1 \Einheit depth\Pfadd@cke\hss}%
      \advance\xcoord by 1
    \else\ifnum#1=2
      \raise\ycoord \Einheit\hbox to0pt{\hskip\xcoord \Einheit
        \hbox{\hskip-\PfadD@cke\vrule height1 \Einheit width\PfadD@cke depth0pt}\hss}%
      \advance\ycoord by 1
    \else\ifnum#1=3
      \raise\ycoord \Einheit\hbox to0pt{\hskip\xcoord \Einheit
         \unitlength\Einheit
         \line(1,1){1}\hss}
      \advance\xcoord by 1
      \advance\ycoord by 1
    \else\ifnum#1=4
      \raise\ycoord \Einheit\hbox to0pt{\hskip\xcoord \Einheit
         \unitlength\Einheit
         \line(1,-1){1}\hss}
      \advance\xcoord by 1
      \advance\ycoord by -1
    \else\ifnum#1=5
      \advance\xcoord by -1
      \raise\ycoord \Einheit\hbox to0pt{\hskip\xcoord \Einheit
         \vrule height\Pfadd@cke width1 \Einheit depth\Pfadd@cke\hss}%
    \else\ifnum#1=6
      \advance\ycoord by -1
      \raise\ycoord \Einheit\hbox to0pt{\hskip\xcoord \Einheit
        \hbox{\hskip-\PfadD@cke\vrule height1 \Einheit width\PfadD@cke depth0pt}\hss}%
    \else\ifnum#1=7
      \advance\xcoord by -1
      \advance\ycoord by -1
      \raise\ycoord \Einheit\hbox to0pt{\hskip\xcoord \Einheit
         \unitlength\Einheit
         \line(1,1){1}\hss}
    \else\ifnum#1=8
      \advance\xcoord by -1
      \advance\ycoord by +1
      \raise\ycoord \Einheit\hbox to0pt{\hskip\xcoord \Einheit
         \unitlength\Einheit
         \line(1,-1){1}\hss}
    \fi\fi\fi\fi
    \fi\fi\fi\fi
  \fi\next}
\def\hSSchritt{\leavevmode\raise-.4pt\hbox to0pt{\hss.\hss}\hskip.2\Einheit
  \raise-.4pt\hbox to0pt{\hss.\hss}\hskip.2\Einheit
  \raise-.4pt\hbox to0pt{\hss.\hss}\hskip.2\Einheit
  \raise-.4pt\hbox to0pt{\hss.\hss}\hskip.2\Einheit
  \raise-.4pt\hbox to0pt{\hss.\hss}\hskip.2\Einheit}
\def\vSSchritt{\vbox{\baselineskip.2\Einheit\lineskiplimit0pt
\hbox{.}\hbox{.}\hbox{.}\hbox{.}\hbox{.}}}
\def\DSSchritt{\leavevmode\raise-.4pt\hbox to0pt{%
  \hbox to0pt{\hss.\hss}\hskip.2\Einheit
  \raise.2\Einheit\hbox to0pt{\hss.\hss}\hskip.2\Einheit
  \raise.4\Einheit\hbox to0pt{\hss.\hss}\hskip.2\Einheit
  \raise.6\Einheit\hbox to0pt{\hss.\hss}\hskip.2\Einheit
  \raise.8\Einheit\hbox to0pt{\hss.\hss}\hss}}
\def\dSSchritt{\leavevmode\raise-.4pt\hbox to0pt{%
  \hbox to0pt{\hss.\hss}\hskip.2\Einheit
  \raise-.2\Einheit\hbox to0pt{\hss.\hss}\hskip.2\Einheit
  \raise-.4\Einheit\hbox to0pt{\hss.\hss}\hskip.2\Einheit
  \raise-.6\Einheit\hbox to0pt{\hss.\hss}\hskip.2\Einheit
  \raise-.8\Einheit\hbox to0pt{\hss.\hss}\hss}}
\def\SPfad(#1,#2),#3\endSPfad{\unskip\leavevmode
  \xcoord#1 \ycoord#2 \ZeichneSPfad#3\endSPfad}
\def\ZeichneSPfad#1{\ifx#1\endSPfad\let\next\relax
  \else\let\next\ZeichneSPfad
    \ifnum#1=1
      \raise\ycoord \Einheit\hbox to0pt{\hskip\xcoord \Einheit
         \hSSchritt\hss}%
      \advance\xcoord by 1
    \else\ifnum#1=2
      \raise\ycoord \Einheit\hbox to0pt{\hskip\xcoord \Einheit
        \hbox{\hskip-2pt \vSSchritt}\hss}%
      \advance\ycoord by 1
    \else\ifnum#1=3
      \raise\ycoord \Einheit\hbox to0pt{\hskip\xcoord \Einheit
         \DSSchritt\hss}
      \advance\xcoord by 1
      \advance\ycoord by 1
    \else\ifnum#1=4
      \raise\ycoord \Einheit\hbox to0pt{\hskip\xcoord \Einheit
         \dSSchritt\hss}
      \advance\xcoord by 1
      \advance\ycoord by -1
    \else\ifnum#1=5
      \advance\xcoord by -1
      \raise\ycoord \Einheit\hbox to0pt{\hskip\xcoord \Einheit
         \hSSchritt\hss}%
    \else\ifnum#1=6
      \advance\ycoord by -1
      \raise\ycoord \Einheit\hbox to0pt{\hskip\xcoord \Einheit
        \hbox{\hskip-2pt \vSSchritt}\hss}%
    \else\ifnum#1=7
      \advance\xcoord by -1
      \advance\ycoord by -1
      \raise\ycoord \Einheit\hbox to0pt{\hskip\xcoord \Einheit
         \DSSchritt\hss}
    \else\ifnum#1=8
      \advance\xcoord by -1
      \advance\ycoord by 1
      \raise\ycoord \Einheit\hbox to0pt{\hskip\xcoord \Einheit
         \dSSchritt\hss}
    \fi\fi\fi\fi
    \fi\fi\fi\fi
  \fi\next}
\def\Koordinatenachsen(#1,#2){\unskip
 \hbox to0pt{\hskip-.5pt\vrule height#2 \Einheit width.5pt depth1 \Einheit}%
 \hbox to0pt{\hskip-1 \Einheit \xcoord#1 \advance\xcoord by1
    \vrule height0.25pt width\xcoord \Einheit depth0.25pt\hss}}
\def\Koordinatenachsen(#1,#2)(#3,#4){\unskip
 \hbox to0pt{\hskip-.5pt \ycoord-#4 \advance\ycoord by1
    \vrule height#2 \Einheit width.5pt depth\ycoord \Einheit}%
 \hbox to0pt{\hskip-1 \Einheit \hskip#3\Einheit 
    \xcoord#1 \advance\xcoord by1 \advance\xcoord by-#3 
    \vrule height0.25pt width\xcoord \Einheit depth0.25pt\hss}}
\def\Gitter(#1,#2){\unskip \xcoord0 \ycoord0 \leavevmode
  \LOOP\ifnum\ycoord<#2
    \loop\ifnum\xcoord<#1
      \raise\ycoord \Einheit\hbox to0pt{\hskip\xcoord \Einheit\Punkt\hss}%
      \advance\xcoord by1
    \repeat
    \xcoord0
    \advance\ycoord by1
  \REPEAT}
\def\Gitter(#1,#2)(#3,#4){\unskip \xcoord#3 \ycoord#4 \leavevmode
  \LOOP\ifnum\ycoord<#2
    \loop\ifnum\xcoord<#1
      \raise\ycoord \Einheit\hbox to0pt{\hskip\xcoord \Einheit\Punkt\hss}%
      \advance\xcoord by1
    \repeat
    \xcoord#3
    \advance\ycoord by1
  \REPEAT}
\def\Label#1#2(#3,#4){\unskip \xdim#3 \Einheit \ydim#4 \Einheit
  \def\lo{\advance\xdim by-.5 \Einheit \advance\ydim by.5 \Einheit}%
  \def\llo{\advance\xdim by-.25cm \advance\ydim by.5 \Einheit}%
  \def\loo{\advance\xdim by-.5 \Einheit \advance\ydim by.25cm}%
  \def\o{\advance\ydim by.25cm}%
  \def\ro{\advance\xdim by.5 \Einheit \advance\ydim by.5 \Einheit}%
  \def\rro{\advance\xdim by.25cm \advance\ydim by.5 \Einheit}%
  \def\roo{\advance\xdim by.5 \Einheit \advance\ydim by.25cm}%
  \def\l{\advance\xdim by-.30cm}%
  \def\r{\advance\xdim by.30cm}%
  \def\lu{\advance\xdim by-.5 \Einheit \advance\ydim by-.6 \Einheit}%
  \def\llu{\advance\xdim by-.25cm \advance\ydim by-.6 \Einheit}%
  \def\luu{\advance\xdim by-.5 \Einheit \advance\ydim by-.30cm}%
  \def\u{\advance\ydim by-.30cm}%
  \def\ru{\advance\xdim by.5 \Einheit \advance\ydim by-.6 \Einheit}%
  \def\rru{\advance\xdim by.25cm \advance\ydim by-.6 \Einheit}%
  \def\ruu{\advance\xdim by.5 \Einheit \advance\ydim by-.30cm}%
  #1\raise\ydim\hbox to0pt{\hskip\xdim
     \vbox to0pt{\vss\hbox to0pt{\hss$#2$\hss}\vss}\hss}%
}

\catcode`\@=12

\def\Line(#1,#2)(#3,#4)#5{\unskip\leavevmode
  \xcoord#1\relax \ycoord#2\relax
      \raise\ycoord \Einheit\hbox to0pt{\hskip\xcoord \Einheit
  \line(#3,#4){#5}\hss}
  }


\section{Introduction}\label{CKsec:intro}

A {\em lattice path\index{lattice 
path}\index{path}\/} 
({\em path\/} for short) is what the name says: a path (walk) in a lattice
in some $d$-dimensional Euclidean space.
Formally, a lattice path $P$ is a sequence $P=(P_0,P_1,\dots,P_l)$ of
points $P_i$ in $\Z^d$. 
Figure~\ref{CKF0.1} shows the lattice path $((0,0),
(1,1),\break (2,1),(3,1),(3,2),(4,3))$.
The point $P_0$ is called the {\em starting point\index{starting
point of a path}\/} and $P_l$ is called the {\em end 
point\index{end point of a path}\/} of $P$. The vectors
$\overrightarrow {P_0P_1},\overrightarrow {P_1P_2}, \dots, \overrightarrow
{P_{l-1}P_{l}}$ are called the {\em steps\index{steps of a 
path}\/} of $P$.

\begin{figure} 
$$
\Gitter(6,5)(0,0)
\Koordinatenachsen(6,5)(0,0)
\Pfad(0,0),31123\endPfad
\DickPunkt(0,0)
\DickPunkt(4,3)
\DickPunkt(1,1)
\DickPunkt(2,1)
\DickPunkt(3,1)
\DickPunkt(3,2)
\hbox{\hskip4cm}
$$
\caption{}
\label{CKF0.1}
\end{figure}

Lattice paths have been studied for a very long time, 
explicitly at least since the second half of the 19th 
century. At the beginning stand the investigations
concerning the two-candidate ballot problem 
\index{Bertrand, Joseph Louis Fran{\c c}ois}%
\index{Whitworth, William Allen}\cite{BertAA,WhitAA}
(see the paragraph below Corollary~\ref{CKT1.2} in
Section~\ref{CKsec:lin1}) and
the gambler's ruin problem \cite{HuygAA} (see
\cite[Ch.~XIV, Sec.~2]{FellAA} and Example~\ref{CKex:gambl}
in Section~\ref{CKs3.6}). Since then, lattice paths have
penetrated many fields of mathematics, computer science,
and physics. The reason for their ubiquity is, on the
one hand, that they are well-suited to encode various
(combinatorial) objects and their properties, and, thus,
problems in various fields can be solved by solving 
lattice path problems.
On the other hand, since lattice paths
are --- at the outset --- reasonably simple combinatorial
objects, the study of physical, probabilistic, or statistical models
is attractive in its own right.
In particular, the importance of lattice path enumeration in
non-parametric statistics seems to explain that the only
books which are entirely devoted to lattice path combinatorics
that I am aware of, namely \cite{MohaAE} and \cite{NaraAB},
are written by statisticians.

The aim of this chapter is to provide an overview of results and
methods in lattice path enumeration. Since, in view of the vast
literature on the subject, comprehensiveness is hopeless, 
I have made a personal selection of topics that I consider of
importance in the theory, the same applying to the methods which
I present here. 

Clearly, when one talks of ``enumeration," this
comes in two different ``flavours": exact and asymptotic.
In this chapter, I only rarely touch asymptotics, but rather
concentrate on exact enumeration results. In most cases, 
corresponding asymptotic results are easily derivable from 
the exact formulas
by using standard methods from asymptotic analysis. See
\cite{FlSeAA} for {\it the} standard text on asymptotic methods
in combinatorial enumeration.

In many cases, I omit proofs. The proofs which are given are either
reasonably short, or they serve to illustrate a key method or idea
in lattice path enumeration. If one attempts to make a list of
the important methods in lattice path enumeration, then this will
include:

\begin{enumerate} 
\item {\em generating functions} (of course), in combination with
the {\em Lagrange inversion formula} and/or {\em residue calculus}
(see the second proof of Theorem~\ref{CKT1.4}, the proof of
Theorem~\ref{CKT1.3a}, and the proof of Theorem~\ref{CKthm:BF1}
for examples);
\item {\em bijections} (they appear explicitly or implicitly at many places);
\item\index{reflection principle}{\em reflection principle} 
(see the proof of Theorem~\ref{CKT1.1} and Section~\ref{CKsec:refl});
\item\index{cycle lemma}{\em cycle lemma} (see Section~\ref{CKsec:rat});
\item\index{transfer matrix method}\index{method, transfer matrix} 
{\em transfer matrix method} (see the proof of Theorem~\ref{CKT3.11});
\item\index{kernel method}\index{method, kernel}{\em kernel method\/}
(see the proof of Theorem~\ref{CKthm:BF2} and the paragraphs 
thereafter);
\item\index{non-intersecting lattice path}\index{path,
  non-intersecting}\index{lattice path,
  non-intersecting}the {\em path switching involution 
for non-intersecting lattice paths} (see Section~\ref{CKsec:nonint});
\item\index{two-rowed array}\index{array, two-rowed}manipulation of
  {\em two-rowed arrays} for turn enumeration (see Section~\ref{CKsec:turn});
\item\index{orthogonal polynomial}\index{polynomial, orthogonal}%
\index{continued fraction}\index{fraction, continued}%
{\em orthogonal polynomials}, {\em continued fractions} (see
Sections~\ref{CKs3.3}--\ref{CKs3.6}).
\end{enumerate}

We start with some simple results on the enumeration of
paths in the $d$-dimen\-sio\-nal integer lattice in
Section~\ref{CKsec:simple}.
The sections which follow, Sections~\ref{CKsec:lin1}--\ref{CKs1.6},
discuss so-called 
\index{simple lattice path}\index{simple path}%
\index{lattice path, simple}\index{path, simple}{\em simple lattice paths}
in the plane integer lattice $\Z^2$; these are paths in $\Z^2$
consisting of horizontal and vertical unit steps in the positive direction. 
While still staying in the plane integer lattice,
beginning from Section~\ref{CKs3.1}, we allow three kinds of steps:
changing the geometry slightly by a rotation about $45^\circ$, these
are up-, down-, and level-steps. The case of Motzkin paths is
intimately related to the theory of orthogonal polynomials and
continued fractions. This link is explained in Sections~\ref{CKs3.3}--\ref{CKs3.6}.
Section~\ref{CKsec:var} provides a loose collection of further results for
lattice paths in the plane integer lattice, with many pointers
to the literature. The subsequent section,
Section~\ref{CKsec:nonint}, is devoted to the theory of non-intersecting
lattice paths, which is an extremely useful enumeration theory
with many applications --- particularly in the enumeration of tilings,
plane partitions, and tableaux ---,
but is also of great interest in its own right.
Turn statistics are investigated in Section~\ref{CKsec:turn}.
Again, the original motivation comes from statistics, but more
recent work, most importantly work on counting non-intersecting
lattice paths by their number of turns, arose from problems
in commutative algebra. Then we move into higher-dimensional
space. Sections~\ref{CKsec:multi}--\ref{CKs7.5} present
standard results for lattice paths in higher-dimensional
lattices. How far one can go with the reflection principle
is explained in Section~\ref{CKsec:refl}. The brief
Section~\ref{CKsec:RR} gives some glimpses of $q$-analogues,
including pointers to the connections of lattice path enumeration
with the Rogers--Ramanujan identities. 

\medskip
We conclude this introduction by fixing some notation which will be
used consistently in this chapter.
(It is in part inspired by standard probability notation.) 
Given lattice points $A$ and
$E$, a set $\SS$ of steps (vectors), a set of restrictions $R$,
and a non-negative integer $m$, we write
\begin{equation} \label{CKeq:LL} 
L_m\big(A\to E;\SS\mid R\big)
\end{equation}
for the set of all lattice paths from $A$ to $E$ with $m$ steps,
all of which from $\SS$, which obey the restrictions in $R$.
The lattice itself in which these paths are considered will be
always clear from the context and is therefore not included in
the notation. For example, the path in Figure~\ref{CKF0.1} is
in 
$$L_5\big((0,0)\to(4,3);\{(1,0),\,(0,1),\,(1,1)\}\mid x\ge y\big),$$
where $x\ge y$ indicates the restriction that the $x$-coordinate
of any lattice point of the path is at least as large as its
$y$-coordinate, or, equivalently, obeys the restriction to
stay weakly below the diagonal $x=y$.

Parts in \eqref{CKeq:LL} may be left out if we do not intend to
require the corresponding restriction, or if that restriction is
clear from the context. For example, the set of lattice paths
in $\Z^2$ from $A$ to $E$ with horizontal and vertical unit steps in
the positive direction without further restriction will be
denoted by $\LL{A\to E;\{(1,0),\,(0,1)\}}$, or sometimes even
shorter, if the step set is clear from the context,
$\LL{A\to E}$.

When we consider \index{weighted counting}{\em weighted
counting}, then we shall also use a uniform notation. 
Given a set $\mathcal M$ and a \index{weight
function}weight
function $w$ on $\mathcal M$, we denote by 
$GF(\mathcal M;w)$\glossary{$GF(\mathcal M;w)$} the
\index{generating function}{\em generating function\/} 
for $\mathcal M$ with respect to $w$, i.e.,
\begin{equation}\label{CKe0.1}
GF(\mathcal M;w):=\sum _{x\in \mathcal M} ^{}w(x).
\end{equation}

Finally, by convention, whenever we write a binomial coefficient $\binom nk$,
it is assumed to be zero if $k$ is not an integer
satisfying $0\leq k \leq n$.

\section{Lattice paths without restrictions}\label{CKsec:simple}

In this short section, we briefly cover the simplest enumeration
problems for lattice paths. If we are given a set of steps $S$,
then the number of paths starting from the origin and using
$n$ steps from $S$ is $\vert S\vert^n$. If we are also fixing
the end point, then we cannot expect a reasonable formula in
this generality. 

However, in the case of (positive) unit steps such formulae
are available. Namely, the number of paths 
in the plane
integer lattice $\Z^2$ from $(a,b)$ to $(c,d)$ 
consisting of horizontal and vertical unit steps in the positive
direction is
\begin{equation} \label{CKe1.1}
\vv{\LL{(a,b)\to (c,d)}}=\binom {c+d-a-b}{c-a},
\end{equation}
since each path from $(a,b)$ to $(c,d)$ can be identified with a
sequence of $(c-a)$ horizontal steps and $(d-b)$ vertical steps, the
number of those sequences being given by the binomial coefficient in
\eqref{CKe1.1}. 

More generally, for the same reason, the number of paths 
in the $d$-dimensional
integer lattice $\Z^d$ from 
$\mathbf a=(a_1,a_2,\dots,a_d)$ to $ \mathbf e=(e_1,e_2,\dots,e_d)$ 
consisting of positive unit steps in the direction
of some coordinate axis
is given by a multinomial coefficient, namely
\begin{equation} \label{CKe1.1d}
\vv{\LL{\mathbf a\to \mathbf e}}=
\binom {\sum_{i=1}^d(e_i-a_i)} 
{e_1-a_1,e_2-a_2,\dots,e_d-a_d}:=
\frac {\left(\sum_{i=1}^d(e_i-a_i)\right)!} 
{(e_1-a_1)!\,(e_2-a_2)!\cdots (e_d-a_d)!}.
\end{equation}

There is another special case, in which one can write down a
closed form expression for the number of paths between two
given points with a fixed number of steps. Namely, 
the number of paths with $n$ horizontal and vertical unit steps
(in the positive {\it or} negative direction) from $(a,b)$ to $(c,d)$
is given by
\begin{equation} \label{CKeq:1.1-4} 
\vv{L_n\big((a,b)\to(c,d);\{(\pm1,0),\,(0,\pm1)\}\big)}
=\binom n {\frac{n+c+d-a-b}{2}}\binom n {\frac{n+c-d-a+b}{2}}.
\end{equation}
See \cite{GuKSAA} and the references given there.

If one considers other step sets then it may often be possible to
obtain (non-closed) formulae by ``mixing" steps. A typical example
is the case where we consider lattice paths in the plane allowing 
three types of steps, namely horizontal
unit steps $(1,0)$, vertical
unit steps $(0,1)$, and diagonal steps $(1,1)$.
Let $S=\{(1,0),(0,1),\break (1,1)\}$ be this step set. If we want to
know how many lattice paths there exist from $(a,b)$ to $(c,d)$
consisting of steps from $S$, then we find
\begin{equation} \label{CKe1.1-5} 
\vv{\LL{(a,b)\to (c,d);S}}=
\sum_{k=0}^{c-a}\binom {c+d-a-b-k} {k,c-a-k,d-b-k},
\end{equation}
since, if we fix the number of diagonal steps to $k$, then the
number of ways to mix $k$ diagonal steps,
$c-a-k$ horizontal steps, and $d-b-k$ vertical steps is given
by the multinomial coefficient which represents the summand
in \eqref{CKe1.1-5}. In the special case where $(a,b)=(0,0)$,
the corresponding numbers are called 
\index{Delannoy numbers}\index{numbers, Delannoy}%
\index{Delannoy, Henri-Auguste}{\it Delannoy numbers}, and, if $(c,d)=(n,n)$,
\index{central Delannoy numbers}\index{numbers, central Delannoy}%
{\it central Delannoy numbers}.

\begin{figure} 
$$
\Gitter(6,5)(0,0)
\Koordinatenachsen(6,5)(0,0)
\Pfad(0,0),21221121\endPfad
\DickPunkt(0,0)
\DickPunkt(1,1)
\DickPunkt(1,2)
\DickPunkt(1,3)
\DickPunkt(2,3)
\DickPunkt(3,3)
\DickPunkt(3,4)
\DickPunkt(4,4)
\hbox{\hskip6cm}
\raise1cm\hbox{%
\Gitter(5,3)(-1,-2)
\Koordinatenachsen(5,3)(-1,-2)
\Pfad(-1,-2),21122121\endPfad
\DickPunkt(-1,-2)
\DickPunkt(-1,-1)
\DickPunkt(0,-1)
\DickPunkt(1,-1)
\DickPunkt(1,0)
\DickPunkt(1,1)
\DickPunkt(2,1)
\DickPunkt(2,2)
\DickPunkt(3,2)
\hskip3cm}
$$
\caption{}
\label{CKF0.2}
\end{figure}

\medskip
As a first excursion to weighted counting, we consider the
generating function for lattice paths in $\Z^2$ from $A=(a,b)$
to $E=(c,d)$ consisting of horizontal and vertical unit steps
in the positive direction, in which each path is weighted by
$q^{a(P)}$, where $a(P)$ denotes the area between the path
and the $x$-axis (with portions of the path which lie below
the $x$-axis contributing a negative area). More precisely,
the area $a(P)$ is the sum of the heights (abscissa) of the
horizontal steps of $P$. For example, for the left-hand path
in Figure~\ref{CKF0.2} we have $a(\,.\,)=1+3+3+4=11$, while for the
right-hand path we have $a(\,.\,)=(-1)+(-1)+1+2=1$.
It is then straightforward to check (by induction on the length
of paths) that
\begin{equation} \label{CKe1.1q}
  \GF{\LL{(a,b)\to (c,d)};q^{a(\,.\,)}}=q^{b(c-a)}\qbinom {c+d-a-b}{c-a}q,
\end{equation}
where $\qbinom {c+d-a-b}{c-a}q$ denotes the 
\index{$q$-binomial coefficient}\index{coefficient, $q$-binomial}{\em
  $q$-binomial coefficient\/} defined by
$$\qbinom nkq:=
\frac {(1-q^n)(1-q^{n-1})\cdots(1-q)} 
{(1-q^k)(1-q^{k-1})\cdots(1-q)(1-q^{n-k})(1-q^{n-k-1})\cdots(1-q)},
$$ 
and $\qbinom nkq=0$ if $k<0$.
This result connects lattice path enumeration with the theory of
\index{integer partition}\index{partition, integer}integer
partitions. What we have computed in \eqref{CKe1.1q} is equivalent
to the classical result that the generating function for integer
partitions with at most $k$ parts, each of which is bounded above
by $n$ is given by $\qbinom {n+k}kq$. We shall say a little bit
more about $q$-counting in Section~\ref{CKsec:RR}. The reader is
referred to \cite{AndrAF} for an excellent survey of the
theory of partitions.

\section{Linear boundaries of slope $1$}\label{CKsec:lin1}
Next we want to count paths from $(a,b)$ to $(c,d)$, where $a\ge b$ and
$c\ge d$, which stay weakly below the main diagonal $y=x$.  
So, what we want to know is the
number $\vv{\LL{(a,b)\to (c,d)\mid x\ge y}}$. This problem is most
conveniently solved by 
the so-called {\em reflection principle\/}\index{reflection principle} 
most often attributed to
\index{Andr\'e, D\'esir\'e}Andr\'e \cite{AndDAA}. However, while
Andr\'e did solve the ballot problem, he did not use the reflection 
principle. Its origin lies most likely in the method of images
of electrostatics, see Sections~2.3--2.6 in \cite{HumKAA}.

\begin{theorem} \label{CKT1.1}%
Let $a\ge b$ and $c\ge d$. The number of
all paths from $(a,b)$ to $(c,d)$ staying weakly below the line $y=x$ is
given by
\begin{equation} \label{CKe1.2}
\vv{\LL{(a,b)\to (c,d)\mid x\ge y}}=\binom {c+d-a-b} {c-a} -
\binom {c+d-a-b}{c-b+1}. 
\end{equation}
\end{theorem}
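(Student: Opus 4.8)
The plan is to prove the formula by the reflection principle. The first term $\binom{c+d-a-b}{c-a}$ already counts, by \eqref{CKe1.1}, the total number of (unrestricted) paths from $(a,b)$ to $(c,d)$. So the task reduces to showing that the number of \emph{bad} paths --- those that violate the restriction $x\ge y$ at some point, i.e.\ that touch or cross the line $y=x+1$ --- equals the second binomial coefficient $\binom{c+d-a-b}{c-b+1}$.

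\medskip
First I would set up the reflection. Fix a bad path $P$ from $A=(a,b)$ to $E=(c,d)$. Since $P$ starts at $A$ with $a\ge b$ (so weakly below the diagonal) but at some point rises to touch the line $y=x+1$, there is a \emph{first} lattice point $T$ on $P$ lying on $y=x+1$. Reflect the initial portion of $P$ --- the segment from $A$ to $T$ --- in the line $y=x+1$, leaving the remaining portion from $T$ to $E$ unchanged. Reflection in $y=x+1$ sends a point $(x,y)$ to $(y-1,x+1)$, so the starting point $A=(a,b)$ is sent to $A'=(b-1,a+1)$, while the endpoint $E=(c,d)$ is fixed in the sense that the tail is untouched; the reflected path runs from $A'$ to $E$. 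Crucially, reflection interchanges horizontal and vertical unit steps, so the image is again a lattice path with positive unit steps, and it still passes through $T$ on the line $y=x+1$.

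\medskip
Next I would argue that this reflection is a \emph{bijection} between bad paths from $A$ to $E$ and \emph{all} paths from $A'=(b-1,a+1)$ to $E=(c,d)$. The key point is that any path from $A'$ to $E$ must cross $y=x+1$: its starting point $A'$ lies strictly above that line (since $a+1 > (b-1)+1$, using $a\ge b$), while $E=(c,d)$ lies weakly below it (as $c\ge d$), so by a discrete intermediate-value argument the path meets $y=x+1$, and has a well-defined first such point $T'$. Reflecting the initial segment back recovers a bad path from $A$ to $E$, and the two operations are mutually inverse because each uses the \emph{first} contact point with $y=x+1$, which is preserved under reflection. Hence the number of bad paths equals $\vv{\LL{A'\to E}}$, which by \eqref{CKe1.1} is
\begin{equation*}
\binom{c+d-(b-1)-(a+1)}{c-(b-1)}=\binom{c+d-a-b}{c-b+1},
\end{equation*}
exactly the term to be subtracted. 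Subtracting this from the total $\binom{c+d-a-b}{c-a}$ yields \eqref{CKe1.2}.

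\medskip
I expect the main obstacle to be the careful verification that the reflection map is a genuine bijection --- specifically, checking that the ``first touch'' of $y=x+1$ is well defined for every path on both sides and that reflecting about this point is an involution on the relevant initial segments. One must confirm that every path from $A'$ to $E$ really does hit the forbidden line (this uses $a\ge b$ and $c\ge d$), and that the inverse map lands back among the bad paths from $A$ to $E$ rather than producing a path with the wrong endpoints; both follow once one notes that reflection preserves the step set and fixes the common contact point $T$. The arithmetic of the reflected endpoints is routine, so the combinatorial bijection is where the real content lies.
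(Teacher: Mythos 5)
Your proof is correct and follows essentially the same route as the paper: the reflection principle in the line $y=x+1$, establishing a bijection between the paths violating $x\ge y$ and all paths from $(b-1,a+1)$ to $(c,d)$, then subtracting via \eqref{CKe1.1}. The only (inessential) difference is that you reflect about the \emph{first} meeting point with $y=x+1$ while the paper uses the \emph{last} (right-most) one; both choices make the map a well-defined involution.
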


\begin{proof}First we observe that the number in question is the number
of all paths from $(a,b)$ to $(c,d)$ minus the number of those paths
which cross the line $y=x$,
\begin{multline} \label{CKe1.3}
\vv{\LL{(a,b)\to (c,d)\mid x\ge y}}=\vv{\LL{(a,b)\to (c,d)}}
\\
-\vv{\LL{(a,b)\to (c,d)\mid x\not\ge y\text{ at least once}}}.
\end{multline}
By \eqref{CKe1.1} we already know $\vv{\LL{(a,b)\to (c,d)}}$. The
reflection principle shows that paths from $(a,b)$ to $(c,d)$ which
cross $y=x$ are in bijection with paths from\break $(b-1,a+1)$ to $(c,d)$.
This implies
$$\vv{\LL{(a,b)\to (c,d)\mid x\not\ge y\text{ at least once}}}
=\vv{\LL{(b-1,a+1)\to (c,d)}}.$$
Hence, using \eqref{CKe1.1} again, we establish
\eqref{CKe1.2}.

\begin{figure} 
$$
\Gitter(12,10)(-2,-1)
\Koordinatenachsen(12,10)(-2,-1)
\Pfad(1,-1),22221111212222111112\endPfad
\SPfad(-2,2),111122221211112\endSPfad
\Diagonale(-2,-2){12}
\SPfad(-2,-1),33333333333\endSPfad
\DickPunkt(1,-1)
\DickPunkt(-2,2)
\DickPunkt(7,8)
\DickPunkt(11,9)
\Label\ru{(a,b)}(1,-1)
\Label\l{(b-1,a+1)}(-4,2)
\Label\ro{(c,d)}(11,9)
\Label\ro{P}(5,3)
\Label\ro{ y=x}(2,1)
\Label\o{S}(7,8)
\Label\ro{P'}(-1,2)
\Label\u{y=x+1}(-3,0)
\hskip4cm
$$
\caption{}
\label{CKF1.1}
\end{figure}

The claimed bijection is obtained as follows. Consider a path $P$ from
$(a,b)$ to $(c,d)$ crossing the line $y=x$. See Figure~\ref{CKF1.1} for an
example. Then $P$ must meet the line $y=x+1$. Among all the meeting
points of $P$ and $y=x+1$ choose the right-most. Denote this point by
$S$. Now reflect the portion of $P$ from $(a,b)$ to $S$ in the line
$y=x+1$, leaving the portion from $S$ to $(c,d)$ invariant. Thus we
obtain a new path $P'$ from $(b-1,a+1)$ to $(c,d)$. To construct the
reverse mapping we only have to observe that any path from $(b-1,a+1)$
to $(c,d)$ must meet $y=x+1$ since $(b-1,a+1)$ and $(c,d)$ lie on
different sides of $y=x+1$. Again we choose the right-most meeting
point, denote it by $S$, and reflect the portion from $(b-1,a+1)$ to
$S$ in the line $y=x+1$, thus obtaining a path from $(a,b)$ to $(c,d)$
that meets the line $y=x+1$, or, equivalently, crosses the line $y=x$.
\end{proof}

In particular, for $a=b=0$ we obtain the following compact formula.

\begin{corollary} \label{CKT1.2}%
If $c\ge d$ we have
\begin{equation} \label{CKe1.4}
\vv{\LL{(0,0)\to (c,d)\mid x\ge y}}=\frac {c+1-d} {c+d+1}\binom {c+d+1}
{d}
\end{equation}
and
\begin{equation} \label{CKe1.5}
\vv{\LL{(0,0)\to (n,n)\mid x\ge y}}= \frac {1} {n+1}\binom {2n}{n}.
\end{equation}
\end{corollary}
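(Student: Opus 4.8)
The plan is to derive Corollary~\ref{CKT1.2} directly from Theorem~\ref{CKT1.1} by specializing $a=b=0$ and simplifying the resulting difference of binomial coefficients. Setting $a=b=0$ in \eqref{CKe1.2} gives
\[
\vv{\LL{(0,0)\to (c,d)\mid x\ge y}}=\binom {c+d} {c} -\binom {c+d}{c+1}.
\]
First I would rewrite both binomial coefficients over a common denominator. Writing $\binom{c+d}{c}=\frac{(c+d)!}{c!\,d!}$ and $\binom{c+d}{c+1}=\frac{(c+d)!}{(c+1)!\,(d-1)!}$, I would factor out $\binom{c+d}{c}$ and use the ratio $\binom{c+d}{c+1}\big/\binom{c+d}{c}=\frac{d}{c+1}$, so that the difference becomes $\binom{c+d}{c}\bigl(1-\frac{d}{c+1}\bigr)=\frac{c+1-d}{c+1}\binom{c+d}{c}$.

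The remaining task is the purely algebraic identity
\[
\frac{c+1-d}{c+1}\binom{c+d}{c}=\frac{c+1-d}{c+d+1}\binom{c+d+1}{d},
\]
which establishes \eqref{CKe1.4}. To verify this I would use the absorption/expansion relation $\binom{c+d+1}{d}=\frac{c+d+1}{c+1}\binom{c+d}{d}$ (equivalently $\binom{c+d+1}{d}=\frac{c+d+1}{c+d+1-d}\binom{c+d}{d}$ writing out factorials), and note that $\binom{c+d}{d}=\binom{c+d}{c}$ by symmetry. Substituting this into the right-hand side, the factor $\frac{c+d+1}{c+1}$ cancels the $\frac{1}{c+d+1}$ and reproduces $\frac{c+1-d}{c+1}\binom{c+d}{c}$, matching the left-hand side. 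This step is routine factorial bookkeeping rather than a genuine obstacle.

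Finally, \eqref{CKe1.5} follows by setting $c=d=n$ in \eqref{CKe1.4}. Then $c+1-d=1$, $c+d+1=2n+1$, and $\binom{c+d+1}{d}=\binom{2n+1}{n}$, giving $\frac{1}{2n+1}\binom{2n+1}{n}$. To bring this to the Catalan form $\frac{1}{n+1}\binom{2n}{n}$, I would again apply the expansion $\binom{2n+1}{n}=\frac{2n+1}{n+1}\binom{2n}{n}$, so the factor $\frac{1}{2n+1}$ cancels against $2n+1$ and leaves $\frac{1}{n+1}\binom{2n}{n}$, as claimed. I do not anticipate a serious difficulty anywhere: the entire corollary is a specialization of the already-proved reflection-principle formula, and the only care needed is in tracking the binomial-coefficient conventions (in particular the convention, stated at the end of the introduction, that $\binom{n}{k}=0$ outside $0\le k\le n$, which guarantees the formulas remain valid in the boundary case $c=d$ where $\binom{c+d}{c+1}$ could otherwise cause concern).
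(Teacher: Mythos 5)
Your proposal is correct and is exactly the paper's route: the corollary is stated as the specialization $a=b=0$ of Theorem~\ref{CKT1.1}, and your binomial manipulations (the ratio $\binom{c+d}{c+1}/\binom{c+d}{c}=\frac{d}{c+1}$, the absorption identity, and the final reduction of $\frac{1}{2n+1}\binom{2n+1}{n}$ to $\frac{1}{n+1}\binom{2n}{n}$) are all accurate. The paper leaves this routine algebra implicit, so your write-up simply fills in the same computation.
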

The numbers $\frac {c+1-d} {c+d+1}\binom {c+d+1}{d}$ are called {\em
\index{ballot numbers}ballot numbers\/} 
since they give the answer to the classical ballot
problem\index{ballot problem}, which is usually attributed to 
\index{Bertrand, Joseph Louis Fran{\c c}ois}Bertrand 
\cite{BertAA}, but was actually first stated and solved by
Whitworth\index{Whitworth, William Allen} \cite{WhitAA}. 
The problem is stated as follows:
in an election
candidate $A$ received $c$ votes and candidate $B$ received $d$
votes;
how many ways of counting the votes are there such that at each stage
during the counting candidate $A$ has at least as many votes as
candidate $B$? By representing a vote for $A$ by a horizontal step and a
vote for $B$ by a vertical step, it is seen that the number  in
question is the same as the number of lattice paths from $(0,0)$ to
$(c,d)$ staying weakly below $y=x$. This number is given in \eqref{CKe1.4}.
More about ballot problems appears in Sections~\ref{CKsec:var} and
\ref{CKsec:refl}.

The numbers $\frac {1} {n+1}\binom {2n}{n}$ are called {\em \index{Catalan
number}\index{Catalan, Eug\`ene Charles}Catalan
numbers\/} \cite{CataAA,CataAB}. 
However, they have been considered earlier by 
\index{Segner, Johann Andreas}Segner \cite{SegnAA} and 
\index{Euler, Leonhard}Euler \cite{EuleAA}, and independently even 
earlier in China; see the historical remarks in \cite{PakIAA}
and \cite[p.~212]{StanBI}.
They appear in numerous places; see \cite[Ex.~6.19]{StanBI},
with many more occurrences in the addendum \cite{Stadd}. 

\medskip
An iterated reflection argument will give the number
of paths between two diagonal lines. 

\begin{theorem} \label{CKT1.3}%
Let $a+t\ge b\ge a+s$ and $c+t\ge d\ge c+s$. The number of all paths
from $(a,b)$ to $(c,d)$ staying weakly below the line $y=x+t$ and above the
line $y=x+s$ is given by
\begin{multline} \label{CKe1.6}
\vv{\LL{(a,b)\to (c,d)\mid x+t\ge y\ge x+s}}\\
=\sum _{k\in\Z} ^{}\(\binom
{c+d-a-b} {c-a-k(t-s+2)}-\binom {c+d-a-b}{c-b-k(t-s+2)+t+1}\). 
\end{multline}
\end{theorem}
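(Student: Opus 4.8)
The plan is to promote the single-reflection argument of Theorem~\ref{CKT1.1} to an iterated one, governed by the reflection group generated by the two barriers of the strip. Since every step changes $y-x$ by $\pm1$ and the starting point satisfies $s\le b-a\le t$, a path leaves the admissible region $x+s\le y\le x+t$ exactly when it first meets one of the lines $L_U\colon y=x+t+1$ or $L_D\colon y=x+s-1$. First I would record that the reflection in a line $y=x+c$ sends $(p,q)\mapsto(q-c,\,p+c)$; the case $c=t+1$ applied to $(a,b)$ is precisely the reflection used in Theorem~\ref{CKT1.1}. Writing $U$ and $D$ for the reflections in $L_U$ and $L_D$, these generate the infinite dihedral group $G=\langle U,D\rangle$, and a direct computation shows that $UD$ is the translation by $(-(t-s+2),\,t-s+2)$. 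Thus the quantity $M:=t-s+2$, the gap between the two barriers measured in the coordinate $y-x$, controls the entire group.

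The core identity I would establish is the reflection-principle expansion
\[
\vv{\LL{(a,b)\to(c,d)\mid x+t\ge y\ge x+s}}
=\sum_{g\in G}\sgn(g)\,\vv{\LL{g(a,b)\to(c,d)}},
\]
where $\sgn(g)=(-1)^{\ell(g)}$ and $\ell(g)$ is the number of reflections in a reduced word for $g$. The justification is a sign-reversing involution on the signed union, over $g\in G$, of all \emph{unrestricted} paths from $g(a,b)$ to $(c,d)$: given such a path that ever meets a barrier, locate the first lattice point at which it touches $L_U$ or $L_D$, reflect the initial segment up to that point in the corresponding line, and pre-multiply $g$ by that reflection. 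This flips $\sgn(g)$ and pairs the term with one of opposite sign, so everything cancels except the fixed points, which are exactly the paths starting at $(a,b)$ itself that never touch a barrier---that is, the admissible paths we want to count. I expect this cancellation to be the main obstacle: one must check that the ``first touch'' is well defined, that the assignment is a genuine involution (the reflected path shares the same first-touch point), and that the admissible paths are its only fixed points.

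It then remains to make the right-hand side explicit. Using $\vv{\LL{(p,q)\to(c,d)}}=\binom{c+d-p-q}{c-p}$ from \eqref{CKe1.1} and splitting $G$ into its translations (even length, sign $+1$) and its reflections (odd length, sign $-1$), I would compute the two families of image points. The translations $(UD)^k$ send $(a,b)$ to $(a-kM,\,b+kM)$, contributing $\binom{c+d-a-b}{c-a-kM}$ after reindexing $k\mapsto-k$; the reflections send $(a,b)$ to $(b-t-1+kM,\,a+t+1-kM)$, contributing $\binom{c+d-a-b}{c-b-kM+t+1}$. In both families the upper binomial entry collapses to $c+d-a-b$, and summing over $k\in\Z$ with the two signs yields exactly \eqref{CKe1.6}. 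Finally I would note that by the convention on binomial coefficients only finitely many $k$ give nonzero terms, so the sum is effectively finite and no convergence issue arises; as a consistency check, letting $s\to-\infty$ (hence $M\to\infty$) annihilates all terms with $k\ne0$ and recovers Theorem~\ref{CKT1.1}.
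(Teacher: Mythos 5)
Your overall strategy---the signed sum over the infinite dihedral group generated by the reflections in the two shifted barriers $L_U\colon y=x+t+1$ and $L_D\colon y=x+s-1$, the identification of $UD$ as a translation, and the final orbit/sign bookkeeping---is exactly the route the paper intends: Theorem~\ref{CKT1.3} is presented there as an instance of the general reflection-principle formula \eqref{CKe7.13} (Theorem~\ref{CKT7.3}), specialized to the affine reflection group of type $\tilde A_1$. Your computation of the two families of image points, the signs, the reindexing $k\mapsto -k$, and the finiteness of the sum are all correct. The gap lies in the cancellation step: reflecting at the \emph{first} touch point is not an involution. With two walls, a reflection does not preserve the wall set: reflection in $L_U$ maps $L_D$ to the line $y=x+2t-s+3$, which lies strictly above $L_U$. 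Hence, if the initial segment of a path (which, before its first touch, may run entirely above $L_U$) crosses $y=x+2t-s+3$, then its reflection in $L_U$ touches $L_D$, and the image path acquires an \emph{earlier} first touch on the other wall---so applying your map twice does not return the original pair.

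A concrete failure: take $s=0$, $t=1$, $(a,b)=(0,0)$, $(c,d)=(3,3)$, $g=U$ (so the path starts at $U(0,0)=(-2,2)$), and let $P$ pass through $(-2,2),(-2,3),(-1,3),(0,3),(1,3),(2,3),(3,3)$. Its first touch is $(1,3)$ on $L_U\colon y=x+2$, and your map sends $(U,P)$ to $(\mathrm{id},P')$ with $P'$ passing through $(0,0),(1,0),(1,1),(1,2),(1,3),(2,3),(3,3)$, whose first touch is $(1,0)$ on $L_D\colon y=x-1$. So $(\mathrm{id},P')$ is mapped to a pair of the form $(D,\cdot)$, not back to $(U,P)$; in fact nothing at all maps to $(U,P)$, so your map is not even a sign-reversing bijection, and the claimed cancellation is unproven. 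The repair is the one used in the paper's proof of Theorem~\ref{CKT7.3} (and already in the proof of Theorem~\ref{CKT1.1}, where the \emph{right-most} meeting point is chosen): reflect at the \emph{last} touch point along the path. Since the portion after that point is left unchanged by the map, and it is precisely this portion that certifies the point as the last touch, the image pair has the same distinguished touch point on the same wall, and the map is a genuine involution. Its fixed points are the pairs $(\mathrm{id},P)$ with $P$ avoiding both barriers, because for $g\ne\mathrm{id}$ the starting point $g(a,b)$ is separated from $(c,d)$ by one of the walls, which any path must touch, as each step changes $y-x$ by $\pm1$ and the walls sit at integer values of $y-x$. With this single change, the rest of your argument goes through verbatim.
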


Since this is (as well as Theorem~\ref{CKT1.1}) an instance of
the general formula \eqref{CKe7.13} for the number of paths staying in 
regions defined by hyperplanes, we omit the proof.

\medskip
The formula in Theorem~\ref{CKT1.3} is very convenient for computing
the number of paths as long as the parameters are not too large.
On the other hand, it is of no use if one is interested in asymptotic
information, because the summands on the right-hand side of \eqref{CKe1.6}
alternate in sign so that there is considerable cancellation.
However, with the help of little residue calculus, the formula 
can be transformed into a surprising formula featuring
cosines and sines, from which asymptotic information can be easily
read off.

\begin{theorem} \label{CKT1.3a}%
Let $a+t\ge b\ge a+s$ and $c+t\ge d\ge c+s$. The number of all paths
from $(a,b)$ to $(c,d)$ staying weakly below the line $y=x+t$ and above the
line $y=x+s$ is given by
\begin{multline} \label{CKe1.6a}
\vv{\LL{(a,b)\to (c,d)\mid x+t\ge y\ge x+s}}\\
=\sum _{k=1} ^{\fl{(t-s+1)/2}}\frac {4} {t-s+2}\(2\cos\frac {\pi k}
{t-s+2}\)^{c+d-a-b}\kern3cm\\
\cdot
\sin\(\frac {\pi k(a-b+t+1)} {t-s+2}\)\cdot
\sin\(\frac {\pi k(c-d+t+1)} {t-s+2}\).
\end{multline}
\end{theorem}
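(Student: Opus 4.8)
The plan is to start from the alternating sum in Theorem~\ref{CKT1.3} and apply a roots-of-unity filter (this is the ``little residue calculus'' alluded to above) to turn it into a trigonometric sum. Write $N=c+d-a-b$ and $p=t-s+2$, and set $\zeta=e^{2\pi i/p}$. Each of the two families of binomial coefficients in \eqref{CKe1.6} is, as $k$ ranges over $\Z$, just a sum of $\binom Nm$ over an arithmetic progression of the index $m$: the first over $m\equiv c-a\,(p)$, the second over $m\equiv c-b+t+1\,(p)$. Since $\binom Nm=[z^m](1+z)^N$, the multisection identity $\sum_{m\equiv j\,(p)}\binom Nm=\frac1p\sum_{l=0}^{p-1}\zeta^{-jl}(1+\zeta^l)^N$ (a residue computation over the $p$-th roots of unity) rewrites each family as a sum over $l$.

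Next I would make the trigonometry explicit. Using $1+\zeta^l=2\cos(\pi l/p)\,e^{\pi i l/p}$ gives $(1+\zeta^l)^N=2^N\cos^N(\pi l/p)\,e^{\pi i lN/p}$, so that after subtracting the two filtered sums the expression becomes $\frac{2^N}{p}\sum_{l=0}^{p-1}\cos^N(\tfrac{\pi l}{p})\big(e^{\pi i l e_1/p}-e^{\pi i l e_2/p}\big)$, where $e_1=N-2(c-a)=(a-b)-(c-d)$ and $e_2=N-2(c-b+t+1)=-(a-b)-(c-d)-2t-2$. The structural observation is the symmetry $l\mapsto p-l$: since $\cos^N(\pi(p-l)/p)=(-1)^N\cos^N(\pi l/p)$ while $N+e_1=2d-2b$ and $N+e_2=2d-2a-2t-2$ are both even, the terms at $l$ and $p-l$ are complex conjugates. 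Hence the sum is real and folds, each pair $\{l,p-l\}$ contributing $2\cos^N(\pi l/p)\cos(\pi l e_i/p)$. The boundary terms are harmless: the $l=0$ contributions of the two exponentials are both $\tfrac{2^N}{p}$ and cancel in the difference, and when $p$ is even the term $l=p/2$ vanishes because $\cos(\pi/2)=0$ (for $N\ge1$). What survives is $\frac{2^{N+1}}{p}\sum_{l=1}^{\fl{(p-1)/2}}\cos^N(\tfrac{\pi l}{p})\big(\cos(\tfrac{\pi l e_1}{p})-\cos(\tfrac{\pi l e_2}{p})\big)$.

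Finally I would match this against the claimed right-hand side of \eqref{CKe1.6a} by running the product-to-sum formula backwards. With $A=\pi k(a-b+t+1)/p$ and $B=\pi k(c-d+t+1)/p$ one checks $A-B=\pi k e_1/p$ and $A+B=-\pi k e_2/p$, whence $\sin A\sin B=\tfrac12(\cos(\pi k e_1/p)-\cos(\pi k e_2/p))$; inserting this together with $(2\cos(\pi k/p))^N=2^N\cos^N(\pi k/p)$ turns the target sum into precisely the folded expression above. I expect the main obstacle to be purely bookkeeping: carrying the two residues $c-a$ and $c-b+t+1$ correctly through the filter, and verifying the parities $N+e_1\equiv N+e_2\equiv0\pmod2$ that force reality and the correct conjugate pairing — these parities are exactly what make \emph{sines} (rather than a mix of sines and cosines) appear in the final formula. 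The degenerate case $N=0$, and the mild hypothesis $N\ge1$ needed to drop the $l=p/2$ term, should be checked separately but are immediate.
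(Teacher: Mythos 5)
Your proposal is correct and follows essentially the same route as the paper: both evaluate the alternating sum of Theorem~\ref{CKT1.3} over the $(t-s+2)$-th roots of unity and then fold the conjugate pairs $l\leftrightarrow p-l$, recovering the sines via the product-to-sum identity. The only difference is packaging — the paper extracts the roots-of-unity sum by summing a geometric series and applying the residue theorem to a contour integral that vanishes at infinity, whereas you invoke the finite multisection identity directly, which is the same computation in an algebraic rather than analytic wrapper.
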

\begin{proof} 
Trivially, the binomial coefficient $\binom nk$ is the
coefficient of $z^{-1}$ in the Laurent series
$$\frac {(1+z)^n} {z^{k+1}}.$$
Thus, the sum \eqref{CKe1.6} equals the
coefficient of $z^{-1}$ in
\begin{align} \notag
\sum _{k=0} ^{\infty}&\bigg(\frac {(1+z)^{c+d-a-b}\,z^{k(t-s+2)}} 
{z^{c-a+(c+d-a-b)(t-s+2)+1}}-
\frac {(1+z)^{c+d-a-b}\,z^{k(t-s+2)}} {z^{c-b+t+(c+d-a-b)(t-s+2)+2}}\bigg)\\
\notag
&\kern-1pt
=\frac {(1+z)^{c+d-a-b}} {z^{c-a+(c+d-a-b)(t-s+2)+1}(1-z^{t-s+2})}\\
\notag
&\kern4cm
-
\frac {(1+z)^{c+d-a-b}} {z^{c-b+t+(c+d-a-b)(t-s+2)+2}(1-z^{t-s+2})}\\
&\kern-1pt
=\frac {(1+z)^{c+d-a-b}\(z^{(-c+d+a-b)/2}-z^{(-c+d-a+b)/2-t-1}\)} 
{z^{(c+d-a-b)/2+(c+d-a-b)(t-s+2)+1}(1-z^{t-s+2})}. \label{CKeq:coef}
\end{align}
(In the second line we used the formula for the geometric series. It
can be either regarded as a summation in the formal sense, or else
one must assume that $\vert z\vert<1$.)
Equivalently, 
the sum \eqref{CKe1.6} equals the residuum of the Laurent series
\eqref{CKeq:coef} at $z=0$. Now consider the contour integral of \eqref{CKeq:coef}
(with respect to $z$, of course)
along a circle of radius $r$ around the origin. It is a standard fact
that in the limit $r\to\infty$ this integral vanishes, because the
integrand \eqref{CKeq:coef} is of the order $O(1/z^2)$. Therefore, by
the theorem of residues, the
sum of the residues of \eqref{CKeq:coef} must be 0, or, equivalently,
the residuum at $z=0$, which we are interested in, equals the negative of
the sum of the other residues. As the other poles of \eqref{CKeq:coef}
are the $(t-s+2)$-th roots of unity different from 1, we obtain
\begin{multline*} 
-\sum _{k=1} ^{t-s+1}
\frac {\(1+e^{\frac {2\pi ik} {t-s+2}}\)^{c+d-a-b}
\(e^{\frac {\pi ik} {t-s+2}(-c+d+a-b)}-
e^{\frac {\pi ik} {t-s+2}(-c+d-a+b-2t-2)}\)} 
{e^{\frac {2\pi ik} {t-s+2}(\frac
{c+d-a-b} {2}+1)}\(-(t-s+2)e^{\frac {2\pi ik} {t-s+2}(t-s+1)}\)}\\
=\sum _{k=1} ^{t-s+1}\frac {1} {t-s+2}\(2\cos\frac {\pi k}
{t-s+2}\)^{c+d-a-b}
e^{\frac {\pi ik} {t-s+2}(-c+d-t-1)}\\
\cdot \(e^{\frac {\pi ik} {t-s+2}(a-b+t+1)}-
e^{-\frac {\pi ik} {t-s+2}(a-b+t+1)}\)
\end{multline*}
for the sum \eqref{CKe1.6}.
Now, in the last line, 
we pair the $k$-th and the $(t-s+2-k)$-th summand. Thus, upon little
manipulation, the above sum turns into
\begin{multline*} 
\sum _{k=1} ^{\fl{(t-s+1)/2}}\frac {1} {t-s+2}\(2\cos\frac {\pi k}
{t-s+2}\)^{c+d-a-b}\\
\cdot
\(e^{-\frac {\pi ik} {t-s+2}(c-d+t+1)}-
e^{\frac {\pi ik} {t-s+2}(c-d+t+1)}\) 
{\(e^{\frac {\pi ik} {t-s+2}(a-b+t+1)}-
e^{-\frac {\pi ik} {t-s+2}(a-b+t+1)}\)}.
\end{multline*}
Clearly, this formula is equivalent to \eqref{CKe1.6a}.
\end{proof}

From the generating function formula given in Section~\ref{CKs3.6}
(see\break Example~\ref{CKex:Cheb}), 
one can see that this asymptotic formula comes from 
\index{Chebyshev, Pafnuty Lvovich}%
\index{polynomial, Chebyshev}\index{Chebyshev polynomial}{\em Chebyshev
polynomials}.

\section{Simple paths with linear boundaries of rational slope, I}
\label{CKsec:rat}

When we want to count simple lattice paths
(recall the meaning of ``simple" from the introduction) in the plane 
bounded by an arbitrary line
$y=kx+d$, $k,d\in\R$, the reflection principle obviously does not
help, since the reflection of a lattice path in a generic
line does not necessarily give a lattice path. In
fact, a solution in form of a determinant can be given when the
boundary is viewed as a special case of a set of general boundaries
(see Section~\ref{CKs1.6}, Theorem~\ref{CKT1.15}; another solution was
proposed by 
\index{Tak\'acs, L\'aszlo}Tak\'acs \cite{TakaAH}, which is of similar complexity as
it involves the solution of a large system of linear equations). 
However, there are cases where
simpler expressions can be obtained, and these are discussed in this
section. All of them can be derived 
from a very basic combinatorial lemma,
the so-called \index{cycle lemma}{\it``cycle lemma"}, which exists
in several variations.

The first case which we discuss is the enumeration of simple lattice paths
from the origin to a lattice point $(r,s)$, with $r$ and $s$ relatively
prime, which stay weakly below the line connecting the origin and $(r,s)$.

\begin{theorem} \label{CKT1.20}%
Let $r$ and $s$ be relatively prime positive integers. The number of
all paths from $(0,0)$ to $(r,s)$ staying weakly below the line $ry=sx$ is
given by
\begin{equation} \label{CKe1.40}
\vv{\LL{(0,0)\to (r,s)\mid sx\ge ry}}=\frac {1} {r+s}\binom {r+s} {r} .
\end{equation}
\end{theorem}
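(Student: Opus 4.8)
The plan is to apply the cycle lemma, encoding the ``weakly below'' condition as a positivity condition on partial sums. First I would encode a path from $(0,0)$ to $(r,s)$ as a word $w=w_1w_2\cdots w_{r+s}$ in the letters $H$ (horizontal step $(1,0)$) and $V$ (vertical step $(0,1)$), with exactly $r$ letters $H$ and $s$ letters $V$; there are $\binom{r+s}{r}$ such words. To each letter I attach a value $c_i=+s$ if $w_i=H$ and $c_i=-r$ if $w_i=V$, so that the $j$-th partial sum $S_j:=c_1+\cdots+c_j$ equals $sx_j-ry_j$, where $(x_j,y_j)$ is the position after $j$ steps. The requirement that the path stay weakly below the line $ry=sx$, i.e.\ $sx\ge ry$, is then exactly the requirement that $S_j\ge 0$ for all $j=0,1,\dots,r+s$; note $S_0=S_{r+s}=0$. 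Call such a word \emph{admissible}.

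Next I would group the $\binom{r+s}{r}$ words into classes under cyclic rotation. The arithmetic input $\gcd(r,s)=1$ forces every class to have exactly $r+s$ elements: if a word had period $d\mid(r+s)$ with $d<r+s$, then $(r+s)/d$ would divide the total $H$-count $r$ as well as $r+s$, hence divide $\gcd(r,r+s)=\gcd(r,s)=1$, forcing $d=r+s$. So no word is periodic and each cyclic class contains $r+s$ distinct rotations.

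The heart of the argument is the cycle lemma in the form: among the $r+s$ rotations of a fixed word, exactly one is admissible. Reading the rotation that begins at position $i$, its partial sums are $S_{i-1+m}-S_{i-1}$ (indices cyclic, using $S_{k+(r+s)}=S_k$), so the rotation is admissible iff $S_{i-1}=\min_{0\le k\le r+s-1}S_k$. I would then prove this minimum is attained uniquely: if $S_k=S_l$ for $0\le k<l\le r+s-1$, the steps in between satisfy $s\cdot(\#H)=r\cdot(\#V)$ with $0<(\#H)+(\#V)<r+s$, and since $\gcd(r,s)=1$ this forces $\#H$ to be a positive multiple of $r$, whence $(\#H)+(\#V)\ge r+s$, a contradiction. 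Thus exactly one value of $i-1\in\{0,\dots,r+s-1\}$ attains the minimum, giving exactly one admissible rotation per class.

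Combining the two counts --- each cyclic class has size $r+s$ and contributes exactly one admissible path --- yields $\frac{1}{r+s}\binom{r+s}{r}$ admissible paths, which is \eqref{CKe1.40}. The main obstacle, and the only place where coprimality is genuinely used, is the uniqueness of the minimal partial sum (equivalently, the aperiodicity of the words); once that is established, the rest is bookkeeping. If the cycle lemma has already been stated in its general form (exactly one ``good'' rotation when the relevant partial sums are distinct), I would instead cite it directly and merely supply the distinctness verification above.
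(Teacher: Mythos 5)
Your proof is correct and follows essentially the same route as the paper: the paper also encodes horizontal steps as $+s$ and vertical steps as $-r$, invokes the cycle lemma (stated there as Spitzer's Lemma, Lemma~\ref{CKlem:Spitzer}) with coprimality guaranteeing that no proper cyclic subsum vanishes, and concludes that exactly one of the $r+s$ rotations of each path is admissible. The only difference is that you prove the needed ingredients inline (uniqueness of the minimal partial sum and aperiodicity of the words), whereas the paper delegates the first to Spitzer's Lemma and leaves the second implicit.
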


\begin{remark}\em
The numbers in \eqref{CKe1.40} are nowadays called
\index{rational Catalan numbers}{\it rational Catalan numbers}
(cf.\ \cite{ArRWAA}), the Catalan numbers being the special case
where $r=n$ and $s=n+1$.
\end{remark}

The above result follows easily from a form of the 
\index{cycle lemma}cycle lemma
which is known in the statistics literature as 
\index{Spitzer's lemma}\index{Spitzer, Frank Ludvig}Spitzer's lemma \cite{SpitAA}.

\begin{lemma}[\sc Spitzer's Lemma]\label{CKlem:Spitzer}%
Let $a_1,a_2,\dots,a_N$ be real numbers with the property that
$a_1+a_2+\dots+a_N=0$ and no other partial sum of consecutive 
$a_i$'s, read cyclically (by which we mean sums of the form
$a_j+a_{j+1}+\dots+a_k$ with $j\le k$ and $k-j<N$, where indices are 
interpreted modulo $N$), vanishes. Then there exists a unique
cyclic permutation $a_i,a_{i+1},\dots,a_N,a_1,\dots,a_{i-1}$
with the property that for all
$j=1,2,\dots,N$ the sum of the first $j$ letters of this
permuted array is non-negative.
\end{lemma}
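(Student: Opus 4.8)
The plan is to reduce the statement to a single elementary fact about the partial sums of the sequence. First I would introduce $S_0=0$ and $S_i=a_1+a_2+\dots+a_i$ for $1\le i\le N$, noting that $S_N=0$ by hypothesis. The whole argument rests on one translation: for each starting index $i$, the $N$ partial sums of the rotated array $a_i,a_{i+1},\dots,a_N,a_1,\dots,a_{i-1}$ are, reading indices cyclically and using $S_N=S_0=0$, precisely the numbers $S_m-S_{i-1}$ for $m=0,1,\dots,N-1$ (the value $0$ occurring as the final, full sum). I would verify this by splitting the partial sum after $j$ steps of the rotation into the portion before and the portion after the wrap-around at position $N$, where the cancellation $S_N=0$ does all the work.

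Granting this identification, the rotation starting at $i$ has all its partial sums non-negative if and only if $S_{i-1}\le S_m$ for every $m$, that is, if and only if $S_{i-1}$ equals the minimum of $S_0,S_1,\dots,S_{N-1}$. Thus existence and uniqueness of the desired rotation amount exactly to existence and uniqueness of a minimizer among these $N$ real numbers.

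The key step connecting this to the hypothesis is the observation that the non-degeneracy condition --- that no proper cyclic consecutive sum vanishes --- is precisely the assertion that $S_0,S_1,\dots,S_{N-1}$ are pairwise distinct. Indeed, every consecutive cyclic sum $a_j+a_{j+1}+\dots+a_k$ (with fewer than $N$ terms) is of the form $S_p-S_q$ with $p\ne q$ in $\{0,1,\dots,N-1\}$, again interpreting indices modulo $N$ and reducing $S_N$ to $S_0$; so all such sums are nonzero exactly when the $S_m$ are distinct. Pairwise distinctness then forces the minimum to be attained at a unique index $i-1$, which yields the unique admissible rotation. In fact all of its proper partial sums are then strictly positive, the only vanishing one being the total, so the non-negativity in the statement holds with room to spare.

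I expect the only real obstacle to be bookkeeping: getting the cyclic index arithmetic right in the identification of the rotated partial sums with the shifted values $S_m-S_{i-1}$, and in matching consecutive cyclic sums with differences $S_p-S_q$. Once the indexing and the two uses of $S_N=0$ are set up carefully, the remainder is immediate, and no estimation or computation beyond these identifications is required.
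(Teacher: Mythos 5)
Your proof is correct and is essentially the paper's own argument in algebraic dress: the paper concatenates the steps $(1,a_i)$ into a cyclic path whose height after $m$ steps is exactly your partial sum $S_m$, locates the unique minimal-height point (uniqueness coming from the non-vanishing of proper cyclic sums, i.e.\ your distinctness of $S_0,\dots,S_{N-1}$), and rotates there. Your identification of the rotated partial sums with $S_m-S_{i-1}$ is just the bookkeeping the paper leaves implicit in the picture, so nothing is missing and nothing is genuinely different.
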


\begin{remark}\em
This lemma could be further generalized by weakening the above assumption
to demanding that $K$ partial sums of consecutive $a_i$'s, read
cyclically, of {\it minimal length\/} vanish, with the
conclusion that there be $K$ cyclic permutations with the above
non-negativity property.
\end{remark}

\begin{proof}We interpret the real numbers $a_i$ as steps of
a path (although not necessarily of a {\it lattice} path), by concatenating
the steps $(1,a_1)$, $(1,a_2)$, \dots, $(1.a_N)$ to a path starting
at the origin. 
See the left half of Figure~\ref{CKF1.20} for a typical example.

\begin{figure} 
$$
\Einheit=.05cm
         \unitlength\Einheit
\Koordinatenachsen(100,50)(-2,-1)
\thicklines
  \Line(0,0)(1,4){10}
  \Line(10,40)(2,-1){10}
  \Line(20,35)(1,-1){10}
  \Line(30,25)(1,-3){10}
  \Line(40,-5)(1,1){10}
  \Line(50,5)(1,-2){10}
  \Line(60,-15)(2,1){10}
  \Line(70,-10)(1,3){10}
  \Line(80,20)(1,-2){10}
  \DuennPunkt(0,0)
  \DuennPunkt(10,40)
  \DuennPunkt(20,35)
  \DuennPunkt(30,25)
  \DuennPunkt(40,-5)
  \DuennPunkt(50,5)
  \DickPunkt(60,-15)
  \DuennPunkt(70,-10)
  \DuennPunkt(80,20)
  \DuennPunkt(90,0)
  \Label\u{A}(60,-15)
\hbox{\hskip6cm}
\Koordinatenachsen(100,50)(-2,-1)
\thicklines
  \Line(0,0)(2,1){10}
  \Line(10,5)(1,3){10}
  \Line(20,35)(1,-2){10}
  \Line(30,15)(1,4){10}
  \Line(40,55)(2,-1){10}
  \Line(50,50)(1,-1){10}
  \Line(60,40)(1,-3){10}
  \Line(70,10)(1,1){10}
  \Line(80,20)(1,-2){10}
  \DickPunkt(0,0)
  \DuennPunkt(10,5)
  \DuennPunkt(20,35)
  \DuennPunkt(30,15)
  \DuennPunkt(40,55)
  \DuennPunkt(50,50)
  \DuennPunkt(60,40)
  \DuennPunkt(70,10)
  \DuennPunkt(80,20)
  \DickPunkt(90,0)
  \Label\u{A}(0,0)
  \Label\u{A}(90,0)
\hskip5cm
$$
\caption{}
\label{CKF1.20}
\end{figure}

Since the sum of all $a_i$'s vanishes, the end point of the path
lies on the $x$-axis. We identify this end point with the starting
point (located at the origin), so that we consider this path as a
cyclic object.

By the non-vanishing of cyclic subsums, there is a unique point
of minimal height, $A$ say. (This may also be the starting/end point, which
we identified.) In the figure this point is marked by a thick dot.
Now ``permute" the path cyclically, that is, take the portion of
the path from $A$ to the end, and concatenate it with the initial
portion of the path until $A$. See the right half of
Figure~\ref{CKF1.20} for the result in our example. Obviously, the
new path always lies strictly above the $x$-axis, except at the
beginning and at the end. This identifies the cyclic permutation of
the $a_i$'s with the required property.
\end{proof}

\noindent
{\bf Proof of Theorem~\ref{CKT1.20}}\indent
We consider {\it all\/} paths from $(0,0)$ to $(r,s)$.
There are $\binom {r+s}s$ such paths. Given a path $P$ from
$(0,0)$ to $(r,s)$, we consider the sequence $a_1,a_2,\dots,a_{r+s}$,
where $a_i=s$ if the $i$-th step of the path is a horizontal
step, and $a_i=-r$ if the $i$-th step of the path is a vertical
step. Since $r$ and $s$ are relatively prime, no cyclic subsum of
the $a_i$'s, except the complete sum, can vanish. The cycle lemma
in Lemma~\ref{CKlem:Spitzer} then implies that, out of the $r+s$
cyclic ``permutations" of the path $P$, there is exactly one
which stays (weakly) below the line $sx=ry$. Thus, there are
in total $\frac {1} {r+s}\binom {r+s}r$ paths with that property.
\bigskip

The next case where a closed form formula can be obtained
(partially overlapping with the result in Theorem~\ref{CKT1.20}) is when counting
lattice paths from $(0,0)$ to $(c,d)$ which stay weakly below the line
$x=\mu y$, where $\mu$ is a positive integer. Of course we have to
assume $c\ge \mu d$. There are two conceptually different 
standard approaches to obtain
the corresponding result: application of another version of the
cycle lemma (see Lemma~\ref{CKT1.5}), respectively generating functions
combined with the use of the Lagrange inversion formula.

\begin{theorem} \label{CKT1.4}%
Let $\mu$ be a non-negative integer and
$c\ge \mu d$. The number of all lattice paths from the origin to
$(c,d)$ which lie weakly below $x=\mu y$ is given by
\begin{equation} \label{CKe1.8}
\vv{\LL{(0,0)\to (c,d)\mid x\ge\mu y}}=\frac {c-\mu d+1} {c+d+1}\binom
{c+d+1} {d}  .
\end{equation}
\end{theorem}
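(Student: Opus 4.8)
The plan is to reduce the counting problem to Spitzer's Lemma exactly as was done in the proof of Theorem~\ref{CKT1.20}, but with a weighting of steps adapted to the line $x = \mu y$. First I would consider \emph{all} lattice paths from $(0,0)$ to $(c,d)$ using horizontal and vertical unit steps, of which there are $\binom{c+d}{d}$ by \eqref{CKe1.1}. Given such a path $P$, I would encode its steps as a sequence $a_1, a_2, \dots, a_{c+d}$ by setting $a_i = 1$ when the $i$-th step is horizontal and $a_i = -\mu$ when the $i$-th step is vertical. The total sum is then $c \cdot 1 + d \cdot (-\mu) = c - \mu d$, which in general is \emph{not} zero, so Spitzer's Lemma in the form of Lemma~\ref{CKlem:Spitzer} does not apply directly. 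This discrepancy is the main obstacle, and resolving it is the crux of the argument.

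To circumvent this, I would use the standard device of appending an extra step. The trick is to work with paths to $(c+1, d)$ (or equivalently, prepend or append one horizontal step) so that the associated sequence sums to a controlled value; more precisely, one arranges $c - \mu d + 1$ of the cyclic shifts to land correctly by the cycle lemma. Concretely, I would consider sequences of length $c + d + 1$ summing to a value that lets the cycle lemma count the ``good'' rotations. The cleanest route: adjoin one additional letter and invoke the version of the cycle lemma (the one flagged as Lemma~\ref{CKT1.5} in the text, promised for this purpose) which states that among the $c+d+1$ cyclic rotations of a sequence of $c+d+1$ steps with appropriate sum, exactly $c - \mu d + 1$ of them yield paths staying weakly below the line. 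The relative primality hypothesis used in Theorem~\ref{CKT1.20} is here replaced by a direct count of how many rotations of the augmented sequence keep all partial sums nonnegative, and this count is precisely $c - \mu d + 1$.

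The key steps in order are therefore: (1) count all paths to the relevant endpoint via \eqref{CKe1.1}; (2) set up the step-weighting $a_i \in \{1, -\mu\}$; (3) augment by one step so that the cycle lemma applies and identifies exactly $c - \mu d + 1$ favourable rotations per equivalence class; (4) divide by the cycle length $c + d + 1$ to pass from the augmented total count $\binom{c+d+1}{d}$ to the number of favourable paths, yielding $\frac{c - \mu d + 1}{c + d + 1}\binom{c+d+1}{d}$. I expect the main difficulty to lie in step (3): setting up the augmentation so that the ``number of good rotations'' equals $c - \mu d + 1$ rather than $1$, which requires the stronger multiplicity version of the cycle lemma (the one the Remark after Lemma~\ref{CKlem:Spitzer} already hints at, with $K$ minimal-length vanishing subsums giving $K$ valid rotations). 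Once that combinatorial bookkeeping is pinned down, the final division is immediate and reproduces \eqref{CKe1.8}.
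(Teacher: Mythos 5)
Your proposal is correct and follows essentially the same route as the paper's first proof of Theorem~\ref{CKT1.4}: adjoin one horizontal step to pass to paths from $(0,0)$ to $(c+1,d)$ that stay strictly below $x=\mu y$ except at the origin, apply the Dvoretzky--Motzkin cycle lemma (Lemma~\ref{CKT1.5}) with $m=c+1$, $n=d$ to get exactly $m-\mu n=c-\mu d+1$ favourable rotations per path, and divide the total count $\binom{c+d+1}{d}$ by the cycle length $c+d+1$. The only cosmetic difference is your initial detour through Spitzer's lemma with weights $\{1,-\mu\}$ before settling on Lemma~\ref{CKT1.5}, which is precisely the multiplicity version the argument needs.
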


This result is essentially equivalent to the cycle lemma due to
\index{Dvoretzky, Aryeh}Dvoretzky and 
\index{Motzkin, Theodore S.}Motzkin
\cite{DvMoAA}. It has been rediscovered many times;
see \cite{DeZaAA} for a partial survey and many related references,
as well as \cite[Lemma~5.3.6 and Example~5,3,7]{StanBI}.

\begin{lemma}[\sc Cycle Lemma]\label{CKT1.5}%
Let $\mu$ be a non-negative integer. For any sequence $p_1p_2\dots
p_{m+n}$ of $m$ $1$'s and $n$ $2$'s, with $m\ge\mu n$, there exist exactly
$m-\mu n$ cyclic permutations $p_ip_{i+1}\dots p_{m+n}p_1\dots
p_{i-1}$, $1\le i\le m+n$, that have the property that for all
$j=1,2,\dots,m+n$ the first $j$ letters of this permutation contain
more 1's than $\mu$ times the number of 2's. 
\end{lemma}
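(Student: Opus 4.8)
The plan is to encode the sequence as a walk and count the good cyclic starting points by a strict-future-minimum argument, generalizing the minimum-point idea from the proof of Lemma~\ref{CKlem:Spitzer}. First I would replace each letter by a real increment, sending a ``$1$'' to $+1$ and a ``$2$'' to $-\mu$, and form the partial sums $S_0=0$ and $S_j=(\text{\# of $1$'s among $p_1\dots p_j$})-\mu\,(\text{\# of $2$'s among $p_1\dots p_j$})$. Since all quantities involved are integers, the requirement that the first $j$ letters contain more $1$'s than $\mu$ times the number of $2$'s is \emph{exactly} the condition $S_j>0$, i.e.\ $S_j\ge 1$. Two features are then crucial: every increment is at most $+1$, and the total increment is $S_{m+n}=m-\mu n=:k\ge 0$. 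So the task is to show that precisely $k$ of the $N:=m+n$ cyclic shifts make all partial sums positive.

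Next I would pass to the bi-infinite periodic walk defined by $S_{j+N}=S_j+k$ for all $j\in\Z$. The case $k=0$ is immediate: the full sum is $0$, so no shift can keep every partial sum positive, and indeed $m-\mu n=0$. Assuming $k\ge 1$, I would prove that a starting index $i\in\{0,1,\dots,N-1\}$ is good if and only if $i$ is a \emph{strict future minimum}, meaning $S_\ell>S_i$ for all $\ell>i$ in the periodic extension. One direction is definitional; for the other, the identity $S_{i+N}=S_i+k$ with $k\ge1$ lets me upgrade ``positive over one period'' to ``positive over the entire future'' by an easy induction on the number of periods. Let $T\subseteq\Z$ denote the set of strict future minima; since the walk tends to $+\infty$ on the right and to $-\infty$ on the left, $T$ is infinite in both directions.

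The heart of the argument, and the step I expect to be the main obstacle, is to show that consecutive elements of $T$ have $S$-values differing by exactly $1$. Given $i\in T$, the bound $S_{i+1}\le S_i+1$ together with $S_{i+1}>S_i$ (since $i$ is a strict future minimum) forces $S_{i+1}=S_i+1$; a short argument then identifies the next element $i'$ of $T$ as the last visit to the level $S_i+1$ and shows $S_{i'}=S_i+1$, with no element of $T$ strictly between. This is precisely where the hypothesis that each step is at most $+1$ is indispensable. Consequently $S$ restricts to an order isomorphism $T\to\Z$. Finally I would invoke periodicity: $T$ is invariant under $j\mapsto j+N$, a shift that raises $S$ by $k$, so each index window of length $N$ meets $T$ in exactly $k$ points, namely the $k$ consecutive integer levels gained over one period. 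Hence there are exactly $k=m-\mu n$ good cyclic permutations, as claimed.
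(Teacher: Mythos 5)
Your proof is correct and is essentially the paper's own argument in arithmetic rather than geometric clothing: your strict-future-minimum condition on the periodized partial-sum walk is exactly the paper's condition that a step of the periodically repeated lattice path be illuminated by a sun shining from direction $(\mu,1)$, since for this monotone staircase only the future portion of the path can block a light ray. The one genuine difference is that your order-isomorphism argument (consecutive future minima differ in level by exactly $1$, and the shift by $N$ raises levels by $k=m-\mu n$) supplies a rigorous proof of the per-period count of $m-\mu n$ good starting points, a count the paper dismisses as ``an easy matter of fact.''
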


\begin{proof} A sequence $p_1p_2\dots p_{m+n}$ of $m$ 1's and $n$ 2's can
be seen as a lattice path from $(0,0)$ to $(m,n)$ by interpreting the
1's as horizontal steps and the 2's as vertical steps. Cyclically
permuting $p_1p_2\dots p_{m+n}$ means to cut the corresponding lattice
path into two pieces and put them together in exchanged order, thus
obtaining a new lattice path from $(0,0)$ to $(m,n)$. Finally, the
property that in each initial string of a sequence the number of 1's
dominates (i.e., 
is larger than) $\mu$ times the number of 2's means that the corresponding
lattice path stays strictly below the line $x=\mu y$, with the
exception of the starting point $(0,0)$. 

For the proof of the lemma interpret $p_1p_2\dots p_{m+n}$ as a path,
as described before, and join a shifted copy of this path at the end
point $(m,n)$, another shifted copy at $(2m,2n)$, etc.
Figure~\ref{CKF1.4} shows an example with $\mu=2$, $m=9$, $n=3$. The
path $P$ corresponds to the sequence $121111122111$.
\begin{figure} 
$$
\Einheit.5cm
\Gitter(20,10)(-1,0)
\Koordinatenachsen(20,10)(-1,0)
\Pfad(0,0),121111122111 121111122111\endPfad
\SPfad(18,6),121\endSPfad
\PfadDicke{2.5pt}
\Pfad(1,1),1\endPfad
\Pfad(6,3),11\endPfad
\Pfad(10,4),1\endPfad
\Pfad(15,6),11\endPfad
\DickPunkt(0,0)
\DickPunkt(9,3)
\DickPunkt(18,6)
\SDPfad(-1,0),3\endSDPfad
\SDPfad(2,1),3\endSDPfad
\SDPfad(4,2),3\endSDPfad
\SDPfad(8,3),3\endSDPfad
\SDPfad(11,4),3\endSDPfad
\SDPfad(13,5),3\endSDPfad
\SDPfad(17,6),3\endSDPfad
\Label\u{(m,n)}(9,3)
\hbox to0pt{$\underbrace{\vrule height0pt width9 \Einheit
depth0pt}_{\textstyle P}\hss$}
\hskip9cm
$$
\caption{}
\label{CKF1.4}
\end{figure}
Cyclic permutations of $p_1p_2\dots p_{m+n}$ correspond to cutting a
piece of $m+n$ successive steps out of this lattice path structure.
Then imagine a sun to be located in direction $(\mu,1)$ illuminating
the lattice path structure. A cyclic permutation will satisfy the
dominance property for each initial string if and only if the first
step of the corresponding lattice path is illuminated. 
In Figure~\ref{CKF1.4} the illuminated steps are
indicated by thick lines. It is an easy
matter of fact that of any $m+n$ successive steps there are exactly $m-\mu n$
illuminated steps. Therefore out of the $m+n$ cyclic permutations of
$p_1p_2\dots p_{m+n}$ there are exactly $m-\mu n$ cyclic permutations
having the
dominance property for each initial string.
\end{proof}

\noindent
{\bf First proof of Theorem~\ref{CKT1.4}}\indent
We want to count paths from $(0,0)$ to $(c,d)$ staying weakly below $x=\mu
y$. To fit with the cycle lemma we adjoin a horizontal step at the
beginning and shift everything by one unit to the right. Thus we are
now asking for the number of paths from $(0,0)$ to $(c+1,d)$ staying
{\em strictly\/} below $x=\mu y$, except for the starting point $(0,0)$.
Now one applies Lemma~\ref{CKT1.5} with $m=c+1$ and $n=d$: 
given a path $P$ from $(0,0)$ to
$(c+1,d)$, exactly $m-\mu n=c+1-\mu d$ 
of its cyclic ``permutations" satisfy the property of staying
{\em strictly\/} below $x=\mu y$, except for the starting point
$(0,0)$. Thus, the total number of paths from $(0,0)$ to $(c,d)$
with that property is given by \eqref{CKe1.8}.
\bigskip

For instructional purposes, we also present the generating function
proof. 

\bigskip
\noindent
{\bf Second proof of Theorem~\ref{CKT1.4}}\indent
The generating function proof works in two steps. First, an
equation is found for the generating function of those paths which
return in the end to the boundary $x=\mu y$. Then, in a second
step, paths ending arbitrarily are decomposed into paths of the
former type, leading to a generating function expression in
terms of the earlier generating function to which the Lagrange
inversion formula is applicable.

Let $P$ be a path in $\LL{(0,0) \to (\mu d,d)\mid x\ge\mu
y}$ (see Figure~\ref{CKF1.2}). 
\begin{figure} 
$$
\Gitter(12,8)(-1,0)
\Koordinatenachsen(12,8)(-1,0)
\Pfad(0,0),111212111211122\endPfad
\raise-1 \Einheit\hbox to0pt{\hskip-2 \Einheit
         \unitlength\Einheit
         \line(2,1){13}\hss}
\DickPunkt(0,0)
\DickPunkt(4,2)
\DickPunkt(5,2)
\DickPunkt(7,3)
\DickPunkt(8,3)
\DickPunkt(10,4)
\DickPunkt(10,5)
\Label\ru{\raise10pt\hbox{\hskip4pt$s_h$}}(4,2)
\Label\ru{\raise10pt\hbox{\hskip4pt$s_h$}}(7,3)
\Label\ru{\hskip-2pt s_v}(10,5)
\Label\ro{x=\mu y}(12,5)
\Label\o{S_0}(4,2)
\Label\o{S_1}(7,3)
\SDPfad(5,2),3\endSDPfad
\SDPfad(8,3),3\endSDPfad
\hbox to0pt{$\underbrace{\vrule height0pt width4 \Einheit
depth0pt}_{\textstyle P_0}\hss$}
\hbox to0pt{$\vrule height0pt width5 \Einheit depth0pt
\underbrace{\vrule height0pt width2 \Einheit depth0pt}_{\textstyle P_1}\hss$}
\hbox to0pt{$\vrule height0pt width8 \Einheit depth0pt
\underbrace{\vrule height0pt width2 \Einheit depth0pt}_{\textstyle P_2}\hss$}
\hskip6cm
$$
\caption{}
\label{CKF1.2}
\end{figure}
For $l=0,1,\dots,\break\mu-1$, the path $P$ will
meet the line $x=\mu y+l$ (which is parallel to our boundary $x=\mu
y$) somewhere for the last time. Denote this point by $S_l$. Clearly, the path
$P$ must leave $S_l$ by a horizontal step, which we denote by $s_h$
for short. This gives us a unique decomposition of $P$ of the form
$$P=P_0s_hP_1s_h\dots s_hP_\mu s_v,$$
where $P_0$ is $P$'s portion from the origin up to $S_0$, $P_1$ is
$P$'s portion from the point immediately following $S_0$ up to $S_1$,
etc. By $s_v$ we denote the final vertical step. All the portions $P_i$ (when
shifted appropriately) belong to $\LL{(0,0)\to (\mu n,n)\mid x\ge\mu
y}$ for some $n$. Let
\begin{equation} \label{CKeq:L0} 
\mathcal L_0=\bigcup _{n\ge 0} ^{}\LL{(0,0)\to (\mu n,n)\mid
x\ge\mu y}.
\end{equation}
Then we have the following decomposition:
\begin{equation*} 
\mathcal L_0=\{\ep\}\cup \big((\mathcal L_0 s_h)^\mu \mathcal L_0 s_v\big).
\end{equation*}
Here, as always in the sequel, \glossary{$\ep$}$\ep$ denotes the empty path.

By elementary combinatorial principles, 
this immediately translates into a functional equation
for the generating function
$$F_0(z):=\sum _{n\ge 0} ^{}\vv{\LL{(0,0)\to (\mu n,n)\mid x\ge\mu
y}}z^n$$
for $\mathcal L_0$ (note that the summation index 
$n$ records the vertical height of the end point of paths), namely
\begin{equation} \label{CKe1.9}
F_0(z)=1+zF_0(z)^{\mu +1}. 
\end{equation}
If we write $F_0(z)=1+G_0(z)$, then Equation~\eqref{CKe1.9} in terms
of the series $G_0(z)$ reads
\begin{equation} \label{CKe1.10}
\frac {G_0(z)} {(1+G_0(z))^{\mu+1}}=z, 
\end{equation}
which simply says that $G_0(z)$ is the compositional inverse of
$z/(1+z)^{\mu+1}$. 

Turning to the more general problem,
consider a lattice path $P$ in $\big\vert\LL{(0,0)\to \linebreak[4](\mu d+k,d)\mid
x\ge \mu y}\big\vert$ (see Figure~\ref{CKF1.3}). 
\begin{figure} 
$$
\Gitter(13,8)(-1,0)
\Koordinatenachsen(13,8)(-1,0)
\Pfad(0,0),112112111211112\endPfad
\raise-1 \Einheit\hbox to0pt{\hskip-2 \Einheit
         \unitlength\Einheit
         \line(2,1){14}\hss}
\DickPunkt(0,0)
\DickPunkt(4,2)
\DickPunkt(5,2)
\DickPunkt(7,3)
\DickPunkt(8,3)
\DickPunkt(9,3)
\DickPunkt(11,4)
\Label\ru{\raise10pt\hbox{\hskip4pt$s_h$}}(4,2)
\Label\ru{\raise10pt\hbox{\hskip4pt$s_h$}}(7,3)
\Label\ru{\raise10pt\hbox{\hskip4pt$s_h$}}(8,3)
\Label\lo{x=\mu y}(10,5)
\Label\o{S_0}(4,2)
\Label\o{S_1}(7,3)
\Label\o{S_2}(8,3)
\Label\ro{(c,d)}(11,4)
\SDPfad(5,2),3\endSDPfad
\SDPfad(9,3),3\endSDPfad
\hbox to0pt{$\underbrace{\vrule height0pt width4 \Einheit
depth0pt}_{\textstyle P_0}\hss$}
\hbox to0pt{$\vrule height0pt width5 \Einheit depth0pt
\underbrace{\vrule height0pt width2 \Einheit depth0pt}_{\textstyle P_1}\hss$}
\hbox to0pt{$\vrule height0pt width7.5 \Einheit depth0pt
\underbrace{\vrule height0pt width0 \Einheit depth0pt}_{\textstyle P_2}\hss$}
\hbox to0pt{$\vrule height0pt width9 \Einheit depth0pt
\underbrace{\vrule height0pt width2 \Einheit depth0pt}_{\textstyle P_3}\hss$}
\hskip6.5cm
$$
\caption{}
\label{CKF1.3}
\end{figure}
For $l=0,1,\dots,k-1$ the
path will meet the line $x=\mu y+l$ somewhere
for the last time. Denote this point by $S_l$. Clearly, the path
$P$ must leave $S_l$ by a horizontal step, for which we again write
$s_h$. This gives us a decomposition of $P$ of the form
$$P=P_0s_hP_1s_h\dots s_h P_{k},$$
where $P_0$ is the portion of $P$ from the origin up to $S_0$, $P_1$ is
the portion of $P$ from the point immediately following $S_0$ up to $S_1$,
and so on. Observe that again the portions $P_l$ belong to 
$\mathcal L_0$ (being defined in \eqref{CKeq:L0}).
Let
$$\mathcal L_k=
\bigcup _{n\ge 0} ^{}\LL{(0,0)\to (\mu n +k,n)\mid x\ge \mu y}.$$
Then we have the following decomposition:
$$\mathcal L_k=(\mathcal L_0s_h)^{k}\mathcal L_0.$$
This translates again into an equation for the corresponding generating
function 
$$F_k(z):=\sum _{n\ge0} ^{}\vv{\LL{(0,0)\to (\mu n+k,n)\mid x\ge\mu y}}
z^n$$
for $\mathcal L_k$, namely into
\begin{equation*} 
F_k(z)=F_0(z)^{k+1}=(1+G_0(z))^{k+1}, 
\end{equation*}
We noted above that $G_0(z)$ is the compositional inverse of 
$z/(1+z)^{\mu+1}$. Therefore, we may apply
the Lagrange formula (see \cite[Corollary~5.4.3]{StanBI};
for the current purpose, we have to choose $H(z)=(1+z)^{k+1}$,
$f(z)=z/(1+z)^{\mu +1}$ there). This yields
\begin{align} \notag
\LL{(0,0)\to (\mu n+k,n)\mid x\ge \mu y}&=\frac {1} {n}\coef{z^{-1}}
(k+1) \,(1+z)^k\frac {(1+z)^{n(\mu +1)}} {z^n}\\
\notag
&\hskip-2cm = \frac {k+1} {n}\binom {\mu n+k +n} {n-1}\\
\notag
&\hskip-2cm = \frac {k+1} {\mu n+k +n+1} \binom {\mu n+k+n+1}{n},
\end{align}
which turns into \eqref{CKe1.8} once we replace $\mu n+k$ by $c$ and $n$
by $d$.
\bigskip

In particular, for $\mu=1$ the generating function $F_0(z)$ can be
explicitly evaluated from solving the quadratic equation
\eqref{CKe1.9}. 
In the case, where the paths return to the boundary $x=y$, i.e., 
where $(c,d)=(n,n)$,
this gives the familiar generating function for the Catalan numbers
(compare with the second paragraph after Corollary~\ref{CKT1.2})
\begin{equation} \label{CKe1.13}\sum _{n\ge0} ^{} C_n z^n= \frac
{1-\sqrt {1-4z}} {2z}. 
\end{equation}

More generally, if $\mu$ is kept generic and $(c,d)=(\mu n,n)$
(that is, we consider again paths which return to the boundary),
then the formula on the right-hand side of \eqref{CKe1.8} becomes
$\frac {1} {\mu n+1}\binom
{(\mu+1)n} {n}  .$ These numbers are now commonly called
\index{Fu\ss--Catalan number}\index{number, Fu\ss--Catalan}%
\index{Fu\ss, Nikolaus}\index{Catalan, Eug\`ene Charles}%
{\em Fu\ss--Catalan numbers}, cf.\ \cite[pp.~59--60]{ArmDAA}
for more information on their significance and historical
remarks.

\medskip
So far we only counted paths bounded by $x=\mu y$ where the starting
point lies on the boundary. If we drop this latter assumption
and now want to 
enumerate all paths from $(a,b)$ to $(c,d)$ staying weakly below $x=\mu y$,
there is still an answer, although
only in terms of a sum. In fact, we can offer two different
expressions. Which of these two is preferable depends on the
particular situation, to be more precise, on which of the numbers
$({a/\mu}-b)$ or $(d-{a/\mu})$ being larger (see
Figure~\ref{CKF1.5} for the pictorial significance of these numbers). 
While the
proof for the first expression is rather straightforward, the
proof for the second expression is more difficult.
The result below was first found by \index{Korolyuk, V. S.}Korolyuk
\cite{KoroAA}. It is a special case of an even more general
result of \index{Niederhausen, Heinrich}Niederhausen \cite[Sec.~2.2]{NiedAC} 
on the enumeration of
simple paths with piecewise linear boundaries, which we will discuss
in Section~\ref{CKs1.3}. 

\begin{figure} 
$$
\Diagonale(-2,0)8
\DickPunkt(0,0)
\DickPunkt(5,5)
\Label\o{(c,d)}(5,5)
\Label\u{(a,b)}(0,0)
\SPfad(0,0),22\endSPfad
\SPfad(0,2),11111\endSPfad
\SPfad(5,0),22222\endSPfad
\SPfad(0,0),11111\endSPfad
\Label\ro{\,d-{a/\mu}}(6.5,3)
\Label\r{\,{a/\mu}-b}(6.5,1)
\Label\ru{\hskip-20pt x=\mu y}(7,8)
\hskip5\Einheit
\raise0.8\Einheit
\hbox to0pt{$\left.\vrule height1.2\Einheit depth0pt width0pt\right\}$\hss}
\raise3.3\Einheit
\hbox to0pt{$\left.\vrule height1.7\Einheit depth0pt width0pt\right\}$\hss}
$$
\caption{}
\label{CKF1.5}
\end{figure}

\begin{theorem} \label{CKT1.6}%
Let $\mu$ be a non-negative integer, $a\ge \mu b$ and $c\ge\mu d$. The
number of all lattice paths from $(a,b)$ to $(c,d)$ staying weakly below
$x=\mu y$ is given by
\begin{multline} \label{CKe1.15}
\vv{\LL{(a,b)\to (c,d)\mid x\ge\mu y}} 
=\binom {c+d-a-b}{c-a}\\
-\sum _{i=\floor {a/\mu}+1} ^{d}\binom {i(\mu+1)-a-b-1}{i-b} 
\frac {c-\mu d+1} {c+d-i(\mu+1)+1}\binom {c+d-i(\mu+1)+1} {d-i},\\
\end{multline}
and also
\begin{multline} \label{CKe1.16}
\vv{\LL{(a,b)\to (c,d)\mid x\ge\mu y}}
=\sum _{i=0} ^{\floor{a/\mu}-b}
(-1)^{i} \binom {a-\mu (b+i)}i \\
\times\frac {c-\mu d +1} {c+d-(\mu+1)(b+i)+1}
\binom {c+d-(\mu+1)(b+i)+1} {d-b-i}.
\end{multline}
\end{theorem}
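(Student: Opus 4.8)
The plan is to compute $N(a,b):=\vv{\LL{(a,b)\to(c,d)\mid x\ge\mu y}}$, the number of \emph{good} paths, and to exploit throughout a shifted version of Theorem~\ref{CKT1.4}: since the line $x=\mu y$ is invariant under translation by $(\mu j,j)$, the number of good paths from the on-line point $(\mu j,j)$ to $(c,d)$ equals the number from $(0,0)$ to $(c-\mu j,d-j)$, which by \eqref{CKe1.8} is $N(\mu j,j)=\frac{c-\mu d+1}{c+d-(\mu+1)j+1}\binom{c+d-(\mu+1)j+1}{d-j}$. This is precisely the second factor in each summand of \eqref{CKe1.15} (with $j=i$) and of \eqref{CKe1.16} (with $j=b+i$), so both formulas reduce to identifying the accompanying coefficients.

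For \eqref{CKe1.15} I would argue bijectively, counting the \emph{bad} paths (those visiting a point $(x,y)$ with $x<\mu y$) and subtracting from the total $\binom{c+d-a-b}{c-a}$. Given a bad path, look at its \emph{last} lattice point strictly above the line. The step leaving that point cannot be vertical (that would keep $x$ fixed and stay strictly above), so it is a horizontal step $(x,y)\to(x+1,y)$ with $x<\mu y\le x+1$; hence $x=\mu y-1$, the last point above the line is $(\mu i-1,i)$ with $i=y$, and the path continues by a horizontal step onto the on-line point $(\mu i,i)$, after which it remains weakly below. This yields a bijection between bad paths and triples consisting of an \emph{arbitrary} path from $(a,b)$ to $(\mu i-1,i)$, the forced horizontal step, and a good path from $(\mu i,i)$ to $(c,d)$. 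The first piece is counted by $\binom{i(\mu+1)-a-b-1}{i-b}$ and the last by $N(\mu i,i)$; the constraints $\mu i-1\ge a$ and $i\ge b$ give the summation range $\floor{a/\mu}+1\le i\le d$, which is exactly \eqref{CKe1.15}.

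For \eqref{CKe1.16} I would instead solve a recursion for $N(a,b)$ and verify that the claimed sum is its solution. Reading off the first step of a good path gives $N(a,b)=N(a+1,b)+N(a,b+1)$ whenever the vertical step is legal, i.e. $a\ge\mu(b+1)$, whereas in the strip $\mu b\le a<\mu(b+1)$ the vertical step is forbidden and $N(a,b)=N(a+1,b)$; the boundary data are $N(a,d)=1$ for $\mu d\le a\le c$ and $N(c+1,b)=0$. The plan is to check, by downward induction on $b$ starting from $b=d$ (where the right-hand side of \eqref{CKe1.16} collapses to its $i=0$ term, equal to $1$), that the right-hand side satisfies this recursion; after inserting the explicit $N(\mu(b+i),b+i)$, the difference $N(a,b)-N(a+1,b)$ and the Pascal relation reduce to standard binomial (Vandermonde/Pascal) manipulations.

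The main obstacle is the behaviour across the line $a=\mu(b+1)$: in the forbidden strip the recursion degenerates to pure horizontal translation, and one must see that the right-hand side of \eqref{CKe1.16} is correspondingly insensitive to $a$ there. This is governed by the binomial $\binom{a-\mu(b+i)}{i}$, which vanishes precisely when $b+i>\floor{a/\mu}$ --- exactly when the on-line point $(\mu(b+i),b+i)$ ceases to be reachable from $(a,b)$ by north-east steps --- so that the effective summation range shrinks by one each time $a$ drops below a multiple of $\mu$. Making the induction close cleanly across this boundary, and matching the alternating coefficients $(-1)^i\binom{a-\mu(b+i)}{i}$ term by term, is the delicate part. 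An alternative route, once \eqref{CKe1.15} is available, is to equate the two expressions: this turns \eqref{CKe1.16} into a single convolution identity expressing $\binom{c+d-a-b}{c-a}$ as a weighted sum of the line-counts $N(\mu j,j)$, provable as an instance of the Rothe--Hagen family of binomial convolutions via Lagrange inversion.
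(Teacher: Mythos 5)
Your argument for \eqref{CKe1.15} is the paper's own proof. The paper decomposes the ``bad'' paths at their last meeting point with the line $x=\mu y-1$; that point coincides with your ``last lattice point strictly above $x=\mu y$,'' and both arguments then count the initial portion freely by \eqref{CKe1.1} and the final portion by the translate of Theorem~\ref{CKT1.4} (formula \eqref{CKe1.8}), with the same summation range $\lfloor a/\mu\rfloor+1\le i\le d$.

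For \eqref{CKe1.16}, the complete argument in your proposal is your fallback, and it is exactly the paper's ``direct proof'': by upper negation, $(-1)^i\binom{a-\mu(b+i)}{i}=\binom{(b+i)(\mu+1)-a-b-1}{i}$, so the agreement of \eqref{CKe1.15} and \eqref{CKe1.16} is equivalent to the single convolution identity
\[
\sum_{j=b}^{d}\binom{j(\mu+1)-a-b-1}{j-b}\,
\frac{c-\mu d+1}{c+d-j(\mu+1)+1}\binom{c+d-j(\mu+1)+1}{d-j}
=\binom{c+d-a-b}{c-a},
\]
which is an instance of the Hagen--Rothe formula (take $x=\mu b-a-1$, $y=c-\mu d+1$, $z=\mu+1$, $n=d-b$), and which the paper disposes of by citation; the paper also records that \eqref{CKe1.16} is the special case $m=2$ of Theorem~\ref{CKthm:piece}. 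Your primary route --- verifying that the right-hand side of \eqref{CKe1.16} satisfies the first-step recursion by a double induction --- is genuinely different from anything in the paper, but as written it is a plan with real gaps. Besides the strip $\mu b\le a<\mu(b+1)$ that you flag yourself, the induction needs a base case in $a$: one must show that the right-hand side of \eqref{CKe1.16} equals $1$ at $a=c$ (equivalently, vanishes at $a=c+1$), and this is itself a nontrivial alternating-sum identity of the very same Hagen--Rothe type you are trying to bypass. Moreover, Pascal-type manipulations of $\binom{a-\mu(b+i)}{i}$ are treacherous here, since the paper's convention (binomials vanish when the top is smaller than the bottom) and the polynomial convention (under which upper negation and Hagen--Rothe are valid) disagree precisely at the terms beyond $i=\lfloor a/\mu\rfloor-b$. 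So the recursion route would require genuine additional work; your proposal stands because the fallback is sound and coincides with the paper's proof.
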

\noindent
{\bf Proof of \eqref{CKe1.15}}\indent
The number of paths in question equals the number of all paths from
$(a,b)$ to $(c,d)$ minus those paths which cross $x=\mu y$. To count
the latter observe that any path crossing $x=\mu y$ must meet the line
$x=\mu y-1$, and for the last time in some point $(\mu i-1,i)$ where
$\floor {a/\mu}+1\le i\le c$. Fix such an $i$, then the number of all
these paths is
$$\vv{\LL{(a,b)\to (\mu i-1,i)}}\cdot \vv{\LL{(\mu i,i)\to (c,d)\mid x\ge
\mu y}}.$$
We already know the first number due to \eqref{CKe1.1},
and we also know the second number due to \eqref{CKe1.8},
since a shift in direction $(-\mu i,-i)$ shows that the second number
equals $\vv{\LL{(0,0)\to (c-\mu i, d-i)\mid x\ge \mu y}}$.

\bigskip
\noindent
{\bf Proof of \eqref{CKe1.16}}\indent
This is the special case of Theorem~\ref{CKthm:piece} where 
$m=2$, $\mu_1=\nu_1=0$, $y_1=\fl{a/\mu}-b$, $\mu_2=\mu$, $\nu_2=\mu b-a$.

For a different, direct proof, 
in the sum in \eqref{CKe1.15} replace the index $i$
by $i+b$; the new index then ranges from $\fl{a/\mu}-b+1$ to
$d-b$; extend the sum to all $i$ between $0$ and $d-b$, thereby
adding a partial sum where $i$ ranges from $0$ to $\fl{a/^mu}-b$;
the former sum can be evaluated by means of a convolution
formula of the Hagen--Rothe type (cf.\ \cite[Eq.~(11)]{GOulAA}), and the result is
the binomial coefficient $\binom {c+d-a-b}{c-a}$.
\bigskip

Enumeration of lattice paths in the presence of several linear boundaries
can in the best cases be solved by an iterated application of the reflection principle;
see Section~\ref{CKsec:refl} for the most general situation where the reflection
principle applies. But, if it does not apply (which, in a random case, will
certainly be so), then the enumeration problem will be very challenging.
Usually, one cannot expect to find a useful exact formula
(but see Section~\ref{CKs1.6}), and will instead
investigate asymptotic behaviours. This is still quite challenging.
The reader is referred to \cite{BrORAA,JRPRAB,JRPRAA} for work in this direction.

\section{Simple paths with linear boundaries with rational slope, II}
 \label{CKs1.4}

In Section~\ref{CKsec:rat} we considered lattice paths bounded by a line
$x=\mu y$, with $\mu$ a non-negative integer. 
Now we want to consider a more general linear boundary of the form
$\nu x=\mu y$, where $\nu,\mu$ are non-negative integers. We describe
a generating function approach, due to \index{Sato, Masako}Sato \cite{SatoAA}, which
works for a large class of cases. Alternative solutions, which work
in all cases, in the form of a determinant, can be given as a special
case of a set of general boundaries. These are discussed in
Section~\ref{CKs1.6}, see in particular Theorem~\ref{CKT1.15}.

The problem that we want to attack here is to enumerate all lattice
paths from an arbitrary starting point to an arbitrary end point 
staying weakly below the line $\nu x=\mu y$, where $\nu$ and $\mu$
are positive integers.
A simple shift of the plane shows that this is equivalent to
enumerating paths from the origin to an arbitrary end point 
staying weakly below $\nu x= \mu
y-\rho$, for an appropriate $\rho$. Without loss of generality we
may assume in the sequel that $\nu<\mu$.
For the approach of Sato, this is the more convenient
formulation of the problem. The idea is to introduce $\nu\times\nu$
matrices which contain the path numbers that we are looking for. More
precisely, define the $\nu\times\nu$ matrix $W(z;c,\rh)$ by
\begin{equation} \label{CKe1.20a}
W(z;c,\rh):=\big(w(z;c+g,\rh+h)\big)_{0\le g,h\le \nu-1},
\end{equation}
where 
\begin{equation} \label{CKe1.21b}
w(z;c,\rh)=\sum _{\mu n+c\equiv\rh\ (\text{mod }\nu)} ^{}w(n;c,\rh)\,z^n,
\end{equation}
with
\begin{multline} \label{CKe1.21c}
w(n;c,\rh)=\begin{cases} \vv{\LL{(0,0)\to(\frac {\mu n+c-\rh}
{\nu},n)\mid \nu x\ge\mu y-\rh}},&\mu n+c\equiv \rh\ (\text{mod }\nu)\\
&\text {and }\mu n+c\ge\rh,\\
\binom {n+\frac {\mu n+c-\rh} {\nu}}{n},&\mu n+c\equiv \rh\ (\text{mod }\nu)\\
&\text {and }\mu n+c<\rh.
\end{cases}\\
\end{multline}
So, what the matrix $W(z;c,\rh)$ contains is generating functions
of the path numbers $w(n;c,\rh)$ 
that we want to know. The definition of $w(n;c,\rh)$ for $\mu n+c<\rh$
(in which case there cannot be any paths from $(0,0)$ to $(\frac {\mu n+c-\rh}
{\nu},n)$) is just for technical convenience. Basically, the matrix
$W(z;c,\rh)$ is
\begin{multline} \label{CKe1.22}
W(z;c,\rh)=\bigg(\sum _{\mu n+c+g-h\equiv \rh\ (\text{mod } \nu)} ^{}
\left\vert\LL{(0,0)\to\left(\frac {\mu n+c+g-\rh-h}
{\nu},n\right)\mid\right. \\
\nu x\ge\mu y-\rh-h}\Big\vert\, z^n\bigg)_{0\le g,h\le \nu-1}.
\end{multline}

The following theorem of \index{Sato, Masako}Sato \cite[Theorem~1]{SatoAA} tells us how to
compute\break $W(z;c,\rh)$.
\begin{theorem} \label{CKT1.7}%
Let
\begin{equation} \label{CKe1.23f}
M=\((-1)^{\nu-h-1} 
s_{(c+g+1,1^{\nu-h-1})}\big(u_0(z),\dots,u_{\nu-1}(z)\big)\)_{0\le
g,h\le \nu-1},
\end{equation}
where
\begin{multline} \label{CKe1.23g}
s_{(\al,1^\be)}(u_0,\dots,u_{\nu-1})\\
=\sum _{\nu-1\ge 
i_\al\ge i_{\al-1}\ge\dots\ge i_1<j_1<\dots<j_\be\le \nu-1} ^{}
u_{i_\al}(z)u_{i_{\al-1}}(z)\cdots u_{i_1}(z)u_{j_1}(z)\cdots u_{j_\be}(z),
\end{multline}
$u_l(z)$ being defined by
\begin{multline} \label{CKe1.23e}
u_l(z)=e^{(2\pi il/\nu)}\sum _{n\ge0} ^{}\frac {1}
{1+(\nu+\mu)n}\binom {\frac {1} {\nu}+(1+\frac {\mu}
{\nu})n}n\(ze^{2\pi il\mu/\nu}\)^n,\\
l=0,1,\dots,\nu-1.
\end{multline}
Furthermore, let
\begin{equation} \label{CKe1.23b}
\Phi(z;\rh)=\Bigg(\underset{\mu l\le \rh-g+h}{\sum _{\mu l\equiv
\rh-g+h\ (\text{\em mod }\nu)}
^{}}(-1)^l\binom {(\rh-g+h-\mu l)/\nu}lz^l\Bigg)_{0\le g,h\le \nu-1}.
\end{equation}
Then, 
for any non-negative integers $c$, $\rh$, $\nu$, $\mu$, with $\nu<\mu$, we have
\begin{equation} \label{CKe1.23a}
W(z;c,\rh)=M(z;c,\rh)\,\Phi(z;\rh).
\end{equation}
\end{theorem}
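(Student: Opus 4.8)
The plan is to treat Theorem~\ref{CKT1.7} as a linear-algebra consequence of a single scalar fact about the series $u_l(z)$. First I would record what these series are. Writing $B_t(x)=\sum_{n\ge0}\frac1{tn+1}\binom{tn+1}nx^n$ for the generalized binomial series, which satisfies $B_t(x)=1+x\,B_t(x)^t$ and $B_t(x)^r=\sum_{n\ge0}\frac r{tn+r}\binom{tn+r}nx^n$, a direct coefficient comparison with $t=(\nu+\mu)/\nu$ and $r=1/\nu$ shows that the sum in \eqref{CKe1.23e} equals $B_t\bigl(z\,e^{2\pi il\mu/\nu}\bigr)^{1/\nu}$, so that $u_l(z)=e^{2\pi il/\nu}\,B_t\bigl(z\,e^{2\pi il\mu/\nu}\bigr)^{1/\nu}$. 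Substituting this into the defining functional equation of $B_t$ and using $\zeta:=e^{2\pi i/\nu}$, $\zeta^\nu=1$, one finds after cancellation that \emph{every} $u_l(z)$ satisfies the same algebraic (kernel) equation
\[
u^\nu=1+z\,u^{\mu+\nu},\qquad\text{equivalently}\qquad u^\nu\bigl(1-z\,u^\mu\bigr)=1,
\]
with distinct constant terms $u_l(0)=\zeta^l$. Hence $u_0(z),\dots,u_{\nu-1}(z)$ are precisely the $\nu$ formal-power-series roots of this equation. For $\nu=1$ the equation reduces to $u=1+z\,u^{\mu+1}$, i.e.\ $u_0=F_0$ from the second proof of Theorem~\ref{CKT1.4}; thus the $u_l$ are the natural $\nu$-fold ramified generalization of the Fu\ss--Catalan series $F_0$, and they package the path counts \eqref{CKe1.8} together with the periodicity modulo $\nu$ forced by the slope $\mu/\nu$.

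Next I would derive a functional equation for $W(z;c,\rh)$ by a last-passage decomposition relative to the family of lines parallel to the boundary, exactly as $\mathcal L_0=\{\ep\}\cup\bigl((\mathcal L_0 s_h)^\mu\,\mathcal L_0 s_v\bigr)$ was used in the second proof of Theorem~\ref{CKT1.4}. Because a horizontal step changes the residue of the abscissa modulo $\nu$, recording the last contact with each of the $\nu$ parallel lines turns the scalar Fu\ss--Catalan recursion into a $\nu\times\nu$ matrix recursion; bundling the $\nu$ residue classes into a vector, the counts $w(z;c,\rh)$ satisfy a system whose characteristic data is governed by the kernel of the first paragraph. The technical clause $\mu n+c<\rh$ in \eqref{CKe1.21c}, where the free binomial count $\binom{n+(\mu n+c-\rh)/\nu}n$ is inserted, is precisely the inhomogeneous term needed so that this system closes; it is the matrix analogue of adjoining an unrestricted initial segment.

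The matrix recursion can then be solved explicitly. Being a linear recurrence whose characteristic equation is the kernel $u^\nu=1+z\,u^{\mu+\nu}$, its solutions are spanned by the powers $u_0^{\,j},\dots,u_{\nu-1}^{\,j}$, and the actual count is the unique combination $\sum_{l}C_l\,u_l^{\,j}$ matching the prescribed data at $\nu$ consecutive indices. This is where the factorization $W=M\Phi$ originates: $\Phi(z;\rh)$ absorbs the inhomogeneous, $\rh$-dependent initial data, while $M(z;c,\rh)$ is the homogeneous fundamental part. I would identify the entry of $\Phi$ in \eqref{CKe1.23b}, the congruence-restricted alternating sum $\sum_{\mu l\equiv\rh-g+h}(-1)^l\binom{(\rh-g+h-\mu l)/\nu}l z^l$, as exactly the modulo-$\nu$ component of the formal (Lagrange-type) inverse of the free generating function attached to the binomial counts in \eqref{CKe1.21c}, so that right-multiplication by $\Phi$ strips off the inhomogeneity. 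For $M$, solving the Vandermonde-type fitting system by Cramer's rule expresses each entry as a ratio of an alternant to the Vandermonde of the $u_l$; by the bialternant definition of a Schur function these ratios are precisely hook Schur functions, and tracking the arm $c+g+1$ and the leg length $\nu-h-1$ produces the signed entries $(-1)^{\nu-h-1}s_{(c+g+1,1^{\nu-h-1})}(u_0,\dots,u_{\nu-1})$ of \eqref{CKe1.23f}--\eqref{CKe1.23g}.

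The main obstacle is making the two identifications above precise rather than schematic. For $M$ one must verify that Cramer's rule applied to the particular fitting system returns exactly the hook shape $(c+g+1,1^{\nu-h-1})$ with sign $(-1)^{\nu-h-1}$; this is a Jacobi--Trudi / bialternant bookkeeping exercise, complicated by the fact that the congruences $\mu n+c+g-h\equiv\rh\pmod\nu$ indexing \eqref{CKe1.22} interleave the $\nu$ residue classes. For $\Phi$ one must check that the Lagrange-type inverse of the free series, restricted to each residue class, yields the truncated alternating binomial sums of \eqref{CKe1.23b}. Both the sign pattern and the congruence-truncated ranges are where the real work lies; the conceptual skeleton --- diagonalize the residue structure via $\nu$-th roots of unity, reduce to the scalar Fu\ss--Catalan kernel, and invert --- is already fixed by the first two paragraphs.
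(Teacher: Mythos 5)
The paper itself contains no proof of this theorem: it is quoted directly from Sato's article \cite[Theorem~1]{SatoAA}, so there is no in-paper argument to compare routes with, and your proposal has to stand on its own as a proof. Its first paragraph does: writing $\zeta=e^{2\pi i/\nu}$ and $t=(\nu+\mu)/\nu$, the identification of the sum in \eqref{CKe1.23e} with $B_t\bigl(z\zeta^{l\mu}\bigr)^{1/\nu}$ via the generalized binomial series, and the conclusion that each $u_l$ satisfies $u^\nu=1+z\,u^{\mu+\nu}$ with $u_l(0)=\zeta^l$, is a correct and easily verified computation, and it correctly exhibits the $u_l$ as the $\nu$ power-series roots of the kernel, specializing to the Fu\ss--Catalan equation \eqref{CKe1.9} when $\nu=1$.

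Everything after that, however, is schematic, and the two steps you defer are not bookkeeping --- they are the theorem. First, the ``matrix recursion'' is never written down. The last-passage decomposition that yields \eqref{CKe1.9} for integer slope $\mu$ does not transfer verbatim to slope $\mu/\nu$: lattice points on the $\nu$ lines $\nu x=\mu y-\rho-h$ exist only when $\mu y\equiv\rho+h\ (\text{mod }\nu)$, so the decomposition produces a coupled system whose exact form (including how the artificial values $\binom{n+(\mu n+c-\rho)/\nu}{n}$ for $\mu n+c<\rho$ in \eqref{CKe1.21c} enter as inhomogeneous terms) must be exhibited before anything can be solved; you assert its existence by analogy only. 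Second, the solution step rests on the claim that solutions of this system ``are spanned by the powers $u_0^{\,j},\dots,u_{\nu-1}^{\,j}$,'' which treats the kernel $u^\nu=1+z\,u^{\mu+\nu}$ as the characteristic polynomial of a constant-coefficient linear recurrence in some index $j$; but no such recurrence has been identified, and for paths under a line of rational slope none is available in a single index. The passage from the kernel roots to coefficient formulas is a Lagrange-inversion/residue computation, and it is precisely there that the hook shapes $(c+g+1,1^{\nu-h-1})$, the signs $(-1)^{\nu-h-1}$, and the congruence-truncated polynomial entries of $\Phi(z;\rho)$ in \eqref{CKe1.23b} must emerge; saying that Cramer's rule and the bialternant ``produce precisely'' these entries names the answer rather than deriving it. As it stands, the proposal is a plausible strategy --- very likely the same skeleton as Sato's own argument --- but not a proof.
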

\begin{note}\em Note that $s_{(\al,1^\be)}(u_0,\dots,u_{\nu-1})$
is a {\em Schur function\/} of {\em hook shape} (cf.\ \cite[Ch.~I,
Sec.~3, Ex.~9]{MacdAC}).
\end{note}
It might be useful to 
discuss an example, in order to illustrate what this is all about.

\begin{example}%
\em
We take $\nu=2$, $\mu=3$. 
So, by \eqref{CKe1.21c}, the quantity $w(n;c,\rh)$ represents the
number of all lattice paths from $(0,0)$ to $((3n+c-\rh)/2,n)$ which
stay weakly below the line $2x= 3y-\rh$, where $c\equiv 3n-\rh\ (\text{mod }2)$, i.e.,
$c\equiv n+\rh\ (\text{mod }2)$.

By definition \eqref{CKe1.20a}, we have
\begin{align} \notag
W(z;c,\rh)&=\big(w(z;c+g,\rh+h)\big)_{0\le g,h\le 1}\\
&=\begin{pmatrix} w(z;c,\rh)&w(z;c,\rh+1)\\
w(z;c+1,\rh)&w(z;c+1,\rh+1)\end{pmatrix}.\notag
\end{align}
Using \eqref{CKe1.23a}, this can be written as 
$$W(z;c,\rh)=M(z;c,2)\,\Phi(z;\rh),$$
where 
\begin{multline*}
\Phi(z;\rh)\\=
\begin{pmatrix} 
\underset{\mu l\le \rh}{\sum _{\mu l\equiv \rh\ (\text{mod }\nu)}
^{}}(-1)^l\binom {(\rh-\mu l)/\nu}lz^l&
\underset{\mu l\le \rh+1}{\sum _{\mu l\equiv \rh+1\ (\text{mod }\nu)}
^{}}(-1)^l\binom {(\rh+1-\mu l)/\nu}lz^l\\
\underset{\mu l\le \rh-1}{\sum _{\mu l\equiv \rh-1\ (\text{mod }\nu)}
^{}}(-1)^l\binom {(\rh-1-\mu l)/\nu}lz^l&
\underset{\mu l\le \rh}{\sum _{\mu l\equiv \rh\ (\text{mod }\nu)}
^{}}(-1)^l\binom {(\rh-\mu l)/\nu}lz^l
\end{pmatrix}
\end{multline*}
by \eqref{CKe1.23b},
and 
$$
M(z;c,2)=\begin{pmatrix} 
-s_{(c+1,1)}\big(u_0(z),u_{1}(z)\big)&
s_{(c+1)}\big(u_0(z),u_{1}(z)\big)\\
-s_{(c+2,1)}\big(u_0(z),u_{1}(z)\big)&
s_{(c+2)}\big(u_0(z),u_{1}(z)\big)
\end{pmatrix}$$
by \eqref{CKe1.23f}, with $s_{(\al,1^\be)}(u_0(z),u_1(z))$ being defined
in \eqref{CKe1.23g}, and
$$u_l(z)=(-1)^l\sum _{n\ge0} ^{}\frac {(-1)^{ln}} {1+5n}\binom {\frac {1}
{2}+\frac {5} {2}n}nz^n,$$
as given in \eqref{CKe1.23e}.

So, in particular, in case that $c=\rh=0$ the matrix $\Phi(z;0)$ is
the $2\times 2$ identity matrix, and so we have
\begin{multline} \notag
W(z;0,0)=\begin{pmatrix} w(z;0,0)&w(z;0,1)\\
w(z;1,0)&w(z;1,1)\end{pmatrix}\\
=M(z;0,2)=
\begin{pmatrix} -u_0(z)u_1(z)&u_0(z)+u_1(z)\\
-u_0(z)u_1(z)(u_0(z)+u_1(z))&u_0^2(z)+u_0(z)u_1(z)+u_1^2(z)\end{pmatrix}.
\end{multline}
Whence, for even $n$ the number of all lattice paths from $(0,0)$ to
$(3n/2,n)$ which stay weakly below the line $2x\ge 3y$ equals
\begin{multline} \notag
\vv{\LL{(0,0)\to(\tfrac {3n} {2},n)\mid 2x\ge 3y}}=\coef{z^n}w(z;0,0)\\
=\sum _{l=0} ^{n}(-1)^l\frac {1} {1+5l}\binom {\frac {1} {2}+\frac {5}
{2}l}l\cdot \frac {1} {1+5(n-l)}\binom {\frac {1} {2}+\frac {5}
{2}(n-l)}{n-l},
\end{multline}
and for odd $n$ the number of all lattice paths from $(0,0)$ to
$((3n-1)/2,n)$ which stay weakly below the line $2x\ge 3y-1$ equals
$$
\vv{\LL{(0,0)\to(\tfrac {3n-1} {2},n)\mid 2x\ge
3y-1}}=\coef{z^n}w(z;0,1)=\frac {2} {1+5n}\binom {\frac {1} {2}+\frac
{5} {2}n}n.
$$
\end{example}

\index{Sato, Masako}Sato \cite{SatoAA} also derived a result of similar type
for two parallel linear boundaries with rational slope. To be precise,
we want to enumerate all lattice
paths from an arbitrary starting point to an arbitrary end point 
staying weakly below a given line $\nu x=\mu y-\rh$ and above 
another given line $\nu x=\mu y+\si$, where $\mu,\nu,\rh,\si$ are
non-negative integers. Again, without loss of generality we may assume
that $\nu<\mu$ and that the starting point is the origin.

Following the approach we have taken earlier, 
we define the $\nu\times\nu$ matrix\break $T(z;c,\rh,\si)$ by
\begin{equation} \label{CKe1.28a}
T(z;c,\rh,\si):=\big(t(z;c+g,\rh+h,\si-h)\big)_{0\le g,h\le \nu-1},
\end{equation}
where 
\begin{equation} \label{CKe1.28b}
t(z;c,\rh,\si)=\sum _{\mu n+c\equiv\rh\ (\text{mod }\nu)} ^{}t(n;c,\rh,\si)\,z^n,
\end{equation}
with
\begin{multline} \label{CKe1.28c}
t(n;c,\rh,\si)=\begin{cases} \vv{\LL{(0,0)\to(\frac {\mu n+c-\rh}
{\nu},n)\mid \mu y+\si\ge\nu x\ge\mu y-\rh}},\kern-2cm&\\
&\mu n+c\equiv \rh\ (\text{mod }\nu)\\
&\text {and }\mu n+c\ge\rh,\\
\binom {n+\frac {\mu n+c-\rh} {\nu}}{n},&\mu n+c\equiv \rh\ (\text{mod }\nu)\\
&\text {and }\mu n+c<\rh.
\end{cases}\\
\end{multline}
Similarly to the one boundary case, 
what the matrix $T(z;c,\rh,\si)$ contains is generating functions
of the path numbers $t(n;c,\rh,\si)$ 
that we want to compute. The definition of $t(n;c,\rh,\si)$ for $\mu n+c<\rh$
(in which case there cannot be any paths from $(0,0)$ to $(\frac {\mu n+c-\rh}
{\nu},n)$) is just for technical convenience. Basically, the matrix
$T(z;c,\rh,\si)$ is
\begin{multline} \label{CKe1.28d}
T(z;c,\rh,\si)=\bigg(\sum _{\mu n+c+g-h\equiv \rh\ (\text{mod } \nu)} ^{}
\left\vert\LL{(0,0)\to\left(\frac {\mu n+c+g-\rh-h}
{\nu},n\right)\mid\right. \\
\mu y+\si-h\ge\nu x\ge\mu y-\rh-g}\Big\vert\, z^n\bigg)_{0\le g,h\le \nu-1}.
\end{multline}

The following theorem of \index{Sato, Masako}Sato
\cite[Theorem~4]{SatoAA} tells us how to
compute\break $T(z;c,\rh,\si)$.

\begin{theorem} \label{CKT1.8}%
For any non-negative integers $c$, $\rh$, $\si$, 
$\nu$, $\mu$, with $\nu<\mu$, we have
\begin{equation} \label{CKe1.28e}
T(z;c,\rh,\si)=\Phi(z;\rh+\si+1-c-\mu)\, \Phi^{-1}(z;\rh+\si+1)\,
\Phi(z;\rh),
\end{equation}
where $\Phi(z;\th)$ is given by \eqref{CKe1.23b}.

\end{theorem}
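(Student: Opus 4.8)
The plan is to obtain the strip enumeration by an iterated reflection principle, exactly as the two-line count of Theorem~\ref{CKT1.3} arises from repeatedly reflecting an endpoint in the two boundaries, and then to translate the resulting alternating sum into the matrix language of Theorem~\ref{CKT1.7}. Since the two boundaries $\nu x=\mu y-\rh$ and $\nu x=\mu y+\si$ are parallel, the two reflections $r_1$ (in the lower line) and $r_2$ (in the upper line) generate an infinite dihedral group whose translation part $r_2r_1$ shifts an endpoint by a fixed lattice vector whose ``height'' is proportional to $\rh+\si$. The orbit of the true endpoint under this group is therefore a two-sided arithmetic progression of reflected endpoints, and confinement to the strip $\mu y+\si\ge\nu x\ge\mu y-\rh$ is expressed, just as in \eqref{CKe1.6}, by the alternating sum of the \emph{lower-boundary} counts $w(\,\cdot\,;\cdot,\cdot)$ over this orbit, the sign being the parity of the number of reflections used to reach each endpoint.

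First I would record the effect of a single reflection on the generating-function matrices. Because the row/column indices $0,\dots,\nu-1$ precisely bookkeep the residues modulo $\nu$ forced by the rational slope, reflecting an endpoint in one of the two lines and invoking the one-boundary identity $W(z;c,\rh)=M(z;c,\rh)\,\Phi(z;\rh)$ of Theorem~\ref{CKT1.7} produces, for each orbit point, the \emph{same} factor $M(z;c,\cdot)$ multiplied on the right by a matrix $\Phi(z;\th)$ whose argument $\th$ is the shifted boundary parameter attached to that reflected endpoint. Summing over the orbit then factors the common $M$ out of the alternating series, and the core of the computation becomes the evaluation of a doubly-infinite alternating sum of $\Phi$-matrices whose arguments run through an arithmetic progression with common difference governed by $\rh+\si$.

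The key algebraic input --- and the step I expect to be the main obstacle --- is a composition (convolution) law for the matrices $\Phi(z;\th)$ of \eqref{CKe1.23b}. The entries of $\Phi(z;\th)$ are the finite alternating sums $\sum_{\mu l\le m}(-1)^l\binom{(m-\mu l)/\nu}{l}z^l$ with $m=\th-g+h$, which are precisely the coefficients that invert the relevant path-counting series; one must show that these entries satisfy a Hagen--Rothe / Vandermonde-type convolution (compare the convolution invoked in the proof of \eqref{CKe1.16}), so that a product of two $\Phi$-matrices again shifts the argument additively. Granting this law, the two-sided alternating geometric series over the translation orbit sums in closed form to a resolvent $(I-X)^{-1}$, which the composition law identifies with $\Phi^{-1}(z;\rh+\si+1)$; the two boundary reflections that begin the orbit supply the outer factors $\Phi(z;\rh+\si+1-c-\mu)$ and $\Phi(z;\rh)$ with exactly the arguments appearing in \eqref{CKe1.28e}. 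In particular the common algebraic factor $M(z;c,\cdot)$, built from the Lagrange-inversion series $u_l(z)$ of \eqref{CKe1.23e}, must cancel entirely, leaving the purely rational expression of the theorem --- the matrix incarnation of the classical principle that walks confined to a strip have rational generating functions.

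As an alternative that sidesteps the convergence bookkeeping of the infinite sum, one can instead \emph{characterize} $T(z;c,\rh,\si)$ by the pair of reflection relations it satisfies at the two boundaries (one relation lowering $\rh$, one lowering $\si$, each reducing a strip count to a lower-boundary count via a single $\Phi$-multiplication), check that the right-hand side of \eqref{CKe1.28e} satisfies the same two relations together with the correct initial data, and invoke uniqueness. Either route reduces the theorem to the single combinatorial identity for the entries of $\Phi$, which is the genuine content of the argument.
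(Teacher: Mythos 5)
Your proposal rests on two steps that both fail. First, the premise that the strip count can be obtained by iterated reflection ``exactly as in Theorem~\ref{CKT1.3}'' is precisely what is unavailable here: the boundaries $\nu x=\mu y-\rho$ and $\nu x=\mu y+\sigma$ have slope $\nu/\mu\neq\pm1$, and reflection in such a line does not map lattice paths with steps $(1,0)$, $(0,1)$ to lattice paths --- this is stated at the opening of Section~\ref{CKsec:rat} and is the very reason Sato's generating-function machinery of Section~\ref{CKs1.4} exists at all. In the language of Theorem~\ref{CKT7.3}, the step set is not invariant under reflections in these hyperplanes, so there is no involution on boundary-violating paths, no infinite dihedral orbit of ``reflected endpoints,'' and no alternating orbit sum analogous to \eqref{CKe1.6}. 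Bookkeeping residues modulo $\nu$ in a matrix does not repair this: the row and column indices record congruence classes of endpoints, not a group action on paths.

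Second, the ``key algebraic input'' you defer --- a composition law by which a product of two $\Phi$-matrices shifts the argument additively --- is demonstrably false. Take the paper's running example $\nu=2$, $\mu=3$. Direct computation from \eqref{CKe1.23b} gives $\Phi(z;2)=I$ but $\Phi(z;4)=\left(\begin{smallmatrix}1&-z\\0&1\end{smallmatrix}\right)$, so $\Phi(z;2)\,\Phi(z;2)\neq\Phi(z;4)$; similarly $\Phi(z;1)\Phi(z;4)\neq\Phi(z;5)$. In fact, if such a law held, the right-hand side of \eqref{CKe1.28e} would collapse to the single polynomial matrix $\Phi(z;\rho-c-\mu)$, whereas $T(z;c,\rho,\sigma)$ genuinely contains infinite series --- rational functions with non-trivial denominators, as in the paper's example $t(z;c,0,c)=\phi(z;c)/\bigl(\phi^2(z;c+1)-\phi(z;c)\phi(z;c+2)\bigr)$. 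So the failure of your composition law is not a technicality; it is exactly what makes the theorem non-trivial, and with it both your geometric-series resolvent and the identification of the sum with $\Phi^{-1}(z;\rho+\sigma+1)$ collapse. For the record, the paper itself gives no proof (it cites Sato); a workable route is the one your final paragraph gropes toward, but purged of reflections: decompose the paths counted by the one-boundary matrices $W$ at their last meeting point with the second boundary, obtaining a matrix convolution identity relating $W$'s and $T$; then insert $W=M\,\Phi$ from Theorem~\ref{CKT1.7}, so that the transcendental factor $M$ (built from the series $u_l(z)$) cancels and only the $\Phi$-factors of \eqref{CKe1.28e} survive.
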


\begin{example}%
\em
As an illustration, let us again consider the case
$\nu=2$, $\mu=3$. By \eqref{CKe1.28c}, the quantity $t(n;c,\rh,\si)$ 
represents the
number of all lattice paths from $(0,0)$ to $((3n+c-\rh)/2,n)$ which
stay weakly below the line $2x= 3y-\rh$ and above the line $2x=3y+\si$, 
where $c\equiv 3n-\rh\ (\text{mod }2)$, i.e.,
$c\equiv n+\rh\ (\text{mod }2)$.

By definition \eqref{CKe1.28a}, we have
\begin{align} \notag
T(z;c,\rh,\si)&=\big(t(z;c+g,\rh+h,\si-h)\big)_{0\le g,h\le 1}\\
&=\begin{pmatrix} t(z;c,\rh,\si)&t(z;c,\rh+1,\si-1)\\
t(z;c+1,\rh,\si)&t(z;c+1,\rh+1,\si-1)\end{pmatrix}.\notag
\end{align}
By Theorem~\ref{CKT1.8}, this can be written as
\begin{align} \notag
T(z;c,\rh,\si)=&\begin{pmatrix} \phi(z;\rh+\si-c-1)&
\phi(z;\rh+\si-c)\\
\phi(z;\rh+\si-c-2)&\phi(z;\rh+\si-c-1)\end{pmatrix}\\
\notag
&\times \begin{pmatrix} \phi(z;\rh+\si+1)&
\phi(z;\rh+\si+2)\\
\phi(z;\rh+\si)&\phi(z;\rh+\si+1)\end{pmatrix}^{-1}\\
&\times\begin{pmatrix} \phi(z;\rh)&
\phi(z;\rh+1)\\
\phi(z;\rh-1)&\phi(z;\rh)\end{pmatrix},\label{CKe1.29a}
\end{align}
where
\begin{equation} \notag
\phi(z;a)=\underset{\mu l\le a}{\sum _{\mu l\equiv
a\ (\text{mod }\nu)}
^{}}(-1)^l\binom {(a-\mu l)/\nu}lz^l.
\end{equation}

In particular, if $c=\si$ and $\rh=0$, so that we are 
e.g\@. interested in
the number of all lattice paths from $(0,0)$ to $((3n+c)/2,n)$ which
stay weakly below the line $2x= 3y$ and above the line $2x=3y+c$ (the reader
should observe that this means that the starting point is on the
first line whereas the end point is on the second line), then our
formula \eqref{CKe1.29a} reduces to
\begin{align} \notag
T(z;c,\rh)&=\begin{pmatrix} t(z;c,0,c)&t(z;c,1,c-1)\\
t(z;c+1,0,c)&t(z;c+1,1,c-1)\end{pmatrix}\\
\notag
&=\frac {1} {\phi^2(z;c+1)-\phi(z;c)\phi(z;c+2)}
\begin{pmatrix} \phi(z;c)&\phi(z;c+1)\\0&0\end{pmatrix}.
\end{align}
Thus we obtain
\begin{align} \notag
t(z;c,0,c)&=\sum _{ n\equiv c\ (\text{mod }2)} ^{}
\vv{\LL{(0,0)\to\left(\tfrac {3 n+c}
{2},n\right)\mid 3 y+c\ge2 x\ge3 y}}\,z^n\\
&=\frac {\phi(z;c)} {\phi^2(z;c+1)-\phi(z;c)\phi(z;c+2)},
\end{align}
with 
$$\phi(z;a)=\underset{3 l\le a}{\sum _{3 l\equiv
a\ (\text{mod }2)}
^{}}(-1)^l\binom {(a-3 l)/2}lz^l.
$$
\end{example}

\section{Simple paths with a piecewise linear boundary} \label{CKs1.3}

In this section we generalize the one-sided linear boundary
results in Corollary~\ref{CKT1.2} and Theorems~\ref{CKT1.4}, \ref{CKT1.6} 
to piecewise
linear boundaries. To be more precise, we want to count lattice paths
from the origin $(0,0)$ to $(c,d)$ staying weakly below the line segments
\begin{multline} \label{CKeq:R}
\{(x,y):x=\mu _1y+\nu_1,0=y_0\le y\le y_1\},\
\{(x,y):x=\mu _2y+\nu_2,y_1< y\le y_2\},\\ \dots,\
\{(x,y):x=\mu _my+\nu_m,y_{m-1}< y\le y_m=d\},
\end{multline}
for some sequence
$0=y_0<y_1<\dots<y_m=d$ of non-negative integers, non-negative integers
$\mu_1,\mu_2,\dots,\mu_m$, and integers $\nu_1,\nu_2,\dots,\nu_m$.
Let us denote this piecewise linear restriction by $R_m$.
See Figure~\ref{CKF1.5B} for an example.
By an
iteration argument it will be seen that the solution to this problem
can be given in form of an $m$-fold sum. 

The result below is due to 
\index{Niederhausen, Heinrich}Niederhausen \cite[Sec.~2.2 in connection
with (2.4) and (2.7)]{NiedAC}, but see also \cite{MoNaAA}.
In order to understand the statement below, it is important to observe
that the number of paths which we want to determine is a polynomial
in $c$, while keeping all other variables fixed. 
We shall not provide a detailed argument here but, instead, refer to
\cite[Sec.~2.2]{NiedAC}.
For convenience, let us denote this polynomial by $L_{R_m,d}(c)$.

\begin{theorem} \label{CKthm:piece}%
The number of lattice paths from $(0,0)$ to $(c,d)$ staying\break weakly
below the piecewise linear boundary $R_m$ given in \eqref{CKeq:R} is
equal to
\begin{multline} 
\vv{\LL{(0,0)\to (c,d)\mid R_m}}
=\sum _{i=0} ^{y_{m-1}}
L_{R_{m-1},i}(\mu_m i+\nu_m-1)\\
\cdot
\frac {c-\mu_md-\nu_m+1} {c+d-i(\mu_m+1)-\nu_m+1}
\binom{c+d-i(\mu_m+1)-\nu_m+1}{d-i}.
\label{CKe1.19}
\end{multline}
\end{theorem}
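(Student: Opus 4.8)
The plan is to prove \eqref{CKe1.19} as the single inductive step of an induction on the number $m$ of linear pieces, the base case $m=1$ (a boundary which, after a shift, passes through the origin) being exactly \eqref{CKe1.8} of Theorem~\ref{CKT1.4}. Throughout I would keep in mind the observation recorded before the statement, namely that $L_{R_m,d}(c)$ is a polynomial in $c$; the identity \eqref{CKe1.19} should therefore be read, and ultimately verified, as an identity between polynomials in $c$. The decomposition I use is the piecewise analogue of the last-passage argument already seen in the proof of \eqref{CKe1.15}.

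First I would peel off the topmost segment, that is, the line $\ell_m\colon x=\mu_m y+\nu_m$ governing the heights $y_{m-1}<y\le d$. Given a path $P$ from $(0,0)$ to $(c,d)$ staying weakly below $R_m$, I would locate the point $S=(\mu_m i+\nu_m,\,i)$ at which $P$ crosses onto $\ell_m$ for the last time, i.e.\ the last horizontal step of $P$ carrying it from $(\mu_m i+\nu_m-1,\,i)$ to $(\mu_m i+\nu_m,\,i)$; since the steps are only $(1,0)$ and $(0,1)$, the height is monotone and such a crossing height $i$ (with $0\le i\le y_{m-1}$) is well defined. Removing this distinguished horizontal step splits $P$ into an initial portion $P'$ from $(0,0)$ to $(\mu_m i+\nu_m-1,\,i)$ and a terminal portion $P''$ from $(\mu_m i+\nu_m,\,i)$ to $(c,d)$. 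By the choice of $S$ as the \emph{last} crossing, $P''$ stays weakly below $\ell_m$, while $P'$, lying at heights $\le y_{m-1}$, stays weakly below $R_{m-1}$.

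The two factors are then read off directly. The initial portions $P'$ are, by the induction hypothesis, counted by $L_{R_{m-1},i}(\mu_m i+\nu_m-1)$. For the terminal portions, shifting the origin to $S$ sends $P''$ to a path from $(0,0)$ to $(c-\mu_m i-\nu_m,\,d-i)$ staying weakly below $x=\mu_m y$, so \eqref{CKe1.8} applies and, after simplifying $(c-\mu_m i-\nu_m)-\mu_m(d-i)+1=c-\mu_m d-\nu_m+1$ and $(c-\mu_m i-\nu_m)+(d-i)+1=c+d-i(\mu_m+1)-\nu_m+1$, it gives exactly the factor $\frac{c-\mu_m d-\nu_m+1}{c+d-i(\mu_m+1)-\nu_m+1}\binom{c+d-i(\mu_m+1)-\nu_m+1}{d-i}$. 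Since $P'$ and $P''$ may be chosen independently once the crossing height $i$ is fixed, summing the product over $0\le i\le y_{m-1}$ yields \eqref{CKe1.19}.

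The hard part will be justifying that this decomposition is genuinely a bijection onto the stated path sets, and here the geometry of the junction between consecutive segments is the delicate point. For the terminal factor to count paths weakly below $\ell_m$ (rather than below the full restriction $R_m$) one needs $\ell_m$ to lie weakly to the right of the earlier boundary at the heights $i<y\le y_{m-1}$; that $(\mu_m i+\nu_m-1,\,i)$ is weakly below $R_{m-1}$ forces this at the crossing height, and the consistency of the piecewise boundary propagates it, but the degenerate configurations — and paths that never actually touch $\ell_m$ — must be handled separately. The cleanest way around these boundary effects is to invoke polynomiality: establish \eqref{CKe1.19} combinatorially for all sufficiently large $c$, where the last-crossing decomposition is unambiguous and every case occurs, and then extend it to all $c$ as an identity of polynomials, the vanishing-binomial convention $\binom nk=0$ (for $k\notin\{0,1,\dots,n\}$) absorbing the out-of-range summands. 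This is precisely the point at which I would lean on Niederhausen's finite-operator-calculus treatment in \cite[Sec.~2.2]{NiedAC}.
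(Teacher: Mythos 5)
Your splitting of the path at the \emph{last horizontal step that lands on} the line $\ell_m\colon x=\mu_m y+\nu_m$ is exactly the decomposition behind the paper's own argument (it is in fact a sharper formulation of the paper's ``last touching point''), and it does prove \eqref{CKe1.19} in the case the paper treats combinatorially, namely when the piecewise boundary is \emph{convex}, so that the extension of $\ell_m$ to heights $y\le y_{m-1}$ lies weakly to the right of the earlier segments. The genuine gap is your claim that this geometric condition comes for free --- that validity of $(\mu_m i+\nu_m-1,i)$ at the crossing height ``propagates'' by consistency of the boundary. It does not: what you need is precisely convexity, and \eqref{CKeq:R} does not impose it. When an earlier segment has smaller slope than $\mu_m$, the extension of $\ell_m$ below $y_{m-1}$ runs weakly to the \emph{left} of the actual boundary; then $(\mu_m i+\nu_m-1,i)$ lies in the forbidden region, no admissible path ever makes a horizontal step onto $\ell_m$ there, so your decomposition is undefined for every path counted on the left-hand side, while the factor $L_{R_{m-1},i}(\mu_m i+\nu_m-1)$ is the polynomial evaluated outside its combinatorial range --- it counts nothing and can be negative. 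The paper's own \eqref{CKe1.16}, which is the case $m=2$, $\mu_1=\nu_1=0$ of this very theorem, exhibits this concretely: there the first factors are $(-1)^i\binom{a-\mu(b+i)}{i}$, i.e.\ the values $\binom{c'+i}{i}$ continued to the negative arguments $c'=\mu(b+i)-a-1$, and they alternate in sign.

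For the same reason your proposed repair --- establish \eqref{CKe1.19} combinatorially for all sufficiently large $c$ and then extend it as a polynomial identity in $c$ --- cannot close the gap: the obstruction is independent of $c$. For a non-convex boundary the right-hand side fails to be a sum of products of cardinalities for \emph{every} $c$, so there is no range of $c$ from which polynomiality could interpolate. This is why the paper structures the proof as it does: the last-crossing decomposition is carried out under the explicit hypothesis that the boundary is convex, and the general case is delegated \emph{entirely} to Niederhausen's polynomiality/umbral-calculus theorem in \cite[Sec.~2.2]{NiedAC}, which proves the identity between the polynomial sequences themselves (via the lattice-path recurrence and boundary conditions) even when the substitution of $\mu_m i+\nu_m-1$ has no combinatorial meaning. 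So citing \cite{NiedAC} is the right move, but it has to carry the whole non-convex case, not merely patch ``degenerate configurations''. (A smaller slip: the base case of your iteration is not \eqref{CKe1.8} --- for a single segment with $\nu_1<0$ the starting point is strictly below the boundary and cannot be shifted onto it --- but rather \eqref{CKe1.16}, as the paper's remark after the theorem indicates.)
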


\begin{remark}\em
Clearly, we may now apply Theorem~\ref{CKthm:piece} to
$L_{R_{m-1},i}(\mu_m i+\nu_m-1)$
so that, iteratively, we obtain an $m$-fold sum. (In the last step,
one applies \eqref{CKe1.16}.)
\end{remark}

\begin{figure} 
$$
\Einheit.5cm
\def\SternPunkt(#1,#2){\unskip
  \raise#2 \Einheit\hbox to0pt{\hskip#1 \Einheit
          \raise-3pt\hbox to0pt{\hss$\bigstar$\hss}\hss}}
\def\ZweiSchritt{\leavevmode\raise-.4pt\hbox to0pt{%
  \hbox to0pt{\hss.\hss}\hskip.2\Einheit
  \raise.1\Einheit\hbox to0pt{\hss.\hss}\hskip.2\Einheit
  \raise.2\Einheit\hbox to0pt{\hss.\hss}\hskip.2\Einheit
  \raise.3\Einheit\hbox to0pt{\hss.\hss}\hskip.2\Einheit
  \raise.4\Einheit\hbox to0pt{\hss.\hss}\hskip.2\Einheit
  \raise.5\Einheit\hbox to0pt{\hss.\hss}\hskip.2\Einheit
  \raise.6\Einheit\hbox to0pt{\hss.\hss}\hskip.2\Einheit
  \raise.7\Einheit\hbox to0pt{\hss.\hss}\hskip.2\Einheit
  \raise.8\Einheit\hbox to0pt{\hss.\hss}\hskip.2\Einheit
  \raise.9\Einheit\hbox to0pt{\hss.\hss}\hss}}
\Gitter(20,10)(0,0)
\Koordinatenachsen(20,10)(0,0)
\raise0\Einheit\hbox to0pt{\hskip0 \Einheit
         \unitlength\Einheit
         \line(3,1){9}\hss}
\raise3\Einheit\hbox to0pt{\hskip9 \Einheit
         \unitlength\Einheit
         \line(2,1){4}\hss}
\raise5\Einheit\hbox to0pt{\hskip13 \Einheit
         \unitlength\Einheit
         \line(1,1){3}\hss}
\raise0 \Einheit\hbox to0pt{\hskip3 \Einheit
         \ZweiSchritt\hss}
\raise1 \Einheit\hbox to0pt{\hskip5 \Einheit
         \ZweiSchritt\hss}
\raise2 \Einheit\hbox to0pt{\hskip7 \Einheit
         \ZweiSchritt\hss}
\SPfad(8,0),33333\endSPfad
\Pfad(0,0),11112111211111112212212121\endPfad
\DickPunkt(0,0)
\DickPunkt(18,8)
\DickPunkt(9,3)
\DickPunkt(13,5)
\DickPunkt(16,8)
\SternPunkt(7,2)
\SternPunkt(10,2)
\Label\ru{P}(12,2)
\Label\lo{\hskip0pt x=\mu_1 y+\nu_1}(2,1)
\Label\lo{\hskip0pt x=\mu_2 y+\nu_2}(10,4)
\Label\lo{\hskip0pt x=\mu_3 y+\nu_3}(13,6)
\Label\l{y_0}(0,0)
\Label\l{y_1}(0,3)
\Label\l{y_2}(0,5)
\Label\l{y_3=d}(-1,8)
\Label\o{(c,d)}(19,8)
\hskip9cm
$$
\caption{A piecewise linear boundary}
\label{CKF1.5B}
\end{figure}

\bigskip
\noindent
{\bf Idea of proof of Theorem~\ref{CKthm:piece}}\indent
To begin with, let us assume that the piecewise linear boundary be
convex. See Figure~\ref{CKF1.5B} for an example.
Evidently, any path from $(0,0)$ to $(c,d)$
has to touch $x=\mu_m y+\nu_m$ for the last time, say in $(\mu_m
i+\nu_m,i)$. In Figure~\ref{CKF1.5B} we have $m=3$, the last
touching point of the path $P$ with $x=\mu_m y+\nu_m$ is $(10,2)$, 
it is marked by a star.
Then we utilize the same idea which led to the formula \eqref{CKe1.15}  
to obtain the number in question being equal to
\begin{align} \notag
&\vv{\LL{(0,0)\to (c,d)\mid R_m}}\\
\notag
&\hskip1cm=
\sum _{i=0} ^{y_{m-1}}\vv{\LL{(0,0)\to (\mu_m i+\nu_m-1,i)\mid R_m}}\\
\notag
&\hskip3cm
\cdot
\vv{\LL{(\mu_m i+\nu_m,i)\to (c,d)\mid x\ge \mu_m y+\nu_m}}
\\\notag
&\hskip1cm
=\sum _{i=0} ^{y_{m-1}}\vv{\LL{(0,0)\to (\mu_m i+\nu_m-1,i)\mid R_{m-1}}}\\
\notag
&\hskip3cm
\cdot
\frac {c-\mu_md-\nu_m+1} {c+d-i(\mu_m+1)-\nu_m+1}
\binom{c+d-i(\mu_m+1)-\nu_m+1}{d-i},
\notag
\end{align}
by virtue of Theorem~\ref{CKT1.4}. In the summand, we were allowed to
replace $R_m$ by $R_{m-1}$ since the summation ends at $i=y_{m-1}$,
and thus the $m$-th segment does not come into play.

Evidently, if the piecewise linear boundary should not be convex,
then this argument breaks down.
However, 
\index{Niederhausen, Heinrich}Niederhausen 
shows in \cite[Sec.~2.2]{NiedAC}, using
the polynomiality of the path numbers (and some results from
umbral calculus), that the above formula continues to hold in
that case also, even if the substitution of $\mu_m i+\nu_m-1$ in
the argument of the polynomial $L_{R_{m-1},i}(\,.\,)$ has no
combinatorial meaning anymore. 
\bigskip

\section{Simple paths with general boundaries} \label{CKs1.6}
The most general problem to encounter is to count paths in a region
that is bounded by nonlinear upper and lower boundaries as exemplified
in Figure~\ref{CKF1.6}.
\begin{figure} 
$$
\Gitter(8,9)(0,0)
\Koordinatenachsen(0,9)(1,-1)
\Pfad(0,0),22111221221221\endPfad
\PfadDicke{.5pt}
\SPfad(0,3),12212212111\endSPfad
\SPfad(0,0),12112122211222\endSPfad
\Label\ro{a_1}(0,3)
\Label\ro{a_2}(1,5)
\Label\ro{a_3}(2,7)
\Label\ro{a_4}(3,8)
\Label\ro{\hskip18pt(n,a_n)}(6,8)
\Label\lu{(0,b_1)\hskip14pt}(0,0)
\Label\ru{b_1}(0,0)
\Label\ru{b_2}(1,1)
\Label\ru{b_3}(2,1)
\Label\ru{b_4}(3,2)
\Label\ro{\raise-10pt\hbox{$y_1$}}(0,2)
\Label\ro{\raise-10pt\hbox{$y_2$}}(1,2)
\Label\ro{\raise-10pt\hbox{$y_3$}}(2,2)
\Label\ro{\raise-10pt\hbox{$y_4$}}(3,4)
\DickPunkt(0,0)
\DickPunkt(6,8)
\hskip4cm
$$
\caption{}
\label{CKF1.6}
\end{figure}

To have a convenient notation, let $a_1\le a_2\le \dots\le a_n$ and
$b_1\le b_2\le \dots\le b_n$ be integers with $a_i\ge b_i$. We
abbreviate $\mathbf a=(a_1,a_2,\dots,a_n)$ and $\mathbf
b=(b_1,b_2,\dots,b_n)$. By $\LL{(0,b_1)\to (n,a_n)\mid \mathbf a\ge \mathbf
y\ge \mathbf b}$ we denote the set of all lattice paths from $(0,b_1)$
to $(n,a_n)$ that satisfy the property that for all $i=1,2,\dots,n$
the height $y_i$ of the $i$-th horizontal step is in the interval
$[b_i,a_i]$. If we also write $\mathbf y(P)=(y_1,y_2,\dots,y_n)$ for the
sequence of heights of horizontal steps of a path $P$, then the notation just
introduced explains itself. Pictorially (see Figure~\ref{CKF1.6}), the
described restriction means that we consider paths in a ladder-shaped
region, the upper ladder being determined by $\mathbf a$, the lower
ladder being determined by $\mathbf b$.
See Figure~\ref{CKF1.7}, which displays an example 
with $n=6$, $\mathbf a=(3,5,7,8,8,8)$,
$\mathbf b=(0,1,1,2,5,5)$, $\mathbf y(P_0)=(2,2,2,4,6,8)$.

Originally, the result below was derived by 
\index{Kreweras, Germain}Kreweras \cite{KrewAB}
using recurrence relations, but 
the most conceptual and most elegant way to attack
this problem is by the method of non-intersecting lattice
paths; see Section~\ref{CKsec:nonint} and \cite{SulaAC}.

\begin{figure}
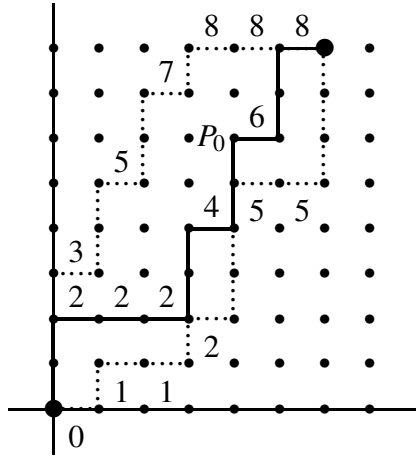
 
$$
{
\Gitter(8,9)(0,0)
\Koordinatenachsen(8,9)(0,0)
\Pfad(0,0),22111221221221\endPfad
\Label\l{P_0}(4,6)
\PfadDicke{.5pt}
\SPfad(0,3),12212212111\endSPfad
\SPfad(0,0),12112122211222\endSPfad
\Label\ro{3}(0,3)
\Label\ro{5}(1,5)
\Label\ro{7}(2,7)
\Label\ro{8}(3,8)
\Label\ro{8}(4,8)
\Label\ro{8}(5,8)
\Label\ru{0}(0,0)
\Label\ru{1}(1,1)
\Label\ru{1}(2,1)
\Label\ru{2}(3,2)
\Label\ru{5}(4,5)
\Label\ru{5}(5,5)
\Label\ro{2}(0,2)
\Label\ro{2}(1,2)
\Label\ro{2}(2,2)
\Label\ro{4}(3,4)
\Label\ro{6}(4,6)
\DickPunkt(0,0)
\DickPunkt(6,8)
\hskip5cm
}
$$
\caption{Lattice path with a general boundary}
\label{CKF1.7}
\end{figure}

\begin{theorem} \label{CKT1.15}%
Let $\mathbf a=(a_1,a_2,\dots,a_n)$ and $\mathbf b=(b_1,b_2,\dots,b_n)$ be
integer sequences with $a_1\le a_2\le \dots\le a_n$, $b_1\le b_2\le
\dots\le b_n$, and $a_i\ge b_i$, $i=1,2,\dots,n$. The number of all
paths from $(0,b_1)$ to $(n,a_n)$ satisfying the property that 
for all $i=1,2,\dots,n$ the height of the $i$-th horizontal step is
between $b_i$ and $a_i$ is given by
\begin{equation} \label{CKe1.30}
\vv{\LL{(0,b_1)\to (n,a_n)\mid \mathbf a\ge \mathbf y\ge \mathbf b}}=
\det_{1\le i,j\le n}\(\binom {a_i-b_j+1}{j-i+1}\). 
\end{equation}
\end{theorem}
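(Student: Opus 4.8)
The plan is to derive \eqref{CKe1.30} from the Lindström--Gessel--Viennot (LGV) lemma for non-intersecting lattice paths (Section~\ref{CKsec:nonint}), as the remark preceding the statement suggests. The first step is to recast the left-hand side. A path counted by $\vv{\LL{(0,b_1)\to (n,a_n)\mid \mathbf a\ge \mathbf y\ge \mathbf b}}$ uses exactly $n$ horizontal unit steps and is completely determined by the sequence $\mathbf y(P)=(y_1,\dots,y_n)$ of their heights. Monotonicity of the path forces $y_1\le y_2\le\dots\le y_n$, and the boundary restriction is exactly $b_i\le y_i\le a_i$. So the quantity to be computed is the number of integer sequences $y_1\le y_2\le\dots\le y_n$ with $b_i\le y_i\le a_i$ for every $i$.

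Next I would introduce the path system on which the LGV lemma is run. Put $S_j=(-j,\,b_j+j)$ and $T_i=(1-i,\,a_i+i)$ for $1\le i,j\le n$, and count lattice paths using the unit steps $(1,0)$ and $(0,1)$. A path from $S_j$ to $T_i$ has horizontal displacement $j-i+1$ and total length $a_i-b_j+1$, so by \eqref{CKe1.1} their number is $\binom{a_i-b_j+1}{j-i+1}$; the convention on binomial coefficients fixed in Section~\ref{CKsec:intro} makes this agree with the path count even when no path exists. Hence the matrix in \eqref{CKe1.30} is precisely $\big(N(S_j\to T_i)\big)_{1\le i,j\le n}$, and since a determinant is unchanged by transposition it equals the signed count supplied by the LGV lemma for the sources $S_1,\dots,S_n$ and sinks $T_1,\dots,T_n$.

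The heart of the argument is to identify the non-intersecting families. For $P_j:S_j\to T_j$ the horizontal displacement is $1$, so $P_j$ consists of a run of vertical steps, a single horizontal step at some height $y_j+j$ with $y_j\in[b_j,a_j]$, and another run of vertical steps; reading off $y_j$ recovers the sequence of the first step. Two paths $P_j$ and $P_{j+1}$ can meet only on the shared line $x=-j$, where $P_j$ occupies the heights up to $y_j+j$ and $P_{j+1}$ occupies the heights from $y_{j+1}+j+1$ upward; they are therefore vertex-disjoint precisely when $y_j+j<y_{j+1}+j+1$, i.e.\ $y_j\le y_{j+1}$. Thus the non-intersecting families correspond bijectively to the monotone sequences of the first step. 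The asymmetric shift (sources at $(-j,b_j+j)$, sinks at $(1-i,a_i+i)$) is exactly what turns the \emph{weak} inequalities $y_j\le y_{j+1}$ into the \emph{strict} disjointness that non-intersection demands. The same shift arranges both the sources and the sinks along an anti-diagonal, so a short crossing argument (as in the $n=2$ case, where the ``swapped'' family is forced to intersect on $x=-1$) shows that the configuration is non-permutable: only the identity permutation admits a non-intersecting family.

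I expect the bookkeeping of this last step to be the main obstacle: one must check carefully that the only coincidence possible is between \emph{consecutive} paths, that their vertex-disjointness is equivalent to $y_j\le y_{j+1}$ rather than a strict inequality, and that no non-identity permutation can be realized without a crossing (so the LGV signed sum collapses to the plain count). Granting these, the LGV lemma yields
$$\vv{\LL{(0,b_1)\to (n,a_n)\mid \mathbf a\ge \mathbf y\ge \mathbf b}}=\det_{1\le i,j\le n}\(\binom {a_i-b_j+1}{j-i+1}\),$$
which is \eqref{CKe1.30}.
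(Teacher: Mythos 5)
Your proposal is correct and is essentially the paper's own proof in unrolled form: the paper deduces \eqref{CKe1.30} by citing Theorem~\ref{CKT2.4} for a single-column shape (all $\lambda_i=1$, $\mu_i=0$) with bounds shifted to $b_i+i\le\pi_i\le a_i+i$, and that theorem's proof (the tableau--path bijection together with Corollary~\ref{CKT2.2a}) produces exactly your path system $S_j=(-j,\,b_j+j)$, $T_i=(1-i,\,a_i+i)$ and the same Lindstr\"om--Gessel--Viennot application. The only difference is that you inline the LGV argument --- including the non-permutability and weak-inequality checks, which do go through as you sketch --- rather than invoking the semistandard tableau theorem.
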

\begin{proof}We apply Theorem~\ref{CKT2.4} with 
$\La=(1,1,\dots,1)$ and $\Mu=(0,0,\dots,0)$, both vectors
containing $n$ entries. This counts vectors
$(\pi_1,\pi_2,\dots,\pi_n)$ with $\pi_1<\pi_2<\dots<\pi_n$ with
a lower and an upper bound on each $\pi_i$. By replacing
$\pi_i$ by $\pi_i-i$, this counting problem is translated into
the counting problem we consider here, $\pi_i-i$ corresponding
to the height of the $i$-th horizontal step of a path. 
\end{proof}

Of course, with increasing $n$ this formula will become less
tractable. An alternative formula can be obtained by rotating the
whole picture by $90^\circ$ and applying formula \eqref{CKe1.30} to the
new situation. Now the size of the determinant is $a_n$, which is smaller 
than before if
$a_n<n$, i.e., if the difference between the $y$-coordinates of
end and starting point is less than the difference between the
respective $x$-coordinates. 

In some cases, a different type of
formula might be preferrable, which one may obtain by the so-called
{\em dummy path technique\/}\index{dummy path
technique}\index{path, dummy},
as proposed
in \index{Krattenthaler, Christian}Krattenthaler and \index{Mohanty,
Sri Gopal}Mohanty 
\cite{KrMoAA}. Again, it comes from
non-intersecting lattice paths. It is based on the following
observation (see \index{Stanley, Richard Peter}Stanley \cite[Ex.~2.7.2]{StanAP}). 

\begin{lemma} \label{CKT1.16}%
Let $C_1,C_2,\dots,C_n$ be pairwise distinct points in $\Z^2$. Then
the number of lattice paths from $(a,b)$ to $(c,d)$ which avoid
$C_1,C_2,\dots, C_n$ is given by
\begin{equation} \label{CKe1.31}
\det_{1\le i,j\le n+1}\big(\vv{L(A_j\to E_i)}\big) ,
\end{equation}
where $A_1=(a,b)$, $A_2=C_1$, \dots, $A_{n+1}=C_{n}$, $E_1=(c,d)$,
$E_2=C_1$, \dots, $E_{n+1}=C_n$.
\end{lemma}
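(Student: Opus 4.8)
The plan is to deduce this from the theory of non-intersecting lattice paths, that is, the Lindström--Gessel--Viennot lemma developed in Section~\ref{CKsec:nonint}, by means of Stanley's \emph{dummy path} construction. First I would set up an $(n+1)$-tuple of sources and sinks exactly as in the statement: take $A_1=(a,b)$, $E_1=(c,d)$, and for each $k=1,2,\dots,n$ let the point $C_k$ serve \emph{both} as the source $A_{k+1}$ and as the sink $E_{k+1}$. The $k$-th ``dummy path'' is then the degenerate path consisting of the single vertex $C_k$, i.e.\ the unique element of $\LL{C_k\to C_k}$. The key observation is that a path $P$ from $(a,b)$ to $(c,d)$ avoids all of $C_1,\dots,C_n$ if and only if the family $(P,\{C_1\},\dots,\{C_n\})$ is vertex-disjoint: the dummy points are pairwise distinct, hence automatically disjoint from one another, and $P$ is disjoint from $\{C_k\}$ exactly when it misses $C_k$.

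Next I would expand the determinant by its definition,
\[
\det_{1\le i,j\le n+1}\big(\vv{L(A_j\to E_i)}\big)=\sum_{\si\in S_{n+1}}\sgn(\si)\prod_{i=1}^{n+1}\vv{L(A_{\si(i)}\to E_i)},
\]
and read the right-hand side as a signed count of path families $(P_1,\dots,P_{n+1})$ with $P_i\in\LL{A_{\si(i)}\to E_i}$. To this signed count I apply the sign-reversing involution underlying the Lindström--Gessel--Viennot lemma: on any family in which two paths share a vertex, locate the first such shared vertex $v$ (under a fixed tie-breaking order) and interchange the portions of the two offending paths beyond $v$. This composes $\si$ with a transposition, reverses $\sgn(\si)$, and preserves the path weights, so that all families containing a shared vertex cancel in pairs. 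What survives is precisely the collection of vertex-disjoint families.

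The crux of the argument is to show that every surviving (vertex-disjoint) family has $\si=\mathrm{id}$, so that only the sign $+1$ occurs. This is where the coincidence $A_{k+1}=E_{k+1}=C_k$ is decisive: in a vertex-disjoint family the vertex $C_k$ is the starting point of the path indexed by $\si^{-1}(k+1)$ and the end point of $P_{k+1}$; if these two paths were distinct they would share the vertex $C_k$, contradicting disjointness, so $\si(k+1)=k+1$ for every $k=1,\dots,n$, whence $\si=\mathrm{id}$. Under $\si=\mathrm{id}$ each $P_{k+1}$ is forced to be the single vertex $C_k$ (a positive-step path from $C_k$ to itself is trivial) and $P_1$ runs from $(a,b)$ to $(c,d)$; by the observation of the first paragraph these surviving families are in bijection with the paths from $(a,b)$ to $(c,d)$ avoiding $C_1,\dots,C_n$. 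Hence the determinant equals the desired count.

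I expect the main obstacle to be the careful verification that the identity is the only permutation contributing a non-intersecting family --- an atypical situation, since here the source set and the sink set overlap in the points $C_k$ rather than being disjoint as in the usual formulation of the Lindström--Gessel--Viennot lemma. One should also check that the involution remains well defined in the presence of the degenerate, length-zero dummy paths, which amounts to allowing a ``tail swap'' at $C_k$ between $P_1$ and the single-vertex path $\{C_k\}$ whenever $P_1$ passes through $C_k$.
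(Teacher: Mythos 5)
Your proposal is correct and follows essentially the same route as the paper: the paper's proof likewise introduces the zero-length ``dummy paths'' from $C_k$ to $C_k$ and invokes the Lindstr\"om--Gessel--Viennot theorem (Theorem~\ref{CKT2.3}) for the resulting family of non-intersecting paths. Your explicit verification that only the identity permutation can yield a vertex-disjoint family — because $C_k$ would otherwise be shared by two distinct paths — is exactly the point the paper leaves implicit when passing from the signed sum to the plain count.
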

\begin{proof}We reformulate our counting problem in that we
want to determine the number of families $(P_1,P_2,\dots,P_{n+1})$ of
non-intersecting lattice paths, where $P_1$ runs from $A_1=(a,b)$ to
$E_1=(c,d)$, and for $i=1,2,\dots,n$ the ``dummy path" $P_{i+1}$ runs
from $A_{i+1}=C_i$ to $E_{i+1}=C_i$. By Theorem~\ref{CKT2.3}, with
$G$ the directed graph with vertices $\Z^2$ and edges given by
horizontal and vertical unit steps in the positive direction,
all weights being $1$, $A_i$ and $E_i$ as above, this number 
equals the determinant in \eqref{CKe1.31}. 
\end{proof}

The idea now is that, given some (possibly two-sided) boundary,
one ``describes" this boundary by such ``dummy points" (paths)
and uses the above lemma to compute the number of paths which
avoid these, thus avoiding the boundary. In cases the boundary
can be ``described" by only very few ``dummy points", this may
lead to a useful formula.
Several formulae which appear in the literature are instances
of this idea (sometimes of minor variations), although it
may not be stated there that way; see \cite[Theorem~2 on p.~36]{MohaAE}
and \cite{KrMoAA} and the references given there.

\section{Elementary results on Motzkin and Schr\"oder paths}
\label{CKs3.1}

The subject of this section and the following three sections 
is lattice paths in $\Z^2$ which consist of
\index{up-step}{\em up-steps} $(1,1)$, \index{down-step}{\em down-steps} 
$(1,-1)$, and \index{level step}{\em level-steps} $(1,0)$ or
$(2,0)$, and which do not pass below the $x$-axis. If the only allowed
level-steps are unit steps $(1,0)$, then the corresponding paths are called
\index{path, Motzkin}\index{Motzkin path}\index{Motzkin, Theodore S.}{\em Motzkin paths}. 
If the only allowed
level-steps are double steps $(2,0)$, then the corresponding paths are called
\index{path, Schr\"oder}\index{Schr\"oder path}\index{Schr{\"o}der,
  Friedrich Wilhelm Karl Ernst}{\em Schr\"oder paths}. 
We call the special paths which consist of just up- and down-steps
(but contain no level-steps) 
\index{path, Catalan}\index{Catalan path}\index{Catalan, Eug\`ene Charles}{\em Catalan paths}. In the special case, where these paths start and end
on the $x$-axis, they are commonly called
\index{path, Dyck}\index{Dyck path}{\em Dyck paths}.

\begin{figure}
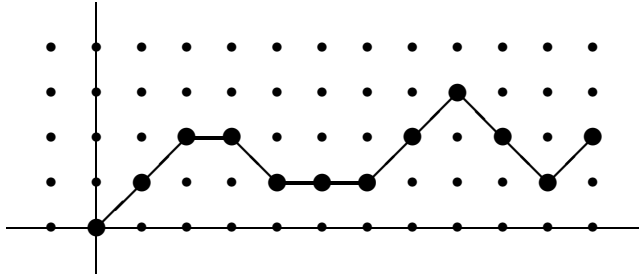
 
$$
\Gitter(12,5)(-1,0)
\Koordinatenachsen(12,5)(-1,0)
\Pfad(0,0),33141133443\endPfad
\DickPunkt(0,0)
\DickPunkt(1,1)
\DickPunkt(2,2)
\DickPunkt(3,2)
\DickPunkt(4,1)
\DickPunkt(5,1)
\DickPunkt(6,1)
\DickPunkt(7,2)
\DickPunkt(8,3)
\DickPunkt(9,2)
\DickPunkt(10,1)
\DickPunkt(11,2)
\hbox{\hskip6.5cm}
$$
\caption{A Motzkin path}
\label{CKF3.1}
\end{figure}

\begin{figure}
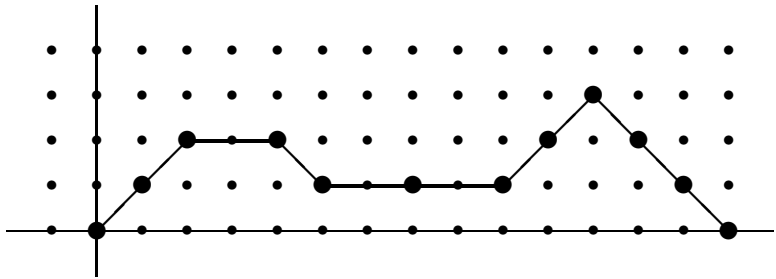
 
$$
\Gitter(15,5)(-1,0)
\Koordinatenachsen(15,5)(-1,0)
\Pfad(0,0),33114111133444\endPfad
\DickPunkt(0,0)
\DickPunkt(1,1)
\DickPunkt(2,2)
\DickPunkt(4,2)
\DickPunkt(5,1)
\DickPunkt(7,1)
\DickPunkt(9,1)
\DickPunkt(10,2)
\DickPunkt(11,3)
\DickPunkt(12,2)
\DickPunkt(13,1)
\DickPunkt(14,0)
\hbox{\hskip7.5cm}
$$
\caption{A Schr\"oder path}
\label{CKF3.2}
\end{figure}

\begin{figure}
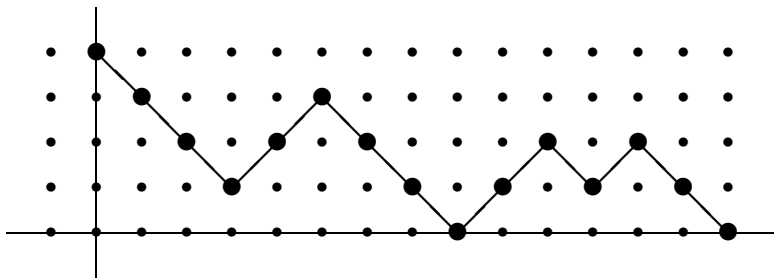
 
$$
\Gitter(15,5)(-1,0)
\Koordinatenachsen(15,5)(-1,0)
\Pfad(0,4),44433444334344\endPfad
\DickPunkt(0,4)
\DickPunkt(1,3)
\DickPunkt(2,2)
\DickPunkt(3,1)
\DickPunkt(4,2)
\DickPunkt(5,3)
\DickPunkt(6,2)
\DickPunkt(7,1)
\DickPunkt(8,0)
\DickPunkt(9,1)
\DickPunkt(10,2)
\DickPunkt(11,1)
\DickPunkt(12,2)
\DickPunkt(13,1)
\DickPunkt(14,0)
\hbox{\hskip6.5cm}
$$
\caption{A Catalan path}
\label{CKF3.C}
\end{figure}

Let $M=\{(1,1),(1,-1),(1,0)\}$ and 
$S=\{(1,1),(1,-1),(2,0)\}$, so that $M$ is the set of steps allowed
in Motzkin paths (see Figure~\ref{CKF3.1} for an example) 
and $S$ is the set of steps allowed
in Schr\"oder paths (see Figure~\ref{CKF3.2} for an example).  

A frequently used alternative way to view Schr\"oder paths is by
reflecting the picture with respect to the $x$-axis, rotating the
result by $45^\circ$, and finally scaling everything by a factor of
$1/\sqrt{2}$, so that the steps $(1,1),(1,-1),(2,0)$ 
are replaced by the steps $(1,0),(0,1),(1,1)$, in that order.
Figure~\ref{CKF3.3} shows the result of this translation when applied
to the Schr\"oder path in Figure~\ref{CKF3.2}. It translates Schr\"oder
paths into paths which consist of unit horizontal and vertical steps in
the positive direction and of upwards diagonal steps, and which stay 
weakly below the main diagonal $y=x$. Without the diagonal
restriction, the counting problem would be solved by the 
Delannoy numbers in \eqref{CKe1.1-5}.

\begin{figure}
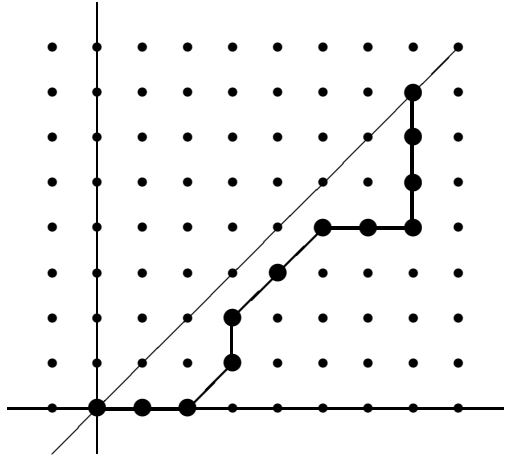
 
$$
\Gitter(9,9)(-1,0)
\Koordinatenachsen(9,9)(-1,0)
\Pfad(0,0),11323311222\endPfad
\DickPunkt(0,0)
\DickPunkt(1,0)
\DickPunkt(2,0)
\DickPunkt(3,1)
\DickPunkt(3,2)
\DickPunkt(4,3)
\DickPunkt(5,4)
\DickPunkt(6,4)
\DickPunkt(7,4)
\DickPunkt(7,5)
\DickPunkt(7,6)
\DickPunkt(7,7)
\thinlines
\Diagonale(-1,-1)9
\hbox{\hskip3.5cm}
$$
\caption{A Schr\"oder path rotated-reflected}
\label{CKF3.3}
\end{figure}

Nevertheless,
this translation, combined with Theorem~\ref{CKT1.1},
already tells us how to enumerate Motzkin and Schr\"oder paths with
given starting and end point.

\begin{theorem} \label{CKT3.1}%
Let $b\ge 0$ and $d\ge0$. The number of all paths from $(a,b)$ to
$(c,d)$ which consist of steps out of $M=\{(1,1),(1,-1),(1,0)\}$ and
do not pass below the $x$-axis 
\index{path, Motzkin}\index{Motzkin path}\index{Motzkin, Theodore S.}{\em (Motzkin paths)} is given by
\begin{multline} \label{CKe3.1}
\vv{\LL{(a,b)\to (c,d);M\mid y\ge0}}\\=
\sum _{k=0} ^{c-a}\binom {c-a}k\(\binom {c-a-k} {(c+d-k-a-b)/2} -
\binom {c-a-k}{(c+d-k-a+b+2)/2}\),
\end{multline}
where, by convention, a binomial coefficient is $0$ if its bottom
parameter is not an integer.

Furthermore, the number of all paths from $(a,b)$ to
$(c,d)$ which consist of steps out of $S=\{(1,1),(1,-1),(2,0)\}$ and
do not pass below the $x$-axis 
\index{path, Schr\"oder}\index{Schr\"oder path}\index{Schr\"oder,
  Friedrich Wilhelm Karl Ernst}{\em (Schr\"oder paths)} is given by
\begin{multline} \label{CKe3.2}
\vv{\LL{(a,b)\to (c,d);S\mid y\ge0}}=
\sum _{k=0} ^{(c-a)/2}\binom {c-a-k}k\\
\cdot\(\binom {c-a-2k} {(c+d-2k-a-b)/2} -
\binom {c-a-2k}{(c+d-2k-a+b+2)/2}\),
\end{multline}
with the same convention for binomial coefficients.
\end{theorem}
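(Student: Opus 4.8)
The plan is to combine the ``mixing of steps'' idea already exploited for the Delannoy numbers in \eqref{CKe1.1-5} with the reflection principle of Theorem~\ref{CKT1.1}, the latter applied (as anticipated in the paragraph preceding the statement) after the $45^\circ$ rotation that turns paths using only the up-step $(1,1)$ and the down-step $(1,-1)$ into simple horizontal/vertical paths. In both parts the $y\ge0$ enumeration reduces to counting the up/down ``skeleton'' of the path, with the level-steps merely interleaved.

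First I would treat the Motzkin case. Since every Motzkin step advances the $x$-coordinate by exactly $1$, each path in $\LL{(a,b)\to (c,d);M\mid y\ge0}$ has precisely $c-a$ steps, and I would condition on the number $k$ of level-steps $(1,0)$. Deleting the level-steps leaves a path built from $(1,1)$ and $(1,-1)$ only --- a Catalan path --- running from height $b$ to height $d$ in $c-a-k$ steps; because level-steps preserve the height, this skeleton stays weakly above the $x$-axis precisely when the original path does. Conversely, a Motzkin path with $y\ge0$ is reconstructed uniquely from its skeleton together with a choice of which $k$ of the $c-a$ positions carry the level-steps. Hence the count factors as $\sum_k\binom{c-a}{k}$ times the number of Catalan paths from $(0,b)$ to $(c-a-k,d)$ that stay weakly above the $x$-axis.

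The heart of the matter is counting those Catalan paths, where I would invoke the reflection principle exactly as in the proof of Theorem~\ref{CKT1.1}. Writing $N:=c-a-k$, such a path has $(N+d-b)/2$ up-steps, so without any positivity constraint there are $\binom{N}{(N+d-b)/2}$ of them; those that drop to the forbidden line $y=-1$ are, upon reflecting the portion before the first visit to $y=-1$ in that line, in bijection with \emph{all} Catalan paths of length $N$ from the reflected starting height $-2-b$ to $d$, of which there are $\binom{N}{(N+d+b+2)/2}$. The difference gives $\binom{c-a-k}{(c+d-k-a-b)/2}-\binom{c-a-k}{(c+d-k-a+b+2)/2}$, and summing over $k$ yields \eqref{CKe3.1}; the convention that a binomial coefficient vanishes off the integer range automatically discards the parity-forbidden $k$. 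For the Schr\"oder case the argument is identical in spirit, the only change being the bookkeeping forced by the double level-step $(2,0)$: if a Schr\"oder path uses $k$ level-steps, these contribute $2k$ to the horizontal displacement, so the skeleton has $c-a-2k$ steps while the path, viewed as a word in its step-symbols, has $k+(c-a-2k)=c-a-k$ letters, and the level-steps may be interleaved in $\binom{c-a-k}{k}$ ways. The same reflection computation with $N=c-a-2k$ then produces the two binomial coefficients in \eqref{CKe3.2}, and summing $k$ from $0$ to $\fl{(c-a)/2}$ completes the proof.

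I expect the only genuinely delicate point to be verifying that the decomposition into ``level-steps $+$ up/down skeleton'' is a bijection respecting the constraint --- that inserting or deleting height-preserving level-steps neither creates nor removes a visit below the axis --- together with getting the two position-counts right, namely $\binom{c-a}{k}$ versus $\binom{c-a-k}{k}$, since the unit level-step and the double level-step occupy different amounts of horizontal room. Everything else reduces to the reflection principle already established in Theorem~\ref{CKT1.1} and to routine parity bookkeeping absorbed by the binomial-coefficient convention.
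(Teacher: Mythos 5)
Your proof is correct and takes essentially the same approach as the paper: both condition on the number $k$ of level-steps, delete them to leave an up/down skeleton whose $y\ge0$ constraint is unaffected, count the skeletons by the reflection principle, and reinsert the level-steps in $\binom{c-a}{k}$ (resp.\ $\binom{c-a-k}{k}$) ways. The only cosmetic difference is that the paper routes the skeleton count through the rotation and Theorem~\ref{CKT1.1}, whereas you apply the reflection directly in the line $y=-1$; these are the same argument in different coordinates.
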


\begin{proof} By the above described translation (reflection +
rotation), a Motzkin path from $(a,b)$ to $(c,d)$ with exactly $k$
level-steps is translated into
a path from $\(\frac {a+b} {2},\frac {a-b} {2}\)$ to 
$\(\frac {c+d} {2},\frac {c-d} {2}\)$, which consists of steps from 
$\{(1,0),(0,1),(\frac {1} {2},\frac {1} {2})\}$, among them exactly $k$ diagonal steps
$(\frac {1} {2},\frac {1} {2})$, and which stays weakly below 
the main diagonal $y=x$. Clearly, if we remove the $k$ diagonal
steps and concatenate the resulting path pieces, we obtain a simple
path from $\(\frac {a+b} {2},\frac {a-b} {2}\)$ to
$\(\frac {c+d} {2}-\frac {k} {2},\frac {c-d} {2}-\frac {k} {2}\)$ which stays weakly below $y=x$. The
number of the latter paths was determined in Theorem~\ref{CKT1.1}. On
the other hand, there are $\binom {c-a}k$ ways to reinsert the $k$
diagonal steps. Thus, Eq.~\eqref{CKe3.1} is established.

The proof of \eqref{CKe3.2} is analogous.
\end{proof}

We will derive expressions for corresponding generating functions
in\break Section~\ref{CKs3.3}, see Theorem~\ref{CKT3.5}.

\medskip
It is worth stating the special case of Theorem~\ref{CKT3.1} 
where the paths start and
terminate on the $x$-axis separately.

\begin{corollary} \label{CKC3.2}%
The number of 
\index{path, Motzkin}\index{Motzkin path}\index{Motzkin, Theodore S.}Motzkin paths 
from $(0,0)$ to $(n,0)$ is given by
\begin{equation} \label{CKe3.3}
\vv{\LL{(0,0)\to (n,0);M\mid y\ge0}}=
\sum _{k=0} ^{\fl{n/2}}\binom {n}{2k}\frac {1} {k+1}\binom {2k} {k}.
\end{equation}
If $n$ is even, the number of 
\index{path, Schr\"oder}\index{Schr\"oder path}\index{Schr\"oder,
  Friedrich Wilhelm Karl Ernst}Schr\"oder paths 
from $(0,0)$ to $(n,0)$ is given by
\begin{equation} \label{CKe3.4}
\vv{\LL{(0,0)\to (n,0);S\mid y\ge0}}=
\sum _{k=0} ^{n/2}\binom {n/2+k}{2k}\frac {1} {k+1}\binom {2k}{k}.
\end{equation}
\end{corollary}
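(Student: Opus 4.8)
The plan is to read both identities off as the special case $(a,b)=(0,0)$, $(c,d)=(n,0)$ of Theorem~\ref{CKT3.1}, the only genuine work being to collapse the bracketed difference of binomial coefficients into a single Catalan number and then to reindex the sum. Since the summation parameter $k$ in \eqref{CKe3.1} and \eqref{CKe3.2} counts the level-steps, the remaining up- and down-steps must balance (the path returns to the $x$-axis), which is exactly the mechanism that forces the surviving terms and produces the Catalan factor.

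First I would treat the Motzkin case. Setting $(a,b)=(0,0)$ and $(c,d)=(n,0)$ in \eqref{CKe3.1}, the inner expression of the summand becomes $\binom{n-k}{(n-k)/2}-\binom{n-k}{(n-k)/2+1}$, which vanishes unless $n-k$ is even. Writing $n-k=2j$, the key observation is the elementary Catalan identity
$$\binom{2j}{j}-\binom{2j}{j+1}=\frac{1}{j+1}\binom{2j}{j},$$
which follows at once from $\binom{2j}{j+1}=\frac{j}{j+1}\binom{2j}{j}$. Thus only the terms with $k=n-2j$ survive, the outer coefficient $\binom{n}{k}$ becomes $\binom{n}{n-2j}=\binom{n}{2j}$, and the sum over $k$ turns into $\sum_{j=0}^{\fl{n/2}}\binom{n}{2j}\frac{1}{j+1}\binom{2j}{j}$, which is exactly \eqref{CKe3.3} after renaming $j$ as $k$.

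For the Schr\"oder case I would proceed identically, now substituting $(a,b)=(0,0)$, $(c,d)=(n,0)$ into \eqref{CKe3.2}. The inner difference is $\binom{n-2k}{(n-2k)/2}-\binom{n-2k}{(n-2k)/2+1}$, nonzero only when $n$ is even (which is the standing hypothesis); writing $n-2k=2j$, i.e.\ $j=n/2-k$, the same Catalan identity converts each inner difference into $\frac{1}{j+1}\binom{2j}{j}$. Under this substitution the outer coefficient $\binom{n-k}{k}$ becomes $\binom{n/2+j}{n/2-j}=\binom{n/2+j}{2j}$, and relabelling $j$ as $k$ yields precisely \eqref{CKe3.4}.

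There is no real obstacle here, as the entire content is already carried by Theorem~\ref{CKT3.1}; the two steps that remain are the one-line Catalan simplification of the bracketed difference and the change of summation index. The only points requiring slight care are matching the parity constraints, so that the surviving terms are correctly indexed by $j$, and verifying the rewriting $\binom{n-k}{k}=\binom{n/2+j}{2j}$ in the Schr\"oder case.
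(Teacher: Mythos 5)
Your proposal is correct and is exactly the paper's (implicit) argument: the corollary is stated there as the specialization of Theorem~\ref{CKT3.1} to $(a,b)=(0,0)$, $(c,d)=(n,0)$, and your Catalan simplification $\binom{2j}{j}-\binom{2j}{j+1}=\frac{1}{j+1}\binom{2j}{j}$ together with the reindexing $k\mapsto n-2j$ (resp.\ $k\mapsto n/2-j$) is precisely the routine computation the paper leaves to the reader.
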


The numbers in \eqref{CKe3.3} are called
\index{number, Motzkin}\index{Motzkin number}\index{Motzkin, Theodore S.}{\em Motzkin numbers}. 
The numbers in \eqref{CKe3.4} are called
\index{number, large Schr\"oder}\index{Schr\"oder
  number}\index{Schr\"oder, Friedrich Wilhelm Karl Ernst}{\em large Schr\"oder numbers}. If $n\ge1$, the latter are all divisible by $2$
(which is easily seen by switching the first occurrence of a level-step
with a pair consisting of an up-step and a down-step, and vice versa).
Dividing these numbers by $2$, we obtain the
\index{number, little Schr\"oder}\index{Schr\"oder
  number}\index{Schr\"oder, Friedrich Wilhelm Karl Ernst}{\em little Schr\"oder numbers}. Similarly to Catalan numbers,
also Motzkin and Schr\"oder numbers appear in numerous contexts;
see \cite[Ex.~6.38 and 6.39]{StanBI}.

The summations in \eqref{CKe3.1} and \eqref{CKe3.2} do not simplify, not
even in the special cases given in \eqref{CKe3.3} and \eqref{CKe3.4}. 

\medskip
In concluding this section, we point out that Motzkin paths,
or, more precisely, 
\index{Motzkin path}\index{path, Motzkin}{\it decorated\/} Motzkin
paths, are of utmost importance for the enumeration of many other
combinatorial objects, most importantly for the enumeration
of permutations and (set) partitions. A decorated Motzkin
path (in the french literature: {\it``histoire"}) is a Motzkin
path in which each step carries a certain label. In terms
of enumeration, one may consider this as allowing several
different steps of the same kind: for example, several
different horizontal steps, etc. In terms of generating
functions, this labelling is reflected by appropriate
weights of the steps. The importance of decorated
Motzkin paths comes from the fact that several bijections
have been constructed between them and permutations or
partitions, which have the property that they ``transfer"
detailed information about permutations or partitions
to the world of (decorated) Motzkin paths, allowing for
very refined enumeration results for permutations and partitions. 
Such bijections have been constructed by
\index{Biane, Philippe}Biane \cite{BianAA},
\index{Foata, Dominique}Foata and 
\index{Zeilberger, Doron}Zeilberger \cite{FoZeAA}, 
\index{Fran\c con, Jean}Fran\c con and 
\index{Viennot, Xavier G\'erard}Viennot \cite{FrViAA}, 
\index{M\'edicis, Anne de}M\'edicis and
\index{Viennot, Xavier G\'erard}Viennot \cite{MeViAA}, and by
\index{Simion, R.}Simion and \index{Stanton, D.}Stanton
\cite{SiStAA}. See \cite{ClSZAA} for a unifying view.

\section{A continued fraction for the weighted counting of Motzkin
paths}
\label{CKs3.3}
We now assign a weight to each 
\index{path, Motzkin}\index{Motzkin path}\index{Motzkin, Theodore S.}Motzkin 
path which starts
and ends on the $x$-axis, and express
the corresponding generating function in terms of a 
\index{continued fraction}\index{fraction, continued}{\em continued
fraction}. The corresponding result is 
due to \index{Flajolet, Philippe}Flajolet
\cite{FlajAA}. The weight is so general that the result also covers 
Schr\"oder paths and Catalan paths.

Given a Motzkin path $P$, we define the weight $w(P)$ to be the
product of the weights of all its steps, where the weight of an
up-step is $1$ (hence, does not contribute anything to the weight), the
weight of a level-step at height $h$ is $b_h$, and the weight of
a down-step from height $h$ to $h-1$ is $\la_h$. Figure~\ref{CKF3.4}
shows a Motzkin path the steps of which are labelled by their
corresponding weights, so that the weight of the path is
$b_2\la_2b_1b_1\la_3\la_2\la_1=b_1^2b_2\la_1\la_2^2\la_3$.

\begin{figure} 
$$
\Gitter(12,5)(-1,0)
\Koordinatenachsen(12,5)(-1,0)
\Pfad(0,0),33141133444\endPfad
\DickPunkt(0,0)
\DickPunkt(1,1)
\DickPunkt(2,2)
\DickPunkt(3,2)
\DickPunkt(4,1)
\DickPunkt(5,1)
\DickPunkt(6,1)
\DickPunkt(7,2)
\DickPunkt(8,3)
\DickPunkt(9,2)
\DickPunkt(10,1)
\DickPunkt(11,0)
\Label\l{1\kern3pt}(1,1)
\Label\l{1\kern3pt}(2,2)
\Label\lo{b_2}(3,2)
\Label\l{\kern3pt\la_2}(4,2)
\Label\lo{b_1}(5,1)
\Label\lo{b_1}(6,1)
\Label\l{1\kern2pt}(7,2)
\Label\l{1\kern2pt}(8,3)
\Label\l{\kern3pt\la_3}(9,3)
\Label\l{\kern3pt\la_2}(10,2)
\Label\l{\kern3pt\la_1}(11,1)
\hbox{\hskip6.5cm}
$$
\caption{}
\label{CKF3.4}
\end{figure}

Then the following theorem is true.

\begin{theorem} \label{CKT3.4}%
With the weight $w$ defined as above, the generating function for
\index{path, Motzkin}\index{Motzkin path}\index{Motzkin, Theodore S.}Motzkin paths 
running from the origin back to the $x$-axis, 
which stay weakly below the line $y=k$, is given by
\begin{multline} \label{CKe3.5}
\GF{\LL{(0,0)\to(*,0);M\mid 0\le y\le k};w}\\
=\cfrac 1{1-b_0-
\cfrac {\la_1}{1-b_1-
\cfrac {\la_2}{1-b_2-\cdots-
\cfrac {\la_{k}}{1-b_k}}}}\ .
\end{multline}
In particular, the generating function for {\em all} Motzkin paths
running from the origin back to the $x$-axis is
given by the infinite continued fraction
\begin{equation} \label{CKe3.6}
\GF{\LL{(0,0)\to(*,0);M\mid 0\le y};w}=\cfrac 1{1-b_0-
\cfrac {\la_1}{1-b_1-
\cfrac {\la_2}{1-b_2-\cdots}}}\ .
\end{equation}

\end{theorem}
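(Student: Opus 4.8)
The plan is to prove the finite continued fraction \eqref{CKe3.5} by a \emph{first-return decomposition} of the paths, and then to obtain the infinite fraction \eqref{CKe3.6} by letting $k\to\infty$ in the ring of formal power series in the weights. The essential point is that the weight of a step depends on its \emph{absolute} height, so the generating function we want is not self-referential on its own. To remedy this I would introduce a family of auxiliary generating functions indexed by the height of the ``floor'': for each $h$ with $0\le h\le k$, let $F_h$ denote the generating function, with respect to $w$, for those weighted Motzkin paths that start and end at height $h$ and remain in the strip $h\le y\le k$. Each $F_h$ is a well-defined formal power series, since for a fixed monomial in the $b_i$'s and $\la_i$'s there are only finitely many contributing paths. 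The quantity in \eqref{CKe3.5} is $F_0$.

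The key step is the decomposition. A nonempty path counted by $F_h$ begins either with a level-step at height $h$, contributing weight $b_h$ and leaving a path of the same type ($F_h$); or with an up-step (weight $1$), in which case I consider its first return to height $h$. The portion strictly between the initial up-step and the down-step effecting that return is a path from $h+1$ to $h+1$ staying in the strip $h+1\le y\le k$, hence enumerated by $F_{h+1}$; the return is a down-step from $h+1$ to $h$ of weight $\la_{h+1}$; and what follows is again a path of type $F_h$. This yields the functional equation
\begin{equation*}
F_h = 1 + b_h F_h + \la_{h+1}\,F_{h+1}\,F_h,
\end{equation*}
and solving for $F_h$ gives
\begin{equation*}
F_h = \cfrac{1}{1-b_h-\la_{h+1}F_{h+1}},\qquad 0\le h\le k,
\end{equation*}
with the boundary convention $F_{k+1}=0$ (equivalently $F_k=1/(1-b_k)$, since at the ceiling $y=k$ no up-step is available). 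Unrolling this recursion from $h=0$ up to $h=k$ produces exactly the finite continued fraction \eqref{CKe3.5}.

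For \eqref{CKe3.6} I would argue by formal convergence. A path from the origin back to the $x$-axis that reaches height exceeding $k$ must use more than $k$ up-steps, hence more than $k$ down-steps, so its weight is a monomial of $\la$-degree exceeding $k$. Thus the strip-restricted generating function $F_0$ agrees with the unrestricted one modulo monomials of $\la$-degree $>k$, and as $k\to\infty$ the truncations converge coefficientwise to the generating function for \emph{all} such Motzkin paths; correspondingly the finite continued fractions converge to the infinite one, giving \eqref{CKe3.6}. I expect the main obstacle to be not the algebra but the bookkeeping in the decomposition: one must check that the elevated arch really sees the shifted weights $b_{h+1},b_{h+2},\dots$ and $\la_{h+2},\la_{h+3},\dots$, so that it is genuinely counted by $F_{h+1}$ rather than by a height-translated copy of $F_h$. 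This is precisely why the auxiliary functions must be indexed by absolute floor height, and getting that indexing right is the crux of the proof.
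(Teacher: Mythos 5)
Your proof is correct and takes essentially the same approach as the paper: both decompose the paths at the bottom level of the strip to obtain the relation $F_0=1/(1-b_0-\lambda_1F_1)$ and then iterate upward, the paper organizing this as an induction on $k$ with relabelled weights ($b_i\to b_{i+1}$, $\lambda_i\to\lambda_{i+1}$), while you index the auxiliary series by absolute floor height and unroll the recursion down from $F_{k+1}=0$. Your first-return decomposition $F_h=1+b_hF_h+\lambda_{h+1}F_{h+1}F_h$ is algebraically equivalent to the paper's sequence decomposition $l^{e_0}uP_1d\,l^{e_1}uP_2d\cdots$, and your coefficientwise-convergence argument for the passage $k\to\infty$ merely makes explicit what the paper leaves implicit.
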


\begin{proof} Clearly, it suffices to prove \eqref{CKe3.5}.
Equation~\eqref{CKe3.6} then follows upon letting $k\to\infty$.

We prove \eqref{CKe3.5} by induction on $k$. For $k=0$,
Equation~\eqref{CKe3.5} is trivially true. Hence, let us assume the
truth of \eqref{CKe3.5} for $k$ replaced by $k-1$. For accomplishing
the induction step, we consider a Motzkin path starting at the
origin, staying weakly below $y=k$, and finally returning to the $x$-axis, 
see Figure~\ref{CKF3.5} for an example with $k=3$.

\begin{figure} 
$$
\Einheit.5cm
\Gitter(19,5)(-1,0)
\Koordinatenachsen(19,5)(-1,0)
\Pfad(0,0),13314411343133444\endPfad
\DickPunkt(0,0)
\DickPunkt(1,0)
\DickPunkt(2,1)
\DickPunkt(3,2)
\DickPunkt(4,2)
\DickPunkt(5,1)
\DickPunkt(6,0)
\DickPunkt(7,0)
\DickPunkt(8,0)
\DickPunkt(9,1)
\DickPunkt(10,0)
\DickPunkt(11,1)
\DickPunkt(12,1)
\DickPunkt(13,2)
\DickPunkt(14,3)
\DickPunkt(15,2)
\DickPunkt(16,1)
\DickPunkt(17,0)
\PfadDicke{.3pt}
\Pfad(-2,3),111111111111111111111\endPfad
\hbox{\hskip8.5cm}
$$
\caption{}
\label{CKF3.5}
\end{figure}

Such a path can be uniquely decomposed into
$$l^{e_0}uP_1d\,l^{e_1}uP_2d\,l^{e_2}\dots ,$$
where $l$ denotes a level-step at height~$0$, $u$ an up-step, 
and $d$ a down-step,
where $e_i$ are non-negative integers, and where, for any $i$, 
$P_i$ is some path between the lines $y=1$ and $y=k$ which
starts at and returns to the line $y=1$. For example, this
decomposition applied to the path in Figure~\ref{CKF3.5} yields
$$l^1uP_1d\,l^2uP_2d\,l^0uP_3d,$$
where $P_1=uld$, $P_2$ is the empty path, and $P_3=luudd$.
This implies immediately the
generating function equation
\begin{multline*} 
\GF{\LL{(0,0)\to(*,0);M\mid 0\le y\le k};w}\\=\frac {1} 
{1-b_0-\la_1 \cdot\GF{\LL{(0,1)\to(*,1);M\mid 1\le y\le k};w}}.
\end{multline*}
By induction, the generating function on the right-hand side is
known: it is given by \eqref{CKe3.5} with $k$ replaced by $k-1$, 
$b_i$ replaced by $b_{i+1}$, and $\la_i$ replaced by $\la_{i+1}$, for
all $i$. This completes the induction step.
\end{proof}

This result has numerous consequences. First of all, it allows us to
derive algebraic expressions for the generating functions $\sum
_{n\ge0} M_nz^n$ and $\sum _{n\ge0} S_nz^n$, where $M_n$ denotes the
number of all Motzkin paths from $(0,0)$ to $(n,0)$, and where $S_n$
denotes the number of all Schr\"oder paths from $(0,0)$ to $(2n,0)$.
By definition, $M_0=S_0=1$.
The numbers $M_n$ are called 
\index{number, Motzkin}\index{Motzkin number}\index{Motzkin, Theodore S.}{\em Motzkin
numbers}, while the numbers $S_n$ are called 
\index{number, Schr\"oder}\index{Schr\"oder
  number}\index{Schr{\"o}der, Friedrich Wilhelm Karl Ernst}{\em large Schr\"oder numbers}. 

\begin{theorem} \label{CKT3.5}%
We have
\begin{equation} \label{CKe3.7}
\sum _{n\ge0} ^{}M_nz^n=\frac {1-z-\sqrt{1-2z-3z^2}} {2z^2}
\end{equation}
and
\begin{equation} \label{CKe3.8}
\sum _{n\ge0} ^{}S_{n}z^n=\frac {1-z-\sqrt{1-6z+z^2}} {2z}.
\end{equation}
\end{theorem}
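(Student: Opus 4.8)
The plan is to apply the continued fraction of Theorem~\ref{CKT3.4} with a judicious choice of the step weights, and then to exploit the fact that these weights are the \emph{same} at every height, which forces the continued fraction to be self-similar. First I would specialize the weights so that the weighted count $\GF{\LL{(0,0)\to(*,0);M\mid 0\le y};w}$ becomes exactly $\sum_{n\ge0}M_nz^n$. Since an up-step carries weight $1$ in Theorem~\ref{CKT3.4}, and since a path returning to the $x$-axis has as many up-steps as down-steps, I would set $\la_h=z^2$ for every $h$ (so that each up--down pair contributes $z^2$) and $b_h=z$ for every $h$ (so that each level-step contributes $z$). Then a Motzkin path of length $\ell$, having $u$ up-steps, $u$ down-steps and $\ell-2u$ level-steps, acquires weight $(z^2)^u z^{\ell-2u}=z^\ell$, so the weighted generating function indeed counts Motzkin paths by length.

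With these substitutions the continued fraction \eqref{CKe3.6} has the \emph{constant} data $b_h=z$, $\la_h=z^2$ at every level. Writing $F(z)=\sum_{n\ge0}M_nz^n$ for its value, the tail of the continued fraction below the first level is an identical copy of the whole, so $F$ satisfies the functional equation $F=1/(1-z-z^2F)$, equivalently $z^2F^2-(1-z)F+1=0$. Solving this quadratic gives $F=\bigl(1-z\pm\sqrt{(1-z)^2-4z^2}\bigr)/(2z^2)=\bigl(1-z\pm\sqrt{1-2z-3z^2}\bigr)/(2z^2)$, and the minus sign is forced by $F(0)=M_0=1$ (the plus sign produces a Laurent series with a pole at $z=0$). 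This is precisely \eqref{CKe3.7}.

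For the large Schr\"oder numbers the argument is identical except for the bookkeeping of the variable. Here $S_n$ counts Schr\"oder paths ending at $(2n,0)$, and since a level-step $(2,0)$ advances the $x$-coordinate by $2$ while each up- or down-step advances it by $1$, a path to $(2n,0)$ with $u$ up-steps (hence $u$ down-steps) and $m$ level-steps satisfies $2u+2m=2n$, i.e.\ $n=u+m$. Thus I would take $\la_h=z$ and $b_h=z$ for every $h$, so that each up--down pair and each level-step contributes one factor of $z$, making the weighted count equal to $\sum_{n\ge0}S_nz^n$. The same self-similarity argument gives $G=1/(1-z-zG)$ for $G(z)=\sum_{n\ge0}S_nz^n$, hence $zG^2-(1-z)G+1=0$, whose solution with $G(0)=1$ is $\bigl(1-z-\sqrt{1-6z+z^2}\bigr)/(2z)$, which is \eqref{CKe3.8}.

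The only genuinely delicate point, and the one I would be most careful about, is the translation between the step weights of Theorem~\ref{CKT3.4} and the exponent of $z$ that actually records $M_n$ respectively $S_n$: one must remember that up-steps are unweighted in the theorem, that up- and down-steps are balanced for a returning path, and that a Schr\"oder level-step has horizontal length $2$. Once the weight assignments $(\la_h,b_h)=(z^2,z)$ and $(z,z)$ are pinned down correctly, everything else is the routine self-similarity of a constant continued fraction together with the correct choice of sign in the quadratic formula; the same two quadratics can alternatively be read off from a first-return decomposition of the paths, which provides a reassuring cross-check.
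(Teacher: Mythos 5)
Your proposal is correct and follows essentially the same route as the paper: specializing the continued fraction \eqref{CKe3.6} to constant weights, using self-similarity to obtain the quadratic functional equation, and selecting the root regular at $z=0$. The only (immaterial) difference is in the Schr\"oder bookkeeping, where the paper sets $b_i=\la_i=z^2$ and works with $\sum_{n\ge0}S_nz^{2n}$ before substituting, whereas you set $b_h=\la_h=z$ to track $\sum_{n\ge0}S_nz^n$ directly.
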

\begin{proof} 
By \eqref{CKe3.6} with $b_i=z$ and $\la_i=z^2$ for all $i$
(the reader should note that for any down-step there is a corresponding
up-step before),
we have 
$$\sum _{n\ge0} ^{}M_nz^n=\cfrac 1{1-z-
\cfrac {z^2}{1-z-
\cfrac {z^2}{1-z-\cdots}}}\ .
$$
Thus, in particular, we have $M(z)=1/(1-z-z^2M(z))$. The appropriate
solution of this quadratic equation is exactly the right-hand side of
\eqref{CKe3.7}. 

Similarly, by setting $b_i=\la_i=z^2$ in \eqref{CKe3.6} for all $i$, we
obtain 
$$\sum _{n\ge0} ^{}S_nz^{2n}=\cfrac 1{1-z^2-
\cfrac {z^2}{1-z^2-
\cfrac {z^2}{1-z^2-\cdots}}}\ ,
$$
and eventually \eqref{CKe3.8} after solving the analogous quadratic
equation.
\end{proof}

In Section~\ref{CKs3.6}, we will express the continued fraction
\eqref{CKe3.5} in numerator/denominator form, the numerator and
denominator being orthogonal polynomials.

\begin{figure}[h]
$$
\Gitter(17,6)(0,0)
\Koordinatenachsen(17,6)(0,0)
\Pfad(0,0),3343334344344434\endPfad
\DickPunkt(0,0)
\DickPunkt(16,0)
\hskip9cm
$$
\caption{}
\label{CKfig2}
\end{figure}

\medskip
We conclude this section with another continued fraction result,
due to 
\index{Roblet, Emmanuel}Roblet and 
\index{Viennot, Xavier G\'erard}Viennot \cite{RoViAA}.
We restrict our attention to Dyck paths, that is, to paths 
consisting of up- and down-steps, starting at the origin and
returning to the $x$-axis, and never running below the $x$-axis. 
We refine the earlier
defined weight $w$ in the following way, 
so that in addition it also takes into account peaks: 
given a Dyck path $P$, we define the weight $\hat w(P)$ of $P$ to be the
product of the weights of all its steps, where the weight of an
up-step is $1$, the weight of 
a down-step from height $h$ to $h-1$ which follows immediately after
an up-step (thus, together, forming a peak of the
path) is $\nu_h$, and where the weight of
a down-step from height $h$ to $h-1$ which follows after another
down-step is $\la_h$. Thus, the weight of
the Dyck path in Figure~\ref{CKfig2} is 
$\nu_2\nu_4\nu_4\la_3\nu_3\la_2\la_1\nu_1=
\nu_1\nu_2\nu_3\nu_4^2\la_1\la_2\la_3.$
With these definitions, the theorem of
\index{Roblet, Emmanuel}Roblet and 
\index{Viennot, Xavier G\'erard}Viennot \cite[Prop.~1]{RoViAA} reads as follows.

\begin{theorem}%
With the weight $\hat w$ defined as above, 
the generating function $\sum _{P} ^{}\hat w(P)$, where the sum is over 
all Dyck paths starting at the origin and returning to the
$x$-axis, is given by
\begin{multline} \label{CKeq:RV} 
\GF{\LL{(0,0)\to(*,0);\{(1,1),\,(1,-1)\}\mid y\ge0};\hat w}\\
=\cfrac 1{1-(\nu_1-\la_1)-
\cfrac {\la_1}{1-(\nu_2-\la_2)-
\cfrac {\la_2}{1-(\nu_3-\la_3)-\cdots}}} .
\end{multline}
\end{theorem}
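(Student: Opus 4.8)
The plan is to reduce the claim to Flajolet's continued fraction expansion for Motzkin paths, Theorem~\ref{CKT3.4}. The first observation is that the right-hand side of \eqref{CKeq:RV} is exactly the continued fraction \eqref{CKe3.6} of Theorem~\ref{CKT3.4}, specialised to the level-step weights $b_h=\nu_{h+1}-\la_{h+1}$ (for a level-step at height $h$) together with the same down-step weights $\la_h$. Indeed, with $b_0=\nu_1-\la_1$, $b_1=\nu_2-\la_2,\dots$ the Flajolet fraction becomes literally the fraction in \eqref{CKeq:RV}. Hence it suffices to exhibit a weight-preserving correspondence between Dyck paths carrying the peak weight $\hat w$ and Motzkin paths carrying these Flajolet weights.

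The key algebraic device is to split the peak weight. For a down-step $d$ that forms a peak and descends from height $h$ to $h-1$, I would write $\nu_h=\la_h+(\nu_h-\la_h)$. Expanding the resulting product of weights over all peaks of a Dyck path $P$ turns $\sum_P\hat w(P)$ into a double sum over pairs $(P,S)$, where $S$ ranges over subsets of the peaks of $P$: each peak selected into $S$ contributes the factor $(\nu_h-\la_h)$, while every other down-step (the non-peak down-steps together with the peaks \emph{not} in $S$) contributes its $\la$-weight. Thus in the pair $(P,S)$ every down-step is weighted by $\la$ except the selected peaks, which carry the difference.

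Next I would set up the bijection. Given such a pair $(P,S)$, contract every selected peak --- an up-step immediately followed by a down-step, with common base height $h-1$ --- into a single level-step at height $h-1$, retaining its weight $\nu_h-\la_h$. Since peaks are pairwise disjoint pairs of consecutive steps and each contraction leaves the height profile outside the peak unchanged, the result is a well-defined Motzkin path staying weakly above the $x$-axis, in which up-steps keep weight $1$, the surviving down-steps from height $h$ keep weight $\la_h$, and the new level-step at height $h-1$ has weight $\nu_h-\la_h=b_{h-1}$. The inverse map simply expands each level-step at height $g$ back into the peak consisting of an up-step from $g$ to $g+1$ followed by a down-step from $g+1$ to $g$, recording these as the selected set $S$; this recovers $(P,S)$ uniquely. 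Consequently $\sum_P\hat w(P)$ equals the generating function for Motzkin paths from the origin back to the $x$-axis with weights $b_h=\nu_{h+1}-\la_{h+1}$ and $\la_h$, and \eqref{CKe3.6} then yields precisely \eqref{CKeq:RV}.

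The step needing the most care is the verification that the contraction/expansion pair is genuinely bijective and weight-preserving. One must check that contracting the selected peaks never creates an ambiguity --- it does not, because the chosen peaks are disjoint adjacent $ud$-blocks of $P$, so they may all be contracted simultaneously with a well-defined outcome --- and that expanding the level-steps of the Motzkin path reproduces exactly the selected peaks and no others, so that the pair $(P,S)$ is recovered. Everything else is the bookkeeping of matching the weight $\nu_h-\la_h$ of a contracted peak of base height $h-1$ with Flajolet's level weight $b_{h-1}$, and of matching the down-step weights $\la_h$ in the two models; both are immediate once the bijection is in place.
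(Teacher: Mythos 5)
Your argument is correct, and it is worth noting that the paper itself offers no proof of this statement at all: the theorem is simply quoted from Roblet and Viennot \cite[Prop.~1]{RoViAA}, so your proposal supplies a proof where the chapter has none (self-contained modulo Theorem~\ref{CKT3.4}). The reduction is sound: splitting each peak weight as $\nu_h=\lambda_h+(\nu_h-\lambda_h)$, expanding the product into a sum over subsets $S$ of peaks, and contracting the selected peaks into level-steps is a weight-preserving bijection between pairs $(P,S)$ and Motzkin paths carrying Flajolet's weights with $b_{h}=\nu_{h+1}-\lambda_{h+1}$; the crucial injectivity point --- that the level-steps of the image are exactly the contracted selected peaks, because a Dyck path has no level-steps of its own --- is precisely the one you verify, and the index bookkeeping ($b_{h-1}=\nu_h-\lambda_h$ for a peak with summit at height $h$, surviving down-steps keeping $\lambda_h$) checks out, as does the identification of the specialised fraction \eqref{CKe3.6} with the right-hand side of \eqref{CKeq:RV}. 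Two minor remarks. First, your correspondence mixes path lengths (a pair $(P,S)$ with $|S|=k$ maps to a Motzkin path that is $k$ steps shorter), so the identity should be read as one between formal power series in the weights $\nu_h,\lambda_h$: both sides are graded by the number of down-plus-level steps, i.e.\ by total degree, and each graded piece is a finite sum, so the rearrangement is legitimate --- this deserves one explicit sentence. Second, there is an even shorter route that bypasses both the bijection and Theorem~\ref{CKT3.4}: the first-return decomposition $P=\varepsilon$ or $P=uP_1dP_2$ gives, for the generating function $F_r$ of Dyck paths at level $r$, the relation $F_r=1+\bigl(\nu_{r+1}+\lambda_{r+1}(F_{r+1}-1)\bigr)F_r$, since the first return step is a peak (weight $\nu_{r+1}$) exactly when $P_1$ is empty; solving gives $F_r=1\big/\bigl(1-(\nu_{r+1}-\lambda_{r+1})-\lambda_{r+1}F_{r+1}\bigr)$, and iteration yields \eqref{CKeq:RV} directly. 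Your route is slightly longer but has the virtue of explaining why the Roblet--Viennot fraction is Flajolet's fraction in disguise.
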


\section{Lattice paths and orthogonal polynomials}
\label{CKs3.4}\index{orthogonal polynomial}\index{polynomial,
orthogonal}

Orthogonal polynomials play an important role 
in many different subject areas, may they be pure or applied. The reader is referred to
\cite{SzegOP} for an in-depth introduction. It is well-known
that the theory of orthogonal polynomials is intimately
connected with Hankel determinants and continued fractions,
which we just discussed in Section~\ref{CKs3.3} from a
combinatorial point of view.
It is \index{Viennot, Xavier G\'erard}Viennot
\cite{VienAE} who made the connection, and who showed 
that a large part of the theory of orthogonal
polynomials is in fact combinatorics. The key objects in this {\em
combinatorial theory of orthogonal polynomials} are \index{path,
Motzkin}\index{Motzkin path}\index{Motzkin, Theodore S.}{\em Motzkin
  paths}. 
If one is forced to,
one may compress the interplay between the theory of orthogonal
polynomials and path enumeration to two key facts: first,
(generalized) moments of orthogonal polynomials are generating functions
   for Motzkin paths, see Theorem~\ref{CKP3.1}; 
second, generating functions for bounded Motzkin paths 
   can be expressed in terms of orthogonal polynomials; see
Theorem~\ref{CKT3.11}. But, of course, this combinatorial theory
of orthogonal polynomials has much more to offer, of which
we present an extract in this section,
with a focus on path enumeration.

\medskip
We call a sequence $(p_n(x))_{n\ge0}$ of polynomials
over $\C$,
where $p_n(x)$ is of degree~$n$.
\index{orthogonal}\index{polynomial, orthogonal}{\em
orthogonal} if there exists a \index{linear functional}\index{functional,
linear}linear functional $L$ on polynomials
over $\C$ (i.e., a linear map, which maps a polynomial to a complex
number) such that
\begin{equation} \label{CKe3.10}
L(p_n(x)p_m(x))=\begin{cases} 0,&\text {if $n\ne m$,}\\
\text {nonzero},&\text {if $n=m$.}\end{cases}
\end{equation}
We alert the reader that our definition deviates from the classical
{\em analytic} definition in that we {\em do not} require $L(p_n(x)^2)$
to be {\it positive}. The above somewhat weaker notion of
orthogonality is sometimes referred to as {\em formal orthogonality}.
The term `formal' expresses the fact that the corresponding theory does not
require any analytic tools, just formal, algebraic arguments. In
fact, the formal theory could be equally well developed over any
field $K$ of characteristic 0 (instead of over $\C$). 

\medskip
It is easy to see that it is not true that
for every linear functional $L$ there is a corresponding sequence
of orthogonal polynomials. Let us consider the example of the linear
functional defined by $L(x^n):=1$, $n=0,1,2,\dots$. Equivalently,
this means that $L(p(x))=p(1)$. In order to construct a corresponding
sequence of orthogonal polynomials, we start with $p_0(x)$. This must
be a polynomial of degree $0$, but otherwise we are completely free.
Without loss of generality we may choose $p_0(x)\equiv 1$. To
determine $p_1(x)$, we use \eqref{CKe3.10} with $m=0$ and $n=1$. Thus we
obtain $p_1(x)=x-1$. But then we have $L(p_1(x)^2)=L((x-1)^2)=0$,
which violates the requirement \eqref{CKe3.10}, with $m=n=1$, that
$L(p_1(x)^2)$ should be nonzero.

On the other hand, if we have a linear functional $L$ such that there
exists a sequence of orthogonal polynomials, then it is easy
to see that all other sequences
are just linear multiples of the former sequence.

\begin{lemma} \label{CKL3.1}%
Let $L$ be a linear functional on polynomials and $(p_n(x))_{n\ge0}$
be a sequence of polynomials orthogonal with respect to $L$. If
$(q_n(x))_{n \ge0}$ is another sequence of polynomials orthogonal
with respect to $L$, then there are nonzero numbers $a_n\in\C$ such
that $q_n(x)=a_n\,p_n(x)$.
\end{lemma}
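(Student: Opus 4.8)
The plan is to exploit the fact that a sequence of orthogonal polynomials with prescribed degrees is a graded basis of the polynomial ring, so that orthogonality against the members of the sequence upgrades to orthogonality against \emph{all} polynomials of strictly lower degree. First I would record the following elementary consequence of \eqref{CKe3.10}: if $(r_n(x))_{n\ge0}$ is any sequence orthogonal with respect to $L$, then $L\big(r_n(x)\,s(x)\big)=0$ for every polynomial $s(x)$ of degree strictly less than $n$. Indeed, since $r_0,r_1,\dots,r_{n-1}$ have degrees $0,1,\dots,n-1$ respectively, they span the space of polynomials of degree at most $n-1$; writing $s=\sum_{k<n}\gamma_k r_k$ and invoking the vanishing $L(r_n r_k)=0$ for $k<n$ furnished by \eqref{CKe3.10} gives the claim at once. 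This observation applies both to the sequence $(p_n)$ and to the sequence $(q_n)$.

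Next I would expand $q_n$ in the basis $\{p_0,p_1,\dots,p_n\}$ of the polynomials of degree at most $n$, say $q_n(x)=\sum_{k=0}^{n} c_{n,k}\,p_k(x)$. Because $q_n$ has degree exactly $n$ while $p_n$ is the unique basis element of degree $n$, the top coefficient $c_{n,n}$ is nonzero. For each $j<n$ I would then compute $L\big(q_n(x)\,p_j(x)\big)$ in two ways. On the one hand, $p_j$ has degree $j<n$, so the preliminary observation applied to $(q_n)$ yields $L(q_n\,p_j)=0$. On the other hand, linearity of $L$ together with the orthogonality of the $p_k$'s gives $L(q_n\,p_j)=\sum_{k=0}^{n} c_{n,k}\,L(p_k\,p_j)=c_{n,j}\,L(p_j^2)$. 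Since \eqref{CKe3.10} guarantees $L(p_j^2)\ne 0$, comparing the two evaluations forces $c_{n,j}=0$ for every $j<n$.

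Hence only the leading term survives, so $q_n(x)=c_{n,n}\,p_n(x)$, and setting $a_n:=c_{n,n}$ completes the argument, the nonvanishing $a_n\ne 0$ having already been noted. The argument is essentially bookkeeping, and the single step I expect to be the crux is the passage from orthogonality against the sequence to orthogonality against all lower-degree polynomials: this is precisely where the hypothesis that each $p_n$ (and each $q_n$) has degree \emph{exactly} $n$ is indispensable, since without it the spanning step collapses and the stated conclusion can fail. The only other point to keep honest is that one never divides by a quantity that might vanish, which is secured throughout by the nonzero clause of \eqref{CKe3.10}.
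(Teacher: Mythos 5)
Your proof is correct and complete: the preliminary observation (orthogonality against the sequence upgrades to orthogonality against all polynomials of lower degree, because $r_0,\dots,r_{n-1}$ have degrees $0,\dots,n-1$ and hence span), followed by expanding $q_n=\sum_{k\le n}c_{n,k}p_k$ and pairing against each $p_j$ with $j<n$ to force $c_{n,j}=0$, is exactly the standard argument. The paper itself omits the proof of this lemma (introducing it only with ``it is easy to see''), and your write-up supplies precisely that intended argument, with the degree-exactly-$n$ hypothesis and the nonvanishing of $L\big(p_j(x)^2\big)$ from \eqref{CKe3.10} invoked at the right places.
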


Lemma~\ref{CKL3.1} justifies that from now on we will restrict our
attention to sequences of \index{monic polynomial}\index{polynomial,
monic}{\em monic} polynomials.

One of the key results in the theory of
orthogonal polynomials is \index{Favard,
Jean}\index{Favard's Theorem}\index{theorem, Favard's}{\em 
Favard's Theorem}, which we state next. 

\begin{theorem} \label{CKT3.6}%
A sequence $(p_n(x))_{n\ge0}$ of monic polynomials, $p_n(x)$ being of
degree $n$, is orthogonal if and only if 
there exist sequences $(b_n)_{n\ge0}$ and $(\la_n)_{n\ge1}$, with
$\la_n\ne0$ for all $n\ge1$, such that the three-term recurrence
\begin{equation}
\label{CKeq:three-term}
xp_{n}(x)=p_{n+1}(x)+b_np_n(x)+\la_{n}p_{n-1}(x),\quad \quad 
\text{ for } n\geq 1,
\end{equation}
holds, with initial conditions $p_0(x)=1$ and $p_1(x)=x-b_0$.

\end{theorem}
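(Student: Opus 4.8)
The plan is to prove the two implications separately, both resting on one structural observation: since $\deg p_n = n$ for every $n$, the family $(p_n)_{n\ge0}$ is a basis of $\C[x]$. Hence every polynomial has a unique expansion in the $p_n$, a linear functional $L$ is completely determined by its values $L(p_n)$, and a product such as $xp_n$ can be re-expanded in the basis. I will use this repeatedly.

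For the \emph{only if} direction (orthogonality implies the recurrence), I would start by writing $xp_n$, which is monic of degree $n+1$, as $xp_n=\sum_{k=0}^{n+1}c_kp_k$ with $c_{n+1}=1$. To identify the lower coefficients I would pair against $p_k$: orthogonality kills all but one term, giving $c_kL(p_k^2)=L(xp_n\,p_k)=L(p_n\cdot xp_k)$. When $k\le n-2$ the polynomial $xp_k$ has degree $k+1<n$, so it is a combination of $p_0,\dots,p_{n-1}$, each orthogonal to $p_n$; thus $c_k=0$ for $k\le n-2$. This collapses the expansion to $xp_n=p_{n+1}+b_np_n+\lambda_np_{n-1}$ with $b_n:=c_n$ and $\lambda_n:=c_{n-1}$, and the $n=0$ instance $xp_0=p_1+b_0p_0$ gives $p_1=x-b_0$. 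To see $\lambda_n\ne0$, I would write $\lambda_n=L(p_n\cdot xp_{n-1})/L(p_{n-1}^2)$ and observe that $xp_{n-1}=p_n+(\text{lower degree})$, so the numerator equals $L(p_n^2)\ne0$.

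For the \emph{if} direction (the recurrence implies orthogonality), I would define $L$ on the basis by $L(p_0)=1$ and $L(p_n)=0$ for $n\ge1$, extended linearly. The key is a lemma, proved by an immediate induction on $j$ using $xp_k=p_{k+1}+b_kp_k+\lambda_kp_{k-1}$, stating that $x^jp_n$ is a linear combination only of those $p_k$ with $|k-n|\le j$. Consequently, if $j<n$ then every index that occurs satisfies $k\ge n-j\ge1$, so $L(x^jp_n)=0$; this yields $L(q\,p_n)=0$ for all $q$ with $\deg q<n$, and hence $L(p_mp_n)=0$ for $m\ne n$ (symmetry handles $m>n$). To get $L(p_n^2)\ne0$, I would evaluate $L(p_n\cdot xp_{n-1})$ in two ways: expanding $xp_{n-1}$ by the recurrence and using orthogonality gives $L(p_n^2)$, while expanding $xp_n$ instead gives $\lambda_nL(p_{n-1}^2)$. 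Equating yields $L(p_n^2)=\lambda_nL(p_{n-1}^2)$, and iterating with $L(p_0^2)=L(1)=1$ and $\lambda_i\ne0$ gives $L(p_n^2)=\lambda_1\cdots\lambda_n\ne0$.

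The main obstacle is getting the induction in the \emph{if} direction to terminate. A naive attempt to compute $L(x^jp_n)$ by \emph{lowering the index} via the recurrence raises the power of $x$ and makes the excess $j-n$ grow without bound, so the recursion never closes. The spread lemma circumvents this by \emph{lowering the power of $x$} (thereby raising indices): after $j<n$ reductions one reaches only $p_k$ with $k\ge n-j\ge1$, so the single index $0$ that $L$ detects is never produced, and the quantity must vanish. Once this lemma is in place, the rest is routine bookkeeping in the basis $(p_n)$.
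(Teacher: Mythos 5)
Your proof is correct, and while your \emph{only if} direction coincides with the paper's argument (expand $xp_n$ in the basis $(p_k)$, pair against $p_i$ with $i<n-1$ to kill the lower coefficients, and identify $\la_n L\big(p_{n-1}(x)^2\big)=L\big(p_n(x)^2\big)\ne0$), your \emph{if} direction takes a genuinely different route. The paper defines the same functional $L$ (by $L(1)=1$, $L\big(p_n(x)\big)=0$ for $n\ge1$) but then invokes its Theorem~\ref{CKP3.1}, which interprets the mixed moments $L\big(x^n p_k(x)p_l(x)\big)$ as weighted generating functions for Motzkin paths; the case $n=0$ of that theorem, proved by an induction that applies the three-term recurrence twice, yields $L\big(p_k(x)p_l(x)\big)=\la_1\cdots\la_l\,\de_{k,l}$ and hence orthogonality. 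You instead stay entirely inside linear algebra: your ``spread lemma'' (that $x^jp_n$ expands over $p_k$ with $|k-n|\le j$, so $L(x^jp_n)=0$ whenever $j<n$) gives $L(p_mp_n)=0$ for $m\ne n$ directly, and your two-way evaluation of $L(p_n\cdot xp_{n-1})$ — which is, in miniature, the same double-recurrence manipulation the paper uses inside the proof of Theorem~\ref{CKP3.1} — gives $L\big(p_n(x)^2\big)=\la_1\cdots\la_n\ne0$. What each approach buys: yours is more elementary and self-contained, needing no combinatorial apparatus; the paper's detour through Motzkin paths is deliberate, since Theorem~\ref{CKP3.1} is the keystone of the whole combinatorial theory of orthogonal polynomials in this chapter (it is reused for the moment formula \eqref{CKe3.16}, the Hankel determinant results, and Corollary~\ref{CKC3.0}), so the paper gets Favard's theorem as a byproduct of a statement it needs anyway. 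Your diagnosis of the pitfall — that lowering indices instead of lowering the power of $x$ makes the recursion diverge — is accurate and well explained.
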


In our
context, more important than the statement of the theorem itself is its
proof, which introduces Motzkin paths in a surprising way in
\eqref{CKe3.16}, and in
particular Theorem~\ref{CKP3.1} below, which is the key ingredient
in the proof,
given after the proof of Theorem~\ref{CKP3.1}.

\begin{theorem} \label{CKP3.1}%
Let the polynomials $p_n(x)$ be
given by the three-term recurrence \eqref{CKeq:three-term},
and let $L$ be the linear functional defined by $L(1)=1$
and $L\big(p_n(x)\big)=0$ for $n\ge1$. Then
\begin{equation} \label{CKe3.17}
L\big(x^n\,p_k(x)\,p_l(x)\big)=\la_1\cdots\la_l\cdot
\GF{\LL{(0,k)\to(n,l);M\mid 0\le y};w},
\end{equation}
where $w$ is the weight on Motzkin paths defined in Section~\ref{CKs3.3}.
\end{theorem}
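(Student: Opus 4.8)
The plan is to expand everything in terms of the three-term recurrence and read off the combinatorial meaning of the coefficients that arise. The key formula to establish is \eqref{CKe3.17}, so I would first clarify what object the right-hand side counts: a Motzkin path from height $k$ to height $l$ using exactly $n$ steps, weighted by $w$, where up-steps carry weight $1$, a level-step at height $h$ carries $b_h$, and a down-step from $h$ to $h-1$ carries $\la_h$. The factor $\la_1\cdots\la_l$ out front is a normalization that will appear naturally when one compares down-steps to up-steps, as I explain below.

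\medskip
\noindent
First I would record the fundamental translation: if I repeatedly use \eqref{CKeq:three-term} to rewrite $x\cdot p_h(x)$, each application of $x$ to $p_h$ produces three terms, $p_{h+1}$ (an up-step, weight $1$), $b_h p_h$ (a level-step at height $h$, weight $b_h$), and $\la_h p_{h-1}$ (a down-step from $h$ to $h-1$, weight $\la_h$). Therefore expanding $x^n\,p_k(x)$ fully in the basis $(p_j(x))_{j\ge0}$ produces exactly a sum over Motzkin-style lattice paths of length $n$ starting at height $k$, where the coefficient of $p_j(x)$ is the $w$-weighted count of all such paths ending at height $j$ — this is precisely the kind of path generating function appearing in Section~\ref{CKs3.3}. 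Concretely I would prove by induction on $n$ that
\begin{equation*}
x^n\,p_k(x)=\sum_{j\ge0}\GF{\LL{(0,k)\to(n,j);M\mid 0\le y};w}\,p_j(x),
\end{equation*}
the base case $n=0$ being trivial and the inductive step being exactly one application of \eqref{CKeq:three-term} to each $p_j$, which corresponds to appending one more step to each path.

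\medskip
\noindent
Next I would multiply this expansion by $p_l(x)$ and apply $L$. Because $L$ annihilates $p_j(x)$ for $j\ge1$ and $L(1)=1$, the functional $L$ picks out information about the constant term in the $p$-basis. The clean way to exploit this is the orthogonality relation: I would use \eqref{CKe3.10} together with the known evaluation $L(p_l(x)^2)=\la_1\la_2\cdots\la_l$, which follows from Favard's Theorem~\ref{CKT3.6} and the recurrence by a short separate induction (multiply \eqref{CKeq:three-term} by $p_{n-1}$, apply $L$, and use orthogonality to get $L(p_n^2)=\la_n L(p_{n-1}^2)$). Then $L\big(x^n p_k(x)\,p_l(x)\big)$ equals the coefficient of $p_l(x)$ in the expansion times $L(p_l^2)$, namely
\begin{equation*}
L\big(x^n p_k(x)\,p_l(x)\big)=\GF{\LL{(0,k)\to(n,l);M\mid 0\le y};w}\cdot\la_1\cdots\la_l,
\end{equation*}
which is \eqref{CKe3.17}.

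\medskip
\noindent
The main obstacle, and the only step that is not purely formal bookkeeping, is establishing $L(p_l(x)^2)=\la_1\cdots\la_l$ and confirming that the $y\ge0$ (non-negativity) constraint on paths is automatically enforced by the recurrence. The former is the recursive computation just sketched, relying on the fact that the defining normalization $L(p_n)=0$ for $n\ge1$ forces all cross terms to drop. The latter point is the genuinely combinatorial subtlety: since $p_{-1}$ is not defined and the recurrence starts with $p_0(x)=1$, $p_1(x)=x-b_0$, there is no down-step available from height $0$, so the expansion never produces paths dipping below the $x$-axis — this is exactly why the right-hand side of \eqref{CKe3.17} involves paths with $0\le y$ rather than unrestricted paths. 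I would make this boundary behaviour explicit so that the path model matches the algebra precisely.
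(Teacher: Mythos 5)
Your expansion lemma --- that
$x^n\,p_k(x)=\sum_{j\ge0}\GF{\LL{(0,k)\to(n,j);M\mid 0\le y};w}\,p_j(x)$,
proved by induction on $n$ with one application of \eqref{CKeq:three-term} per appended step, the absence of a $p_{-1}$ term enforcing the constraint $y\ge0$ --- is correct, and it is essentially the same mechanism as the paper's own induction on $n$: the paper applies \eqref{CKeq:three-term} to the factor $p_k$ inside $L$, which amounts to decomposing each path by its \emph{first} step, while you decompose by the \emph{last} step; the bookkeeping is identical. The problem lies entirely in how you then evaluate $L\big(p_j(x)\,p_l(x)\big)$.

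At that point you invoke the orthogonality relation \eqref{CKe3.10} and Favard's Theorem~\ref{CKT3.6} to obtain $L(p_jp_l)=0$ for $j\ne l$ and $L\big(p_l^2\big)=\la_1\cdots\la_l$. This is a genuine gap, for two reasons. First, the hypothesis of the theorem does not grant that $L$ is an orthogonality functional: $L$ is only defined by $L(1)=1$ and $L(p_n)=0$ for $n\ge1$, and the statement $L\big(p_k(x)p_l(x)\big)=\la_1\cdots\la_l\,\de_{k,l}$ is precisely the case $n=0$ of the identity \eqref{CKe3.17} you are proving; note that $L(p_n)=0$ for $n\ge1$ does \emph{not} by itself kill the cross terms $L(p_jp_l)$, since $p_jp_l$ is not one of the $p_n$'s. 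Second, the appeal to Favard's theorem is circular within this paper: the backward implication of Theorem~\ref{CKT3.6} (a three-term recurrence with $\la_n\ne0$ implies orthogonality) is itself proved by applying Theorem~\ref{CKP3.1} with $n=0$ to exactly this functional $L$. Even taking Favard's theorem as known from outside, you would still need the additional step of identifying the functional it produces, after normalization, with the specific $L$ of the statement. The repair is to prove the $n=0$ case directly from the recurrence and the definition of $L$, by induction on $k+l$: for $k\ge l\ge1$ one writes $p_l=xp_{l-1}-b_{l-1}p_{l-1}-\la_{l-1}p_{l-2}$, uses the induction hypothesis to discard the resulting cross terms, obtains $L(p_kp_l)=L\big(xp_k\,p_{l-1}\big)$, and then expands $xp_k$ by \eqref{CKeq:three-term} to get $L(p_kp_l)=\la_k L(p_{k-1}p_{l-1})$, closing the induction. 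This is exactly how the paper handles the base case; with that lemma in place, the rest of your argument goes through as written.
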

\begin{proof} 
We prove the assertion by induction on $n$.

If $n=0$, then we have to show
\begin{equation} \label{CKe3.17a}
L\big(p_k(x)\,p_l(x)\big)=\la_1\cdots\la_l\cdot
\de_{k,l},
\end{equation}
where $\de_{k,l}$ denotes the Kronecker delta.
We establish this claim by induction on $k+l$. It is obviously
true for $k=l=0$. Without loss of generality, we assume $k\ge l$.
Then, using the three-term recurrence \eqref{CKeq:three-term} twice,
together with the induction hypothesis, we have
\begin{align*}
L\big(p_k(x)\,p_l(x)&\big)\\
&=
L\big(p_k(x)\,xp_{l-1}(x)\big)
-b_{l-1}L\big(p_k(x)\,p_{l-1}(x)\big)
-\la_{l-1}L\big(p_k(x)\,p_{l-2}(x)\big)\\
&=
L\big(xp_k(x)\,p_{l-1}(x)\big)\\
&=
L\big(p_{k+1}(x)\,p_{l-1}(x)\big)
+b_kL\big(p_{k}(x)\,p_{l-1}(x)\big)
+\la_kL\big(p_{k-1}(x)\,p_{l-1}(x)\big)\\
&=
\la_kL\big(p_{k-1}(x)\,p_{l-1}(x)\big).
\end{align*}
Clearly, this achieves the induction step, and thus establishes
\eqref{CKe3.17a}.

We may now continue with the induction on~$n$.
For the induction step, we apply
\eqref{CKeq:three-term} with $n=k$ on the left-hand side of
\eqref{CKe3.17}. This leads to
\begin{multline*}
L\big(x^n\,p_k(x)\,p_l(x)\big)\\=
L\big(x^{n-1}\,p_{k+1}(x)\,p_l(x)\big)
+b_kL\big(x^{n-1}\,p_k(x)\,p_l(x)\big)
+\la_kL\big(x^{n-1}\,p_{k-1}(x)\,p_l(x)\big).
\end{multline*}
By the induction hypothesis, we may interpret the 
right-hand side of this equality as generating function for
Motzkin paths, as described by \eqref{CKe3.17} with $n$ replaced
by $n-1$. It is then
straightforward to see that this implies \eqref{CKe3.17} itself.
\end{proof}

Now we have all the prerequisites available
in order to prove Theorem~\ref{CKT3.6}.

\bigskip
\noindent
{\bf Proof of Theorem~\ref{CKT3.6}}\indent
For showing the forward implication, let
$(p_n(x))_{n\ge0}$ be a sequence of monic polynomials, $p_n(x)$ of
degree $n$, which is orthogonal with respect to the linear functional
$L$. Then we can express $xp_n(x)$ in terms of a linear combination
of the polynomials $p_{n+1}(x)$, $p_n(x)$, \dots, $p_0(x)$,
\begin{equation} \label{CKe3.15}
xp_n(x)=p_{n+1}(x)+b_np_n(x)+\la_np_{n-1}(x)+\om_{n,n-2}p_{n-2}(x)
+\dots+\om_{n,0}p_0(x).
\end{equation} 
We have to show that in fact the first three terms on the right-hand
side suffice, i.e., that all other terms are zero. 

In order to do that, we multiply both sides of \eqref{CKe3.15} by $p_i(x)$,
for some $i<n-1$, and apply $L$ on both sides. Because of
\eqref{CKe3.10}, on the right-hand side it is only the term
$\om_{n,i}L\big(p_i(x)^2\big)$ which survives. On the left-hand side we
obtain $L\big(xp_i(x)p_n(x)\big)$. The polynomial $xp_i(x)$ of degree
$i+1$ can be expressed as a linear combination of the polynomials
$p_{i+1}(x)$, $p_i(x)$, \dots, $p_0(x)$. Because of
\eqref{CKe3.10} and $i<n-1$, we therefore conclude that
$L\big(xp_i(x)p_n(x)\big)=0$. Hence, $\om_{n,i}$ is indeed $0$ for
$i<n-1$. Similarly, we have
$$\la_n\,L\big(p_{n-1}(x)^2\big)=L\big(xp_{n-1}(x)p_n(x)\big)=
L\big(p_n(x)^2\big),$$
which is nonzero because of \eqref{CKe3.10}. Hence, we have $\la_n\ne0$, as
desired.

\medskip
For the proof of the backward implication, we must construct a linear
functional $L$ such that \eqref{CKe3.10} holds, given a sequence
$(p_n(x))$ of polynomials, $p_n(x)$ of degree $n$, satisfying the
three-term recurrence \eqref{CKeq:three-term}. We construct $L$ by
defining $L(1)=1$ and $L\big(p_n(x)\big)=0$ for $n\ge1$.
Theorem~\ref{CKP3.1} with $n=0$
immediately implies that $L\big(p_k(x)p_l(x)\big)=0$ if $k\ne l$, as there
is no Motzkin path from $(0,k)$ to $(0,l)$, and
that $L\big(p_k(x)^2\big)=\la_1\cdots\la_k\ne0$. This completes the proof of
the theorem.
\bigskip

In the above proof, we have found a linear functional $L$ by
defining (cf.\ Theorem~\ref{CKP3.1})
its moments $\mu_n:=L(x^n)$
to be generating functions for Motzkin paths, namely
\begin{equation} \label{CKe3.16}
\mu_n=\GF{\LL{(0,0)\to(n,0);M\mid 0\le y};w}=
\underset{\text {from $(0,0)$ to $(n,0)$}}
{\sum _{P\text { a Motzkin path}} ^{}}w(P),
\end{equation}
the
weights of the paths carrying the coefficients in the three-term recurrence
\eqref{CKeq:three-term}. This definition generates a linear functional
with $\mu_0=L(1)=1$. It is easy to see 
that all other such linear functionals are constant nonzero multiples of the
linear functional defined by \eqref{CKe3.16}. 
This justifies to restrict ourselves to linear functionals with
first moment equal to $1$.

In view of Theorem~\ref{CKT3.4}, the backward implication of Theorem~\ref{CKT3.6}
can also be phrased in the following way.

\begin{corollary} \label{CKC3.0}%
Let $(p_n(x))_{n\ge0}$ be a sequence of polynomials satisfying the
three-term recurrence \eqref{CKeq:three-term} with initial
conditions $p_0(x)=1$ and $p_1(x)=x-b_0$. Then $(p_n(x))_{n\ge0}$ is
orthogonal with respect to the linear functional $L$, where the generating
function of its moments $\mu_n=L(x^n)$ is given by
\begin{equation} \label{CKe3.16a}
\sum _{n\ge0} ^{}\mu_n\, z^n=
\cfrac 1{1-b_0z-
\cfrac {\la_1z^2}{1-b_1z-
\cfrac {\la_2z^2}{1-b_2z-\cdots}}}\ .
\end{equation}
All other linear functionals with respect to which the sequence
$(p_n(x))_{n\ge0}$ is orthogonal are constant nonzero multiples of $L$.
\end{corollary}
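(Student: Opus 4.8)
The plan is to assemble the statement from three ingredients already developed above: the backward implication of Favard's Theorem (Theorem~\ref{CKT3.6}), the moment interpretation \eqref{CKe3.16}, and the continued fraction of Theorem~\ref{CKT3.4}. First, the orthogonality of $(p_n(x))_{n\ge0}$ with respect to the functional $L$ defined by $L(1)=1$ and $L(p_n(x))=0$ for $n\ge1$ is precisely what the backward direction of Theorem~\ref{CKT3.6} furnishes: Theorem~\ref{CKP3.1} specialized to $n=0$ gives $L(p_k(x)p_l(x))=\la_1\cdots\la_l\,\de_{k,l}$, which is exactly the orthogonality relation \eqref{CKe3.10}.

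Second, I would pin down the moments. Applying Theorem~\ref{CKP3.1} with $k=l=0$ and $p_0(x)=1$, so that the empty product $\la_1\cdots\la_0$ equals $1$, yields
$$\mu_n=L(x^n)=\GF{\LL{(0,0)\to(n,0);M\mid 0\le y};w},$$
that is, the $n$-th moment is the $w$-weighted count of Motzkin paths from $(0,0)$ to $(n,0)$. Consequently $\sum_{n\ge0}\mu_nz^n$ is the generating function for all Motzkin paths returning to the $x$-axis under the weight $w$ refined by an extra factor $z$ for each step, the exponent of $z$ recording the horizontal extent of the path, which for Motzkin steps equals the number of steps.

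Third --- and here lies the only real work --- I would extract the closed form by invoking Theorem~\ref{CKT3.4}. The continued fraction \eqref{CKe3.6} counts these paths under $w$ without tracking length; to attach the factor $z$ per step one substitutes $b_h\mapsto b_hz$ and $\la_h\mapsto\la_hz^2$. The level-step substitution is immediate, since a level-step is a single step of weight $b_h$. The delicate point is the quadratic factor on $\la_h$: in the combinatorial decomposition behind \eqref{CKe3.6}, every down-step carrying weight $\la_h$ is bundled with the weight-$1$ up-step that opened the corresponding excursion, so that appearance of $\la_h$ represents two steps and hence contributes $z^2$ rather than $z$. This is exactly the bookkeeping already used in the proof of Theorem~\ref{CKT3.5}, and performing it transforms \eqref{CKe3.6} into \eqref{CKe3.16a}.

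Finally, for the uniqueness clause I would use that $(p_n(x))_{n\ge0}$ is a basis of the polynomial ring, so any linear functional is determined by its values on the $p_n(x)$. If $L'$ also orthogonalizes the sequence, then taking $p_0(x)=1$ gives $L'(p_n(x))=L'(p_n(x)\,p_0(x))=0$ for $n\ge1$ and $c:=L'(1)=L'(p_0(x)^2)\ne0$; hence $L'$ and $cL$ agree on every $p_n(x)$, so $L'=cL$. The main obstacle is the correct length-bookkeeping in the third step --- in particular justifying the $z^2$ on each $\la_h$ --- while everything else is a direct assembly of results already in hand.
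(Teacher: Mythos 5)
Your proof is correct and takes essentially the same route as the paper, whose entire proof is the instruction to combine \eqref{CKe3.16} with \eqref{CKe3.6} under the substitution $b_i\mapsto b_iz$, $\la_i\mapsto \la_iz^2$. Your explicit justification of the $z^2$ factor (each $\la_h$ in the continued fraction accounts for a down-step together with its matching up-step, i.e., two steps) and your verification of the uniqueness clause merely spell out details the paper leaves implicit.
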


\begin{remark}\em
A continued fraction of the type \eqref{CKe3.16a} is called a
\index{J-fraction}\index{fraction, J-fraction}\index{Jacobi, Carl Gustav Jacob}%
{\em Jacobi continued fraction} or {\em J-fraction}.
\end{remark}

\bigskip
\noindent
{\bf Proof of Corollary~\ref{CKC3.0}}\indent
Combine \eqref{CKe3.16} and \eqref{CKe3.6} with $b_i$ replaced by $b_iz$
and $\la_i$ replaced by $\la_iz$.
\bigskip

Below, we illustrate what we have found so far by an
example. The polynomials which appear in this example, the 
\index{Chebyshev, Pafnuty Lvovich}%
\index{polynomial, Chebyshev}\index{Chebyshev polynomial}{\em Chebyshev
polynomials}, are of particular importance for path counting.

\begin{example}\label{CKex:Ch}%
\em
We choose $b_i=0$ and $\la_i=1$ for all $i$. Then the
three-term recurrence \eqref{CKeq:three-term} becomes
\begin{equation} \label{CKet3.1}
xu_{n}(x)=u_{n+1}(x)+u_{n-1}(x),\quad \quad 
\text{ for } n\geq 1,
\end{equation}
with initial values $u_0(x)=1$ and $u_1(x)=x$. These polynomials are,
up to re\-pa\-ra\-metri\-za\-tion, 
\index{Chebyshev, Pafnuty Lvovich}%
\index{polynomial, Chebyshev}\index{Chebyshev polynomial}{\em Chebyshev
polynomials of the second kind}. To see that, recall that the latter
are defined by
$$U_n(\cos\th)=\frac {\sin((n+1)\th)} {\sin\th},$$
or, equivalently,
$$U_n(x)=\frac {\sin((n+1)\arccos x)} {\sqrt {1-x^2}}.$$
Because of the easily verified fact that
$$\sin((n+1)\th)+\sin((n-1)\th)=2\cos\th\sin n\th,$$
the Chebyshev polynomials of the second kind satisfy the three-term
recurrence
\begin{equation} \label{CKet3.2}
2xU_{n}(x)=U_{n+1}(x)+U_{n-1}(x),\quad \quad 
\text{ for } n\geq 1,
\end{equation}
with initial values $U_0(x)=1$ and $U_1(x)=2x$. Therefore we have
\begin{equation} \label{CKet3.3}
U_n(x)=u_n(2x)
\end{equation}
for all $n$.

It is straightforward to verify
\begin{equation} \label{CKet3.3a}
U_n(x)=\sum _{k\ge0} ^{}(-1)^k\binom {n-k}k (2x)^{n-2k},
\end{equation}
whence, by \eqref{CKet3.3}, we have
$$u_n(x)=\sum _{k\ge0} ^{}(-1)^k\binom {n-k}k x^{n-2k}.$$

Another well-known fact is
$$\frac {2} {\pi}\int _{0} ^{\pi}\sin((n+1)\th)\sin((m+1)\th)\,d\th =
\begin{cases} 1,&n=m,\\ 0,&n\ne m.\end{cases}$$
Substitution of $x=\cos\th$ then yields
\begin{equation} \label{CKet3.4}
\frac {2} {\pi}\int _{-1} ^{1}U_n(x)U_m(x)\sqrt{1-x^2}\,dx =
\de_{nm}.
\end{equation}
Thus the linear functional $L$ for Chebyshev polynomials of the
second kind is given by
$$L(p(x))=\frac {2} {\pi}\int _{0} ^{\pi}p(x)\sqrt{1-x^2}\,dx.$$

Using \eqref{CKe3.16} we can now easily compute the corresponding
moments. On the right-hand side of \eqref{CKe3.16} all the terms
corresponding to paths which contain a level-step vanish, because
$b_i=0$ for all $i$. Therefore, what the right-hand side counts are
paths which contain only up-steps and down-steps (and never pass
below the $x$-axis). Clearly, there cannot be such a path if $n$ is
odd. If $n$ is even, then by \eqref{CKe1.5} the number of these paths
is the \index{Catalan numbers}\index{Catalan, Eug\`ene Charles}Catalan number 
$\frac {1} {n/2+1}\binom {n}{n/2}$. Hence, by also taking into
account \eqref{CKet3.3}, we have shown that
$$\frac {2} {\pi}\int _{-1} ^{1}x^m\sqrt{1-x^2}=\begin{cases} \frac
{1} {4^n}\frac {1} {n+1}\binom {2n}{n},&m=2n,\\0,&m=2n+1.\end{cases}$$

\medskip
\index{Chebyshev, Pafnuty Lvovich}%
\index{polynomial, Chebyshev}\index{Chebyshev polynomial}Chebyshev 
polynomials are not only tied to Catalan paths (Dyck paths), i.e., paths
that consist of just up- and down-steps, but also to Motzkin paths.
To see this, let us now choose $b_i=\la_i=1$ for all $i$. Then the
three-term recurrence \eqref{CKeq:three-term} becomes
\begin{equation} \label{CKet3.9}
xm_{n}(x)=m_{n+1}(x)+m_n(x)+m_{n-1}(x),\quad \quad 
\text{ for } n\geq 1,
\end{equation}
with initial values $m_0(x)=1$ and $m_1(x)=x-1$. Comparison with
\eqref{CKet3.2} reveals that these polynomials are expressible by means
of Chebyshev polynomials of the second kind as
\begin{equation} \label{CKet3.10}
m_n(x)=U_n\(\frac {x-1} {2}\).
\end{equation}
We will take advantage of this relation in Section~\ref{CKs3.6} to
obtain further enumerative results on Motzkin paths.
\end{example}

We now come back to the earlier observed fact that not all linear
functionals allow for a corresponding sequence of orthogonal
polynomials. Which linear functionals do is told by the following
theorem. 
The criterion is given in terms of
\index{determinant, Hankel}\index{Hankel determinant}\index{Hankel, Hermann}{\em Hankel
determinants} of the moments of $L$. A Hankel determinant 
(or \index{determinant, persymmetric}\index{persymmetric
determinant}{\em persymmetric} or \index{Tur\'anian
determinant}\index{determinant, Tur\'anian}\index{Tur\'an, P\'al}{\em Tur\'anian determinant}) 
is a determinant of a matrix which has
constant entries along antidiagonals, i.e., it is a determinant of
the form $\det_{1\le i,j,\le n}(a_{i+j}).$ 
We omit the proof here, but
\index{Viennot, Xavier G\'erard}Viennot \cite[Ch.~IV, Cor.~6 and 7]{VienAE}
has shown that it can be given by an elegant application of the
main theorem on \index{non-intersecting lattice paths}%
\index{lattice paths, non-intersecting}non-intersecting 
lattice paths, Theorem~\ref{CKT2.3}, by using the interpretation
of moments in terms of generating functions for Motzkin paths
as given in Theorem~\ref{CKP3.1}.

\begin{theorem} \label{CKT3.7}%
Let $L$ be a linear functional on polynomials with $n$-th moment
$\mu_n=L(x^n)$.
For any non-negative integer $n$ let
$$\De_n=\det\begin{pmatrix} \mu_0&\mu_1&\mu_2&\dots&\mu_n\\
\mu_1&\mu_2&\hdotsfor2 &\mu_{n+1}\\
\mu_2&\hdotsfor3&\mu_{n+2}\\
\vdots&&&&\vdots\\
\mu_n&\hdotsfor3&\mu_{2n}
\end{pmatrix}$$
and
$$\chi_n=\det\begin{pmatrix} \mu_0&\mu_1&\dots&\mu_{n-1}&\mu_n\\
\mu_1&\mu_2&\dots&\mu_n&\mu_{n+1}\\
\vdots&\vdots&&\vdots&\vdots\\
\mu_{n-1}&\mu_{n}&\dots&\mu_{2n-2}&\mu_{2n-1}\\
\mu_{n+1}&\mu_{n+2}&\dots&\mu_{2n}&\mu_{2n+1}
\end{pmatrix}$$
Let $(p_n(x))_{n\ge0}$ be the sequence
of monic polynomials which is orthogonal with respect
to $L$. Then the
polynomials satisfy the three-term recurrence \eqref{CKeq:three-term}
with 
\begin{equation} \label{CKe3.23}
\la_n=\frac {\De_n\De_{n-2}} {\De_{n-1}^2}
\end{equation}
and
\begin{equation} \label{CKe3.24}
b_n=\frac {\chi_{n}} {\De_{n}}-\frac {\chi_{n-1}} {\De_{n-1}}.
\end{equation}

In particular,
given a linear functional $L$ on the set of polynomials, then there
exists a sequence of orthogonal polynomials which are orthogonal with
respect to $L$ if and only if all Hankel determinants
$\De_n=\det_{0\le i,j\le n}(\mu_{i+j})$ of moments are nonzero.
\end{theorem}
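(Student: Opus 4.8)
The plan is to represent the monic orthogonal polynomials determinantally and to read off both recurrence coefficients, as well as the existence criterion, from this representation together with a factorization of the moment matrix. First I would record that factorization: since each $p_k(x)$ is monic of degree $k$, the monomials expand as $x^i=\sum_{k=0}^i a_{ik}\,p_k(x)$ with $a_{ii}=1$, and hence, using $\mu_{i+j}=L(x^ix^j)$ together with the orthogonality \eqref{CKe3.10}, the Hankel matrix $(\mu_{i+j})_{0\le i,j\le n}$ factors as $A\,D\,A^{T}$, where $A=(a_{ik})$ is unit lower-triangular and $D=\operatorname{diag}\big(L(p_0^2),\dots,L(p_n^2)\big)$. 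Taking determinants gives $\De_n=\prod_{k=0}^n L(p_k^2)$, and by Theorem~\ref{CKP3.1} in the case $n=0$ each factor equals $L(p_k^2)=\la_1\cdots\la_k\neq0$. This already settles the forward direction of the final assertion (the existence of orthogonal polynomials forces every $\De_n\neq0$) and yields $L(p_n^2)=\De_n/\De_{n-1}$, with the convention $\De_{-1}:=1$.

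With $\De_{n-1}\neq0$ secured, I would solve the orthogonality conditions $L\big(x^kp_n(x)\big)=0$ for $0\le k\le n-1$, together with the monic normalization, by Cramer's rule. This produces the standard determinantal formula for $p_n(x)$ whose last row is $(1,x,\dots,x^n)$ and whose prefactor is $1/\De_{n-1}$ (so that the $x^n$-coefficient is $1$). Applying $L$ after multiplying this formula by $x^n$ turns the last row into $(\mu_n,\dots,\mu_{2n})$, giving $L(x^np_n)=\De_n/\De_{n-1}$ and thus re-confirming $L(p_n^2)=\De_n/\De_{n-1}$. For $\la_n$ this is immediate: since $L(p_k^2)=\la_1\cdots\la_k$ by Theorem~\ref{CKP3.1}, dividing consecutive values gives $\la_n=L(p_n^2)/L(p_{n-1}^2)=\De_n\De_{n-2}/\De_{n-1}^2$, which is \eqref{CKe3.23}.

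The coefficient $b_n$ is where the real work lies. Comparing coefficients of $x^n$ on the two sides of the three-term recurrence \eqref{CKeq:three-term}, and writing $c_m:=[x^{m-1}]\,p_m(x)$ for the subleading coefficient, yields the clean relation $b_n=c_n-c_{n+1}$. It then remains to extract $c_n$ from the determinantal formula. Expanding its defining determinant along the last row, the coefficient of $x^{n-1}$ equals $(-1)$ times the minor obtained by deleting the last row and the column carrying $x^{n-1}$; this minor has the consecutive rows $\mu_0,\dots,\mu_{n-1}$ but the columns $0,\dots,n-2,n$. The delicate step—the one I expect to be the main obstacle—is to identify this minor with $\chi_{n-1}$: transposing it and invoking the symmetry $\mu_{i+j}=\mu_{j+i}$ of the Hankel array converts the ``skipped column'' into the ``skipped row'' that appears in the definition of $\chi_{n-1}$, whence the minor equals $\chi_{n-1}$ and $c_n=-\chi_{n-1}/\De_{n-1}$. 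Substituting into $b_n=c_n-c_{n+1}$ gives \eqref{CKe3.24}. Some bookkeeping with the boundary conventions $\De_{-1}=1$, $\chi_{-1}=0$ is needed, and one checks that these reproduce $b_0=\mu_1/\mu_0$ correctly.

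Finally, for the converse half of the criterion I would assume all $\De_n\neq0$ and define $p_n(x)$ directly by the determinantal formula above. It is monic of degree $n$; for $k<n$, applying $L$ to $x^kp_n(x)$ makes the last row of the determinant coincide with an earlier row, so $L(x^kp_n)=0$, while $L(x^np_n)=\De_n/\De_{n-1}\neq0$. Consequently $L(p_kp_l)=0$ for $k\neq l$ and $L(p_n^2)\neq0$, so $(p_n(x))_{n\ge0}$ is orthogonal with respect to $L$, which establishes the equivalence. Alternatively, one may invoke Favard's Theorem~\ref{CKT3.6}, since the $\la_n$ produced by \eqref{CKe3.23} are nonzero exactly when all $\De_n$ are nonzero, and uniqueness of the monic sequence is guaranteed by Lemma~\ref{CKL3.1}.
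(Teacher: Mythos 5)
Your proof is correct, but it follows a genuinely different route from the one the paper has in mind: the paper omits the proof altogether and points to Viennot's combinatorial argument, which interprets the moments $\mu_n$ as generating functions for Motzkin paths (Theorem~\ref{CKP3.1}) and then evaluates the Hankel determinants $\De_n$ and $\chi_n$ by the non-intersecting-paths involution of Theorem~\ref{CKT2.3}, so that the product formula \eqref{CKe3.26} and the coefficient formulas \eqref{CKe3.23}--\eqref{CKe3.24} emerge from a weight-preserving bijection. You instead give the classical linear-algebra proof: the Gram factorization $(\mu_{i+j})=A\,D\,A^{T}$ with $A$ unit triangular yields $\De_n=\prod_{k=0}^{n}L(p_k^2)\ne0$ and hence both the forward half of the existence criterion and $\la_n=L(p_n^2)/L(p_{n-1}^2)=\De_n\De_{n-2}/\De_{n-1}^2$; Cramer's rule gives the determinantal representation (this is exactly Theorem~\ref{CKT3.8}, which your argument re-proves en route); extracting the subleading coefficient $c_n=[x^{n-1}]p_n(x)=-\chi_{n-1}/\De_{n-1}$ (your transposition argument identifying the skipped-column minor with the skipped-row determinant $\chi_{n-1}$ is the right way to do this, and the sign works out) and combining with $b_n=c_n-c_{n+1}$ gives \eqref{CKe3.24}; and the converse follows by defining $p_n$ determinantally and checking repeated rows. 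Each approach buys something: yours is self-contained, needs no path machinery, and delivers Theorem~\ref{CKT3.8} as a by-product, while the combinatorial proof explains \emph{why} $\De_n$ factors as $\la_1^n\la_2^{n-1}\cdots\la_n$ and fits the chapter's programme of reducing orthogonal-polynomial theory to lattice-path enumeration. Two small points of hygiene: Theorem~\ref{CKP3.1} is stated for the functional normalized by $L(1)=1$, so for general $\mu_0$ you should either rescale $L$ or note that $L(p_k^2)=\mu_0\la_1\cdots\la_k$, the constant cancelling in all the ratios you take (alternatively, get $\la_nL(p_{n-1}^2)=L(p_n^2)$ directly from the recurrence as in the paper's proof of Theorem~\ref{CKT3.6}, avoiding the normalization issue entirely); and your closing ``alternative'' converse via Favard is looser than your main converse argument, since orthogonality with respect to \emph{some} functional still has to be matched with the given $L$ --- but your primary argument does not rely on it.
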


Implicit in \eqref{CKe3.23} is the Hankel determinant evaluation
\begin{equation} \label{CKe3.26}
\De_n=\la_1^n\la_2^{n-1}\cdots\la_n^1,
\end{equation}
which expresses the close interplay between Hankel determinants,
moments of orthogonal polynomials, and Motzkin path enumeration
(via Theorem~\ref{CKP3.1}).

\medskip
We conclude this section with an explicit, determinantal formula
for orthogonal polynomials, given the moments of the orthogonality
functional. Again, 
\index{Viennot, Xavier G\'erard}Viennot \cite[Ch.~IV, \S4]{VienAE}
has given a beautiful combinatorial proof for this formula.
using
\index{non-intersecting lattice paths}%
\index{lattice paths, non-intersecting}non-intersecting lattice paths.

\begin{theorem} \label{CKT3.8}%
Let $L$ be a linear functional defined on polynomials with moments
$\mu_n=L(x^n)$. Then the corresponding sequence
$(p_n(x))_{n\ge0}$ of monic orthogonal polynomials is given by
\begin{equation}
\label{CKeq:ortho}
p_{n}(x)=\frac {1} {\De_{n-1}}\det\begin{pmatrix}
		\mu_0	&	\mu_1	&	\mu_2	&	\dots &	\mu_n \\
		\mu_1	&	\mu_2	&	\dots	&	\mu_n	&	\mu_{n+1} \\
		\mu_2	&	\dots	&	\mu_n	&	\mu_{n+1}&\mu_{n+2} \\
		\hdotsfor{5} \\
		\mu_{n-1} &	\mu_n	&	\mu_{n+1}&\dots	&\mu_{2n-1} \\
		1	&	x&\dots&	x^{n-1}&x^n 
	\end{pmatrix},
\end{equation}
where, again, $\De_{n-1}=\det_{0\le i,j\le n-1}(\mu_{i+j})$.
\end{theorem}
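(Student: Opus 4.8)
The plan is to verify directly that the right-hand side of \eqref{CKeq:ortho} is a monic polynomial of degree $n$ that is annihilated by $L$ after multiplication by any lower-degree polynomial, and then to invoke the uniqueness of monic orthogonal polynomials (Lemma~\ref{CKL3.1}). Write $D_n(x)$ for the determinant in \eqref{CKeq:ortho}, so the claim is $p_n(x)=D_n(x)/\De_{n-1}$. First I would check that the expression is well-defined: since we are given that a sequence orthogonal with respect to $L$ exists, Theorem~\ref{CKT3.7} guarantees that every Hankel determinant is nonzero, in particular $\De_{n-1}\ne0$. Next, expanding $D_n(x)$ along its last row $(1,x,\dots,x^n)$ — the only row depending on $x$ — the cofactor of the entry $x^n$ is exactly $\det_{0\le i,j\le n-1}(\mu_{i+j})=\De_{n-1}$, so $D_n(x)$ has degree $n$ with leading coefficient $\De_{n-1}$ and $D_n(x)/\De_{n-1}$ is monic of degree $n$.

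The heart of the argument is the orthogonality computation. Writing the last-row cofactor expansion as $D_n(x)=\sum_{j=0}^n x^j\,C_j$, with the cofactors $C_j$ independent of $x$, linearity of $L$ gives, for $0\le k\le n-1$,
\begin{equation*}
L\big(x^k D_n(x)\big)=\sum_{j=0}^n L(x^{k+j})\,C_j=\sum_{j=0}^n \mu_{k+j}\,C_j.
\end{equation*}
The right-hand side is precisely the cofactor expansion, along the last row, of the determinant obtained from $D_n(x)$ by replacing its last row with $(\mu_k,\mu_{k+1},\dots,\mu_{k+n})$. For $0\le k\le n-1$ this row equals the $k$-th Hankel row $(\mu_k,\dots,\mu_{k+n})$ sitting above it, so the determinant has two identical rows and vanishes. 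Hence $L\big(x^k D_n(x)\big)=0$ for all $k<n$, and by linearity $L\big(q(x)\,D_n(x)\big)=0$ for every polynomial $q$ of degree $<n$.

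I expect this passage of $L$ through the determinant to be the only step requiring care, and it is the natural candidate for the main obstacle; but it is in fact routine once one isolates the single $x$-dependent row and recognizes that applying $L$ termwise simply turns that row into a row of moments $(\mu_k,\dots,\mu_{k+n})$, after which the ``two equal rows'' observation closes it.

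Finally I would identify $D_n/\De_{n-1}$ with $p_n$. Since $D_n/\De_{n-1}$ is monic of degree $n$ and the $p_m$ form a basis of the polynomials, write $D_n/\De_{n-1}=p_n+\sum_{m=0}^{n-1}c_m p_m$. Pairing with $p_\ell$ for $\ell<n$ and using \eqref{CKe3.10} (so $L(p_\ell p_m)=0$ unless $m=\ell$, and $L(p_\ell p_n)=0$ since $\ell\ne n$), the previous paragraph yields $0=L\big(p_\ell\,D_n/\De_{n-1}\big)=c_\ell\,L(p_\ell^2)$; as $L(p_\ell^2)\ne0$ we conclude $c_\ell=0$ for every $\ell$, whence $D_n(x)/\De_{n-1}=p_n(x)$, which is \eqref{CKeq:ortho}. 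Equivalently, one may simply note that $D_n/\De_{n-1}$ is a monic polynomial orthogonal to all lower-degree polynomials and invoke the uniqueness asserted by Lemma~\ref{CKL3.1}.
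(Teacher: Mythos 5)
Your proof is correct and takes essentially the same route as the paper's: the heart of both arguments is that applying $L$ to $x^k$ times the determinant converts the last row $(1,x,\dots,x^n)$ into the moment row $(\mu_k,\mu_{k+1},\dots,\mu_{k+n})$, which for $k<n$ duplicates the $k$-th row and forces the determinant to vanish. The paper's version is merely terser, leaving implicit the monicity check (the cofactor of $x^n$ being $\De_{n-1}$) and the final identification with $p_n$, both of which you spell out carefully.
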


\begin{proof}
It suffices to check that $L(x^mp_n(x))=0$ for $0\le m<n$. Indeed, by
\eqref{CKeq:ortho} we have
\begin{equation*}
L(x^mp_{n}(x))=\frac {1} {\De_{n-1}}\det\begin{pmatrix}
		\mu_0	&	\mu_1	&	\mu_2	&	\dots &	\mu_n \\
		\mu_1	&	\mu_2	&	\dots	&	\mu_n	&	\mu_{n+1} \\
		\mu_2	&	\dots	&	\mu_n	&	\mu_{n+1}&\mu_{n+2} \\
		\hdotsfor{5} \\
		\mu_{n-1} &	\mu_n	&	\mu_{n+1}&\dots	&\mu_{2n-1} \\
		\mu_m	&	\mu_{m+1}&\dots&
\mu_{m+n-1}&\mu_{m+n}
	\end{pmatrix}.
\end{equation*}
Thus the result is zero, because for $0\le m<n$ the $m$-th and the
last row in the above determinant are identical.
\end{proof}

In Section~\ref{CKs3.6} we derive several further enumeration results on Motzkin
paths which feature orthogonal polynomials.

We close this section by pointing out that Motzkin paths can be seen
as so-called
\index{heap of pieces}{\em heaps of pieces}.
The corresponding theory has been developed by
\index{Viennot, Xavier G\'erard}Viennot \cite{VienAF}.
As a matter of fact, it is the combinatorial realization
of the \index{Cartier, Pierre}{\it Cartier--\index{Foata, Dominique}Foata
monoid\/} \cite{CaFoAA}. 

For further intriguing work on the connections between lattice path
counting, Hankel determinants, and continued fractions, the reader is referred to
\index{Gessel, Ira Martin}Gessel and 
\index{Xin, Guoce}Xin \cite{GeXiAA}, and 
also \index{Sulanke, Robert A.}Sulanke and 
\index{Xin, Guoce}Xin \cite{SuXiAA}.

\section{Motzkin paths in a strip}
\label{CKs3.6}
In Sections~\ref{CKs3.1} and \ref{CKs3.3} we have derived enumeration
results for Motzkin paths which start and terminate on the $x$-axis.
In particular, Theorem~\ref{CKT3.4} provided a continued fraction for
the generating function with respect to a very general weight.
This continued fraction can be compactly brought in 
numerator/denominator form, using
orthogonal polynomials. In fact, more generally, a compact
expression for the generating function of Motzkin paths which
start and terminate at {\em arbitrary} points can be given, 
again using orthogonal polynomials.

In order to be able to state the corresponding result, we need
two definitions. Recall that,
given sequences $(b_n)_{n\ge0}$ and $(\la_n)_{n\ge1}$, with
$\la_n\ne0$ for all $n\ge1$, the three-term recurrence
\eqref{CKeq:three-term},
\begin{equation} \label{CKeq:three-term2}
xp_{n}(x)=p_{n+1}(x)+b_np_n(x)+\la_{n}p_{n-1}(x),\quad \quad 
\text{ for } n\geq 1,
\end{equation}
with initial conditions $p_0(x)=1$ and $p_1(x)=x-b_0$, produces
a sequence $(p_n(x))_{n\ge0}$ of orthogonal polynomials. 
We also need associated ``shifted" polynomials (often simply called
\index{associated orthogonal polynomials}\index{polynomial,
associated orthogonal}{\em
associated orthogonal polynomials}), denoted by $(Sp_n(x))_{n\ge0}$,
which arise from the sequence $(p_n(x))$ by replacing $\la_i$ by
$\la_{i+1}$ and $b_i$ by $b_{i+1}$, $i=0,1,2,\dots$, everywhere in
the three-term recurrence \eqref{CKeq:three-term2} and in the initial
conditions. Furthermore, given a polynomial $p(x)$ of degree $n$, we
denote the corresponding \index{polynomial,
reciprocal}\index{reciprocal polynomial}{\em reciprocal polynomial}
$x^np(1/x)$ by $p^*(x)$.

\begin{theorem} \label{CKT3.11}%
With the weight $w$ defined as before Theorem~\ref{CKT3.4}, 
the generating function for
\index{path, Motzkin}\index{Motzkin path}\index{Motzkin, Theodore S.}Motzkin paths running from 
height $r$ to height $s$
which stay weakly below the line $y=k$ is given by
\begin{multline} \label{CKe3.43}
\sum _{n\ge0} ^{}\GF{\LL{(0,r)\to(n,s);M\mid 0\le y\le k};w}x^n\\
=\begin{cases} \dfrac {x^{s-r}p^*_r(x)S^{s+1}p_{k-s}^*(x)} {p_{k+1}^*(x)},
&\text{if }r\le s,\\
\la_r\cdots\la_{s+1}\dfrac {x^{r-s}p^*_s(x)S^{r+1}p_{k-r}^*(x)}
{p_{k+1}^*(x)},
&\text{if }r\ge s.
\end{cases}
\end{multline}
In particular, the generating function for
\index{path, Motzkin}\index{Motzkin path}\index{Motzkin, Theodore S.}Motzkin paths running from 
the origin back to the $x$-axis
which stay weakly below the line $y=k$, is given by
\begin{equation} \label{CKe3.39}
\sum _{n\ge0} ^{}\GF{\LL{(0,0)\to(n,0);M\mid 0\le y\le k};w}x^n
=\frac {Sp_k^*(x)} {p_{k+1}^*(x)}.
\end{equation}
\end{theorem}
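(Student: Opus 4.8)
The plan is to prove Theorem~\ref{CKT3.11} by establishing a recurrence in the bound $k$ and recognizing that the resulting recurrence is precisely the one satisfied by the reciprocal orthogonal polynomials $p_k^*(x)$. The key observation is that the continued fraction in Theorem~\ref{CKT3.4} already gives us the denominator-return case, and converting that continued fraction into numerator/denominator form is exactly what produces $Sp_k^*(x)/p_{k+1}^*(x)$. So I would prove \eqref{CKe3.39} first as the foundational case, and then derive the general formula \eqref{CKe3.43} from it by a path-decomposition argument. Since the question asks specifically for the final statement, I focus on \eqref{CKe3.39}.

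\textbf{Step 1: Extract the recurrence from the continued fraction.} First I would introduce the generating function
$$
H_k(x):=\sum_{n\ge0}\GF{\LL{(0,0)\to(n,0);M\mid 0\le y\le k};w}x^n,
$$
which by Theorem~\ref{CKT3.4} (with $b_i\mapsto b_ix$ and $\la_i\mapsto\la_ix^2$, exactly as in the proof of Corollary~\ref{CKC3.0}, to account for the step lengths) equals the finite continued fraction terminating at level $k$. The standard theory of continued fractions writes such a finite $J$-fraction as a ratio of two polynomial sequences $A_k(x)/B_k(x)$, where both numerator and denominator satisfy the \emph{same} three-term recurrence in $k$, differing only in initial conditions. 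The plan is to verify that $B_k(x)=p_{k+1}^*(x)$ and $A_k(x)=Sp_k^*(x)$ by checking that these reciprocal polynomials satisfy the correct recurrence.

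\textbf{Step 2: Match to the reciprocal orthogonal polynomials.} The three-term recurrence \eqref{CKeq:three-term2}, namely $xp_n(x)=p_{n+1}(x)+b_np_n(x)+\la_np_{n-1}(x)$, transforms under the reciprocal operation $p^*(x)=x^np(1/x)$ into a recurrence of the form
$$
p_{n+1}^*(x)=(1-b_nx)\,p_n^*(x)-\la_nx^2\,p_{n-1}^*(x),
$$
and one checks directly that this is exactly the denominator recurrence of the continued fraction $H_k$ (the $(1-b_nx)$ and $\la_nx^2$ matching the partial numerators and denominators). The associated polynomials $Sp_n(x)$, obtained by the index shift $b_i\mapsto b_{i+1}$, $\la_i\mapsto\la_{i+1}$, give precisely the numerator sequence. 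Verifying the initial conditions ($p_0^*=1$, $p_1^*(x)=1-b_0x$, and the corresponding shifted values for the numerator) then pins down the identification, yielding \eqref{CKe3.39}.

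\textbf{The main obstacle.} The delicate point is the bookkeeping in the reciprocal operation: one must track the degrees carefully (the numerator has degree one less than the denominator, so the reciprocal shifts interact with the convergent indices), and confirm that the shift operator $S$ applied to $p_k$ \emph{before} taking reciprocals produces the same object as taking the numerator convergent of the continued fraction directly. In other words, the hard part is not any single computation but establishing that $\bigl(\text{numerator of the }k\text{th convergent}\bigr)=Sp_k^*(x)$ and $\bigl(\text{denominator}\bigr)=p_{k+1}^*(x)$ simultaneously, with matching initial data. Once \eqref{CKe3.39} is in hand, the general formula \eqref{CKe3.43} follows by decomposing a path from height $r$ to height $s$ according to its excursions relative to the boundaries $y=0$ and $y=k$, expressing each factor via a bounded return generating function and invoking the symmetry $\la_r\cdots\la_{s+1}$ for the $r\ge s$ case (which reflects reversing the path and reweighting the down-steps).
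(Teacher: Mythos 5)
Your continued-fraction argument for \eqref{CKe3.39} is correct, and it is a genuinely different route from the paper's: you identify the finite $J$-fraction of Theorem~\ref{CKT3.4} (with $b_i\mapsto b_ix$, $\lambda_i\mapsto\lambda_ix^2$) with the quotient of its convergents, and then check that the denominator sequence satisfies $p^*_{n+1}(x)=(1-b_nx)\,p^*_n(x)-\lambda_nx^2\,p^*_{n-1}(x)$ with $p^*_0=1$, $p^*_1=1-b_0x$, while the numerator sequence satisfies the same recurrence with shifted initial data and hence equals $Sp^*_n(x)$; both verifications are routine and your identification is right. The paper instead proves the whole theorem in one stroke by the transfer matrix method: Motzkin paths in the strip are walks on the path graph with vertices $v_0,\dots,v_k$, Stanley's transfer-matrix theorem gives the generating function from $v_r$ to $v_s$ as $(-1)^{r+s}\det(I-xA;s,r)/\det(I-xA)$ for the tridiagonal weighted adjacency matrix $A$, and expanding these tridiagonal determinants and minors identifies the denominator with $p^*_{k+1}(x)$ and the cofactors with the numerators of \eqref{CKe3.43} for all pairs $(r,s)$ simultaneously.

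The genuine gap is your treatment of \eqref{CKe3.43}, which is the main assertion of the theorem (\eqref{CKe3.39} is merely its case $r=s=0$), not a dispensable addendum. Your sketch---``decompose according to excursions relative to the boundaries''---does not go through as stated. The natural last-visit decomposition of a path from height $r$ to height $s$ (for $r\le s$) gives
\begin{equation*}
G_{r\to s}(x)=G_{r}(x)\cdot\prod_{j=r+1}^{s}x\,\frac{S^{j+1}p^*_{k-j}(x)}{S^{j}p^*_{k-j+1}(x)}
=G_{r}(x)\,x^{s-r}\,\frac{S^{s+1}p^*_{k-s}(x)}{S^{r+1}p^*_{k-r}(x)},
\end{equation*}
where the factors inside the product are indeed shifted instances of \eqref{CKe3.39} (paths from $j$ to $j$ staying in $[j,k]$). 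But the remaining factor $G_r(x)$---the generating function for paths from height $r$ back to height $r$ inside the \emph{full} strip $[0,k]$---is exactly the $r=s$ case of \eqref{CKe3.43} and is \emph{not} an instance of \eqref{CKe3.39}: the weights $b_h,\lambda_h$ depend on absolute height, so the portion of such a path lying below level $r$ cannot be absorbed by an index shift. To close this case you need two ingredients your proposal never supplies: (i) the ceiling-anchored return generating function, namely that paths from $j$ to $j$ staying in $[0,j]$ have generating function $p^*_j(x)/p^*_{j+1}(x)$ (a separate, easy induction using the recurrence above); and (ii) the splitting identity $p^*_{k+1}(x)=p^*_r(x)\,S^rp^*_{k+1-r}(x)-\lambda_rx^2\,p^*_{r-1}(x)\,S^{r+1}p^*_{k-r}(x)$, i.e., the expansion of the tridiagonal determinant representing $p^*_{k+1}$ at row $r$ (whose one-row special case is the ``head'' form of the three-term recurrence, which you also need). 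With (i) and (ii), the decomposition of $G_r$ into excursions above and below level $r$ does yield $p^*_r\,S^{r+1}p^*_{k-r}/p^*_{k+1}$, and your reversal argument with the factor $\lambda_{s+1}\cdots\lambda_r$ then correctly settles $r\ge s$; as written, however, the central case of the theorem remains unproven, and the machinery required to repair it is essentially the tridiagonal-determinant calculus that the paper's transfer-matrix proof uses directly.
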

\begin{proof}
Consider the directed graph, $P_{k+1}$ say, with vertices 
$v_0,v_1,\dots v_k$, where for $h=0,1,\dots, k-1$ there is an arc
from $v_h$ to $v_{h+1}$ as well as an arc from $v_{h+1}$ to $v_h$, 
and where there is a loop for each vertex $v_h$. 
Motzkin paths which never exceed height $k$ 
correspond in a one-to-one fashion to 
{\em walks} on $P_{k+1}$.
In this correspondence, an up-step from height $h$ to
$h+1$ in the Motzkin path corresponds to a step from vertex $v_h$ to
vertex $v_{h+1}$ in the walk, and similarly for level- and
down-steps. To make the correspondence also weight-preserving, we
attach a weight of $1$ to an arc from $v_h$ to $v_{h+1}$,
$h=0,1,\dots,k-1$, a weight of $\la_h$ to an arc from $v_{h}$ to
$v_{h-1}$, and a weight of $b_h$ to a loop at $v_h$.

By the 
\index{transfer matrix method}\index{method, transfer matrix}{\em transfer
matrix method} (see e.g\@. \cite[Theorem~4.7.2]{StanAP}),
the generating function for walks from $v_r$ to $v_s$ is given by 
$$\frac {(-1)^{r+s}\det(I-xA;s,r)} {\det(I-xA)},$$
where $A$ is the (weighted) {\it adjacency matrix} of $P_{k+1}$, where $I$
is the $(k+1)\times (k+1)$ identity matrix, and where $\det(I-xA;s,r)$
is the minor of $(I-xA)$ with the $s$-th row and $r$-th column
deleted. 

Now, the (weighted) adjacency matrix of $P_{k+1}$ with the property
that the weight of a particular walk would correspond to the weight $w$ of the
corresponding Motzkin path is the tridiagonal matrix
$$A=\begin{pmatrix} b_0&1&0&\dots\\
\la_1&b_1&1&0&\dots\\
0&\la_2&b_2&1&0&\dots\\
\vdots&\ddots&\ddots&\ddots&\ddots&\ddots&\vdots\\
&\dots&0&\la_{k-2}&b_{k-2}&1&0\\
&&\dots&0&\la_{k-1}&b_{k-1}&1\\
&&&\dots&0&\la_{k}&b_k\end{pmatrix}.$$
It is easily verified that, with this choice of $A$, we have
$\det(I-xA)=p_{k+1}^*(x)$ (by expanding the determinant 
with respect to the last row
and comparing with the three-term recurrence \eqref{CKeq:three-term}), 
and, similarly, that the numerator in \eqref{CKe3.43} agrees with  
$(-1)^{r+s}\det(I-xA;r,s)$.
\end{proof}

\begin{example}\label{CKex:Cheb}%
\em
We illustrate Theorem~\ref{CKT3.11} for the special cases
which were considered in Example~\ref{CKex:Ch}.

Let first
$b_i=0$ and $\la_i=1$ for all $i$. Combinatorially, we are talking
about paths consisting of up- and down-steps, that is, 
\index{Catalan path}\index{path, Catalan}\index{Catalan, 
Eug\`ene Charles}Catalan paths (Dyck paths).
Since for this choice
of $b_i$'s and $\la_i$'s there is no difference between the
orthogonal polynomials and the corresponding associated orthogonal
polynomials arising from \eqref{CKeq:three-term},  
Example~\ref{CKex:Ch} tells us that
\begin{equation*} 
p_n(x)=Sp_n(x)=U_n(x/2). 
\end{equation*}
From \eqref{CKe3.43}, it then follows that
\begin{multline} 
\sum _{n\ge0} ^{}\big\vert\LL{(0,r)\to(n,s);
\{(1,1),(1,-1)\}\mid 0\le y\le k}\big\vert
\cdot x^n\\
=\begin{cases} \dfrac {U_r(1/2x)\,U_{k-s}(1/2x)} {x\,U_{k+1}(1/2x)},
&\text{if }r\le s,\\
\dfrac {U_s(1/2x)\,U_{k-r}(1/2x)}
{x\,U_{k+1}(1/2x)},
&\text{if }r\ge s.
\end{cases}
\end{multline}

Next let
$b_i=\la_i=1$ for all $i$. Combinatorially, we are talking
about paths consisting of up-, down-, and level-steps, that is, 
\index{Motzkin path}\index{path, Motzkin}\index{Motzkin, 
Theodore S.}Motzkin paths.
Again, since for this choice
of $b_i$'s and $\la_i$'s there is no difference between the
orthogonal polynomials and the corresponding associated orthogonal
polynomials arising from \eqref{CKeq:three-term},  
Example~\ref{CKex:Ch} tells us that
\begin{equation*} 
p_n(x)=Sp_n(x)=U_n\left(\frac {x-1} {2}\right). 
\end{equation*}
From \eqref{CKe3.43}, it then follows that
\begin{multline} \label{CKeq:Motz} 
\sum _{n\ge0} ^{}\big\vert\LL{(0,r)\to(n,s);
M\mid 0\le y\le k}\big\vert
\cdot x^n\\
=\begin{cases} \dfrac {
U_r\left(\frac {1-x} {2x}\right)\,U_{k-s}\left(\frac {1-x} {2x}\right)} 
{x\,U_{k+1}\left(\frac {1-x} {2x}\right)},
&\text{if }r\le s,\\
\dfrac {
U_s\left(\frac {1-x} {2x}\right)\,U_{k-r}\left(\frac {1-x} {2x}\right)}
{x\,U_{k+1}\left(\frac {1-x} {2x}\right)},
&\text{if }r\ge s.
\end{cases}
\end{multline}
\end{example}

\begin{example}\label{CKex:gambl}%
\em
The standard application of \eqref{CKeq:Motz} concerns the 
\index{gambler's ruin}{\it
gambler's ruin problem} (see also \cite[Ch.~XIV]{FellAA}): 
two players $A$ and $B$ have initially $a$
and $R-a$ dollars, respectively. They play several rounds, in each of
which the probability that player $A$ wins is $p_A$, the probability
that player $B$ wins is $p_B$, and the probability
that there is a tie is $p_T=1-p_A-p_B$.
If one player wins,
(s)he takes a dollar from the other.
If there is a tie, nothing happens. The play stops when one of the
players is bankrupt. What is the probability that
player $A$, say, goes bankrupt after $N$ rounds? 

\begin{figure}[h]
$$
\Gitter(12,5)(-1,0)
\Koordinatenachsen(12,5)(-1,0)
\Pfad(0,1),13141133444\endPfad
\DickPunkt(0,1)
\DickPunkt(11,0)
\PfadDicke{.5pt}
\Pfad(-2,4),11111111111111\endPfad
\Label\l{y=R-2}(-3,4)
\hbox{\hskip6cm}
$$
\caption{}
\label{CKfig1}
\end{figure}

By disregarding the last round (which is necessarily a round in which
$B$ wins),
this problem can be represented by a lattice path starting at 
$(0,a-1)$, ending at $(N-1,0)$, 
with steps $(1,1)$
(corresponding to player $A$ to win a round), $(1,-1)$
(corresponding to player $B$ to win a round), and
$(1,0)$ (corresponding to a tie), which does not pass below the
$x$-axis, and which does not pass above the
horizontal line $y=R-2$. For example, the lattice path in Figure~\ref{CKfig1}
corresponds to the play, where player~A starts with 2~dollar, player~B
starts with 4~dollar, the
outcome of the rounds is in turn TATBTTAABBBB (the letter $A$ symbolizing
a round where $A$ won, with an analogous meaning of the letter $B$,
and the letter $T$ symbolizing a tie), so that $A$ goes bankrupt
after $N=12$ rounds (while $B$ did not). 

If we assign the {\it
weight\/} $p_A$ to an up-step $(1,1)$, $p_B$ to a down-step $(1,-1)$,
and $p_T$ to a level-step $(1,0)$, then the probability of this play
is the product of the weights of all the steps of the path $P$
times $p_B$
(corresponding to the last round where $B$ wins and $A$ goes bankrupt;
in our example, it is $p_Tp_Ap_Tp_Bp_Tp_Tp_Ap_Ap_Bp_Bp_Bp_B$). If we
write $p(P)$ for the product of the weights of the steps of $P$, 
then, in order to solve 
the problem, we need to compute the sum $\sum _{P} ^{}p_Bp(P)$, where
the sum is over all the above described paths from $(0,a-1)$ to $(N-1,0)$.

Clearly, \eqref{CKeq:Motz} with $r=a-1$ and $s=0$ provides the
solution for the above problem, in terms of a generating function.
Since the zeroes of the Chebyshev polynomials are explicitly known,
one can apply partial fraction decomposition to obtain an explicit
formula for the coefficients in the generating function. If this
is carried out, then we get
\begin{multline} \label{CKeq:Motz2} 
\big\vert\LL{(0,r)\to(n,s);
M\mid 0\le y\le k}\big\vert\\
=\frac {2} {k+2}\sum _{j=1} ^{k+1}
\left(2\cos \frac {\pi j} {k+2}+1\right)^n\cdot
\sin\frac {\pi j(r+1)} {k+2}\cdot \sin\frac
{\pi j(s+1)} {k+2}. 
\end{multline}
\end{example}

\section{Further results for lattice paths in the plane}\label{CKsec:var}

In this section we collect various further results on the
enumeration of two-dimensional lattice paths, 
respectively pointers to further such results.

\medskip
The first set of results that we describe concerns lattice
paths in the plane integer lattice $\Z^2$ which consist of
steps from a finite set $\SS$ that contains steps of the form $(1,b)$.
Here, $b$ is some integer. Say,
\begin{equation} \label{CKeq:SS} 
\SS=\{(1,b_1),\,(1,b_2),\dots,(1,b_m)\}.
\end{equation}
We also assume that to each step $(1,b_j)$ there is associated a
weight $w_j\in\C$.

\index{Banderier, Cyril}Banderier and
\index{Flajolet, Philippe}Flajolet \cite{BaFlAA} completely solved
the exact and asymptotic enumeration of lattice paths consisting
of steps from $\SS$ obeying certain restrictions. We concentrate
here on the exact enumeration results.

The key object in their theory is the 
\index{characteristic polynomial}\index{polynomial, characteristic}{\it
characteristic polynomial\/} of the step set $\SS$,
\begin{equation} \label{CKeq:char}
P_\SS(u)=\sum_{j=1}^m w_ju^{b_j}.
\end{equation}
If we write $c=-\min_jb_j$ and $d=\max_jb_j$, then $P_\SS(u)$ can be
rewritten in the form
$$
P_\SS(u)=\sum_{j=-c}^d p_ju^{j},
$$
for appropriate coefficients $p_j$. Associated with the
characteristic polynomial is the 
\index{characteristic equation}\index{equation, characteristic}{\em
characteristic equation}
\begin{equation} \label{CKeq:chareq}
1-zP_\SS(u)=0, 
\end{equation}
or, equivalently,
\begin{equation} \label{CKeq:chareq2}
u^c-zu^cP_\SS(u)=u^c-z\sum_{j=0}^{c+d}p_{j-c}u^j=0.
\end{equation}
The form \eqref{CKeq:chareq2} has only non-negative powers in $u$,
and it shows that, counting multiplicity, there are $c+d$
solutions to the characteristic equation when $u$ is expressed
as a function in $z$. These $c+d$ solutions fall into two
categories; there are $c$ ``small branches'' $u_1(z),u_2(z),
\dots,u_c(z)$ satisfying
$$
u_j(z)\sim e^{2\pi i(j-1)/c}p_{-c}^{1/c}z^{1/c}
\quad \text{as }z\to0,
$$
and $d$ ``large branches" $u_{c+1}(z),u_{c+2}(z),
\dots,u_{c+d}(z)$ satisfying
$$
u_j(z)\sim e^{2\pi i(c+1-j)/d}p_{d}^{-1/d}z^{-1/d}
\quad \text{as }z\to0.
$$
One can show that there are functions $A(z)$ and $B(z)$ which are
analytic and non-zero at $0$ such that, in a neighbourhood of $0$,
\begin{align} \label{CKeq:small}
u_j(z)&=\om^{j-1}z^{1/c}A(\om^{j-1}z^{1/c}), \text{ with }\om=e^{2\pi
  i/c},
\quad j=1,2,\dots,c,\\
u_j(z)&=\varpi^{c+1-j}z^{-1/d}B(\varpi^{j-c-1}z^{1/d}), \text{ with }\varpi=e^{2\pi
  i/d},
\quad j=c+1,c+2,\dots,c+d.
\label{CKeq:large}
\end{align}

We are now in the position to state the 
enumeration results for lattice paths with steps from $\SS$
without further restriction. In the formulation, we use
$\ell(P)$ to denote the length of a path $P$, and $h(P)$
to denote the abscissa (height) of the end point of $P$.

\begin{theorem} \label{CKthm:BF1}
The generating function $\sum_P z^{\ell(P)}u^{h(P)}$
for lattice paths $P$ which start at the origin and consist of
steps from $\SS$ as given in \eqref{CKeq:SS} equals
\begin{equation} \label{CKeq:BF1a}
\GF{\LL{(0,0)\to (*,*);\SS};z^{\ell(\,.\,)}u^{h(\,.\,)}}
=\frac {1} {1-zP_\SS(u)}, 
\end{equation}
with $P_\SS(u)$ the characteristic polynomial of\/ $\SS$ given in
\eqref{CKeq:char}. Moreover, the generating function 
$\sum_P z^{\ell(P)}$ for those paths $P$ which end at height~$0$
equals
\begin{equation} \label{CKeq:BF1b}
\GF{\LL{(0,0)\to (*,0);\SS};z^{\ell(\,.\,)}}
=z\sum_{j=1}^c \frac {u_j'(z)} {u_j(z)}
=z\frac {d} {dz}\big(u_1(z)u_2(z)\cdots u_c(z)\big) ,
\end{equation}
where $u_1(z),u_2(z),\dots,u_c(z)$ are the small branches
given in \eqref{CKeq:small}. Finally, 
for $k<c$ the generating function 
$\sum_P z^{\ell(P)}$ for those paths $P$ which end at height~$k$
equals
\begin{equation} \label{CKeq:BF1c}
\GF{\LL{(0,0)\to (*,k);\SS};z^{\ell(\,.\,)}}
=z\sum_{j=1}^c \frac {u_j'(z)} {u_j^{k+1}(z)}
=-\frac {z} {k}\frac {d} {dz}
\Bigg(\sum_{j=1}^c u_j^{-k}(z)\Bigg),
\end{equation}
where again $u_1(z),u_2(z),\dots,u_c(z)$ are the small branches
given in \eqref{CKeq:small}, while for $k>-d$ it equals
\begin{equation} \label{CKeq:BF1d}
\GF{\LL{(0,0)\to (*,k);\SS};z^{\ell(\,.\,)}}
=-z\sum_{j=c+1}^{c+d} \frac {u_j'(z)} {u_j^{k+1}(z)}
=\frac {z} {k}\frac {d} {dz}
\Bigg(\sum_{j=c+1}^{c+d} u_j^{-k}(z)\Bigg),
\end{equation}
\end{theorem}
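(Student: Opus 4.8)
The plan is to extract all four generating-function identities from the single fundamental equation \eqref{CKeq:BF1a} by means of residue calculus / Lagrange-type coefficient extraction, treating $u$ as the variable and $z$ as a parameter. The starting point is that \eqref{CKeq:BF1a} packages \emph{all} paths from the origin, graded by length and terminal height, into the rational kernel $1/(1-zP_\SS(u))$. Each of the remaining formulas \eqref{CKeq:BF1b}--\eqref{CKeq:BF1d} is obtained by isolating the coefficient of a particular power $u^k$ of the bivariate generating function, and the natural device for that is the Cauchy coefficient integral
\[
\GF{\LL{(0,0)\to (*,k);\SS};z^{\ell(\,.\,)}}
=\frac{1}{2\pi i}\oint \frac{1}{1-zP_\SS(u)}\,\frac{du}{u^{k+1}},
\]
where the contour is a small circle about $u=0$. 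The whole proof is then a matter of evaluating this integral by the residue theorem, and the branches $u_j(z)$ of \eqref{CKeq:small}--\eqref{CKeq:large} enter precisely as the zeros of the kernel's denominator.

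First I would establish \eqref{CKeq:BF1a} itself, which is immediate: a path from the origin is a sequence of steps, each step $(1,b_j)$ contributing a factor $zw_ju^{b_j}$ to $z^{\ell}u^{h}$, so summing the geometric series over all sequences of steps gives $\sum_{\ell\ge0}(zP_\SS(u))^\ell=1/(1-zP_\SS(u))$. Next I would rewrite the kernel using the factored form suggested by \eqref{CKeq:chareq2}: since $1-zP_\SS(u)=0$ is equivalent to the polynomial equation $u^c-z\sum_{j=0}^{c+d}p_{j-c}u^j=0$ of degree $c+d$ in $u$, its roots are exactly the $c+d$ branches $u_1(z),\dots,u_{c+d}(z)$, and one has a factorization
\[
u^c\big(1-zP_\SS(u)\big)=-zp_d\prod_{j=1}^{c+d}\big(u-u_j(z)\big).
\]
Then I would apply the residue theorem on the contour integral above. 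For a contour enclosing only $u=0$, the answer is the sum of residues at the small branches (those tending to $0$ as $z\to0$), and a direct residue computation at a simple root $u_j$ of the denominator yields a term of the form $z\,u_j'(z)/u_j^{k+1}(z)$, after using the implicit-differentiation relation between $\partial_u$ and $\partial_z$ of the defining equation to convert the residue's derivative-of-denominator into $u_j'(z)$. Summing over the $c$ small branches gives \eqref{CKeq:BF1c}, and the special case $k=0$ (where the derivative-factor manipulation must be handled separately, since the exponent $-k$ in the second form degenerates) gives \eqref{CKeq:BF1b}, with the product form following from $\sum_j u_j'/u_j=(\log\prod_j u_j)'$. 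For $k>-d$ one instead closes the contour at infinity and picks up the \emph{large} branches with a sign change, giving \eqref{CKeq:BF1d}.

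The main obstacle I expect is \textbf{justifying the residue bookkeeping rigorously}: one must argue that for the small-$u$ contour the integrand's poles inside are exactly $u_1,\dots,u_c$ (and that these are simple, generically), that no contribution comes from $u=0$ itself once $k<c$ so the integrand is regular there, and — for the large-branch version \eqref{CKeq:BF1d} — that the integral over a large circle vanishes so that the residue at infinity plus the small-pole residues account for everything, forcing the large-branch sum to equal the coefficient. The conversion of $\operatorname{Res}_{u=u_j}\big[u^{-k-1}/(1-zP_\SS(u))\big]$ into the clean form $z\,u_j'(z)/u_j^{k+1}(z)$ is the one genuinely computational step: it rests on differentiating $1-zP_\SS(u_j(z))\equiv0$ with respect to $z$, which gives $P_\SS(u_j)+z P_\SS'(u_j)u_j'=0$, i.e.\ $z P_\SS'(u_j)u_j'=-P_\SS(u_j)=-1/z$, so that the Jacobian factor $1/\big(-zP_\SS'(u_j)\big)$ appearing in the residue is exactly $z\,u_j'(z)$. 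Everything else is the routine rewriting of $\sum u_j'/u_j^{k+1}$ as $-\tfrac1k\,\tfrac{d}{dz}\sum u_j^{-k}$.
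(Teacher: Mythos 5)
Your proposal is correct and takes essentially the same route as the paper: \eqref{CKeq:BF1a} via the geometric series over step sequences, then coefficient extraction by a Cauchy contour integral evaluated with the residue theorem (small branches inside, large branches outside the contour), using implicit differentiation of the characteristic equation \eqref{CKeq:chareq} to turn the residue factor $1/\bigl(-zP_\SS'(u_j(z))\bigr)$ into $z\,u_j'(z)$. The paper carries this out explicitly only for \eqref{CKeq:BF1b} and declares \eqref{CKeq:BF1c} and \eqref{CKeq:BF1d} ``similar,'' so your remarks that regularity at $u=0$ requires $k<c$ and that the vanishing of the integral over a large circle requires $k>-d$ are precisely the details behind that remark.
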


\begin{proof}
By elementary combinatorial principles, the generating function\break
$\sum_P z^{\ell(P)}u^{h(P)}$
for lattice paths $P$ which start at the origin and consist of
steps from $\SS$ is given by $\sum_{n\ge0}z^nP_\SS^n(u)$, which
equals \eqref{CKeq:BF1a}.

In order to determine the generating function 
$\sum_P z^{\ell(P)}$ for those lattice paths $P$ which end at
height $0$, we have to extract the coefficient of $u^0$ in
\eqref{CKeq:BF1a}. This can be achieved by
computing the contour integral
\begin{equation} \label{CKeq:cI}
\frac {1} {2\pi i}\int_C \frac {1} {1-zP_\SS(u)}\frac {du} {u}, 
\end{equation}
where $C$ is a contour encircling the origin in the positive
direction. One has to choose
$C$ so that, for sufficiently small $z$, the small branches lie within the
contour, while the large branches lie outside. Then, by the
residue theorem, only the small branches contribute to the integral
\eqref{CKeq:cI}. The residue at $u=u_j(z)$ equals
(assuming that, in addition, we have chosen $z$ so that all small
branches are different)
$$
\underset{u=u_j(z)}{\text{Res}}\left(\frac {1} {u\big(1-zP_\SS(u)\big)}\right)
=-\frac {1} {zu_j(z)P_\SS'(u_j(z))}.
$$
The integral in \eqref{CKeq:cI} equals the sum of these residues.
This sum simplifies to \eqref{CKeq:BF1b} since differentiation of
both sides of the characteristic equation \eqref{CKeq:chareq}\break shows
that $P'_\SS(u_j(z))^{-1}=-z^2u_j'(z)$ for all small branches $u_j(z)$.

The arguments for establishing \eqref{CKeq:BF1c} and \eqref{CKeq:BF1d}
are similar.
\end{proof}

The second set of results concerns lattice paths starting at the
origin with steps from $\SS$ which do not run below the $x$-axis.

\begin{theorem} \label{CKthm:BF2}
The generating function $\sum_P z^{\ell(P)}u^{h(P)}$
for lattice paths $P$ which start at the origin, consist of
steps from $\SS$ as given in \eqref{CKeq:SS}, and do not
run below the $x$-axis, equals
\begin{align} \notag
\GF{\LL{(0,0)\to (*,*);\SS\mid y\ge0};z^{\ell(\,.\,)}u^{h(\,.\,)}}
&=\frac {
\prod _{j=1} ^{c}(u-u_j(z))} {u^c(1-zP_\SS(u))}\\
&=-\frac {1} {p_dz}
\prod _{j=c+1} ^{c+d}\frac 1{(u-u_j(z))},
\label{CKeq:BF2a}
\end{align}
with $P_\SS(u)$ the characteristic polynomial of\/ $\SS$ given in
\eqref{CKeq:char}, and
$u_1(z),u_2(z),\dots,\break u_c(z)$ and 
$u_{c+1}(z),u_{c+2}(z),\dots,u_{c+d}(z)$ 
the small and large branches given in \eqref{CKeq:small} and \eqref{CKeq:large}.
In particular, the generating function 
$\sum_P z^{\ell(P)}$ for those paths $P$ which end at height~$0$
equals
\begin{align} \notag
\GF{\LL{(0,0)\to (*,0);\SS\mid y\ge0};z^{\ell(\,.\,)}}
&=\frac {(-1)^{c-1}}{p_{-c}z}
\prod _{j=1} ^{c}u_j(z))\\
&=\frac {(-1)^{d-1}} {p_dz}
\prod _{j=c+1} ^{c+d}\frac 1{u_j(z))}.
\label{CKeq:BF2b}
\end{align}
\end{theorem}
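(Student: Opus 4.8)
The plan is to use the \emph{kernel method} of Banderier and Flajolet. Writing $F(z,u)=\sum_{P}z^{\ell(P)}u^{h(P)}=\sum_{k\ge0}F_k(z)\,u^k$ for the generating function in question (so $F_k(z)$ records paths ending at height $k$, and $F$ involves only non-negative powers of $u$), I would first set up a functional equation by appending one step at the end of a path. Every way of appending a step to a path staying weakly above the $x$-axis is accounted for by $zP_\SS(u)F(z,u)$, and those appendings that land below the axis are exactly the ones producing \emph{negative} powers of $u$ in the product $P_\SS(u)F(z,u)$, since a term $p_jF_k\,u^{j+k}$ has $j+k<0$ precisely when a down-step of size $-j$ sends height $k$ to a negative height. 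Denoting by $\{\,\cdot\,\}_{<0}$ the part of a Laurent series consisting of strictly negative powers of $u$, this yields (the $1$ accounting for the empty path)
\begin{equation*}
F(z,u)=1+zP_\SS(u)F(z,u)-z\{P_\SS(u)F(z,u)\}_{<0}.
\end{equation*}

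Next I would clear denominators. Multiplying by $u^c$ and using $u^cP_\SS(u)=\sum_{j=0}^{c+d}p_{j-c}u^j$, the equation becomes $K(z,u)\,F(z,u)=u^c-zR(z,u)$, where $K(z,u)=u^c\bigl(1-zP_\SS(u)\bigr)$ is the \emph{kernel} and $R(z,u)=u^c\{P_\SS F\}_{<0}$ is a polynomial in $u$ of degree at most $c-1$ (because $\{P_\SS F\}_{<0}$ ranges over the powers $u^{-c},\dots,u^{-1}$). Thus the numerator $N(z,u):=u^c-zR(z,u)$ is a \emph{monic} polynomial of degree exactly $c$ in $u$ with $c$ undetermined lower coefficients. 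The kernel equation $K(z,u)=0$ is exactly the characteristic equation \eqref{CKeq:chareq2}, whose $c+d$ roots are the small branches $u_1(z),\dots,u_c(z)$ and the large branches $u_{c+1}(z),\dots,u_{c+d}(z)$ of \eqref{CKeq:small}--\eqref{CKeq:large}.

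The heart of the argument is to pin down $N(z,u)$ by substituting the small branches. Since each $u_j(z)\sim z^{1/c}$ vanishes as $z\to0$, the substitution $u=u_j(z)$ turns $F(z,u)=\sum_{k\ge0}F_k(z)u^k$ into a well-defined formal power series in $z^{1/c}$ (the $k$-th summand has order at least $k/c$ in $z$), so $F(z,u_j(z))$ is finite. From $N=K\cdot F$ and $K(z,u_j(z))=0$ I conclude $N(z,u_j(z))=0$ for $j=1,\dots,c$. A monic degree-$c$ polynomial vanishing at these $c$ distinct roots must equal $\prod_{j=1}^c\bigl(u-u_j(z)\bigr)$, which gives the first equality in \eqref{CKeq:BF2a}. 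For the second equality I would factor the kernel completely: as a polynomial in $u$, $K(z,u)=u^c-zu^cP_\SS(u)$ has leading term $-zp_du^{c+d}$ and all $c+d$ branches as roots, so $K(z,u)=-zp_d\prod_{j=1}^{c+d}\bigl(u-u_j(z)\bigr)$; dividing, the small-branch factors cancel and leave $-\bigl(p_dz\bigr)^{-1}\prod_{j=c+1}^{c+d}\bigl(u-u_j(z)\bigr)^{-1}$.

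Finally, the ``in particular'' formulas \eqref{CKeq:BF2b} follow by extracting the constant term in $u$, i.e.\ by letting $u\to0$ in the two closed forms (legitimate since $F$ has only non-negative $u$-powers). In the first form the numerator tends to $(-1)^c\prod_{j=1}^c u_j(z)$ while $u^c\bigl(1-zP_\SS(u)\bigr)\to-zp_{-c}$, giving $\frac{(-1)^{c-1}}{p_{-c}z}\prod_{j=1}^c u_j(z)$; the second form yields $\frac{(-1)^{d-1}}{p_dz}\prod_{j=c+1}^{c+d}u_j(z)^{-1}$ in the same way. The hard part is the kernel step itself --- namely justifying that $F$ may be evaluated at the small branches and that there are exactly $c$ of them, so that the degree count matches the number of cancellation conditions; this rests on the branch asymptotics recorded in \eqref{CKeq:small} and on $F$ having non-negative $u$-support, both of which are available.
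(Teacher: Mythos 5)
Your proposal is correct and follows essentially the same route as the paper's proof: the same kernel-method functional equation, multiplication by $u^c$, vanishing of the numerator at the $c$ small branches, the factorization $\prod_{j=1}^{c}(u-u_j(z))$, and extraction of the height-$0$ part. If anything, you are slightly more careful than the paper in justifying that $F(z,u)$ may be evaluated at the small branches (via the order-$z^{1/c}$ estimate) and in spelling out the complete factorization of the kernel that yields the second equality in \eqref{CKeq:BF2a}.
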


\begin{proof}
Here, we use the so-called 
\index{kernel method}\index{method, kernel}{\em kernel method\/}
(cf.\ e.g.\ \cite{BoJeAA}). Let $F(z,u)$ denote the generating function
on the left-hand side of \eqref{CKeq:BF2a}. 
Then we have
\begin{equation} \label{CKeq:F(z,u)} 
F(z, u) = 1+zP_\SS(u)F(z, u) - z[u^{<0}]\big(P_\SS(u)F(z, u)\big),
\end{equation}
where $[u^{<0}]G(z,u)$ means that in the series $G(z,u)$
all monomials $z^nu^m$ with $m\ge0$ are dropped.
For, any lattice path that is counted by $F(z,u)$ is either empty,
or it consists of a step
($zP_\SS(u)$ describes
the possibilities) added to a path, except that the steps that would
take the walk below
level $0$ are to be taken out (the operator $[u^{<0}]$ extracts
the terms to be taken out). Since $P_\SS(u)$ involves
only a finite number of negative powers, we may rewrite
\eqref{CKeq:F(z,u)} in the form
\begin{equation} \label{CKeq:F(z,u)2}
F(z,u)(1 - zP_\SS(u)) = 1 - z\sum_{k=0}^{c-1}
r_k (u)F_k (z),
\end{equation}
for some Laurent polynomials $r_k (u)$ that can be computed
from $P_\SS(u)$ via \eqref{CKeq:F(z,u)},
$$
r_k (u) = [u^{<0}](P_\SS(u)u^k) =
\sum_{j=-c}^{-k-1}p_ju^{j+k}.
$$
Here, $F_k(z)$ is the generating function 
$\sum_P z^{\ell(P)}$ for those paths $P$ which end at height~$k$.

In the current context, the factor $1-zP_\SS(u)$ on the left-hand side
of \eqref{CKeq:F(z,u)2} (which is identical with the left-hand side of
the characteristic equation \eqref{CKeq:chareq}) is called the 
\index{kernel}{\em kernel}. The idea of the kernel method is
to substitute $u=u_j(z)$, $j=1,2,\dots,c$ (that is, the small
branches) on both sides of \eqref{CKeq:F(z,u)2} so that the kernel
--- and thus the left-hand side --- vanishes. In this way,
we arrive at the system of equations
\begin{equation*} 
u_j^c(z) - z\sum_{k=0}^{c-1}u_j^c(z)
r_k (u_j(z))F_k (z)=0,\quad j=1,2,\dots,c.
\end{equation*}
This system of linear equations in the unknowns $F_0(z),F_1(z),
\dots,F_{c-1}(z)$ could now be solved. Alternatively,
we could observe that the expression
$$
u^c - z\sum_{k=0}^{c-1}u^c
r_k (u)F_k (z)
$$
is a polynomial in $u$ of degree $c$ with leading monomial $u^c$.
Its roots are exactly the small branches $u_j(z)$, $j=1,2,\dots,c$.
Hence, it factorizes as
\begin{equation} \label{CKeq:ufak} 
u^c - z\sum_{k=0}^{c-1}u^c
r_k (u)F_k (z)=
\prod _{j=1} ^{c}(u-u_j(z)).
\end{equation}
Extraction of the coefficient of $u^0$ on both sides gives immediately
$F_0(z)$, the generating function for the paths which end at height
$0$. This leads directly to \eqref{CKeq:BF2b}. The formula
\eqref{CKeq:BF2a} follows from \eqref{CKeq:F(z,u)2} and \eqref{CKeq:ufak}.
\end{proof}

Sometimes, the kernel method is also applicable if the the set
of steps $\SS$ is infinite. This is. for instance, the case for
\index{\L ukasiewicz path}\index{path, \L ukasiewicz}%
\index{\L ukasiewicz, Jan}{\it\L ukasiewicz paths}, which are paths
consisting of steps from
$\SS_L=\big\{(1,b):b\in\{-1,0,1,2,\dots\}\big\}$, which start
at the origin, return to the $x$-axis, never running below it.
In that case, the equation \eqref{CKeq:F(z,u)2} for the generating
function $\sum_P z^{\ell(P)}u^{h(P)}$ becomes
\begin{equation} \label{CKeq:GFL} 
F(z,u)\left(1 - \frac {z} {u(1-u)}\right) = 1 - zu^{-1}F_0 (z),
\end{equation}
where, as before, $F_0(z)$ is the generating function for those
paths which end at height $0$ (that is, return to the $x$-axis).
Here, the kernel is
$$
1 - \frac {z} {u(1-u)},
$$
and it vanishes for $u(z)=\frac {1-\sqrt{1-4z}} {2}$.
If this is substituted in \eqref{CKeq:GFL}, then we obtain
$$
\GF{\LL{(0,0)\to (*,0);\SS_L\mid y\ge0};z^{\ell(\,.\,)}}
=F_0(z)=\frac {1-\sqrt{1-4z}} {2z},
$$
the \index{number, Catalan}\index{Catalan
number}\index{Catalan, Eug\`ene Charles}Catalan number generating
function \eqref{CKe1.13}. Hence, also \L ukasiewicz paths of
length $n$ are enumerated by the Catalan number $C_n=\frac {1}
{n+1}\binom {2n}n$.

To conclude this topic, it must be mentioned that
\index{Banderier, Cyril}Banderier and
\index{Gittenberger, Bernhard}Gittenberger \cite{BaGiAA}
have extended the analyses of \cite{BaFlAA} to also
include the area statistics.

\medskip
A very cute problem, which arose in a probabilistic context
around 2000, is the problem of counting paths (walks)
in the \index{slit plane}\index{plane, slit}{\it slit plane}.
The slit plane is the integer lattice $\Z^2$ where one
has taken out the half-axis $\{(k,0):k\le 0\}$.
Investigation of this problem started with the conjecture
that the number of paths in the slit plane which start
at $(1,0)$ and do $2n+1$ horizontal or vertical unit steps
(in the positive or in the negative direction) is given by the Catalan
number $C_{2n+1}$. This conjecture was proved by
\index{Bousquet-M\'elou, Mireille}Bousquet-M\'elou and 
\index{Schaeffer, Gilles}Schaeffer in \cite{BoScAA},
but they provide much stronger and more general results 
on the enumeration of lattice paths in the slit plane
in that paper. When it is not possible to find exact
formulas, then the focus is on the nature of the
generating function, whether it be algebraic or not,
D-finite or not, etc. Methods used are the cycle
lemma and the kernel method.

\medskip
An innocent looking three-candidate ballot problem
stands at the beginning of another long line of investigation:
Let 
$E_1,E_2,E_3$ be candidates in an election, $E_1$ receiving $e_1$
votes, $E_2$ receiving $e_2$ votes, and $E_3$ receiving $e_3$ votes,
$e_1\ge \max\{e_2,e_3\}$. How many ways of counting the votes are
there such that at any stage during the counting candidate $E_1$ has
at least as many votes as $E_2$ and at least as many votes as 
$E_3$? In lattice path formulation
this means to count all simple lattice paths in $\Z^3$ from the
origin to $(e_1,e_2,e_3)$ staying in the region
$\{(x_1,x_2,x_3):x_1\ge x_2\text{ and }x_1\ge x_3\}$.%
\footnote{It seems that this is a non-planar lattice path problem, contradicting
the title of the section
However, the problem can be translated into a two-dimensional problem,
see \cite{BousAH}.}
We state the
result below. Solutions were given by \index{Kreweras, Germain}Kreweras 
\cite{KrewAB} and
\index{Niederhausen, Heinrich}Niederhausen \cite{NiedAJ}, see also
\index{Gessel, Ira Martin}Gessel \cite{GessAF}. 
This line of research was picked up later by
\index{Bousquet-M\'elou, Mireille}Bousquet-M\'elou \cite{BousAH}
who showed, again with the help of the kernel method, that the generating function 
of these ``Kreweras walks" is algebraic.
It must be pointed out that this counting problem is a
``non-example" for the reflection principle (see Section~\ref{CKsec:refl}), that is, the
reflection principle does not apply.
The reason is that, if one tries to set it up for application of the 
reflection principle, then one realizes that the nice property that for
permutations other than the identity permutation some hyperplane has
to be touched would fail.

\begin{theorem} \label{CKT7.2}
Let $e_1\ge\max\{e_2,e_3\}$. The number of all lattice paths in $\Z^3$
from $(0,0,0)$ to $(e_1,e_2,e_3)$ subject to $x_1\ge x_2$  and $x_1\ge
x_3$ is given by
\begin{multline} \label{CKe7.7}
\vv{\LL{(0,0,0)\to (e_1,e_2,e_3)\mid x_1\ge \max\{x_2,x_3\}}}\\
=\binom {e_1+e_2+e_3}{e_1,e_2,e_3} - \frac {e_2+e_3} {1+e_1}\binom
{e_1+e_2+e_3} {e_1,e_2,e_3}\\
+\sum _{i,j\ge 1} ^{}(-1)^{i+j}\frac
{(e_1+e_2+e_3)!\,(2i+2j-2)!\,(i+j-2)!}
{i!\,(e_3-i)!\,j!\,(e_2-j)!\,(2i-1)! \,(2j-1)!\,(i+j+e_1)!}.
\end{multline}
In particular, if $e_1=e_2$ this number simplifies to
\begin{multline} \label{CKe7.8}
\vv{\LL{(0,0,0)\to (e_1,e_1,e_3)\mid x_1\ge \max\{x_2,x_3\}}}\\
=2^{2e_3+1}\frac {(2e_1+e_3)!\,(2e_1-2e_3+1)!} {(2e_1+2)!\,e_3!\,(e_1-e_3)!^2}.
\end{multline}
\end{theorem}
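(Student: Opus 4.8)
The plan is to convert the three-dimensional problem into a planar one before doing any enumeration. Writing $a=x_1-x_2$ and $b=x_1-x_3$, the two constraints $x_1\ge x_2$ and $x_1\ge x_3$ become simply $a\ge0$ and $b\ge0$, so we are counting walks confined to the first quadrant of the $(a,b)$-plane. Under this substitution an $e_1$-step becomes $(1,1)$, an $e_2$-step becomes $(-1,0)$, and an $e_3$-step becomes $(0,-1)$; these are exactly the steps of the so-called \emph{Kreweras walks}. This reformulation also makes transparent why the reflection principle is unavailable: the group generated by the reflections in $x_1=x_2$ and $x_1=x_3$ is the symmetric group $S_3$ permuting the coordinates, and the step set $\{e_1,e_2,e_3\}$ is indeed invariant under it, but the region $\{x_1\ge x_2,\ x_1\ge x_3\}$ is the \emph{union} of the two Weyl chambers in which $x_1$ is maximal, not a single fundamental domain. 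Hence the Gessel--Zeilberger signed sum over $S_3$ does not collapse to the desired count.

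A convenient first reduction is inclusion--exclusion on the two ``bad events''. Let $N=\binom{e_1+e_2+e_3}{e_1,e_2,e_3}$ be the number of unrestricted paths (by \eqref{CKe1.1d}), let $A$ be the set of paths with $x_2>x_1$ at some stage, and $B$ the set of paths with $x_3>x_1$ at some stage. The quantity we want equals $N-\vv{A}-\vv{B}+\vv{A\cap B}$. For a \emph{single} constraint the reflection principle does apply, since reflection in one coordinate hyperplane preserves the step set: reflecting the starting point $(0,0,0)$ in $x_2=x_1+1$ sends it to $(-1,1,0)$, whence $\vv{A}=\binom{e_1+e_2+e_3}{e_1+1,e_2-1,e_3}=\frac{e_2}{e_1+1}N$, and symmetrically $\vv{B}=\frac{e_3}{e_1+1}N$. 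Adding these reproduces exactly the first two terms of \eqref{CKe7.7}, so the entire difficulty is concentrated in the double-sum term, which must equal $\vv{A\cap B}$.

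To obtain $\vv{A\cap B}$ --- equivalently, to compute the full quarter-plane count directly --- I would use the \emph{kernel method} on the planar walks, following the route that yields the algebraicity result of Bousquet-M\'elou. Let $F(x,y;t)=\sum_{n,a,b\ge0}c_n(a,b)\,x^ay^bt^n$, where $c_n(a,b)$ counts length-$n$ Kreweras walks in the quadrant ending at $(a,b)$. A step-by-step construction gives
\begin{equation*}
F(x,y;t)\bigl(1-t(xy+x^{-1}+y^{-1})\bigr)=1-tx^{-1}F(0,y;t)-ty^{-1}F(x,0;t),
\end{equation*}
and the $x\leftrightarrow y$ symmetry of the steps lets one identify $F(0,y;t)$ with $F(y,0;t)$, leaving a single catalytic unknown. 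The kernel $1-t(xy+x^{-1}+y^{-1})$ carries an order-$6$ group of birational transformations acting on its zero set; antisymmetrising the equation over this group produces the relation that ultimately pins down $F(x,0;t)$, after which $F$ itself is determined. Coefficient extraction of $x^{e_1-e_2}y^{e_1-e_3}t^{e_1+e_2+e_3}$ then yields the alternating double sum in \eqref{CKe7.7}.

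I expect two places to be the real work. The first is the group-sum bookkeeping: for the Kreweras step set the \emph{full} orbit sum of the constant term vanishes identically, so one cannot read $F$ off directly but must instead use a half-orbit sum (or the algebraic kernel method) and verify that the surviving extraction can be written as one binomial double sum. This vanishing is the technical shadow of the very same phenomenon --- the region being a union of two chambers --- that disabled the reflection principle, and it is also why the generating function comes out \emph{algebraic}. The second difficulty is the final specialisation: setting $e_1=e_2$ and collapsing the double sum to the closed form \eqref{CKe7.8} amounts to evaluating a terminating balanced hypergeometric series, so I would aim to recognise the inner sum as a ${}_3F_2$ summable by a Saalsch\"utz- or Dixon-type identity and then tidy up the factorials. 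As an alternative to the kernel method, once the first two terms are secured one could attack $\vv{A\cap B}$ through Kreweras's original recurrence in the length variable; but the coefficient extraction and the hypergeometric simplification would remain the crux either way.
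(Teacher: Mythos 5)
You should first note that the paper itself offers no proof of Theorem~\ref{CKT7.2}: it states the result and refers to Kreweras, Niederhausen and Gessel for solutions, and to Bousquet-M\'elou's kernel-method work for the algebraicity of the generating function. So your proposal can only be measured against the literature route the paper points to, and in that respect your preparatory work is correct and genuinely verifiable. The change of variables $(a,b)=(x_1-x_2,x_1-x_3)$ does turn the problem into Kreweras walks in the quarter plane with steps $(1,1),(-1,0),(0,-1)$; your explanation of the failure of the reflection principle (the region is the union of the two chambers on which $x_1$ is maximal, so the image of the starting point under the transposition exchanging $x_2$ and $x_3$ lies \emph{inside} the region and paths from it need not touch a wall) is exactly the paper's remark in Section~\ref{CKsec:var} made precise; and the inclusion--exclusion step is sound, because reflection in the single hyperplane $x_2=x_1+1$ (respectively $x_3=x_1+1$) does preserve the step set, giving $\vert A\vert=\frac{e_2}{e_1+1}N$ and $\vert B\vert=\frac{e_3}{e_1+1}N$, which are precisely the first two terms of \eqref{CKe7.7}. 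This correctly reduces the theorem to proving that $\vert A\cap B\vert$ equals the alternating double sum. The functional equation you write for $F(x,y;t)$, the $x\leftrightarrow y$ symmetry, the order-$6$ group of the kernel, and the vanishing of the full orbit sum are all accurately stated.

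The gap is that everything after this reduction --- which is the actual content of the theorem --- is only gestured at, not carried out. ``Antisymmetrising over the group \dots\ ultimately pins down $F(x,0;t)$'' compresses what is, in Bousquet-M\'elou's treatment, a substantial argument: precisely because the orbit sum vanishes, one needs a half-orbit sum together with a canonical factorization of the kernel (or an equivalent algebraic device), and none of that is executed or even outlined in checkable form. Likewise, the passage from the resulting algebraic expression for $F$ to the specific double sum in \eqref{CKe7.7} is a nontrivial coefficient extraction that you do not perform; it is not evident a priori that the kernel-method answer emerges in this Kreweras--Gessel form rather than in some other representation, so this step cannot be waved through. Finally, the terminating hypergeometric evaluation needed to collapse the double sum to \eqref{CKe7.8} when $e_1=e_2$ is named (Saalsch\"utz or Dixon) but not identified or carried out. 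In short: what you have is a correct, well-informed research plan whose verified content is the easy part of the formula (the first two terms); the double sum and its closed-form specialization --- that is, the theorem proper --- remain unproven in your write-up.
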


We come to a relatively recent research field:
the enumeration of walks in the quarter plane.
The question that was posed is: given a particular
step set, can one find an explicit formula for 
the corresponding generating function, and, if not,
is the generating function rational, algebraic, D-finite,
or neither? For ``small" step sets, the analysis is
now complete, due to work by
   \index{Bousquet-M\'elou, Mireille}Bousquet-M\'elou and 
\index{Mishna, Marni}Mishna \cite{BoMiAA},
by \index{Bostan, Alin}Bostan and
\index{Kauers, Manuel}Kauers \cite{BoKaAA},
and by \index{Bostan, Alin}Bostan, 
\index{Kurkova, I.}Kurkova,
\index{Raschel, Kilian}Raschel and 
\index{Salvy, Bruno}Salvy \cite{BoKRAA,BoRSAA}.
However, there is not yet a good understanding
how, or whether at all, one can decide from the
step set that the generating function has one of
the above mentioned properties.

\medskip
The last topic that I mention here is the connection
between Dyck and Schr\"oder path enumeration on the one hand, and
Hilbert series for diagonal harmonics and Macdonald polynomials
on the other hand.
This topic would by itself require a whole chapter.
We refer the reader to the survey \cite{HaglAA}
and the references therein. One of the most intriguing
combinatorial problems originating from the
investigations in this area is new statistics for
Dyck paths, most prominently ``bounce" and ``dinv."
It has been shown (algebraically) that
the pair (bounce. area) is equally distributed as
(area, bounce), and the same for area and dinv.
However, although much effort has been put into it,
so far nobody could come up with a direct combinatorial
reason (in the best case: a bijection) why this
symmetry holds.

\section{Non-intersecting lattice paths}
\label{CKsec:nonint}

The technique of non-intersecting lattice paths is a powerful
counting method. We have already seen its effectiveness in
Section~\ref{CKs1.6}. Originally, non-intersecting paths
arose in matroid theory, in the work of
\index{Lindstr\"om, Bernt}Lindstr\"om \cite{LindAA}.
Lindstr\"om's result was rediscovered
(not always in its most general form) in the 1980s at about the same
time in three different
communities, not knowing of each other at that time: in statistical
physics by 
\index{Fisher, Michael Ellis}Fisher \cite[Sec.~5.3]{FishAA} in order to apply it to
the analysis of vicious walkers as a model of wetting and melting, 
in combinatorial chemistry by 
\index{John, Peter}John and \index{Sachs, Horst}Sachs \cite{JoSaAB} and
\index{Gronau, H.-D. O. F.}Gronau, 
\index{Just, W.}Just, 
\index{Schade, W.}Schade, 
\index{Scheffler, P.}Scheffler and 
\index{Wojciechowski, J.}Wojciechowski \cite{GrJSAA}
in order to compute Pauling's bond order
in benzenoid hydrocarbon molecules, and in enumerative combinatorics
by 
\index{Gessel, Ira Martin}Gessel and 
\index{Viennot, Xavier G\'erard}Viennot \cite{GeViAA,GeViAB} in order to count
tableaux and plane partitions. 
It must however be
mentioned that in fact the same idea appeared even earlier in work by
\index{Karlin, Samuel}Karlin and 
\index{McGregor, James}McGregor \cite{KaMGAB,KaMGAC} in a probabilistic
framework, as well as that the so-called 
\index{Slater, John Clarke}``Slater determinant"
in quantum mechanics (cf.\ \cite{SlatZY} and \cite[Ch.~11]{SlatZZ}) 
may qualify as an ``ancestor" of the 
\index{determinant}determinantal formula of
Lindstr\"om.
Since then, many more applications
have been found, particularly in plane partition and rhombus
tiling enumeration, see e.g.\ \cite{BretAG,CiEKAA,FiscAA,StemAE}
and Chapter~[Tiling Enumeration by Jim Propp] for more
information on this topic.

We devote this section to developing the
theory of non-intersecting lattice paths and give some
sample applications. This 
will be continued in Section~\ref{CKsec:turn}, where we
give results on the enumeration of non-intersecting lattice paths
in the plane with respect to turns.

\medskip
The most general version of the non-intersecting path
theorem (\cite[Lemma~1]{LindAA}, \cite[Theorem~1]{GeViAB})
is formulated for paths in a 
\index{graph, directed}\index{directed graph}{\em directed graph}.
Let $G$ be a directed graph with vertices $V$ and (directed) edges
$E$.
A \index{path}{\em path} (actually, the usual notion in graph theory is 
\index{walk}{\em walk}) in $G$
is a sequence $v_0,v_1,\dots,v_m$ of vertices, for some $m$, such
that there is an edge from $v_i$ to $v_{i+1}$, $i=0,1,\dots,m-1$. 
We denote the set of all paths in $G$ from $A$ to $E$ by $L_G(A\to
E)$. The
directed graph $G$ is called \index{acyclic graph}\index{graph,
acyclic}{\em acyclic} if there is no non-trivial closed path in $G$, i.e., if
there is no path that starts and ends in the same vertex other than a
zero-length path.

The central definition is that a family 
$\P=(P_1,P_2,\dots,P_n)$ of paths $P_i$ in $G$ is called
{\em non-intersecting\/} if no two
paths of $\P$ have a vertex in common. 
Otherwise $\P$ is called {\em
\index{intersecting lattice paths}\index{lattice paths,
intersecting}intersecting\/}. 
In the context of lattice path enumeration, the graph $G$ comes
from a lattice. In many examples, the vertices of $G$ are
the lattice points $\Z^2$ in the plane, and the edges of $G$
connect a point $(i,j)$ to $(i+1,j)$, respectively a point
$(i,j)$ to $(i,j+1)$.
Figure~\ref{CKF2.1Aa} displays a family
of non-intersecting lattice paths in this sense, 
Figure~\ref{CKF2.1Ab} a family of
intersecting lattice paths.
(It is very important
to note that, in the geometric
realization of paths as piecewise linear trails, the corresponding trails 
may very well have common points, but never in starting and end
points of steps, see Figure~\ref{CKF2.1B} for such an example. In
particular, non-intersecting lattice paths may even cross each other
in the geometric visualization.)

\begin{figure}
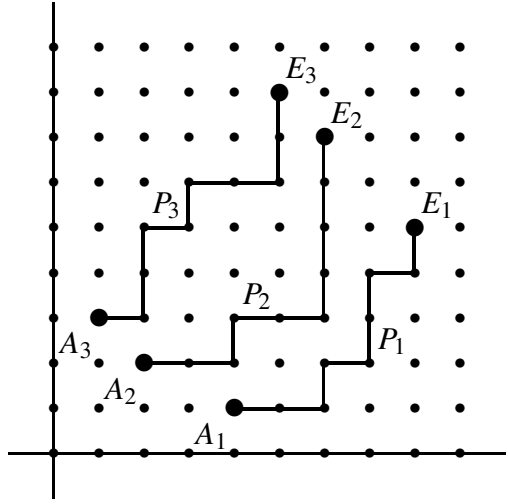
 
$$
{
\Gitter(10,10)(0,0)
\Koordinatenachsen(10,10)(0,0)
\Pfad(1,3),122121122\endPfad
\Pfad(2,2),112112222\endPfad
\Pfad(4,1),11212212\endPfad
\DickPunkt(1,3)
\DickPunkt(2,2)
\DickPunkt(4,1)
\DickPunkt(5,8)
\DickPunkt(6,7)
\DickPunkt(8,5)
\Label\lu{A_3}(1,3)
\Label\lu{A_2}(2,2)
\Label\lu{A_1}(4,1)
\Label\ro{E_3}(5,8)
\Label\ro{E_2}(6,7)
\Label\ro{E_1}(8,5)
\Label\ro{P_3}(2,5)
\Label\ro{P_2}(4,3)
\Label\ro{P_1}(7,2)
\hskip5cm
}
$$
\caption{Family of non-intersecting paths}
\label{CKF2.1Aa}
\end{figure}

\begin{figure} 
$$
{
\Gitter(10,10)(0,0)
\Koordinatenachsen(10,10)(0,0)
\Pfad(1,3),122121122\endPfad
\Pfad(2,2),122211122\endPfad
\Pfad(4,1),11221222\endPfad
\DickPunkt(1,3)
\DickPunkt(2,2)
\DickPunkt(4,1)
\DickPunkt(5,8)
\DickPunkt(6,7)
\DickPunkt(7,6)
\Label\lu{A_3}(1,3)
\Label\lu{A_2}(2,2)
\Label\lu{A_1}(4,1)
\Label\ro{E_3}(5,8)
\Label\ro{E_2}(6,7)
\Label\ro{E_1}(7,6)
\Label\ro{P_3}(1,4)
\Label\ro{P_2}(3,3)
\Label\ro{P_1}(7,3)
\hskip5cm
}
$$
\caption{Family of intersecting paths}
\label{CKF2.1Ab}
\end{figure}

\begin{figure}
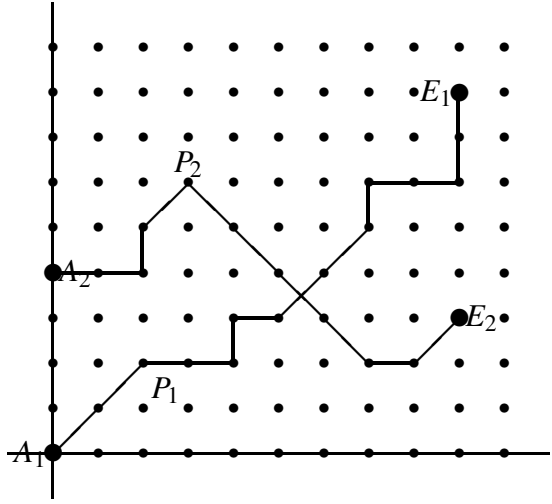
 
$$
\Gitter(11,10)(0,0)
\Koordinatenachsen(11,10)(0,0)
\Pfad(0,0),3311213321122\endPfad
\Pfad(0,4),1123444413\endPfad
\DickPunkt(0,0)
\DickPunkt(9,8)
\DickPunkt(0,4)
\DickPunkt(9,3)
\Label\l{A_1}(0,0)
\Label\l{E_1}(9,8)
\Label\r{A_2}(0,4)
\Label\r{E_2}(9,3)
\Label\ru{P_1}(2,2)
\Label\o{P_2}(3,6)
\hskip5.5cm
$$
\caption{Non-intersecting lattice paths may even cross}
\label{CKF2.1B}
\end{figure}

Returning to the general setup,
we furthermore assume that to any edge $e$ in the graph $G$ 
there is assigned a weight $w(e)$
(an element in some commutative ring $\mathcal R$).
The \index{weight}{\em weight\/} of a path $P$ is the product 
$w(P)=\prod _{e} ^{}w(e)$, where the product is over all edges $e$ of the
path $P$. 
The weight $w(\P)$
of a family $\P=(P_1,P_2,\dots,P_n)$ of paths is defined as the product of all
the weights of paths in the family,
$w(\P)=w((P_1,P_2,\dots,P_n))=\prod _{i=1} ^{n} w(P_i)$.

Given two sequences $\A=(A_1,A_2,\dots,A_n)$ and $\E=(E_1,E_2,\dots,E_n)$ 
of vertices of $G$, we write $L_G(\A\to \E)$ for the set of all
families $(P_1,P_2,\dots,P_n)$ of paths, where $P_i$ runs from $A_i$
to $E_i$, $i=1,2,\dots,n$, whereas $L_G(\A\to \E\mid \text
{non-intersecting})$ denotes the subset of families of non-intersecting
paths.

We need one more piece of notation.
Given a permutation $\si\in\mathfrak S_n$ and
a vector $\mathbf v=(v_1,v_2,\dots,v_n)$, by $\mathbf v_\si$ we mean
$(v_{\si(1)},v_{\si(2)},\dots ,v_{\si(n)})$.
We are now in the position to state and prove the main theorem
on non-intersecting paths, due to 
\index{Lindstr\"om, Bernt}Lindstr\"om \cite[Lemma~1]{LindAA}.

\begin{theorem} \label{CKT2.3}%
Let $G$ by a directed, acyclic graph, and
let $\A=(A_1,A_2,\break\dots,A_n)$ and\/ $\E=(E_1,E_2,\dots,E_n)$ be sequences
of vertices in $G$.
Then
\begin{multline} \label{CKe2.5a}
\sum _{\si\in\mathfrak S_n} ^{}(\sgn\si)\cdot
\GF{L_G(\A_\si\to\E\mid \text{\em non-intersecting});w}\\=
\det_{1\le i,j\le n}\big(\GF{L_G(A_j\to E_i);w}\big).
\end{multline}
\end{theorem}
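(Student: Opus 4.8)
The plan is to start from the right-hand side of \eqref{CKe2.5a} and expand the determinant by the Leibniz formula. Writing $M_{ij}=\GF{L_G(A_j\to E_i);w}$, we have $\det_{1\le i,j\le n}(M_{ij})=\sum_{\si\in\mathfrak S_n}(\sgn\si)\prod_{i=1}^n\GF{L_G(A_{\si(i)}\to E_i);w}$. Since the weight of a family of paths is by definition the product of the weights of its members, the product $\prod_{i=1}^n\GF{L_G(A_{\si(i)}\to E_i);w}$ is exactly $\GF{L_G(\A_\si\to\E);w}$, the generating function for \emph{all} families $\P=(P_1,\dots,P_n)$ (not necessarily non-intersecting) in which $P_i$ runs from $A_{\si(i)}$ to $E_i$. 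Thus the right-hand side equals $\sum_\si(\sgn\si)\GF{L_G(\A_\si\to\E);w}$.

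Next I would split each such generating function according to whether the family is non-intersecting or intersecting. The non-intersecting parts contribute precisely the left-hand side of \eqref{CKe2.5a}, so the theorem reduces to showing that the signed sum over intersecting families vanishes, that is, $\sum_\si(\sgn\si)\GF{L_G(\A_\si\to\E\mid\text{intersecting});w}=0$.

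To prove this I would construct a sign-reversing, weight-preserving involution $\Phi$ on the set of pairs $(\si,\P)$ with $\P$ an intersecting family from $\A_\si$ to $\E$. Given such a pair, let $i$ be the smallest index for which $P_i$ shares a vertex with another path, let $X$ be the first vertex along $P_i$ (measured from its starting point) that lies on some other path, and let $j$ be the smallest index $j\ne i$ with $X\in P_j$. Define $\Phi$ by interchanging the portions of $P_i$ and $P_j$ that follow $X$. This keeps the two starting points but swaps the two endpoints $E_i$ and $E_j$; after relabelling the paths by their endpoints, the underlying permutation $\si$ is multiplied by the transposition $(i\,j)$, so that $\sgn$ changes sign. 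Because the operation merely recombines the same collection of edges, the total weight $w(\P)$ is unchanged.

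The main obstacle will be verifying that $\Phi$ is a genuine involution, i.e.\ that applying the tail-swap twice recovers the original pair. This hinges on checking that the selected data $(i,X,j)$ are invariant under the swap: the initial segment of $P_i$ up to $X$ is left untouched, so $i$ and $X$ remain the minimal intersecting index together with its first intersection vertex, and $j$ remains the minimal competing index. Here the hypothesis that $G$ is acyclic is essential, as it guarantees that no path revisits a vertex, so that ``the first vertex along $P_i$'' is unambiguous and the head/tail decomposition at $X$ behaves consistently. Once $\Phi$ is shown to be an involution without fixed points --- an intersecting family always admits such a swap, so no pair is fixed --- the signed contributions cancel in pairs, and the intersecting sum is $0$, which completes the proof.
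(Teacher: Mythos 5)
Your overall strategy---Leibniz expansion of the determinant, reduction to the vanishing of the signed sum over intersecting families, and cancellation via a sign-reversing, weight-preserving involution---is exactly the strategy of the paper's proof. The gap is in the involution itself: the map $\Phi$ you define is not an involution, and your verification sketch contains the error. You pick $i$ minimal such that $P_i$ meets another path, $X$ the \emph{first} vertex along $P_i$ on another path, $j\ne i$ minimal with $X\in P_j$, then swap the \emph{tails} after $X$ and relabel by endpoints. After that relabelling, the path carrying label $i$ is (initial segment of the old $P_j$ up to $X$) followed by (final segment of the old $P_i$); its initial segment is the old $P_j$'s head, not the old $P_i$'s head, contrary to your claim that ``the initial segment of $P_i$ up to $X$ is left untouched''. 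Nothing prevents the old $P_j$ from meeting a third path before it reaches $X$, and then the selected data $(i,X,j)$ are not recovered on the second application. Concretely, take $n=3$, $\si=\mathrm{id}$, the acyclic graph with edges $a_1\to X$, $X\to e_1$, $a_2\to v$, $v\to X$, $X\to e_2$, $a_3\to v$, $v\to e_3$, with $A_k=a_k$, $E_k=e_k$, and $P_1=a_1\to X\to e_1$, $P_2=a_2\to v\to X\to e_2$, $P_3=a_3\to v\to e_3$. Your rule selects $(i,X,j)=(1,X,2)$ and produces $P'_1=a_2\to v\to X\to e_1$, $P'_2=a_1\to X\to e_2$, $P'_3=P_3$. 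Applied to the new family, the rule selects the vertex $v$ and the pair $(1,3)$, because $P'_1$ now passes through $v\in P'_3$ \emph{before} reaching $X$; hence $\Phi(\Phi(\si,\P))\ne(\si,\P)$. Without the pairing, the signed contributions of intersecting families do not cancel, so the proof is incomplete.

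The defect lies in the interaction between the direction of your swap and the labelling convention, and this is precisely where the paper's proof differs from yours. In the notation of the theorem the index of a path is tied to its \emph{end} point ($P_i$ always ends at $E_i$; the permutation acts on the starting points), so the portion of a path that stays attached to its label is the final one; accordingly, the paper chooses a meeting point that is \emph{last} along the paths involved and swaps the \emph{initial} portions, leaving all endpoints and labels untouched. The mirror image of your rule does work in this convention: take $i$ minimal such that $P_i$ meets another path, let $X$ be the \emph{last} vertex along $P_i$ lying on another path, let $j\ne i$ be minimal with $X\in P_j$, and swap the initial segments up to $X$, replacing $\si$ by $\si\circ(ij)$. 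Then the portion of the path labelled $i$ beyond $X$ is unchanged and meets no other path, the set of indices $k\ne i$ whose path passes through $X$ is unchanged, and the minimality of $i$ is preserved, so the selected data are invariant and the map really is a weight-preserving, sign-reversing involution. (Alternatively, you may keep your first-meeting-point/tail-swap rule verbatim provided you index the paths by their starting points, which amounts to running the same argument in the graph with all edges reversed.)
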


\begin{proof}
By expanding the determinant on the right-hand side of \eqref{CKe2.5a}, 
we obtain
\begin{align} \notag
\det_{1\le i,j\le n}\big(\GF{L_G(A_j\to E_i);w}\big)&=\sum _{\si\in\mathfrak S_n}
^{} \sgn\si \prod _{i=1} ^{n}\GF{L_G(A_{\si(i)}\to E_i);w}\\
\label{CKe2.2}
&=\underset{\si\in\mathfrak S_n,\,\P\in L_G(\A_\si\to\E)}{\sum _{(\si,\P)} ^{}}
\sgn\si\,w(\P).
\end{align}
The sum in \eqref{CKe2.2} expresses the determinant in \eqref{CKe2.5a} as a
generating function for pairs $(\si,\P)$ in the set
\begin{equation} \label{CKe2.3}
\bigcup _{\si\in\mathfrak S_n} ^{}L_G(\A_\si\to\E). 
\end{equation}

We now define a sign-reversing, weight-preserving involution
$\ph$ on the set of
all pairs $(\si,\P)$ in \eqref{CKe2.3} with the property that $\P$ is
intersecting. 
Sign-reversing means that if
${\ph}((\si,\P))=(\si_{\ph},\P_{\ph})$ then 
$\sgn\si=-\sgn\si_{\ph}$, while weight-preserving means that
$w(\P)=w(\P_{\ph})$. Suppose that we had already constructed
such a $\ph$. Then, in the sum \eqref{CKe2.2}, all
contributions of pairs $(\si,\P)$ in \eqref{CKe2.3} where $\P$ is
intersecting would cancel. Only contributions of pairs $(\si,\P)$ in
\eqref{CKe2.3} where $\P$ is non-intersecting would survive,
establishing \eqref{CKe2.5a}.

Next we construct the sign-reversing, weight-preserving involution $\ph$. Let
$(\si,\P)$ be in $L_G(\A_\si\to\E)$ where $\P$ is intersecting. In the
left-hand picture of Figure~\ref{CKF2.1} an example is shown with
$G$ the directed graph corresponding to the integer lattice
$\Z^2$, $n=3$, and $\si=213$.
Among all pairs of paths with a common point,
choose the lexicographically largest, say $(P_i,P_j)$, $i<j$,
and among all common points of
that pair choose the last along the paths. 
(It does not matter on which of the two
paths of the pair we choose the last common point since the
graph $G$ is acyclic.) Denote
this common point by $M$. In our example, the common points
between paths are
$(3,2)$, $(3,3)$, $(4,5)$. The lexicographically largest pair
of paths with common points is $(P_2,P_3)$. The last common point
of this pair is $M=(4,5)$. 

Returning to the general construction of the involution $\ph$,
we now interchange the
initial portions of $P_i$ and $P_j$ up to $M$. To be more precise, we
form the new paths
$$P'_i=[\text{subpath of $P_j$ from $A_{\si(j)}$ to $M$ joined with
subpath of $P_i$ from $M$ to $E_i$}]$$
and
$$P'_j=[\text{subpath of $P_i$ from $A_{\si(i)}$ to $M$ joined with
subpath of $P_j$ from $M$ to $E_j$}].$$
Then we define
\begin{align}\notag
\ph\big((\si,\P)\big)&
=\ph\big((\si,(P_1,\dots,P_i,\dots,P_j,\dots,P_n))\big)\\
\notag
&=(\si\circ(ij),(P_1,\dots,P'_i,\dots,P'_j,\dots,P_n)),
\end{align}
where $(i,j)$ denotes the transposition interchanging $i$ and $j$. The
right-hand picture in Figure~\ref{CKF2.1} shows what is obtained by this
operation in our example.
\begin{figure}
$$
\Einheit.55cm
\Gitter(8,8)(0,0)
\Koordinatenachsen(8,8)(0,0)
\SPfad(3,1),222211\endSPfad
\Pfad(1,2),111121\endPfad
\Pfad(1,3),111222\endPfad
\DickPunkt(3,1)
\DickPunkt(1,2)
\DickPunkt(1,3)
\DickPunkt(6,3)
\DickPunkt(5,5)
\DickPunkt(4,6)
\Kreis(4,5)
\Label\u{A_1}(3,1)
\Label\lu{A_2}(1,2)
\Label\l{A_3}(1,3)
\Label\ro{E_1}(6,3)
\Label\ro{E_2}(5,5)
\Label\ro{E_3}(4,6)
\Label\lo{M}(4,5)
\Label\u{P_1}(5,2)
\Label\lu{P_2}(3,5)
\Label\r{P_3}(4,4)
\Label\u{\si=213}(3.5,-1)
\hskip5cm
\raise2.5cm\hbox{$\longleftrightarrow$\hskip1cm}
\Gitter(8,8)(0,0)
\Koordinatenachsen(8,8)(0,0)
\SPfad(1,3),111221\endSPfad
\Pfad(1,2),111121\endPfad
\Pfad(3,1),222212\endPfad
\DickPunkt(3,1)
\DickPunkt(1,2)
\DickPunkt(1,3)
\DickPunkt(6,3)
\DickPunkt(5,5)
\DickPunkt(4,6)
\Kreis(4,5)
\Label\u{A_1}(3,1)
\Label\lu{A_2}(1,2)
\Label\l{A_3}(1,3)
\Label\ro{E_1}(6,3)
\Label\ro{E_2}(5,5)
\Label\ro{E_3}(4,6)
\Label\lo{M}(4,5)
\Label\u{P_1}(5,2)
\Label\lu{P'_3}(3,5)
\Label\r{P'_2}(4,4)
\Label\u{\si'=231}(3.5,-1)
\hskip3.8cm
$$
\caption{}
\label{CKF2.1}
\end{figure}
The image $\ph((\si,\P))$ is again an element of the set in
\eqref{CKe2.3} since the new permutation of the starting points of $\P$
is exactly $\si\circ(ij)$. Moreover,
$(P_1,\dots,P'_i,\dots,P'_j,\dots,P_n)$ is intersecting since $P'_i$
and $P'_j$ are. From all this it is obvious that when $\ph$ is
applied to $\ph((\si,\P))$ we arrive back at $(\si,\P)$. Hence,
$\ph$ is an involution. Since $\si$ and $\si\circ(ij)$ differ in
sign, $\ph$ is sign-reversing. Finally, since the total (multi)set
of edges in the path families does not change under application of
$\ph$, the map $\ph$ is also weight-preserving. This finishes
the proof.
\end{proof}

\medskip
The most frequent situation in which the general result in
Theorem~\ref{CKT2.3} is applied is the one where
non-intersecting paths
can only occur if the starting and end points are connected via
the identity permutation, that is, if $(P_1,P_2,\dots,P_n)$
can only be non-intersecting if $P_i$ connects $A_i$ with $E_i$,
$i=1,2,\dots,n$. In that situation, Theorem~\ref{CKT2.3}
simplifies to the following result.

\begin{corollary} \label{CKT2.2a}%
Let $G$ be a directed, acyclic graph, and
let $\A=(A_1,A_2,\break\dots,A_n)$ and\/ $\E=(E_1,E_2,\dots,E_n)$ be sequences
of vertices in $G$ such that the only permutation $\si$ that
allows for a family $(P_1,P_2,\dots,P_n)$ of non-intersecting
paths such that $P_i$ connects $A_{\si(i)}$ with $E_i$, $i=1,2,\dots,n$, 
is the identity permutation.
Then the generating function $\sum _{\P} ^{}w(\P)$ for
families $\P=(P_1,P_2,\dots,P_n)$ of non-intersecting
paths, where $P_i$ is a path running from
$A_i$ to $E_i$, $i=1,2,\dots,n$, is given by 
\begin{equation} \label{CKe2.4a}
\GF{L_G(\A\to\E\mid \text{\em non-intersecting});w}=
\det_{1\le i,j\le n}\big(\GF{L_G(A_j\to E_i);w}\big).
\end{equation}
\end{corollary}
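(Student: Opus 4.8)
The plan is to deduce this immediately from the general Lindstr\"om--Gessel--Viennot theorem, Theorem~\ref{CKT2.3}, by observing that the hypothesis annihilates every term on the left-hand side of \eqref{CKe2.5a} except the one indexed by the identity permutation. First I would recall the precise shape of that left-hand side, namely the signed sum
$$\sum _{\si\in\mathfrak S_n} ^{}(\sgn\si)\cdot\GF{L_G(\A_\si\to\E\mid \text{non-intersecting});w},$$
where, by the notational conventions fixed just before the theorem, $L_G(\A_\si\to\E)$ denotes the families $(P_1,P_2,\dots,P_n)$ in which $P_i$ runs from $A_{\si(i)}$ to $E_i$, and $L_G(\A_\si\to\E\mid\text{non-intersecting})$ its non-intersecting subfamilies.

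Next I would invoke the defining hypothesis of the corollary: the only permutation $\si$ admitting a non-intersecting family $(P_1,P_2,\dots,P_n)$ with $P_i$ connecting $A_{\si(i)}$ to $E_i$ is the identity. Consequently, for every $\si\neq\mathrm{id}$ the set $L_G(\A_\si\to\E\mid\text{non-intersecting})$ is empty, so by definition \eqref{CKe0.1} its generating function is $0$ and the corresponding summand drops out of the sum. Only the $\si=\mathrm{id}$ term survives; since $\sgn(\mathrm{id})=1$ and $\A_{\mathrm{id}}=\A$, the entire left-hand side of \eqref{CKe2.5a} collapses to $\GF{L_G(\A\to\E\mid\text{non-intersecting});w}$, which is exactly the quantity appearing on the left of \eqref{CKe2.4a}. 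Reading off the right-hand side of \eqref{CKe2.5a}, which is unchanged, then yields \eqref{CKe2.4a}.

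There is essentially no hard part here, as the statement is a direct specialization of Theorem~\ref{CKT2.3}; the only point demanding care is the bookkeeping of the indexing and sign conventions. Specifically, I would want to verify that the surviving permutation is genuinely the identity (so that its sign contributes $+1$ rather than an unwanted factor), and that $\A_{\mathrm{id}}$ really is the untwisted sequence $\A=(A_1,\dots,A_n)$, so that no residual reindexing or sign slips into the final identity. Once these conventions are checked to align, the corollary follows at once.
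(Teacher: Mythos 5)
Your proposal is correct and coincides with the paper's own (implicit) argument: the corollary is stated there precisely as the specialization of Theorem~\ref{CKT2.3} to the situation where only the identity permutation admits a non-intersecting family, so that every summand on the left-hand side of \eqref{CKe2.5a} with $\si\neq\mathrm{id}$ is a generating function over an empty set and vanishes, leaving the single term $\GF{L_G(\A\to\E\mid \text{non-intersecting});w}$ with sign $+1$. Your care about the sign and indexing conventions is exactly the right (and only) point to check.
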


The standard application of Corollary~\ref{CKT2.2a} concerns
\index{semistandard tableau}\index{tableau, semistandard}{\em semistandard
tabl\-eaux}. These are important objects particularly in the
representation theory of the general and the special linear
groups, cf.\ \cite{SagaAQ}. 

Let $\La=(\la_1,\la_2,\dots,\la_n)$ and $\Mu=(\mu_1,\mu_2,\dots,\mu_n)$
be $n$-tuples of integers such that
{\refstepcounter{equation}\label{CKe2.11}}
\alphaeqn
\begin{gather}
\lambda
_1\ge\lambda
_2\ge\lambda
_3\ge\dots\ge\lambda
_n ,\label{CKe2.11a}\\
\mu_1\ge\mu_2\ge\mu_3\ge\dots\ge\mu_n,\label{CKe2.11b}\\
\text {and }\la_i\ge\mu_i\text{ for all }i.\label{CKe2.11c}
\end{gather}
\reseteqn
A semistandard tableau $T$ of shape $\La/\Mu$ is
an array of integers 
\begin{equation} \label{CKe2.6}
\hskip1cm
\begin{matrix} 
&&&\pi_{1,\mu_1+1}&\multicolumn{3}{c}{\dotfill} &\pi_{1,\la_1}\\
&&\pi_{2,\mu_2+1}\quad \dots&\pi_{2,\mu_1+1}&\multicolumn{2}{c}{\dotfill} 
&\pi_{2,\la_2}\\
&\iddots&&\vdots&&\iddots\\
\pi_{n,\mu_n+1}&\multicolumn{3}{c}{\dotfill} &\pi_{n,\la_n}
\end{matrix}
\end{equation}
such that entries along rows are weakly increasing and entries
along columns are strictly increasing.
A semistandard tableau of shape $(7,6,6,4)/(3,3,1,0)$
is shown in Figure~\ref{CKF2.4}.a.
(The lower and upper bounds on the entries displayed to the left
and right of the tableau should be ignored at this point.)

Let $\mathbf a=(a_1,a_2\dots,a_n)$ and $\mathbf b=(b_1,b_2,\dots,b_n)$ be
sequences of integers such that
{\refstepcounter{equation}\label{CKe2.12}}
\alphaeqn
\begin{gather}
a_1\le a_2\le \dots\le a_n\hphantom{.}\label{CKe2.12a}\\
b_1\le b_2\le \dots\le b_n,\label{CKe2.12b}\\
\text {and }a_i\ge b_i\text{ for all }i.\label{CKe2.12c}
\end{gather}
\reseteqn

We claim that {\em semistandard tableaux of shape $\La/\Mu$ where the
entries in row $i$ are at most $a_i$ and at least $b_i$ bijectively
correspond to families $(P_1,P_2,\dots,P_n)$ of non-intersecting
lattice paths $P_i$, where $P_i$ runs from $(\mu_i-i,b_i)$ to
$(\la_i-i,a_i)$.}

This is seen as follows. Let $\pi$ be a semistandard tableau
of shape $\La/\Mu$ where the entries in row $i$ are at most $a_i$ and 
at least $b_i$. 
\begin{figure}
$$
\underset{\text {\small a. Semistandard tableau}}
{
\begin{matrix} 
-2\le&&&&1&1&4&5&\le 7\hphantom{0}\\
-1\le&&&&5&5&7&&\le8\hphantom{0}\\
\hphantom{-}0\le&&2&5&6&9&9&&\le 9\hphantom{0}\\
\hphantom{-}1\le&1&5&6&8&&&&\le10
\\
\mathstrut\\
\end{matrix}
}
$$
$$\updownarrow$$
$$
\underset{\text {\small b. Family of non-intersecting lattice paths}}
{
\Einheit0.5cm
\Gitter(7,11)(-4,-2)
\Koordinatenachsen(7,11)(-4,-2)
\Pfad(2,-2),2221122212122\endPfad
\Pfad(1,-1),222222112212\endPfad
\Pfad(-2,0),22122212122211\endPfad
\Pfad(-4,1),1222212122122\endPfad
\DickPunkt(2,-2)
\DickPunkt(1,-1)
\DickPunkt(-2,0)
\DickPunkt(-4,1)
\DickPunkt(6,7)
\DickPunkt(4,8)
\DickPunkt(3,9)
\DickPunkt(0,10)
\Label\ro{P_4}(-3,3)
\Label\ro{P_3}(-1,3)
\Label\ro{P_2}(2,4)
\Label\ru{P_1}(4,3)
\Label\ro{1}(2,1)
\Label\ro{1}(3,1)
\Label\ro{4}(4,4)
\Label\ro{5}(5,5)
\Label\ro{5}(1,5)
\Label\ro{5}(2,5)
\Label\ro{7}(3,7)
\Label\ro{2}(-2,2)
\Label\ro{5}(-1,5)
\Label\ro{6}(0,6)
\Label\ro{9}(1,9)
\Label\ro{9}(2,9)
\Label\ro{1}(-4,1)
\Label\ro{5}(-3,5)
\Label\ro{6}(-2,6)
\Label\ro{8}(-1,8)
\Label\ro{}(1,-4)
\hskip1.1cm
\vphantom{\sum_{i=0}^{dsf}}
\mathstrut}
$$
\caption{}
\label{CKF2.4}
\end{figure}
The semistandard tableau $\pi$ is
mapped to a family of lattice paths
by associating the $i$-th row of $\pi$ with a path
$P_i$ from $(\mu_i-i,b_i)$ to $(\la_i-i,a_i)$ where
the entries in the $i$-th row are interpreted
as heights of the horizontal steps in the path
$P_i$. Thus, from $\pi$ we obtain the family $\P=(P_1,\dots,P_n)$
of lattice paths. The lower picture of Figure~\ref{CKF2.4} 
displays the family of lattice paths that in this way
results from the array displayed in the upper
picture of Figure~\ref{CKF2.4}.

Clearly, the property that the columns of $\pi$ are strictly
increasing translates into the condition that $(P_1,P_2,\dots,P_n)$
is non-intersecting. 

By applying Corollary~\ref{CKT2.2a} to this situation, we obtain
the following enumeration result.

\begin{theorem} \label{CKT2.4}
Let $\La=(\la_1,\la_2,\dots,\la_n)$ and
$\Mu=(\mu_1,\mu_2,\dots,\mu_n)$ be sequences of integers satisfying
\eqref{CKe2.11}. Let $\mathbf a=(a_1,a_2,\dots,a_n)$ and $\mathbf
b=(b_1,b_2,\break\dots,b_n)$ be sequences of integers satisfying
\eqref{CKe2.12}.
Then the number of all semistandard tableaux
$\pi$ of shape $\La/\Mu$
where the entries in row $i$ are at most $a_i$ and at least $b_i$
equals
\begin{equation} \label{CKe2.13} 
\det_{1\le i,j\le n}\(\binom{a_i-b_j+\la_i-\mu_j-i+j}
{\la_i-\mu_j-i+j}\).
\end{equation}
More generally,
the generating function $\sum _{\pi} ^{}q^{n(\pi)}$ for the same set
of semistandard tableaux $\pi$, where $n(\pi)$ denotes the sum of all
entries of $\pi$, equals
\begin{equation} \label{CKe2.14} 
\det_{1\le i,j\le n}\(q^{b_j(\la_i-\mu_j-i+j)}
\qbinom{a_i-b_j+\la_i-\mu_j-i+j}
{\la_i-\mu_j-i+j}q\).
\end{equation}
\end{theorem}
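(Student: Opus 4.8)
The plan is to leverage the bijective correspondence just described between semistandard tableaux of shape $\La/\Mu$ (with row-$i$ entries confined to $[b_i,a_i]$) and families of non-intersecting lattice paths, and then to apply Corollary~\ref{CKT2.2a}. Concretely, to such a tableau $\pi$ I associate the family $\P=(P_1,\dots,P_n)$ in which $P_i$ runs from $A_i=(\mu_i-i,b_i)$ to $E_i=(\la_i-i,a_i)$, the entries of row $i$ recording the successive heights of the horizontal steps of $P_i$. The weak increase of entries along each row is exactly the condition that these heights be weakly increasing, so that $P_i$ is a genuine path with positive unit steps; the constraints $b_i\le\pi_{i,c}\le a_i$ match the prescribed endpoints; and, as noted in the text, strict increase down columns translates precisely into non-intersection of $\P$. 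This last equivalence is the one delicate point I would write out carefully: it hinges on the $-i$ shift in the abscissa of $A_i$ and $E_i$, which is exactly what converts the weak inequality forced along a column into strict separation of adjacent paths.

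Next I would verify the hypothesis of Corollary~\ref{CKT2.2a}, namely that only the identity permutation admits a non-intersecting family. Since $\mu_1\ge\mu_2\ge\dots$, the source abscissas $\mu_i-i$ are \emph{strictly} decreasing in $i$, and likewise the sink abscissas $\la_i-i$ are strictly decreasing, while the ordinates $b_i$ and $a_i$ increase weakly. Hence for any non-identity matching there are two paths that begin in one horizontal order and must end in the opposite horizontal order; as the abscissa is monotone along each path with positive unit steps, such a pair is forced to share a lattice point. Thus only the identity contributes, and Corollary~\ref{CKT2.2a} yields the count as $\det_{1\le i,j\le n}\big(\vv{\LL{A_j\to E_i}}\big)$.

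It then remains to read off the matrix entries. Applying \eqref{CKe1.1} with $(a,b)=(\mu_j-j,b_j)$ and $(c,d)=(\la_i-i,a_i)$, one has $c+d-a-b=a_i-b_j+\la_i-\mu_j-i+j$ and $c-a=\la_i-\mu_j-i+j$, so the entry is exactly the binomial coefficient in \eqref{CKe2.13}. For the weighted refinement \eqref{CKe2.14} I would instead attach to each horizontal edge at height $h$ the weight $q^{h}$ and to each vertical edge the weight $1$; this is a legitimate edge-weight, and the switching involution underlying Theorem~\ref{CKT2.3} preserves the total multiset of edges and hence the weight, so Corollary~\ref{CKT2.2a} applies verbatim. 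Under the bijection $w(P_i)=q^{a(P_i)}$ with $a(\,.\,)$ the area statistic of \eqref{CKe1.1q}, whence $w(\P)=q^{\sum_i a(P_i)}=q^{n(\pi)}$, the entries of $\pi$ being precisely the step heights. The generating function therefore equals $\det_{1\le i,j\le n}\big(\GF{\LL{A_j\to E_i};q^{a(\,.\,)}}\big)$, and evaluating each entry by \eqref{CKe1.1q} produces the prefactor $q^{b_j(\la_i-\mu_j-i+j)}$ times the $q$-binomial coefficient, which is \eqref{CKe2.14}; setting $q=1$ recovers \eqref{CKe2.13}.

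The main obstacle is not computational — the two path-counting formulas do all the arithmetic — but conceptual verification: establishing that column-strictness is equivalent to non-intersection, and that the staircase arrangement of sources and sinks rules out all non-identity permutations. Everything after that is bookkeeping.
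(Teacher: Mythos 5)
Your proposal is correct and takes essentially the same route as the paper: the bijection between semistandard tableaux and families of non-intersecting lattice paths with $A_i=(\mu_i-i,b_i)$, $E_i=(\la_i-i,a_i)$, followed by Corollary~\ref{CKT2.2a}, with the determinant entries evaluated via \eqref{CKe1.1} and its $q$-analogue \eqref{CKe1.1q}. If anything, you supply details the paper leaves implicit, namely the verification that only the identity permutation admits a non-intersecting family and the realization of $q^{n(\pi)}$ as a genuine multiplicative edge weight.
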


If the shape $\La/\Mu$ is a straight shape and the
bounds $\mathbf a$ and $\mathbf b$ are constant, that is,
if, say, $\Mu=(0,0,\dots,0)$, $\mathbf b=(1,1,\dots,1)$, 
and $\mathbf a=(a,a,\dots,a)$, 
then the above determinants
can be evaluated in closed form. Rewritten appropriately, the result is
the \index{formula, hook-content}\index{hook-content formula}%
{\it hook-content formula}.
(We refer the reader to \cite[Sec.~3.10]{SagaAQ} or \cite[Cor.~7.21.6]{StanBI}
for unexplained terminology).

\begin{theorem} \label{CKT2.4H}
Let $\La=(\la_1,\la_2,\dots,\la_n)$ 
be a sequence of non-negative integers satisfying
\eqref{CKe2.11}. 
Then the number of all semistandard tableaux
$\pi$ of shape $\La$ with
entries in row $i$ being positive integers 
at most $a$
equals
\begin{equation} \label{CKe2.13H} 
\prod _{\rho} ^{}\frac {a+c(\rho)} {h(\rho)},
\end{equation}
where the product is over all cells $\rho$ in the Ferrers diagram
of the partition $\La$, $c(\rho)$ is
the content of the cell $\rho$, and $h(\rho)$ is
the hook-length of the cell $\rho$.
More generally,
the generating function $\sum _{\pi} ^{}q^{n(\pi)}$ for the same set
of semistandard tableaux $\pi$ equals
\begin{equation} \label{CKe2.14H} 
q^{\sum_{i=1}^n i\la_i}
\prod _{\rho} ^{}\frac {1-q^{a+c(\rho)}} {1-q^{h(\rho)}}.
\end{equation}
\end{theorem}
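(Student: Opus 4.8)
The plan is to derive everything from the determinantal formula of Theorem~\ref{CKT2.4} by specialising it to the straight shape with constant bounds, namely $\Mu=(0,\dots,0)$, $\mathbf b=(1,\dots,1)$ and $\mathbf a=(a,\dots,a)$. With these choices the $(i,j)$-entry of the matrix in \eqref{CKe2.13} becomes $\binom{a-1+\la_i-i+j}{\la_i-i+j}=\binom{a-1+\la_i-i+j}{a-1}$, so that the number of tableaux in question equals
\begin{equation*}
\det_{1\le i,j\le n}\binom{a-1+\la_i-i+j}{a-1},
\end{equation*}
while \eqref{CKe2.14} specialises to the analogous determinant of $q$-binomial coefficients, the $(i,j)$-entry being $q^{\la_i-i+j}\qbinom{a-1+\la_i-i+j}{a-1}q$. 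Thus the theorem reduces to \emph{evaluating these two determinants in closed form} and then \emph{rewriting the answer in hook--content language}. I would settle the enumerative case ($q=1$) first and obtain the $q$-statement by the parallel argument.

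The determinant evaluation is a standard Vandermonde-type computation. Writing $x_i=\la_i-i$ (a strictly decreasing sequence of integers), each entry $\binom{a-1+x_i+j}{a-1}=\frac{1}{(a-1)!}\prod_{k=1}^{a-1}(x_i+j+k)$ is, for fixed column index $j$, a polynomial in $x_i$ of degree $a-1$ with leading coefficient $1/(a-1)!$. I would pull the common constant $1/\big((a-1)!\big)^n$ out of the determinant and then use successive column operations (repeatedly applying the Pascal recurrence $\binom{N}{a-1}-\binom{N-1}{a-1}=\binom{N-1}{a-2}$) to replace the columns by polynomials in $x_i$ of degrees $0,1,\dots,n-1$. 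What survives is the Vandermonde determinant $\prod_{1\le i<j\le n}(x_i-x_j)$, so that after routine bookkeeping of the normalising constants one arrives at
\begin{equation*}
\det_{1\le i,j\le n}\binom{a-1+x_i+j}{a-1}
=\prod_{1\le i<j\le n}\frac{x_i-x_j}{j-i}\cdot(\text{an explicit product of factorial ratios}).
\end{equation*}
Equivalently, one may simply cite the determinant lemma for matrices whose entries are products $\prod_k(x_i+\mathrm{const})$, which gives this factorisation in one stroke.

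The genuinely content-bearing step is the last one: recognising the right-hand side as $\prod_{\rho}\frac{a+c(\rho)}{h(\rho)}$. Substituting $x_i=\la_i-i$ back, the Vandermonde factor becomes $\prod_{1\le i<j\le n}\frac{(\la_i-i)-(\la_j-j)}{j-i}$, and combined with the factorial ratios it reorganises into $\prod_{i<j}\frac{\la_i-\la_j-i+j}{j-i}$ times a product of ascending factorials $\prod_i(a-i+1)(a-i+2)\cdots(a-i+\la_i)$. Here I would invoke the two classical arithmetic facts about Young diagrams: first, the hook-length identity, which collects the denominators into $\prod_\rho h(\rho)$; and second, that the cell in row $i$ and column $k$ has content $c(\rho)=k-i$, so the linear factors $a-i+k$ are exactly the numbers $a+c(\rho)$ and assemble into $\prod_\rho\big(a+c(\rho)\big)$. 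This completes the enumerative case, and no separate boundary discussion is needed, since for $n>a$ both sides vanish (the cell of content $1-i$ with $i=a+1$ contributes $a+c(\rho)=0$). The $q$-version runs along identical lines, with $q$-integers replacing integers and the $q$-Vandermonde determinant replacing the ordinary one; the extra prefactor $q^{\sum_{i=1}^n i\la_i}$, which is the weight $q^{n(\pi)}$ of the minimal tableau (all entries in row $i$ equal to $i$), is produced by the powers of $q$ carried by \eqref{CKe2.14}. The main obstacle is therefore not the determinant evaluation, which is mechanical, but this final translation of the factorial product into hook and content factors, which rests entirely on the classical hook-length identity for partitions.
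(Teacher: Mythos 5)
Your proposal is correct and takes exactly the route the paper itself indicates (the paper only sketches it): specialise Theorem~\ref{CKT2.4} to $\Mu=(0,\dots,0)$, $\mathbf b=(1,\dots,1)$, $\mathbf a=(a,\dots,a)$, evaluate the binomial determinant as a Vandermonde product times explicit factorial factors, and convert to hook--content form via the identity $\prod_\rho h(\rho)=\prod_{i=1}^n(\la_i+n-i)!\big/\prod_{i<j}(\la_i-\la_j+j-i)$.

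One intermediate claim is false as stated, though the conclusion you draw from it is right. The Pascal column operations alone do \emph{not} yield columns of degrees $0,1,\dots,n-1$: iterating $\binom{N}{a-1}-\binom{N-1}{a-1}=\binom{N-1}{a-2}$ transforms the matrix into $\bigl(\binom{a+x_i}{a-j}\bigr)$, whose $j$-th column has degree $a-j$ in $x_i$. To reach the Vandermonde you must additionally factor $(x_i+n+1)(x_i+n+2)\cdots(x_i+a)$ out of row~$i$; these row factors are $x_i$-dependent (they are precisely your ``factorial ratios,'' not normalising constants), and after removing them column $j$ becomes $\frac{1}{(a-j)!}(x_i+j+1)\cdots(x_i+n)$, of degree $n-j$, whence the Vandermonde. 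With this fix (or by citing the product-form determinant lemma, as you offer) one gets
\begin{equation*}
\det_{1\le i,j\le n}\binom{a-1+\la_i-i+j}{a-1}
=\prod_{1\le i<j\le n}(\la_i-\la_j+j-i)\cdot\prod_{i=1}^n\frac{(a+\la_i-i)!}{(a-i)!\,(\la_i+n-i)!},
\end{equation*}
which is exactly $\prod_\rho\frac{a+c(\rho)}{h(\rho)}$, and the $q$-case runs in parallel. Finally, your boundary remark should say ``more than $a$ \emph{nonzero} rows'': if $n>a$ but $\la_{a+1}=0$, both sides are positive, and one should strip the empty rows before applying the evaluation.
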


By introducing more general weights, in the same way one can
provide combinatorial proofs for the 
\index{Jacobi--Trudi identity}\index{identity, Jacobi--Trudi}%
\index{Jacobi, Carl Gustav Jacob}Jacobi--Trudi-type
identities for 
\index{Schur function}\index{function, Schur}Schur functions 
(cf.\ \cite[Sec.~4.5]{SagaAQ}),
respectively formulas for so-called 
\index{flagged Schur function}\index{Schur function, flagged}%
\index{function, flagged Schur}flagged Schur functions,
originally introduced by
\index{Lascoux, Alain}Lascoux and
\index{Sch\"utzenberger, Marcel-Paul}Sch\"utzenberger \cite{LaScAA}, see also 
\index{Wachs, Michelle L.}\cite{WachAA}.

If in an array \eqref{CKe2.6} one also introduces a relationship between
the first row and the last row, then one is led to define so-called
\index{cylindric partition}\index{partition, cylindric}{\em cylindric
partitions}, as was done by
\index{Gessel, Ira Martin}Gessel and
\index{Krattenthaler, Christian}Krattenthaler \cite{GeKrAA}.
Also in that theory, non-intersecting paths play an essential role.

\medskip
It may seem that the general result in
Theorem~\ref{CKT2.3} is a very artificial statement, 
perhaps being of no use.
However, even this result does have several applications.
For example, the most elegant proof of 
the determinant formula
for higher-dimensional path counting under a general two-sided
restriction (\cite{SulaAC}; see Section~\ref{CKs7.5},
Theorem~\ref{CKT7.10}) fundamentally
makes use of the full generality of Theorem~\ref{CKT2.3}.
Further applications of the general formula \eqref{CKe2.5a} can be found 
in rhombus tiling enumeration (see \cite{CiEKAA,FiscAA}), in
combinatorial commutative algebra (see \cite{KratBO}), and
in the combinatorial theory of orthogonal
polynomials developed in Section~\ref{CKs3.4} (see
the proof of Theorem~\ref{CKT3.8} as given in \cite{VienAE}).

\medskip
In several applications, one has to deal with the problem of
enumerating non-intersecting lattice paths where the starting and end
points are not fixed. Either it is only the starting points which are
fixed and the end points are any points from a given set (or the
other way round), or it is even that the starting points may come
from one set and the end points from another. The solution to these
counting problems comes from \index{Pfaffian}\index{Pfaff, Johann Friedrich}{\em Pfaffians}.

A Pfaffian is very similar
to a determinant. Whereas in the definition of a determinant there
appear \index{permutation}{\em permutations}, 
in the definition of a Pfaffian there
appear \index{matching, perfect}\index{perfect matching}{\em 
perfect matchings}. A {\em
perfect matching} of a set of
objects, $\mathcal A$ say, is a pairing of the objects. 
For example, if $\mathcal A=\{1,2,3,4,5,6\}$, then
$\{\{1,3\},\{2,5\},\{4,6\}\}$ is a matching of $\mathcal A$. 
A matching of $\{1,2,\dots,N\}$ can be realized geometrically by
drawing points labelled $1,2,\dots,N$ along a line, and then
connecting any two points whose labels are paired in the matching by a
curve, so that there are no touching points between curves and no
triple intersections. Figure~\ref{CKFPf2.1} shows the geometric
realization of $\{\{1,3\},\{2,5\},\{4,6\}\}$.
Any two pairs $\{i,k\}$ and $\{j,l\}$ in a matching for which
$i<j<k<l$ are called a \index{crossing}{\em crossing} of the
matching. The \index{sign, of a matching}{\em sign} $\sgn\pi$ of a matching
$\pi$ is $(-1)^c$, where $c$ is the number of crossings of $\pi$.
Thus, the sign of $\{\{1,3\},\{2,5\},\{4,6\}\}$ is $(-1)^2=+1$.
In the geometric realization of a matching,
its sign can be read off as $(-1)^{c'}$, where
$c'$ is the number of crossing points between two curves. (It is
easily checked that it does not matter how we draw the curves.)

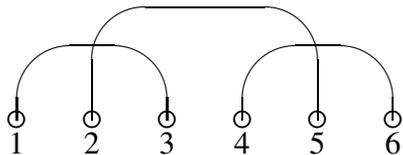
\begin{figure} 
\vskip1cm
$$
\Einheit1cm
\Kreis(0,0)
\Kreis(1,0)
\Kreis(2,0)
\Kreis(3,0)
\Kreis(4,0)
\Kreis(5,0)
\Label\u{1}(0,0)
\Label\u{2}(1,0)
\Label\u{3}(2,0)
\Label\u{4}(3,0)
\Label\u{5}(4,0)
\Label\u{6}(5,0)
\begin{picture}(4,4)
\unitlength1cm
\put(1,0){\oval(2,2)[t]}
\put(2.5,0){\oval(3,3)[t]}
\put(4,0){\oval(2,2)[t]}
\end{picture}
\hskip5cm
$$
\vskip.3cm
\caption{A perfect matching}
\label{CKFPf2.1}
\end{figure}

With these definitions, the 
\index{Pfaffian}\index{Pfaff, Johann Friedrich}{\em Pfaffian}
\glossary{\Pf(A)}$\Pf(A)$ of an upper triangular array
$A=\break(a_{ij})_{1\le i<j\le 2n}$ is defined by
\begin{equation} \label{CKpf2.1}
\Pf(A):=\sum _{\pi\text { a perfect matching of }\{1,2,\dots,2n\}} ^{}
\sgn\pi\prod _{\{i,j\}\in\pi} ^{}a_{ij}.
\end{equation}
For example, for $n=2$ we have
$$\Pf\big((a_{ij})_{1\le i<j\le
4}\big)=a_{12}a_{34}-a_{13}a_{24}+a_{14}a_{23}.$$

Alternatively, the Pfaffian could be defined as the (appropriate) 
square root of a skew symmetric matrix. To be precise, if $A$ is a
skew symmetric matrix, then
\begin{equation} \label{CKpf2.2}
\Pf(A)^2=\det(A),
\end{equation}
where $\Pf(A)$ has to be interpreted as the Pfaffian of the upper
triangular part of $A$. For a (combinatorial) proof of this fact see
e.g\@. \cite[Prop.~2.2]{StemAE}.

\medskip
Now let again $G$ be a directed, acyclic graph. Let
$A_1,A_2,\dots,A_n$ be vertices in $G$, and
$\E=(\dots, E_1,E_2,\dots)$ be an ordered set of vertices.
What we want to count is the
number of {\em all families $\P=(P_1,P_2,\dots,P_n)$ of non-intersecting
paths, where $P_i$ runs from
$A_i$ to some vertex in $\E$, $i=1,2,\dots,n$\/}. The difference to
the situation in Theorem~\ref{CKT2.3} is that the end vertices of the
paths are not fixed. 
Nevertheless, we adopt the earlier notation for this
more general situation. To be precise, by 
\glossary{$L_G(\A\to\E\mid \text{non-intersecting})$}%
$L_G(\A\to\E\mid \text{non-intersecting})$
we mean the set of all families 
$\P=(P_1,P_2,\dots,P_n)$ of non-intersecting
paths in $G$, where $P_i$ runs from
$A_i$ to some vertex $E_{k_i}$ in $\E$, $i=1,2,\dots,n$ 
and $k_1<k_2<\dots<k_n$. 
The corresponding enumeration result is due to
\index{Okada, Soichi}Okada \cite[Theorem~3]{OkadAA} and
\index{Stembridge, John R.}Stembridge \cite[Theorem~3.1]{StemAE}.

\begin{theorem} \label{CKTPf2.1a}%
Let $G$ be a directed, acyclic graph with a weight function
$w$ on its edges.
Let $\A=(A_1,A_2,\dots,A_{2n})$ and\/ $\E=(\dots,E_1,E_2,\dots)$ be sequences
of vertices in $G$.
Then 
\begin{equation} \label{CKpf2.4a}
\sum _{\si\in \mathfrak S_{2n}} ^{}(\sgn\si)\cdot
\GF{L_G(\A_\si\to\E\mid \text{\em non-intersecting});w}=
\Pf_{1\le i<j\le 2n}(Q_G(i,j;w)),
\end{equation}
where $L_G(\A_\si\to\E\mid \text{\em non-intersecting})$ is the set
of all families $(P_1,P_2,\dots,P_{2n})$ of non-intersecting paths,
$P_i$ connecting $A_{\si(i)}$ with $E_{k_i}$, $i=1,2,\dots,2n$
and $k_1<k_2<\dots k_{2n}$,
and $Q_G(i,j;w)$ is the generating function $\sum _{(P',P'')}
^{}w(P')w(P'')$ for all pairs $(P',P'')$ of
non-intersecting lattice paths, where $P'$ connects $A_i$ with 
some $E_k$ and 
$P''$ connects $A_j$ with some $E_l$, with $k<l$.
\end{theorem}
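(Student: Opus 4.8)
The plan is to imitate the proof of Theorem~\ref{CKT2.3}, replacing the expansion of a determinant over $\mathfrak S_n$ by the expansion of a Pfaffian over perfect matchings, and then to cancel the intersecting contributions by a sign-reversing, weight-preserving involution. First I would expand the right-hand side according to the definition~\eqref{CKpf2.1} of the Pfaffian: a term is indexed by a perfect matching $\pi$ of $\{1,2,\dots,2n\}$, and, substituting the definition of $Q_G(i,j;w)$ into each factor, by a choice, for every arc $\{i,j\}\in\pi$ with $i<j$, of a non-intersecting pair of paths, one issuing from $A_i$ and one from $A_j$, whose end points occur in the prescribed order along $\E$. Collecting these choices, a generic term of $\Pf_{1\le i<j\le 2n}(Q_G(i,j;w))$ is a pair $(\pi,\P)$, where $\P=(P_1,\dots,P_{2n})$ is a family of paths with $P_m$ starting at $A_m$, carrying the signed weight $(\sgn\pi)\,w(\P)$. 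Thus the right-hand side is a generating function for such pairs, exactly as \eqref{CKe2.2} was for the determinant.

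The decisive structural observation is that the two paths sharing a common arc of $\pi$ are non-intersecting \emph{by the very definition of $Q_G$}; consequently, if $\P$ is intersecting, then any two paths that meet must belong to two \emph{different} arcs. This is what makes an involution possible. On the set of pairs $(\pi,\P)$ for which $\P$ is intersecting I would define a map $\ph$ in the spirit of the involution in the proof of Theorem~\ref{CKT2.3}: among all meeting pairs of paths select the canonical one (the lexicographically largest pair, and on it the last common vertex $M$), interchange the portions of the two selected paths beyond $M$, and simultaneously modify $\pi$ on the indices involved so that the new configuration is again admissible. One then has to check that $\ph$ is weight-preserving (the multiset of edges is unchanged), that it is an involution, and --- this is the point where the Pfaffian argument genuinely differs from the determinantal one --- that it reverses the sign $\sgn\pi$.

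I expect the main obstacle to be precisely the coordination of the tail-swap with the matching. Swapping the tails beyond $M$ exchanges the end points of the two selected paths, which in turn disturbs the end-point order inside the arcs to which they belong; the re-matching of the affected end points must therefore be chosen so that every arc of the new matching again meets the ordering requirement built into $Q_G$, so that $\ph\circ\ph$ returns the original configuration, and so that the number of crossings of $\pi$ changes by an odd amount. Verifying these three properties of the re-matching simultaneously is the technical heart of the proof, whereas in the determinantal case the analogous bookkeeping was merely the multiplication of $\si$ by a transposition.

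Once $\ph$ is in place, all intersecting contributions cancel, and only pairs $(\pi,\P)$ with $\P$ non-intersecting survive; for such $\P$ the end points are automatically distinct. Finally I would fix a non-intersecting family $\P$ and sum the surviving terms $(\sgn\pi)\,w(\P)$ over all matchings $\pi$ compatible with $\P$. A short sign computation --- equivalently, the evaluation of the Pfaffian of the skew-symmetric sign array recording, for each pair of starting indices, the order in which the corresponding end points appear along $\E$ --- shows that this inner sum equals $\sgn\si$, where $\si$ is the permutation that sorts the end points of $\P$ into increasing order. Since $\sgn\si\,w(\P)$ is exactly the contribution of $\P$ to the left-hand side $\sum_{\si}(\sgn\si)\,\GF{L_G(\A_\si\to\E\mid \text{non-intersecting});w}$, this establishes \eqref{CKpf2.4a}.
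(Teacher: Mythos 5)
Your overall route --- expand the Pfaffian over perfect matchings, interpret each term as a pair $(\pi,(P_1,\dots,P_{2n}))$ in which the paths of every arc are non-intersecting with correctly ordered end points, cancel the intersecting configurations by a tail-swap involution, and finally evaluate the sum over matchings compatible with a surviving non-intersecting family --- is exactly the route the paper indicates (its own ``proof'' is a one-line remark that the involution idea of Theorem~\ref{CKT2.3} applies, with a pointer to Stembridge). However, the two steps you defer are precisely the ones that fail, so the proposal is not a proof. For the involution: after swapping the tails of the canonically chosen intersecting paths $P_r,P_s$, which lie in distinct arcs $\{r,r'\}$ and $\{s,s'\}$ of $\pi$, there is in general \emph{no} admissible re-matching. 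Pairing $r$ with $s$ is impossible (the new paths still share the meeting point $M$); the cross re-pairing $\{r,s'\},\{s,r'\}$ can produce an \emph{intersecting} matched pair, since the initial segment of $P_s$ may meet $P_{r'}$ and no canonical choice of $(r,M,s)$ controls that segment; and when $r<r'<s<s'$ this re-pairing is not even sign-reversing: both $\{r,r'\},\{s,s'\}$ and $\{r,s'\},\{s,r'\}$ are crossing-free among themselves, and the parity of crossings with any other arc is unchanged (it depends only on how many of the four points lie inside that arc), so $\sgn\pi'=\sgn\pi$. For the final sign computation: the matchings compatible with a surviving family having $P_i$ from $A_i$ to $E_{l_i}$ are those all of whose arcs $\{i<j\}$ satisfy $l_i<l_j$, so the inner sum is the Pfaffian of the $0/1$ matrix recording whether $l_i<l_j$, \emph{not} the Pfaffian of the $\pm1$ sign matrix $\big(\sgn(l_j-l_i)\big)$, which is the quantity that equals $\sgn\si$. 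Already for $2n=2$ and $l_1>l_2$ the former is $0$ while $\sgn\si=-1$; for $(l_1,l_2,l_3,l_4)=(2,1,4,3)$ it is $0$ while $\sgn\si=+1$.

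These gaps cannot be closed at the stated level of generality, because the identity \eqref{CKpf2.4a} is then false: take for $G$ the graph with exactly two edges $A_1\to E_2$ and $A_2\to E_1$, both of weight $1$, and $\E=(E_1,E_2)$. The only non-intersecting family arises from $\si=(12)$, so the left-hand side equals $-1$, whereas $Q_G(1,2;w)=0$ (the unique non-intersecting pair has its end points in decreasing order), so the right-hand side equals $0$. What is missing, both in your argument and, strictly speaking, in the statement, is the compatibility hypothesis under which Stembridge (and likewise Okada) proves the result: for $i<j$, every path from $A_i$ to $\E$ and every path from $A_j$ to $\E$ whose end point precedes that of the former must intersect. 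Under that hypothesis mis-ordered non-intersecting pairs do not exist, every surviving family automatically satisfies $l_1<\dots<l_{2n}$, your inner sum becomes the Pfaffian of the all-ones matrix, which is $1=\sgn(\mathrm{id})$, and the involution bookkeeping can be carried through as in the cited source. Alternatively, an unconditional statement holds after replacing $Q_G(i,j;w)$ by its skew-symmetrization (pairs with $k<l$ counted with $+$, pairs with $k>l$ with $-$); that version needs no involution at all, since it follows by applying the $2\times2$ case of Theorem~\ref{CKT2.3} to each entry and then the minor summation formula of Theorem~\ref{CKthm:IsWa} with $A$ the all-ones skew-symmetric matrix.
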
 

The proof uses the same involution idea as the proof of Theorem~\ref{CKT2.3}
does. See \cite[Proof of Theorem~3.1]{StemAE}.

Similarly to Theorem~\ref{CKT2.3},
the most frequent situation in which the general result in
Theorem~\ref{CKTPf2.1a} is applied is the one where
non-intersecting paths
can only occur if the starting and end points are connected via
the identity permutation, that is, if $(P_1,P_2,\dots,P_{2n})$
can only be non-intersecting if $P_i$ connects $A_i$ with $E_{k_i}$,
$i=1,2,\dots,2n$ and $k_1<k_2<\dots<k_{2n}$. 
In that situation, Theorem~\ref{CKTPf2.1a}
simplifies to the following result.

\begin{corollary} \label{CKTPf2.1aa}%
Let $G$ be a directed, acyclic graph with a weight function
$w$ on its edges.
Let $\A=(A_1,A_2,\dots,A_{2n})$ and\/ $\E=(\dots,E_1,E_2,\dots)$ be sequences
of vertices in $G$ such that the only permutation $\si$ that
allows for a family $(P_1,P_2,\dots,P_{2n})$ of non-intersecting
paths such that $P_i$ connects $A_{\si(i)}$ with $E_{k_i}$, 
$i=1,2,\dots,2n$ and $k_1<k_2<\dots<k_{2n}$, 
is the identity permutation.
Then the generating function $\sum _{\P} ^{}w(\P)$ for
families $\P=(P_1,P_2,\dots,P_{2n})$ of non-intersecting
paths, where $P_i$ is a path running from
$A_i$ to $E_{k_i}$, $i=1,2,\dots,2n$ and $k_1<k_2<\dots<k_{2n}$, 
is given by 
\begin{equation} \label{CKpf2.4aa}
\GF{L_G(\A\to\E\mid \text{\em non-intersecting});w}=
\Pf_{1\le i<j\le 2n}(Q_G(i,j;w)),
\end{equation}
where $Q_G(i,j;w)$ has the same meaning as in Theorem~\ref{CKTPf2.1a}.
\end{corollary}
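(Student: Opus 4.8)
The plan is to deduce this corollary from Theorem~\ref{CKTPf2.1a} in exactly the same way that Corollary~\ref{CKT2.2a} was deduced from Theorem~\ref{CKT2.3}: the entire content lies in the general Pfaffian formula \eqref{CKpf2.4a}, and the stated hypothesis forces its left-hand side to collapse to a single term.

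First I would invoke Theorem~\ref{CKTPf2.1a} with the given sequences $\A=(A_1,A_2,\dots,A_{2n})$ and $\E=(\dots,E_1,E_2,\dots)$, which yields
$$\sum_{\si\in\mathfrak S_{2n}}(\sgn\si)\cdot\GF{L_G(\A_\si\to\E\mid \text{non-intersecting});w}=\Pf_{1\le i<j\le 2n}(Q_G(i,j;w)).$$
Next I would examine the summands on the left one by one. By the standing hypothesis, whenever $\si\ne\text{id}$ there is no family $(P_1,P_2,\dots,P_{2n})$ of non-intersecting paths with $P_i$ running from $A_{\si(i)}$ to $E_{k_i}$ for indices $k_1<k_2<\dots<k_{2n}$; hence the set $L_G(\A_\si\to\E\mid \text{non-intersecting})$ is empty and its generating function is $0$. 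The only surviving term is therefore $\si=\text{id}$, for which $\A_{\text{id}}=\A$ and $\sgn(\text{id})=+1$, so that this summand equals precisely $\GF{L_G(\A\to\E\mid \text{non-intersecting});w}$. Equating the left-hand side with the right-hand side then gives \eqref{CKpf2.4aa}.

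There is essentially no technical obstacle, since the corollary is a pure specialization of Theorem~\ref{CKTPf2.1a}. The only point deserving a moment of care is to confirm that the vanishing of the generating functions for $\si\ne\text{id}$ is exactly what the hypothesis asserts --- that is, that the ordering constraint $k_1<k_2<\dots<k_{2n}$ on the (non-fixed) end points is already incorporated into the definition of $L_G(\A_\si\to\E\mid \text{non-intersecting})$ used in Theorem~\ref{CKTPf2.1a}, so that no additional families can contribute and no sign ambiguity arises. Once this is observed, the identity follows at once.
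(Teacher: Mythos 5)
Your proof is correct and matches the paper's approach exactly: the paper treats Corollary~\ref{CKTPf2.1aa} as an immediate specialization of Theorem~\ref{CKTPf2.1a}, where the hypothesis kills every summand with $\si\ne\text{id}$ on the left-hand side of \eqref{CKpf2.4a}, leaving only the generating function for the identity permutation. Your added remark that the ordering constraint $k_1<k_2<\dots<k_{2n}$ is built into the definition of $L_G(\A_\si\to\E\mid \text{non-intersecting})$ is precisely the right point to verify, and it holds.
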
 

Theorem~\ref{CKTPf2.1a} and Corollary~\ref{CKTPf2.1aa} 
are only formulated for an even number of paths.
However, a simple trick allows us to also use it for an odd number of
paths: one introduces a ``phantom" vertex $X$ that cannot be reached by
any other vertex (one can think of 
a vertex at infinity), and adjoins this point as a new starting
point {\it and\/} as a new end point. A family of non-intersecting 
paths would necessarily contain the zero-length path from $X$ to $X$
as one of the paths, which therefore can be ignored.
Theorem~\ref{CKTPf2.1a} applies, and yields the following corollary.

\begin{corollary} \label{CKTPf2.1u}%
Let $G$ be a directed, acyclic graph with a weight function
$w$ on its edges.
Let $\A=(A_1,A_2,\dots,A_{2n-1})$ and\/ $\E=(\dots,E_1,E_2,\dots)$ be sequences
of vertices in $G$.
Then 
\begin{equation} \label{CKpf2.4u}
\sum _{\si\in \mathfrak S_{2n}} ^{}(\sgn\si)\cdot
\GF{L_G(\A_\si\to\E\mid \text{\em non-intersecting});w}=
\Pf(Q_G(i,j;w))_{1\le i<j\le 2n},
\end{equation}
where $L_G(\A_\si\to\E\mid \text{\em non-intersecting})$ has the
same meaning as in Theorem~\ref{CKTPf2.1a},
where for $j\le 2n-1$ the quantity $Q_G(i,j;w)$ has the
same meaning as in Theorem~\ref{CKTPf2.1a},
and where $Q_G(i,2n;w)$ is the generating function $\sum _{P}
^{}w(P)$ for all paths running from $A_i$ to some point of $\E$.

In particular, if the vertices $\A$ and\/ $\E$ are such that
the only permutation $\si$ that
allows for a family $(P_1,P_2,\dots,P_{2n-1})$ of non-intersecting
paths such that $P_i$ connects $A_{\si(i)}$ with $E_{k_i}$, 
$i=1,2,\dots,2n-1$ and $k_1<k_2<\dots<k_{2n-1}$, 
is the identity permutation,
then the generating function $\sum _{\P} ^{}w(\P)$ for
families $\P=(P_1,P_2,\dots,P_{2n-1})$ of non-intersecting
paths, where $P_i$ is a path running from
$A_i$ to $E_{k_i}$, $i=1,2,\dots,2n-1$ and $k_1<k_2<\dots<k_{2n-1}$, 
is given by 
\begin{equation} \label{CKpf2.4ub}
\GF{L_G(\A\to\E\mid \text{\em non-intersecting});w}=
\Pf(Q_G(i,j;w))_{1\le i<j\le 2n}.
\end{equation}
\end{corollary}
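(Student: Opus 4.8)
The plan is to deduce the corollary directly from Theorem~\ref{CKTPf2.1a} by the phantom-vertex device described just above the statement. Concretely, I would enlarge $G$ by adjoining a single new vertex $X$ carrying no incident edges (neither in- nor out-edges), set $A_{2n}:=X$, and append $X$ to $\E$ as a new end vertex placed \emph{after} all the original $E_k$ in the given order, so that $\E'=(\dots,E_1,E_2,\dots,X)$ with $X$ the largest element. Since $X$ is isolated, the only path in $G$ touching $X$ is the zero-length path $X\to X$, and this path shares no vertex with any path among the original vertices. I would then apply Theorem~\ref{CKTPf2.1a} to the $2n$ starting points $\A'=(A_1,\dots,A_{2n-1},X)$ and to $\E'$, and reinterpret both sides.

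First I would identify the Pfaffian entries. For $1\le i<j\le 2n-1$ the quantity $Q_G(i,j;w)$ computed in the enlarged graph is unchanged, because $X$ is isolated and cannot lie on any path from $A_i$ or $A_j$ to $\E$. For $j=2n$, a pair $(P',P'')$ counted by $Q_G(i,2n;w)$ has $P'$ running from $A_i$ to some $E_k$ and $P''$ running from $A_{2n}=X$; the latter is forced to be the zero-length path $X\to X$, whose endpoint $X=E_l$ is, by our placement, the largest end vertex, so the ordering requirement $k<l$ is automatic. Hence the pair degenerates to the single path $P'$, and $Q_G(i,2n;w)=\sum_P w(P)$ over all paths $P$ from $A_i$ to some point of $\E$, exactly as defined in the statement. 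Thus the Pfaffian $\Pf_{1\le i<j\le 2n}(Q_G(i,j;w))$ produced by the theorem is precisely the one appearing in \eqref{CKpf2.4u}.

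Next I would treat the left-hand side. In any family $(P_1,\dots,P_{2n})$ of non-intersecting paths from $\A'_\si$ to end vertices $E_{k_1},\dots,E_{k_{2n}}$ with $k_1<\dots<k_{2n}$, exactly one path ends at $X$ (because $X$, being isolated and a starting point, must be joined to $X$, and $X$ can be reached by nothing else); as $X$ is the largest end vertex this is the path $P_{2n}$, whence $A_{\si(2n)}=X=A_{2n}$ and $\si$ fixes $2n$. A permutation of $\{1,\dots,2n\}$ fixing $2n$ has the same sign as its restriction to $\{1,\dots,2n-1\}$, and deleting the inert path $X\to X$ turns the family into a non-intersecting family of $2n-1$ paths joining $A_{\si(1)},\dots,A_{\si(2n-1)}$ to the original end vertices in increasing order. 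This is exactly the data summed on the left of \eqref{CKpf2.4u}, so the general identity follows. For the special case \eqref{CKpf2.4ub}, I would combine the forced relation $\si(2n)=2n$ with the hypothesis that on $\{1,\dots,2n-1\}$ only the identity admits a non-intersecting family; together these force $\si=\mathrm{id}$, leaving the single term $\GF{L_G(\A\to\E\mid\text{non-intersecting});w}$ with sign $+1$.

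I expect the only genuinely delicate point to be the sign and ordering bookkeeping: one must place $X$ as the maximal end vertex precisely so that the inert path occupies the slot $k_{2n}$ and forces $\si(2n)=2n$, and then check that the sign of $\si\in\mathfrak S_{2n}$ agrees with the sign of the induced permutation in $\mathfrak S_{2n-1}$. Everything else is a transcription of Theorem~\ref{CKTPf2.1a}, so no new combinatorial idea beyond the involution already used there is required.
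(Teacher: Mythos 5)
Your proposal is correct and takes essentially the same approach as the paper: the corollary is obtained there from Theorem~\ref{CKTPf2.1a} by precisely the phantom-vertex trick you describe, adjoining an isolated vertex $X$ as both an extra starting point and an extra end point, so that every non-intersecting family is forced to contain the zero-length path $X\to X$, which is then ignored. Your write-up merely makes explicit the details the paper leaves implicit, namely placing $X$ as the maximal end vertex, identifying $Q_G(i,2n;w)$ with the single-path generating function, and checking that $\si(2n)=2n$ is forced with the correct sign.
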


For various applications of this theorem, see e.g.\
\cite{KratBD,OkadAA,StemAE}.

\medskip
Next we consider a mixed case, in which the starting points of the
paths are fixed, some end points are fixed, but some end points can
be chosen from a given set. To be precise, let $m$ and $n$ 
be a positive integer with $m\le n$, let 
$\A=(A_1,A_2,\dots,A_n)$ and $\E=(E_1,E_2,\dots,E_m)$ be
vertices in a given directed, acyclic graph $G$, let 
$\hat\E=(\dots,\hat E_1,\hat E_2,\dots)$ be an ordered set of 
(finitely many or infinitely many) vertices.
What we want to determine is the
generating function for 
{\em all families $\P=(P_1,P_2,\break\dots,P_n)$ of non-intersecting
paths, where for
$i=1,2,\dots,m$ the path $P_i$ runs from $A_i$ to $E_i$, and where
for $i=m+1,m+2,\dots,n$ the path $P_i$ runs from
$A_i$ to some point $\hat E_{k_i}$ in $\hat\E$, with
$k_{m+1}<k_{m+2}<\dots<k_n$}. We write
\glossary{$L_G(\A\to\E\cup\hat\E\mid \text 
{non-intersecting})$}$L_G(\A\to\E\cup\hat\E\mid \text
{non-intersecting})$ for the set of these families of paths.
The corresponding enumeration result is due to
\index{Stembridge, John R.}Stembridge \cite[Theorem~3.2]{StemAE}.

\begin{theorem} \label{CKTPf2.2}%
Let $G$ be a directed, acyclic graph with a weight function
$w$ on its edges, and let $m$ and $n$ 
be a positive integer such that $m\le n$ and $m+n$ is even.
Let $\A=(A_1,A_2,\dots,A_n)$, $\E=(E_1,E_2,\dots,E_m)$ and\/
$\hat\E=(\dots,\hat E_1,\hat E_2,\dots)$ be sequences
of vertices in $G$.
Then 
\begin{equation} \label{CKpf2.7}
\sum _{\si\in \mathfrak S_{n}} ^{}(\sgn\si)\cdot
\GF{L_G(\A_\si\to\E\cup\hat\E\mid \text{\em non-intersecting});w}=
\Pf\begin{pmatrix} \hphantom{-{}}Q&H\\-H^t&0\end{pmatrix},
\end{equation}
where $L_G(\A_\si\to\E\cup \hat\E\mid \text{\em non-intersecting})$ is the set
of all families $(P_1,P_2,\dots,P_{n})$ of non-intersecting paths,
$P_i$ connecting $A_{\si(i)}$ with $E_i$, $i=1,2,\dots,m$,
$P_i$ connecting $A_{\si(i)}$ with $\hat E_{k_i}$, $i=m+1,m+2,\dots,n$
and $k_{m+1}<k_{m+2}<\dots< k_{n}$,
where $Q=(Q_G(i,j;w))_{1\le i,j\le n}$ is a skew-symmetric matrix with
$Q_G(i,j;w)$ denoting the generating function $\sum _{(P',P'')}
^{}w(P')w(P'')$ for all pairs $(P',P'')$ of
non-intersecting lattice paths, 
where $P'$ connects $A_i$ with 
some $\hat E_k$ and 
$P''$ connects $A_j$ with some $\hat E_l$, with $k<l$,
and where
$H=(H_G(i,j;w))_{1\le i\le n,\, 1\le j\le m}$ is the rectangular matrix
with $H_G(i,j;w)$ denoting the generating function $\sum _{P}
^{}w(P)$ for all paths $P$ from $A_i$ to $E_{j}$.
The Pfaffian of a skew-symmetric matrix has to be interpreted
according to the remark containing \eqref{CKpf2.2}.

In particular, if the vertices $\A$ and\/ $\E\cup\hat\E$ are such that
the only permutation $\si$ that
allows for a family $(P_1,P_2,\dots,P_{n})$ of non-intersecting
paths such that $P_i$ connects $A_{\si(i)}$ with $E_{i}$, 
$i=1,2,\dots,m$, and $P_i$ connects $A_{\si(i)}$ with $\hat E_{k_i}$, 
$i=m+1,m+2,\dots,n$ and $k_{m+1}<k_{m+2}<\dots<k_{n}$, 
is the identity permutation,
then the generating function $\sum _{\P} ^{}w(\P)$ for
all families $(P_1,P_2,\dots,P_{n})$ of non-intersecting
lattice paths, where for
$i=1,2,\dots,m$ the path $P_i$ runs from $A_i$ to $E_i$, and where
for $i=m+1,m+2,\dots,n$ the path $P_i$ runs from
$A_i$ to some point $\hat E_{k_i}$ in $\hat\E$, $k_{m+1}<k_{m+2}<\dots<k_n$,
is given by 
\begin{equation} \label{CKpf2.7b}
\GF{L_G(\A\to\E\cup\hat\E\mid \text{\em non-intersecting});w}=
(-1)^{\binom m2}\Pf\begin{pmatrix} \hphantom{-{}}Q&H\\-H^t&0\end{pmatrix}.
\end{equation}
\end{theorem}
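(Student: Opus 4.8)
The plan is to deduce Theorem~\ref{CKTPf2.2} from the pure free-endpoint Pfaffian result, Theorem~\ref{CKTPf2.1a}, by encoding the $m$ fixed endpoints $E_1,\dots,E_m$ as forced ``phantom'' paths — essentially Stembridge's strategy, and parallel to the reduction that yielded Corollary~\ref{CKTPf2.1u} from Theorem~\ref{CKTPf2.1a}. Concretely, I would enlarge the graph $G$ by adjoining $m$ new isolated starting vertices $A_{n+1},\dots,A_{n+m}$ together with $m$ new endpoints prepended as the smallest elements of the ordered family $\hat\E$, arranged so that $A_{n+j}$ admits exactly one outgoing path (forced, of weight $1$) reaching a designated new endpoint, and so that pairing the phantom $A_{n+j}$ with a genuine starting point $A_i$ reproduces precisely a single path from $A_i$ to $E_j$. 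With the enlarged data $\A'=(A_1,\dots,A_{n+m})$ (note that $n+m$ is even, as required) and the enlarged ordered endpoint set, the claim to be checked is that the Okada--Stembridge matrix $\big(Q_{G'}(i,j;w)\big)$ of Theorem~\ref{CKTPf2.1a} specializes to $Q_G(i,j;w)$ for $i,j\le n$, to $H_G(i,j-n;w)$ for $i\le n<j$, and to $0$ for $n<i<j$; that is, it becomes exactly the block matrix appearing in \eqref{CKpf2.7}.

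Granting this encoding, the right-hand side of \eqref{CKpf2.7} is literally the right-hand side of \eqref{CKpf2.4a} for the enlarged problem, so it remains to match the left-hand sides. Here one expands the Pfaffian over perfect matchings of $\{1,\dots,n+m\}$, where $1,\dots,n$ index the genuine starting points and $n+1,\dots,n+m$ index the fixed endpoints. The lower-right zero block annihilates every matching that pairs two fixed-endpoint indices, so the surviving matchings pair each of the $m$ fixed-endpoint indices with a genuine index (contributing an $H$-entry, i.e.\ a single path $A_i\to E_j$) and pair the remaining $n-m$ genuine indices among themselves (contributing a $Q$-entry, i.e.\ an ordered non-intersecting pair of paths to $\hat\E$); since $m+n$ is even, $n-m$ is even and this is consistent. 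This is exactly the combinatorial decomposition ``$m$ of the paths run to the fixed $E_i$, the other $n-m$ run in non-intersecting pairs to $\hat\E$'' that \eqref{CKpf2.7} encodes.

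The engine underneath, inherited from Theorem~\ref{CKTPf2.1a} and ultimately from Theorem~\ref{CKT2.3}, is the sign-reversing, weight-preserving involution: on any intersecting family one locates the lexicographically largest intersecting pair, takes their last common point $M$, and swaps the initial portions up to $M$; this flips the relevant sign while preserving the total multiset of edges, hence the weight, so all intersecting configurations cancel and only non-intersecting families survive. For the specialization \eqref{CKpf2.7b}, one passes to the situation where the identity is the only permutation admitting a non-intersecting family, so that the signed sum on the left of \eqref{CKpf2.7} collapses to the single generating function $\GF{L_G(\A\to\E\cup\hat\E\mid \text{non-intersecting});w}$.

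The step I expect to be the genuine obstacle is the sign bookkeeping: verifying that the phantom encoding reproduces the block matrix \emph{with the correct signs}, and in particular accounting for the factor $(-1)^{\binom m2}$ in \eqref{CKpf2.7b}. Two sources of sign must be reconciled — the crossing-sign $\sgn\pi$ of a matching in the Pfaffian expansion \eqref{CKpf2.1}, and the permutation-sign $\sgn\si$ attached to a family of paths. The factor $(-1)^{\binom m2}$ is precisely the crossing-sign of the $m$ forced $H$-edges: pairing index $i$ with index $n+i$ for $1\le i\le m$ yields the pattern $i<j<n+i<n+j$ for each $i<j\le m$, so all $\binom m2$ such pairs cross. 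Equivalently, it records the reordering of the fixed endpoints $\E$ relative to the free endpoints $\hat\E$ when one unfolds the master identity \eqref{CKpf2.7} into the clean single-term form \eqref{CKpf2.7b}. I would verify this directly on the Pfaffian sign rule rather than through the paths; once the signs are pinned down, everything else is a routine transcription of the proof of Theorem~\ref{CKTPf2.1a}.
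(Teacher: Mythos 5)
The phantom-vertex encoding in your first paragraph is the load-bearing step of your plan, and it cannot be realized; the obstruction is not the sign bookkeeping you flag in your last paragraph, but already the unsigned generating functions. Since the forced path out of the phantom $A_{n+j}$ has weight $1$ and does not depend on $i$, the identity $Q_{G'}(i,n+j;w)=H_G(i,j;w)$ (an identity in the edge weights, which are arbitrary) forces the paths out of $A_i$ counted in that entry to be precisely the paths $A_i\to E_j$ prolonged by forced new edges into some new endpoint $F_j$ reachable from $E_j$. But then $F_j$ is reachable from \emph{every} genuine starting point, so pairs of genuine paths in which one path ends at some $F_j$ contribute to $Q_{G'}(i,j;w)$ for $i,j\le n$, and this happens for any placement of the new endpoints in the ordering; hence $Q_{G'}(i,j;w)\ne Q_G(i,j;w)$ already for $m=1$. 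For $m\ge2$ the requirements are in addition cyclically inconsistent: writing $G_j$ for the endpoint of the forced phantom path out of $A_{n+j}$, the entry $Q_{G'}(i,n+j;w)$ sees only paths to $E_j$ exactly when $F_j$ precedes $G_j$ while every $F_{j'}$ with $j'\ne j$ and all of $\hat\E$ come after $G_j$; for $j,j'\in\{1,2\}$ this gives $F_1<G_1<F_2<G_2<F_1$. The trick behind Corollary~\ref{CKTPf2.1u} does not transfer: it works there only because the extra path is a zero-length path at an isolated vertex placed after all other endpoints, so that the new Pfaffian entry becomes ``all paths from $A_i$ to \emph{some} point of $\E$,'' with no restriction on where the path ends. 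The framework of Theorem~\ref{CKTPf2.1a} simply has no mechanism --- neither reachability nor the ordering condition $k<l$ --- that can force a path to terminate at a prescribed interior vertex $E_j$ while other fixed endpoints and the free endpoints coexist.

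The good news is that your second and third paragraphs, read on their own, are the paper's actual proof (the paper follows Stembridge's direct argument, and also notes that the statement is a specialization of the Ishikawa--Wakayama minor summation formula, Theorem~\ref{CKthm:IsWa}): expand $\Pf\begin{pmatrix}Q&H\\-H^t&0\end{pmatrix}$ over perfect matchings of $\{1,\dots,n+m\}$, observe that the zero block kills every matching pairing two fixed-endpoint indices, interpret an $H$-edge $\{i,n+j\}$ as a single path $A_i\to E_j$ and a $Q$-edge as an ordered non-intersecting pair of paths into $\hat\E$, and cancel all intersecting configurations by the path-switching involution of Theorem~\ref{CKT2.3}. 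So the repair is to delete the reduction and run exactly this argument directly on the mixed configurations; your crossing count, showing that the $m$ edges $\{i,n+i\}$ pairwise cross and contribute $(-1)^{\binom m2}$, is then precisely the computation that converts \eqref{CKpf2.7} into \eqref{CKpf2.7b}. What remains genuinely to be checked in that direct argument --- and is not ``routine transcription'' --- is the compatibility, for a surviving non-intersecting family, of three sign/ordering sources: the crossing sign of the matching, the permutation sign $\sgn\si$, and the conditions $k<l$ built into the definitions of $Q_G(i,j;w)$ and of the sets $L_G(\A_\si\to\E\cup\hat\E\mid\text{non-intersecting})$; this is the mixed-case analogue of the sign analysis in the proof of Theorem~\ref{CKT2.3}, and it is the part your proposal defers.
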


Again,
the proof uses the same involution idea as the proof of Theorem~\ref{CKT2.3}
does. See \cite[Proof of Theorem~3.2]{StemAE}.
Applications can for example be found in
\cite{CiKrAE,CiKrAH,StemAE}.

\medskip
As a matter of fact, Theorem~\ref{CKTPf2.2} is a special case of the so-called
minor summation formula due to 
\index{Ishikawa, Masao}Ishikawa and 
\index{Wakayama, Masato}Wakayama \cite[Theorem~2]{IsWaAA}.

\begin{theorem}\label{CKthm:IsWa}%
Let $m$, $n$, $p$ be integers such that $n+m$ is even and $0 \le n-m \le p$.
Let $M$ be any $n \times p$ matrix, $H$ be any $n \times m$ matrix,
 and $A=(a_{ij})_{1 \le i,j \le p}$ be any skew-symmetric matrix.
Then we have
$$
\sum_{K}
 \Pf \left( A^K_K \right)
 \det \big( M_K \ \vdots \  H \big)
=
(-1)^{\binom m2}
 \Pf \begin{pmatrix}
 M\,A\,M^t & H \\ - H^t & 0 \end{pmatrix}.
$$
where $K$ runs over all $(n-m)$-element subsets of $[1, p]$,
 $A^K_K$ is the skew-symmetric matrix obtained by picking the rows and
 columns indexed by $K$, and $M_K$ is the sub-matrix of $M$ consisting of
 the columns corresponding to $K$.
\end{theorem}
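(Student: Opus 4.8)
The plan is to prove this purely algebraic identity by the method of \emph{Grassmann (anticommuting) variables}, which mechanizes exactly the sign bookkeeping that one would otherwise have to carry out by hand. I introduce $N=n+m$ anticommuting symbols, split into $\theta_1,\dots,\theta_n$ (attached to the rows of $M$) and $\psi_1,\dots,\psi_m$ (attached to the columns of $H$), all squaring to zero and pairwise anticommuting, and I write $\xi_1,\dots,\xi_N$ for the concatenated list. I will use two standard facts about the exterior algebra they generate. First, for any even $N$ and any skew-symmetric $S=(s_{IJ})_{1\le I,J\le N}$,
$$
\frac{1}{(N/2)!}\Bigl(\sum_{I<J}s_{IJ}\xi_I\xi_J\Bigr)^{N/2}=\Pf(S)\,\xi_1\cdots\xi_N ,
$$
and, more generally, the coefficient of $\xi_{i_1}\cdots\xi_{i_r}$ (indices increasing) in $\frac{1}{(r/2)!}\bigl(\sum_{I<J}s_{IJ}\xi_I\xi_J\bigr)^{r/2}$ is the sub-Pfaffian $\Pf\bigl(S^{\{i_1,\dots,i_r\}}_{\{i_1,\dots,i_r\}}\bigr)$. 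Second, a product of $r$ linear forms $\ell_t=\sum_i c_{it}\xi_i$ in $r$ anticommuting variables equals $\det(c_{it})_{1\le i,t\le r}\,\xi_1\cdots\xi_r$.

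The first step is to expand the right-hand Pfaffian via the top form. With $S=\left(\begin{smallmatrix}MAM^t&H\\-H^t&0\end{smallmatrix}\right)$ and the $\xi_I$ listed as $\theta_1,\dots,\theta_n,\psi_1,\dots,\psi_m$, I form
$$
\Omega:=\sum_{I<J}s_{IJ}\xi_I\xi_J=\sum_{i<j}(MAM^t)_{ij}\theta_i\theta_j+\sum_{i,j}H_{ij}\theta_i\psi_j=:P+R .
$$
Introducing the odd elements $\eta_k=\sum_{i=1}^n M_{ik}\theta_i$ for $k=1,\dots,p$, the quadratic substitution identity $\sum_{i,j}(MAM^t)_{ij}\theta_i\theta_j=\sum_{k,l}A_{kl}\eta_k\eta_l$ yields $P=\sum_{k<l}A_{kl}\eta_k\eta_l$. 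Both $P$ and $R$ are \emph{even}, hence central, so $\Omega^{N/2}$ expands by the ordinary binomial theorem. Since each factor of $R$ carries exactly one $\psi$ and the $\psi_j$ square to zero, only the term with exactly $m$ factors of $R$ (and hence $(n-m)/2$ factors of $P$) can produce a monomial containing all of $\psi_1\cdots\psi_m$; this is precisely where the parity hypothesis $n\equiv m\pmod 2$ and the range $0\le n-m\le p$ enter. Thus
$$
\frac{\Omega^{N/2}}{(N/2)!}=\frac{1}{((n-m)/2)!}\,P^{(n-m)/2}\cdot\frac{1}{m!}\,R^{m}.
$$

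The second step evaluates the two factors. Writing $h_j=\sum_i H_{ij}\theta_i$, the commuting nilpotents $h_j\psi_j$ give $R^m=m!\,(h_1\psi_1)\cdots(h_m\psi_m)$, and sliding all the $\psi$'s to the right past the odd $h$'s produces the sign $(-1)^{\binom m2}$, so $\frac{1}{m!}R^m=(-1)^{\binom m2}(h_1\cdots h_m)\,\psi_1\cdots\psi_m$ — this is the source of the global sign in the theorem. For the other factor, the sub-Pfaffian fact (applied in the free exterior algebra on the $\eta_k$ and then pushed along the homomorphism $\eta_k\mapsto\sum_i M_{ik}\theta_i$) gives $\frac{1}{((n-m)/2)!}P^{(n-m)/2}=\sum_{|K|=n-m}\Pf(A^K_K)\,\eta_{k_1}\cdots\eta_{k_{n-m}}$, the sum over $K=\{k_1<\dots<k_{n-m}\}\subseteq[p]$. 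Multiplying the two, each summand is a product of $(n-m)+m=n$ linear forms in $\theta_1,\dots,\theta_n$ whose coefficient columns are the $K$-columns of $M$ followed by the columns of $H$; by the second standard fact this product is $\det(M_K\,\vdots\,H)\,\theta_1\cdots\theta_n$. Comparing the coefficient of $\theta_1\cdots\theta_n\psi_1\cdots\psi_m$ against $\Pf(S)\,\xi_1\cdots\xi_N$ gives $\Pf(S)=(-1)^{\binom m2}\sum_{|K|=n-m}\Pf(A^K_K)\det(M_K\,\vdots\,H)$, which is the assertion after multiplying through by $(-1)^{\binom m2}$.

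The main obstacle is not any single hard step but keeping every sign honest: one must verify the quadratic substitution for $P$ (routine but sign-sensitive), the reordering sign $(-1)^{\binom m2}$ emerging from $R^m$, and the implicit sign in identifying the $\eta_{k_1}\cdots\eta_{k_{n-m}}$ coefficient with $\Pf(A^K_K)$, and check that the latter cancels correctly because the $\eta$'s and the $h$'s are listed in the very left-to-right order that defines $\det(M_K\,\vdots\,H)$. An alternative that avoids the Grassmann formalism is a direct sign-reversing involution in the spirit of the proof of Theorem~\ref{CKT2.3}: expand $\Pf(S)$ over perfect matchings of $[n+m]$, note that the zero lower-right block forces each of the $m$ ``$H$-vertices'' to be matched to a ``$\theta$-vertex,'' expand the resulting $(MAM^t)_{ij}$ entries through $A$ and $M$, and match the terms against the expansion of $\sum_K\Pf(A^K_K)\det(M_K\,\vdots\,H)$; here the combinatorial accounting of signs is exactly what the exterior calculus performs automatically.
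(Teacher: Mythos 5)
The paper itself offers no proof of this statement: Theorem~\ref{CKthm:IsWa} is quoted from Ishikawa and Wakayama's article, and the surrounding text only explains how Theorem~\ref{CKTPf2.2} arises as a special case. So there is no in-paper argument to compare yours against; judged on its own, your Grassmann-variable proof is correct and complete. The key steps all check out: the exterior-algebra Pfaffian agrees with the paper's crossing-number definition \eqref{CKpf2.1} (e.g.\ for $N=4$ both give $s_{12}s_{34}-s_{13}s_{24}+s_{14}s_{23}$); the quadratic substitution $P=\sum_{k<l}A_{kl}\eta_k\eta_l$ with $\eta_k=\sum_i M_{ik}\theta_i$ is a correct identity; centrality of the even elements $P$ and $R$ justifies the binomial expansion; $R^b=0$ for $b>m$ and the need for all of $\psi_1,\dots,\psi_m$ isolate the single term with $m$ factors of $R$ and $(n-m)/2$ factors of $P$; the reordering sign coming from $R^m$ is indeed $(-1)^{\binom m2}$ (the inverted pairs are exactly the $\binom m2$ pairs $(\psi_i,h_j)$ with $i<j$); the sub-Pfaffian expansion legitimately transfers along the algebra homomorphism $\eta_k\mapsto\sum_i M_{ik}\theta_i$, since the images are odd, square to zero, and anticommute; and the product of the resulting $n$ linear forms is $\det\big(M_K\,\vdots\,H\big)\,\theta_1\cdots\theta_n$ by the standard determinant fact. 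Two cosmetic points: your displayed equation $\Omega^{N/2}/(N/2)!=\tfrac{1}{((n-m)/2)!}P^{(n-m)/2}\cdot\tfrac{1}{m!}R^m$ is literally an equality only between the components containing $\psi_1\cdots\psi_m$ (terms with fewer $R$-factors survive on the left-hand side but are irrelevant), so an ``$\equiv$ modulo terms missing some $\psi_j$'' would be more accurate; and since the final sign is the whole point, the two ``standard facts'' deserve at least the one-line convention check above. For what it is worth, your method is essentially that of Ishikawa and Wakayama's original proof, while your sketched alternative --- a sign-reversing involution on perfect matchings forced through the zero block --- is exactly the route a rewrite within this paper's own toolkit (the proof of Theorem~\ref{CKT2.3}) would take; the Grassmann calculus buys automatic sign management, the involution buys self-containedness.
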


Theorem~\ref{CKTPf2.2} results from the special case of Theorem~\ref{CKthm:IsWa} 
where $A$ is the $p\times p$ skew-symmetric matrix with all $1$s above
the diagonal, and $M$ and $H$ are 
matrices the entries of which are appropriately chosen
path generating functions. This is based on the well-known fact
(see e.g.\ \cite[Prop.~2.3(c)]{StemAE}) that $\Pf(1)_{1\le i<j\le 2N}=1$
for all $N$.

\medskip
The last theorem in this section addresses the case where starting
{\it and\/} end points are chosen from given sets.
To be precise, let 
$\A=(A_1,A_2,\dots,A_n)$ and 
$\E=(\dots,E_1,\break E_2,\dots)$ be ordered sets of 
vertices (finitely many or infinitely many in the case of $\E$).
What we want to determine is the generating function for 
{\em all families $\P=(P_1,P_2,\dots,P_s)$ of non-intersecting
paths, where for
$i=1,2,\dots,s$ the path $P_i$ runs from some $A_{k_i}$ to some 
$E_{l_i}$}. 
The corresponding enumeration result is due to
\index{Okada, Soichi}Okada \cite[Theorem~4]{OkadAA} and
\index{Stembridge, John R.}Stembridge \cite[Theorem~4.1]{StemAE}.
In the formulation below, by abuse of notation, $\A'\subseteq\A$
means that $\A'$ is a subsequence of $\A$, with an analogous meaning
for $\E'\subseteq\E$.

\begin{theorem} \label{CKTPf2.4}%
Let $G$ be a directed, acyclic graph with a weight function
$w$ on its edges, and 
let $\A=(A_1,A_2,\dots,A_n)$ and\/
$\E=(\dots,E_1,E_2,\dots)$ be sequences
of vertices in $G$.

\smallskip
{\em(a)} If $n$ is even, then
\begin{multline} \label{CKpf2.8a}
\sum_{s=0}^{n/2}
t^s
\underset{\vert \A'\vert=\vert \E'\vert=2s}
{\sum _{\A'\subseteq \A,\,\E'\subseteq\E} ^{}}
\GF{L_G(\A'\to\E'\mid \text{\em non-intersecting});w}\\
=
\Pf_{1\le i<j\le n}\big(
(-1)^{i+j-1}+tQ_G(i,j;w)
\big),
\end{multline}
where $Q_G(i,j;w)$ has the same meaning as in Theorem~\ref{CKTPf2.1a}.

\smallskip
{\em(b)} If $n$ is odd, then
\begin{multline} \label{CKpf2.8b}
\sum_{s=0}^{n}
t^s
\underset{\vert \A'\vert=\vert \E'\vert=s}
{\sum _{\A'\subseteq \A,\,\E'\subseteq\E} ^{}}
\GF{L_G(\A'\to\E'\mid \text{\em non-intersecting});w}\\
=
\underset{1\le i<j\le n+1}{\Pf}\left(
(-1)^{i+j-1}+t^2Q_G(i,j;w)
\right),
\end{multline}
where for $j\le n$ the quantity $Q_G(i,j;w)$ has the same meaning 
as in Theorem~\ref{CKTPf2.1a}, while $Q_G(i,n+1;w)$ equals the generating
function $t^{-1}\sum _{P}
^{}w(P)$ for all paths $P$ from $A_i$ to some vertex in $\E$.

\smallskip
{\em(c)} If $n$ is even, then
\begin{multline} \label{CKpf2.8c}
\sum_{s=0}^{n}
t^s
\underset{\vert \A'\vert=\vert \E'\vert=s}
{\sum _{\A'\subseteq \A,\,\E'\subseteq\E} ^{}}
\GF{L_G(\A'\to\E'\mid \text{\em non-intersecting});w}\\
=
\underset{1\le i<j\le n+2}{\Pf}\left(
(-1)^{i+j-1}+t^2Q_G(i,j;w)
\right),
\end{multline}
where for $j\le n+1$ the quantity $Q_G(i,j;w)$ has the same meaning 
as in {\em(b)}, and $Q_G(i,n+2;w)=0$.
\end{theorem}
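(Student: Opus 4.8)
The plan is to establish part \eqref{CKpf2.8a} in full and then to obtain \eqref{CKpf2.8b} and \eqref{CKpf2.8c} from it by the same ``phantom vertex'' device that produced Corollary~\ref{CKTPf2.1u} from Theorem~\ref{CKTPf2.1a}. Throughout I write $h_{ik}=\GF{L_G(A_i\to E_k);w}$ for the single-path generating function from $A_i$ to $E_k$. The first step is to record that $Q_G(i,j;w)$ is the signed two-path sum $\sum_{k<l}\big(h_{ik}h_{jl}-h_{il}h_{jk}\big)$: this is the two-path instance of the tail-swap involution of Theorem~\ref{CKT2.3}, where the intersecting ordered pairs cancel and only the non-intersecting pairs connecting $A_i,A_j$ to $E_k,E_l$ with $k<l$ survive. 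This identity is what lets me move freely between the geometric object $Q_G(i,j;w)$ and the purely algebraic entries $h_{ik}$, and it is the reason the right-hand Pfaffian will carry path content.

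For \eqref{CKpf2.8a} I would expand the right-hand Pfaffian over perfect matchings of $[n]$, namely $\Pf_{1\le i<j\le n}\big((-1)^{i+j-1}+tQ_G(i,j;w)\big)=\sum_{\pi}\sgn(\pi)\prod_{\{i,j\}\in\pi}\big((-1)^{i+j-1}+tQ_G(i,j;w)\big)$, and then distribute each factor into a ``phantom'' choice $(-1)^{i+j-1}$ and a ``real'' choice $tQ_G(i,j;w)$. The real edges select a subset $W$ of $2s$ starting indices and contribute a factor $t^{s}$ together with a family of $2s$ paths running from $\{A_i:i\in W\}$ to a $2s$-subset of endpoints, assembled in pairs. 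Applying the intersection-resolving involution of Theorem~\ref{CKT2.3} to these $2s$ paths makes all intersecting families cancel, so that the real edges contribute precisely the non-intersecting families counted by $\GF{L_G(\A'\to\E'\mid\text{non-intersecting});w}$ with $\A'=\{A_i:i\in W\}$, summed over endpoint subsets $\E'$. The exponent $s$ of $t$ matches the grouping into $s$ pairs, exactly as on the left-hand side.

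The heart of the matter is the treatment of the phantom edges: the matching of the unused indices $[n]\setminus W$ contributes the sub-Pfaffian $\Pf_{i<j\in[n]\setminus W}\big((-1)^{i+j-1}\big)$. Writing the array $\big((-1)^{i+j-1}\big)$ as the diagonal conjugate $-D\,\mathbf 1\,D$ of the all-ones skew-symmetric array $\mathbf 1$ (with $D=\operatorname{diag}\big((-1)^i\big)$), whose Pfaffian is $1$ by the normalisation $\Pf(1)_{1\le i<j\le 2N}=1$ recorded after \eqref{CKpf2.2}, this sub-Pfaffian evaluates to an explicit sign $\pm1$ depending only on $[n]\setminus W$. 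The hard part will be to verify that this sign, together with the matching sign $\sgn(\pi)$ and the tail-swap signs from resolving the real pairs and from ordering the endpoints, combines so that each surviving family $\A'\to\E'$ is counted with coefficient exactly $+1$ and with its endpoints attached in increasing order. Disentangling these three sources of sign is precisely the delicate bookkeeping carried out by Stembridge \cite[Proof of Theorem~4.1]{StemAE}; I expect it to be the main obstacle. (One might alternatively hope to derive \eqref{CKpf2.8a} from the minor-summation formula of Theorem~\ref{CKthm:IsWa}, but the presence of subset sums over \emph{both} $\A$ and $\E$ makes it a double minor summation, so the direct involution is the more natural route.)

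Finally, I would deduce \eqref{CKpf2.8b} and \eqref{CKpf2.8c} from \eqref{CKpf2.8a} by augmenting $G$ with phantom vertices. For odd $n$ one adjoins a single phantom index $n+1$ whose pairing generating function with $A_i$ is the single-path sum $t^{-1}\sum_P w(P)$; a matching edge incident to $n+1$ then contributes one extra path of weight $t^{-1}\cdot t^{2}=t$ rather than a pair, which is exactly what realises families of odd cardinality $s$ while keeping the Pfaffian size $n+1$ even. For even $n$ in \eqref{CKpf2.8c} one adjoins two phantom indices $n+1$ and $n+2$, the first again supplying single paths and the second a pure parity filler with $Q_G(i,n+2;w)=0$, so that $n+2$ can only be matched to $n+1$ through the forced entry $(-1)^{(n+1)+(n+2)-1}=1$. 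In each case the argument for \eqref{CKpf2.8a} applies verbatim on the augmented graph, and the stated $t$-degree conventions drop out of the phantom bookkeeping, exactly as in the passage from Theorem~\ref{CKTPf2.1a} to Corollary~\ref{CKTPf2.1u}.
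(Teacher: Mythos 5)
First, a point of orientation: the paper does \emph{not} prove Theorem~\ref{CKTPf2.4} at all --- it states the result and cites Okada and Stembridge, just as it refers to Stembridge for the proofs of Theorems~\ref{CKTPf2.1a} and~\ref{CKTPf2.2}. So there is no proof in the paper to compare yours with; your outline has to stand on its own. In spirit it is indeed Stembridge's argument (expand the Pfaffian over matchings, split each entry into a ``phantom'' and a ``real'' part, cancel intersecting configurations by tail-swapping), and your phantom-index reductions for parts (b) and (c) correctly mirror the trick behind Corollary~\ref{CKTPf2.1u}.

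The difficulty is that your proposal has a genuine gap, located exactly at the two steps you treat as routine. Your opening identity $Q_G(i,j;w)=\sum_{k<l}\big(h_{ik}h_{jl}-h_{il}h_{jk}\big)$ is false for a general acyclic $G$: the tail-swap involution cancels the intersecting ordered pairs, but what survives is the \emph{difference} between non-intersecting pairs whose endpoints occur in the order $k<l$ and non-intersecting pairs whose endpoints occur in reversed order, and the latter need not vanish. For the same reason, the assertion that after resolving intersections ``the real edges contribute precisely the non-intersecting families'' with increasing endpoints is not correct: a configuration all of whose paths are pairwise \emph{disjoint} is a fixed point of every tail-swap, yet its endpoint pattern need not be increasing, and then it survives in the Pfaffian with a sign that nothing cancels. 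Concretely, take $n=4$ and let $G$ consist of the four pairwise disjoint edges $A_1\to E_3$, $A_2\to E_1$, $A_3\to E_4$, $A_4\to E_2$ with weights $w_1,w_2,w_3,w_4$ (endpoint pattern $3142$). Then $Q_G(1,3;w)=w_1w_3$, $Q_G(2,3;w)=w_2w_3$, $Q_G(2,4;w)=w_2w_4$, and all other $Q_G(i,j;w)$ vanish, so
\begin{equation*}
\Pf_{1\le i<j\le 4}\big((-1)^{i+j-1}+tQ_G(i,j;w)\big)
=1+t\,(w_1w_3+w_2w_3+w_2w_4)-t^2\,w_1w_2w_3w_4,
\end{equation*}
whereas the left-hand side of \eqref{CKpf2.8a} equals $1+t\,(w_1w_3+w_2w_3+w_2w_4)$, since no non-intersecting family with increasing endpoint indices exists here at all. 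The discrepant term $-t^2w_1w_2w_3w_4$ is exactly the contribution of the matching $\{1,3\},\{2,4\}$ filled with those four disjoint paths. So no amount of the ``delicate sign bookkeeping'' you defer to Stembridge can close the argument in the generality in which the theorem is stated: the statement itself tacitly carries Stembridge's hypothesis that $A_1,\dots,A_n$ be $D$-compatible with $\E$ (for $i<j$, every path from $A_i$ to a later endpoint must intersect every path from $A_j$ to an earlier one), which the survey's formulation suppresses. Under that hypothesis reversed non-intersecting pairs cannot exist, so your identity for $Q_G(i,j;w)$ becomes valid and pairwise non-intersecting configurations automatically have increasing endpoint pattern; what then remains is precisely the sign analysis you postpone --- and since that analysis, together with invoking compatibility at the right moment, \emph{is} the proof, the proposal as written is a plan rather than a proof, and parts (b) and (c) inherit the same gap from part (a).
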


For applications of this theorem in plane partition enumeration
see \cite{OkadAA} and \cite{StemAE}.

\medskip
If one weakens the condition of non-intersection
of lattice paths in the plane to the
requirement that paths are allowed to touch each other
in isolated points but not to change sides, then one arrives at the
model of 
\index{osculating path}\index{path, osculating}%
{\it osculating paths}. The motivation to consider
this model comes from an observation of
\index{Bousquet-M\'elou, Mireille}Bousquet-M\'elou and 
\index{Habsieger, Laurent}Habsieger \cite{BoHaAA} that
\index{alternating sign matrix}\index{matrix, alternating sign}%
{\it alternating sign matrices} are in bijection with
families of osculating paths with appropriate starting
and end points. Alternating sign matrices are fascinating,
but notoriously difficult to count, therefore it may be
useful to investigate objects which are equivalent to
them. So far, this point of view has not led to much,
but recently
\index{Brak, Richard}Brak and 
\index{Galleas, Wellington}Galleas \cite{BrGaAA}
proved a constant term formula for families of
osculating paths.

\section{Lattice paths and their turns}\label{CKsec:turn}

In this section we consider {\em turns\/}\index{turn of a lattice 
path}\index{path, turn} of lattice paths. Literally, a {\em turn\/} of a 
lattice path is any 
vertex of a path where the direction of the path changes. The
enumeration of lattice paths with a given number of turns is motivated
by problems of correlated random walks, distribution of runs
(cf.\ \cite{MohaAE}),
coefficients of Hilbert polynomials of determinantal and Pfaffian
rings (cf.\ \cite{KrPrAA,KulkAC}), and summations for Schur functions
(cf.\ \cite{KratAP}).

The approach for the enumeration of simple plane lattice paths
with respect to their number of turns which we present here
is by encoding lattice paths in terms of two-rowed
arrays, a point of view put forward in \cite{KratBL}. 

\medskip
For simple lattice paths in the plane there are two types of turns. We
call a vertex $T$ of a path a {\em North-East turn\/}\index{North-East turn}
\index{turn, North-East} ({\em NE-turn\/}\index{NE-Turn} for short) if
$T$ is reached by a step towards north and left by a step 
towards east. We
call a vertex $T$ of a path an {\em east-north turn\/}\index{east-north turn}
\index{turn, east-north} ({\em EN-turn\/}\index{EN-Turn} for short) if
$T$ is reached by a step towards east and left by a step 
towards north. Thus, the NE-turns of the path $P_0$ in
Figure~\ref{CKF4.1} are $(1,1)$, $(2,3)$, and $(5,4)$, the EN-turns of
$P_0$ are $(2,1)$, $(5,3)$, and $(6,4)$.
\begin{figure} 
$$\Koordinatenachsen(8,8)(0,0)
\Gitter(8,8)(0,0)
\Pfad(1,-1),221221112122\endPfad
\DickPunkt(1,-1)
\DickPunkt(6,6)
\Label\ro{P_0}(3,3)
\hskip4cm
$$
\caption{}
\label{CKF4.1}
\end{figure}
We denote by \glossary{$NE(P)$}$NE(P)$ the number of NE-turns of $P$ and
by $EN(P)$\glossary{$EN(P)$} the number of EN-turns of $P$.

Now we describe the encoding of paths in terms of two-rowed arrays.
Actually, we use two encodings, one corresponding to NE-turns, one
corresponding to EN-turns. Let $(p_1,q_1)$, $(p_2,q_2)$, \dots,
$(p_\ell,q_\ell)$ be the NE-turns of a path $P$. Then the {\em NE-turn
representation\/}\index{NE-turn representation} 
of $P$ is defined by the two-rowed array
\begin{equation}\label{CKe4.1}\begin{array} {cccc}
p_1&p_2&\dots&p_\ell\\
q_1&q_2&\dots&q_\ell,
\end{array}
\end{equation}
which consists of two strictly increasing sequences. Clearly, if 
$P$ runs from $(a,b)$ to $(c,d)$ then $a\le p_1<p_2<\dots<
p_\ell\le c-1$ and $b+1\le q_1<q_2<\dots<q_\ell\le d$ are satisfied. 
If we wish to make this fact transparent, we write
\begin{equation} \label{CKe4.2} 
\begin{array} {rccccl}
a\le&\quad p_1&p_2&\dots&p_\ell&\quad \le c-1\\
b+1\le&\quad q_1&q_2&\dots&q_\ell&\quad \le d.
\end{array}
\end{equation}
For a given starting point and a given final point,
by definition the empty array is the representation for the only path
that has no NE-turn.
For the path in our running example
we obtain the NE-turn representation
$$\begin{matrix} 1& 2& 5\\1& 3& 4\end{matrix}\ ,$$
or, with bounds included,
$$\begin{array}{rcccl}\\
1\le&\quad 1&\ 2&\ 5&\quad \le5\\
0\le&\quad 1&\ 3&\ 4&\quad \le6\end{array}\ .$$

Similarly, if $(p_1,q_1)$, $(p_2,q_2)$, \dots,
$(p_\ell,q_\ell)$ denote the EN-turns 
of a path $P$, then \eqref{CKe4.1} is called the {\em EN-turn
representation\/}\index{EN-turn representation} 
of $P$.
If
$P$ runs from $(a,b)$ to $(c,d)$ then $a+1\le p_1<p_2<\dots<
p_\ell\le c$ and $b\le q_1<q_2<\dots<q_\ell\le d-1$ are satisfied. Again,
as earlier, we write
\begin{equation} \label{CKe4.3} 
\begin{array} {rccccl}
a+1\le&\quad p_1&p_2&\dots&p_\ell&\quad \le c\\
b\le&\quad q_1&q_2&\dots&q_\ell&\quad \le d-1.
\end{array}
\end{equation}
For a given starting point and a given final point,
by definition the empty array is the representation for the only path
that has no EN-turn.
For the path in our running example
we obtain the EN-turn representation
$$\begin{matrix} 2& 5& 6\\1& 3& 4\end{matrix}\ ,$$
or, with bounds included,
$$\begin{array}{rcccl}\\
2\le&\quad 2&\ 5&\ 6&\quad \le6\\
-1\le&\quad 1&\ 3&\ 4&\quad \le5\end{array}\ .$$


Also two-rowed arrays with its
rows being of unequal length will be considered. These arrays will
also have the property that the rows are strictly increasing. So, by
convention, whenever we speak of two-rowed arrays, we
mean two-rowed arrays with strictly increasing rows. For these
arrays we will use a notation of the kind \eqref{CKe4.2} or
\eqref{CKe4.3} as well. 
We shall frequently use the short notation $(\mathbf a\mid \mathbf b)$ 
for two-rowed arrays, where $\mathbf a$
denotes the sequence $(a_i)$ of elements of the first row, 
and $\mathbf b$ denotes the
sequence $(b_i)$ of elements of the second row.

From \eqref{CKe4.2} we see at once that the number of all paths from
$(a,b)$ to $(c,d)$ with exactly $\ell$ NE-turns equals the number of
$\ell$-element subsets of $\{a,a+1,\dots,c-1\}$ times the number of
$\ell$-element subsets of $\{b+1,b+2,\dots,d\}$. A similar argument
holds for EN-turns. Thus we have proved
\begin{multline}\label{CKe4.4}
\vv{\LL{(a,b)\to (c,d)\mid NE(.)=\ell}}=
\vv{\LL{(a,b)\to (c,d)\mid EN(.)=\ell}}\\
=\binom {c-a}{\ell}
\binom{d-b}\ell.
\end{multline}

\medskip
A lattice path statistic that is frequently used is the number of
{\em runs\/} of a lattice path. A {\em run\/}\index{run of a
path}\index{path, run} in a path $P$ is a maximal subpath of $P$
consisting of steps of equal type. We write $\run(P)$ for the number
of runs of $P$. The runs of the path $P_0$ in Figure~\ref{CKF4.1} are
the subpaths from $(1,-1)$ to $(1,1)$, from $(1,1)$ to $(2,1)$, from
$(2,1)$ to $(2,3)$, from $(2,3)$ to $(5,3)$, from $(5,3)$ to $(5,4)$,
from $(5,4)$ to $(6,4)$, and from $(6,4)$ to $(6,6)$. Thus we have
$\run(P_0)=7$. Obviously, the number of runs of a path is exactly one
more than the total number of turns (both, NE-turns and EN-turns).
Besides, there is also a close relation between NE-turns and
runs, which allows us to translate any enumeration result for NE-turns
into one for runs. 

To avoid case by case formulations, depending on whether the number of
runs is even or odd, we prefer to consider generating functions.
Suppose we know the number of all paths from $A$ to $E$ satisfying
some property $R$ and containing a given number of NE-turns.
Then we 
know the generating function $GF(L(A\to E\mid R);x^{NE(.)})$. For
brevity, let
us denote it by $F(A\to E\mid R;x)$. We define four refinements of
$F(A\to E\mid R;x)$. Let $F_{hv}(A\to E\mid R;x)$ be the generating
function $\sum _{P} ^{}x^{NE(P)}$ where the sum is over all paths in
$L(A\to E\mid R)$ that start with a horizontal step and end with a
vertical step. The notations $F_{hh}(A\to E\mid R;x)$, $F_{vh}(A\to
E\mid R;x)$, and $F_{vv}(A\to E\mid R;x)$ are defined analogously. 
The relation between
enumeration by runs and enumeration by NE-turns is given by
\begin{multline} \label{CKe4.5}
GF(L(A\to E\mid R);x^{\run(.)})=xF_{hh}(A\to E\mid R;x^2) +x^2
F_{hv}(A\to E\mid R;x^2)\\
+ F_{vh}(A\to E\mid R;x^2)+xF_{vv}(A\to E\mid R;x^2).
\end{multline}

All the four refinements of the NE-turn generating function can be
expressed in terms of NE-turn generating functions. This is seen by
setting up a few linear equation and solving them. Evidently, 
the following is true:
\begin{multline} \notag
F(A\to E\mid R;x)=F_{hh}(A\to E\mid R;x) +
F_{hv}(A\to E\mid R;x)\\
+ F_{vh}(A\to E\mid R;x)+F_{vv}(A\to E\mid R;x).
\end{multline}
Besides, if $E_1=(1,0)$ and $E_2=(0,1)$ denote the standard unit
vectors, we have
\begin{align} \notag
F_{hh}(A\to E\mid R;x)+F_{hv}(A\to E\mid R;x)&=F(A+E_1\to E\mid R;x)\\
\notag
F_{hv}(A\to E\mid R;x)+F_{vv}(A\to E\mid R;x)&=F(A\to E-E_2\mid R;x)\\
\notag
F_{hv}(A\to E\mid R;x)&=F(A+E_1\to E-E_2\mid R;x).
\end{align}
Solving for $F_{hh}$, $F_{hv}$, $F_{vh}$, $F_{vv}$ we get
{\refstepcounter{equation}\label{CKe4.6}}
\alphaeqn
\begin{align} \label{CKe4.6a}
F_{hh}(A\to E\mid R;x)&=F(A+E_1\to E\mid R;x)-F(A+E_1\to E-E_2\mid R;x)\\
\label{CKe4.6b}
F_{hv}(A\to E\mid R;x)&=F(A+E_1\to E-E_2\mid R;x)\\
\notag
F_{vh}(A\to E\mid R;x)&=F(A\to E\mid R;x)+(A+E_1\to E-E_2\mid R;x)\\
\label{CKe4.6c}
&\hskip1cm -F(A+E_1\to E\mid R;x)-F(A\to E-E_2\mid R;x)\\
\label{CKe4.6d}
F_{vv}(A\to E\mid R;x)&=F(A\to E-E_2\mid R;x)-F(A+E_1\to E-E_2\mid R;x).
\end{align}
\reseteqn

\medskip
As we know from Section~\ref{CKsec:lin1}, counting paths restricted by
$x=y$, or even by two lines $x=y+t$ and $x=y+s$, is effectively solved
by the reflection principle. Of course, reflection by itself is useless
for counting paths by turns, since the reflection of portions of paths
does not take care of turns. It might introduce new turns or make
turns disappear. However, there are ``analogues" of reflection for
two-rowed arrays, which are due to \index{Krattenthaler, Christian}Krattenthaler and 
\index{Mohanty, Sri Gopal}Mohanty 
\cite{KrMoAC}. 

\begin{theorem} \label{CKT4.1}%
Let $a\ge b$ and $c\ge d$. The number of all paths from $(a,b)$ to
$(c,d)$ staying weakly below $x=y$ with exactly $\ell$ NE-turns is given by
\begin{multline} 
\vv{\LL{(a,b)\to (c,d)\mid x\ge y,\,NE(.)=\ell}}\\
=\binom {c-a}{\ell}\binom{d-b}\ell - \binom {c-b-1}{\ell-1}
\binom{d-a+1}{\ell+1} ,\label{CKe4.7} 
\end{multline}
and with exactly $\ell$ EN-turns is given by
\begin{multline} 
\vv{\LL{(a,b)\to (c,d)\mid x\ge y,\,EN(.)=\ell}}\\
=\binom {c-a}{\ell}\binom{d-b}\ell - \binom {c-b+1}{\ell}
\binom{d-a-1}{\ell} .\label{CKe4.8} 
\end{multline}
\end{theorem}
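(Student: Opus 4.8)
The plan is to work entirely inside the two-rowed array encoding and to reduce the diagonal restriction to a \emph{dominance} condition on the array. Consider first the NE-turn count. Along a path the quantity $x-y$ decreases by $1$ on each north step and increases by $1$ on each east step, so its local minima are attained exactly at the NE-turns (tops of the maximal north-runs, where a north step is followed by an east step), together with the starting point if the path begins with an east step and the end point if it ends with a north step. Since by hypothesis $a\ge b$ and $c\ge d$, the values of $x-y$ at the two endpoints are already non-negative, so the path stays weakly below $x=y$ if and only if $p_i\ge q_i$ at every NE-turn $(p_i,q_i)$. Hence the number I must compute is the number of two-rowed arrays of the form \eqref{CKe4.2} subject to the extra constraint $q_i\le p_i$ for all $i$.

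Next I would count by complementation. By \eqref{CKe4.4} the number of \emph{all} arrays with $\ell$ NE-turns, ignoring the diagonal, is $\binom{c-a}{\ell}\binom{d-b}{\ell}$, so it remains to subtract the ``bad'' arrays, those with $q_i>p_i$ for at least one $i$. This is where the array analogue of the reflection principle of Theorem~\ref{CKT1.1} enters: one mirrors the geometric reflection across $y=x+1$, but carries it out on the turn data. The effect is to locate the first violation and reflect, which redistributes the $2\ell$ entries of a bad array into a two-rowed array whose two rows now have lengths $\ell-1$ and $\ell+1$, with entries ranging over a set of size $c-b-1$ (say $\{b+1,\dots,c-1\}$) and a set of size $d-a+1$ (say $\{a,\dots,d\}$), respectively. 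The number of such arrays is $\binom{c-b-1}{\ell-1}\binom{d-a+1}{\ell+1}$, which produces exactly the subtracted term in \eqref{CKe4.7}.

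For the EN-turn formula \eqref{CKe4.8} the strategy is the same, but the dominance condition is \emph{shifted}. Encoding now by EN-turns \eqref{CKe4.3}, the local minima of $x-y$ (the NE-turns, which carry the real constraint) interlace the recorded EN-turns: the NE-turn lying between the EN-turns $(p_i,q_i)$ and $(p_{i+1},q_{i+1})$ sits at $(p_i,q_{i+1})$. Thus staying weakly below $x=y$ translates into $p_i\ge q_{i+1}$ for $1\le i\le\ell-1$, supplemented by boundary conditions coming from an NE-turn before the first EN-turn or after the last one (these involve the start/end points and are again controlled by $a\ge b$, $c\ge d$). The parallel array reflection here preserves the two row lengths (both equal to $\ell$) and lands in arrays over sets of sizes $c-b+1$ and $d-a-1$, giving the subtracted term $\binom{c-b+1}{\ell}\binom{d-a-1}{\ell}$.

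The hard part will be making the array reflection a \emph{well-defined bijection} rather than a formal device. A naive ``swap the tails at the first violation'' can create repeated entries (for instance when $q_{i_0}$ coincides with some later $p_j$), so the reflection must faithfully reproduce the genuine geometric reflection of the initial portion of the path up to its last meeting with $y=x+1$, transferred to the turn statistics; I would verify that the reflected rows are strictly increasing, that they land in the stated ranges, and that the map is invertible. The additional bookkeeping of the boundary NE-turns in the EN-case, and confirming that the shift $p_i\ge q_{i+1}$ produces precisely the equal-length arrays counted by $\binom{c-b+1}{\ell}\binom{d-a-1}{\ell}$, is the remaining technical point; once the bijection is established, \eqref{CKe4.7} and \eqref{CKe4.8} follow immediately by subtraction from \eqref{CKe4.4}.
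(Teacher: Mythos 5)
Your setup coincides with the paper's: encode paths by their NE-turns, observe that staying weakly below $x=y$ is equivalent to the pointwise dominance $p_i\ge q_i$ on the representation \eqref{CKe4.2} (the endpoint conditions being absorbed by the hypotheses $a\ge b$ and $c\ge d$), and subtract from \eqref{CKe4.4} the number of violating arrays, which you correctly predict should equal $\binom{c-b-1}{\ell-1}\binom{d-a+1}{\ell+1}$, the number of two-rowed arrays with rows of lengths $\ell-1$ and $\ell+1$ in the ranges $[b+1,c-1]$ and $[a,d]$. But the entire content of the proof lies in the bijection between the violating arrays and these unequal-length arrays, and you never construct it --- you flag it as ``the hard part'' and defer it. Moreover, the route you propose for filling the gap, namely transcribing the geometric reflection across $y=x+1$ into turn data, cannot work as stated: reflection exchanges east and north steps on the reflected portion, hence converts NE-turns into EN-turns there, so it does not respect the statistic being counted (this is exactly why the paper warns, just before the theorem, that reflection by itself is useless for counting paths by turns); in addition, the target arrays have rows of unequal lengths, so they are not turn representations of any paths and cannot arise as images of a path-level reflection. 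The construction that works is a direct array manipulation: with $I$ the \emph{largest} index such that $p_I<q_I$, map $(\mathbf p\mid \mathbf q)$ to the array with first row $q_1,\dots,q_{I-1},p_{I+1},\dots,p_\ell$ and second row $p_1,\dots,p_I,q_I,q_{I+1},\dots,q_\ell$; strict increase of both rows follows from $q_{I-1}<q_I<q_{I+1}\le p_{I+1}$ (maximality of $I$ is used here) and from $p_I<q_I$, the stated ranges are verified by a short case analysis, and the inverse is given by the analogous rule (locate the largest index at which the bottom row exceeds the top row and re-split accordingly). Without this map, or an equivalent one, \eqref{CKe4.7} is not proved.

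For \eqref{CKe4.8} there is a further inaccuracy. Your interlacing observation is correct: the NE-turn lying between the EN-turns $(p_i,q_i)$ and $(p_{i+1},q_{i+1})$ is at $(p_i,q_{i+1})$, which gives $p_i\ge q_{i+1}$ for $1\le i\le \ell-1$. But the boundary conditions are \emph{not} ``controlled by $a\ge b$, $c\ge d$'': there may be an NE-turn at $(a,q_1)$ before the first EN-turn and one at $(p_\ell,d)$ after the last one, and these impose the genuine additional constraints $q_1\le a$ and $p_\ell\ge d$, which do not follow from \eqref{CKe4.3} (indeed $q_1$ may be as large as $d-1$, which can exceed $a$). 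So the set to be counted consists of the arrays \eqref{CKe4.3} satisfying the full shifted dominance $a\ge q_1$, $p_i\ge q_{i+1}$ for $1\le i\le\ell-1$, and $p_\ell\ge d$ --- equivalently, pointwise dominance after prepending $a$ to the top row and appending $d$ to the bottom row --- and it is to this set that an array-reflection argument must be applied in order to produce the subtracted term $\binom{c-b+1}{\ell}\binom{d-a-1}{\ell}$.
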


\begin{proof}We start with proving \eqref{CKe4.7}. 
By the NE-turn representation \eqref{CKe4.2}, the paths from $(a,b)$
to $(c,d)$ staying weakly below $x=y$ with exactly $\ell$ NE-turns
 can be
represented by 
{\refstepcounter{equation}\label{CKe4.9}}
\alphaeqn
\begin{equation} \label{CKe4.9a} 
\begin{array} {rccccl}
a\le&\quad p_1&p_2&\dots&p_\ell&\quad \le c-1\\
b+1\le&\quad q_1&q_2&\dots&q_\ell&\quad \le d,
\end{array}
\end{equation}
where
\begin{equation} \label{CKe4.9b} 
p_i\ge q_i, \quad \quad i=1,2,\dots,\ell.
\end{equation}
\reseteqn
Following the argument in the proof of Theorem~\ref{CKT1.1}, the number
of these two-rowed arrays is the number of {\em all\/} two-rowed arrays
of the type \eqref{CKe4.9a} minus those two-rowed arrays of the type
\eqref{CKe4.9a} which {\em violate\/} \eqref{CKe4.9b}, i.e., where $p_i<q_i$ for
some $i$ between $1$ and $\ell$. We know the first number from
\eqref{CKe4.4}. 

Concerning the second number, we claim that two-rowed arrays of the
type \eqref{CKe4.9a} which violate \eqref{CKe4.9b} are in one-to-one
correspondence with two-rowed arrays of the type
\begin{equation} \label{CKe4.10} 
\begin{array} {rcccccl}
b+1\le&\quad &&\bar p_2&\dots&\bar p_\ell&\quad \le c-1\\
a\le&\quad \bar q_0&\bar q_1&\bar q_2&\dots&\bar q_\ell&\quad \le d.
\end{array}
\end{equation}
The number of all these two-rowed arrays is $\binom {c-b-1}{\ell-1}
\binom {d-a+1}{\ell+1}$, as desired. So it only remains to construct
the one-to-one correspondence.

Take a two-rowed array $(\mathbf p\mid \mathbf q)$ of the type \eqref{CKe4.9a} such that
$p_i<q_i$ for some $i$. Let $I$ be the largest integer such that
$p_I<q_I$. Then map $(\mathbf p\mid \mathbf q)$ to
\begin{equation} \label{CKe4.11} 
\begin{array} {ccccccccc}
&&q_1\kern15pt&\dots\dots &q_{I-1}&p_{I+1}&\dots&p_\ell\\
p_1&p_2&\dots\dots&\kern15ptp_I&q_I&q_{I+1}&\dots&q_\ell
\end{array}.
\end{equation}
Note that both rows are strictly increasing because of
$q_{I-1}<q_I<q_{I+1}\le p_{I+1}$ and $p_I<q_I$. By some case by case
analysis it can be seen that \eqref{CKe4.11} is of type \eqref{CKe4.10}.
For example, if $I=\ell$ then we must check $q_{I-1}\le c-1$, among
others. Clearly, this follows from the inequalities $q_{I-1}<q_I\le
d\le c$.

The inverse of this map is defined in the same way. Let $(\bar {\mathbf
p}\mid
\bar{\mathbf q})$
be a two-rowed array of the type \eqref{CKe4.10}. Let $\bar I$ be the
largest integer such that $\bar p_{\bar I}<\bar q_{\bar I}$. If there
are none, take $\bar I=2$. Then map $(\bar {\mathbf p}\mid \bar {\mathbf q})$ to
\begin{equation} \label{CKe4.12} 
\begin{array} {ccccccc}
\bar q_0&\multicolumn{2}{c}{\dotfill}&\bar q_{\bar I-1}&\bar p_{\bar
I+1} &\dots&\bar p_\ell\\
\bar p_2&\dots&\bar p_{\bar I}&\bar q_{\bar I}&\multicolumn{2}{c}{\dotfill}&\bar q_\ell
\end{array}.
\end{equation}
It is not difficult to check that the mappings \eqref{CKe4.11} and
\eqref{CKe4.12} are inverses of each other. This completes the proof of
\eqref{CKe4.7}.

The second identity, \eqref{CKe4.8}, can be established similarly.
\end{proof}

\begin{remark}\em
The above proof leads in fact to $q$-analogues; see \cite{KrMoAC}.
\end{remark}

A refinement of
Theorem~\ref{CKT1.3} taking into account turns may as well be derived
in this way. 

\begin{theorem} \label{CKT4.2}%
Let $a+t\ge b\ge a+s$ and\/ $c+t\ge d\ge c+s$. The number of all paths
from $(a,b)$ to $(c,d)$ staying weakly below the line $y=x+t$ and above the
line $y=x+s$ with exactly $\ell$ NE-turns is given by
\begin{multline} \label{CKe4.17}
\vv{\LL{(a,b)\to (c,d)\mid x+t\ge y\ge x+s,\,NE(.)=\ell}}\\
=\sum _{k\in\Z} ^{}\bigg(\binom
{c-a-k(t-s)} {\ell+k}\binom {d-b+k(t-s)}{\ell-k}\hskip3cm\\
-\binom {c-b-k(t-s)
+s-1}{\ell+k} \binom {d-a+k(t-s)-s+1}{\ell-k}\bigg). 
\end{multline}
\end{theorem}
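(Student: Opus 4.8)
The plan is to run the two-rowed array reflection of Theorem~\ref{CKT4.1} iteratively, exactly as the iterated reflection principle turns Theorem~\ref{CKT1.1} into Theorem~\ref{CKT1.3}. First I would encode every path from $(a,b)$ to $(c,d)$ with exactly $\ell$ NE-turns by its NE-turn representation \eqref{CKe4.2}, a two-rowed array $a\le p_1<\dots<p_\ell\le c-1$, $b+1\le q_1<\dots<q_\ell\le d$, so that the count \emph{without} the strip restriction is the product $\binom{c-a}\ell\binom{d-b}\ell$ of \eqref{CKe4.4}. The key geometric observation is that the NE-turns are precisely the local maxima of $y-x$ along the path, while the EN-turn lying between two successive NE-turns $(p_i,q_i)$ and $(p_{i+1},q_{i+1})$ sits at $(p_{i+1},q_i)$ and is a local minimum of $y-x$. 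Hence the restriction $x+t\ge y\ge x+s$ is equivalent to the array inequalities $q_i\le p_i+t$ for all $i$ (upper boundary at the NE-turns) and $p_{i+1}\le q_i-s$ for all $i$ (lower boundary at the intervening EN-turns), together with the boundary versions at the two ends, which are guaranteed to be compatible by the hypotheses $a+t\ge b\ge a+s$ and $c+t\ge d\ge c+s$.

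Second, I would treat these strip inequalities by inclusion-exclusion, reflecting away one violated inequality at a time. A violation of $q_i\le p_i+t$ becomes, after the shift $q_i\mapsto q_i-t$, exactly the event $p_i<q_i$ handled in the proof of Theorem~\ref{CKT4.1}; applying the array map \eqref{CKe4.11}--\eqref{CKe4.12} there produces an array whose two rows have lengths $\ell-1$ and $\ell+1$ and whose box bounds are shifted, with an accompanying sign $-1$. A violation of the lower inequality $p_{i+1}\le q_i-s$ is handled by the mirror-image operation across $y=x+s$. Reflecting alternately in the two boundaries generates the action of the infinite dihedral group, and summing the signed contributions over a full orbit yields the two families indexed by $k\in\Z$.

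Third, I would match the accumulated data with the claimed formula \eqref{CKe4.17}. Composing the reflection in $y=x+t$ with the reflection in $y=x+s$ is the translation by $(t-s,s-t)$, so each full double reflection shifts the $x$-extent $c-a$ by $t-s$; this is the source of the argument $k(t-s)$, and it replaces the $t-s+2$ of \eqref{CKe1.6} because here one reflects in the boundary lines themselves rather than in the forbidden lines $y=x+t+1$ and $y=x+s-1$ appropriate to path lengths, the unit discrepancies being carried instead by the offsets in $\ell$. The pure-translation part of the orbit contributes $\binom{c-a-k(t-s)}{\ell+k}\binom{d-b+k(t-s)}{\ell-k}$, the offsets $\pm k$ recording the net change of the two row lengths, while the odd part (a translation followed by one more reflection, which also swaps the roles of $a$ and $b$ and of the two boundaries) contributes the subtracted family $\binom{c-b-k(t-s)+s-1}{\ell+k}\binom{d-a+k(t-s)-s+1}{\ell-k}$.

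The \textbf{hard part} will be the bookkeeping of the iteration: one must verify that after each reflection the transformed array is again admissible, that the box bounds and the two row lengths evolve exactly as the shifts $k(t-s)$ and offsets $\pm k$ demand, and that the boundary segments at $(a,b)$ and $(c,d)$ never create spurious or missing terms. As in Theorem~\ref{CKT4.1}, checking that the reflection map and its inverse are well defined — strict monotonicity of both rows being preserved and the shifted bounds respected — requires a short case analysis at the extreme indices, and this, rather than any single algebraic identity, is where the real work lies.
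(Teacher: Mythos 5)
Your proposal is correct and is essentially the paper's own route: the paper gives no detailed proof of Theorem~\ref{CKT4.2}, remarking only that it ``may as well be derived in this way,'' namely by iterating the two-rowed-array reflection of Krattenthaler and Mohanty used to prove Theorem~\ref{CKT4.1}, which is exactly what you propose. Your reduction of the strip condition to the array inequalities $q_i\le p_i+t$ and $p_{i+1}\le q_i-s$ (including the boundary instances with $q_0=b$, $p_{\ell+1}=c$), followed by the signed inclusion--exclusion over the infinite dihedral group of array reflections, is the intended mechanism, and the structure you describe (even elements giving the first family, odd elements the subtracted family with $a$ and $b$ interchanged) matches the formula \eqref{CKe4.17}.
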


Some of the results in Section~\ref{CKsec:rat} allow also for 
refinements taking into account turns.

\begin{theorem} \label{CKT4.4}%
Let $\mu$ be a positive integer and
$c\ge \mu d$. The number of all lattice paths from the origin to
$(c,d)$ which stay weakly below $x=\mu y$ with exactly $\ell$ NE-turns
is given by
\begin{equation} \label{CKe4.31} 
\vv{\LL{(0,0)\to (c,d)\mid x\ge \mu y,\,NE(.)=\ell}}
=\binom c\ell \binom d\ell-\mu\binom {c-1}{\ell-1} \binom
{d+1}{\ell+1},\hskip.5cm
\end{equation}
and with exactly $\ell$ EN-turns is given by
\begin{multline} \label{CKe4.32} 
\vv{\LL{(0,0)\to (c,d)\mid x\ge \mu y,\,EN(.)=\ell}}\\
=\frac {c-\mu d+1} {c+1}\binom {c+1}{\ell} \binom {d-1}{\ell-1}
=\binom {c+1}\ell \binom {d-1}{\ell-1}-\mu\binom {c}{\ell-1} \binom
{d}{\ell}.
\end{multline}
\end{theorem}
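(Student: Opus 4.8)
The plan is to prove the NE-turn formula \eqref{CKe4.31} by the two-rowed array technique of Section~\ref{CKsec:turn}, and then to obtain the EN-turn formula \eqref{CKe4.32} by the analogous argument on the dual representation. First I would pass to the NE-turn representation \eqref{CKe4.2} of a path from $(0,0)$ to $(c,d)$, a two-rowed array $(\mathbf p\mid\mathbf q)$ with $0\le p_1<\dots<p_\ell\le c-1$ and $1\le q_1<\dots<q_\ell\le d$, where $(p_i,q_i)$ is the $i$-th NE-turn. The key geometric observation is that, among all vertices of a monotone path, the binding points for the restriction $x\ge\mu y$ are exactly the tops of the vertical runs, that is, the left ends of the horizontal runs of positive height; these are precisely the NE-turns, together with the end point $(c,d)$ in case the path terminates with a vertical step. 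Since $c\ge\mu d$ is assumed and the origin trivially satisfies $0\ge0$, a path stays weakly below $x=\mu y$ if and only if each of its NE-turns satisfies $p_i\ge\mu q_i$. I would record here the elementary but crucial reformulation $p_i\ge\mu q_i\iff\fl{p_i/\mu}\ge q_i$, valid for integers, which is what lets the slope-$\mu$ problem be compared with the slope-$1$ situation of Theorem~\ref{CKT4.1}.

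Thus \eqref{CKe4.31} amounts to counting the arrays above subject to $p_i\ge\mu q_i$ for all $i$. By \eqref{CKe4.4} with $a=b=0$, the total number of such arrays, ignoring the restriction, is $\binom c\ell\binom d\ell$, the first term on the right-hand side of \eqref{CKe4.31}. It therefore remains to show that the number of \emph{bad} arrays, those with $p_i<\mu q_i$ for at least one $i$, equals $\mu\binom{c-1}{\ell-1}\binom{d+1}{\ell+1}$. The idea, following Theorem~\ref{CKT4.1}, is to locate the largest index $I$ with $p_I<\mu q_I$ and to apply a reflection/swap of the array across this cut. In the slope-$1$ case $(\mu=1)$ this is precisely the map \eqref{CKe4.11}, and \eqref{CKe4.31} reduces to \eqref{CKe4.7} with $a=b=0$. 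For general $\mu$ the extra factor $\mu$ must appear, and the natural source is the residue $\rho:=p_I\bmod\mu\in\{0,1,\dots,\mu-1\}$: since the violation $p_I<\mu q_I$ is equivalent to $\fl{p_I/\mu}<q_I$, it is detected by the quotient alone, leaving $\rho$ as a free label. Geometrically $\rho$ is the horizontal offset $x-\mu y\in\{0,\dots,\mu-1\}$ at the boundary crossing, i.e.\ which of the $\mu$ lattice lines lying strictly between two consecutive lattice points of $x=\mu y$ is the one last crossed. The target should be an array of type \eqref{CKe4.10}, an $(\ell-1)$-element top row in a set of size $c-1$ and an $(\ell+1)$-element bottom row in a set of size $d+1$, tagged by $\rho$, which is enumerated by $\mu\binom{c-1}{\ell-1}\binom{d+1}{\ell+1}$.

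The main obstacle is to turn this idea into an honest bijection, and it is the range bookkeeping that is delicate: the unnormalised entries $p_i$ live in a set of size $c$, while the boundary makes the ``dangerous'' entries comparable to $\mu q_i$, which a~priori occupy a range of size about $\mu d$ rather than $d+1$. One must organise the swap so that, after the residue $\rho$ is peeled off, the two output rows land in sets of sizes exactly $c-1$ and $d+1$, and so that the inverse reconstructs $\rho$, the cut index $I$, and the original array. This is exactly the slope-$\mu$ reflection for two-rowed arrays of Krattenthaler and Mohanty \cite{KrMoC}, and it carries all the real work. As a consistency check that the bookkeeping is right, I would sum \eqref{CKe4.31} over $\ell$: by Vandermonde, $\sum_\ell\binom c\ell\binom d\ell=\binom{c+d}d$ and $\sum_\ell\binom{c-1}{\ell-1}\binom{d+1}{\ell+1}=\binom{c+d}{d-1}$, so the total collapses to $\binom{c+d}d-\mu\binom{c+d}{d-1}=\frac{c-\mu d+1}{c+1}\binom{c+d}d$, in agreement with Theorem~\ref{CKT1.4}.

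Finally, the EN-turn identity \eqref{CKe4.32} I would establish by the same scheme applied to the EN-turn representation \eqref{CKe4.3}; here the binding NE-turns interlace the recorded EN-turns, so the restriction translates into a slightly shifted system of inequalities, and the reflection produces the factor $\mu$ in the same manner. The two expressions on the right-hand side of \eqref{CKe4.32} are then seen to coincide by a single application of Pascal's and Vandermonde's identities, so it suffices to prove either one. Alternatively, one can avoid a separate argument and deduce \eqref{CKe4.32} from \eqref{CKe4.31} through the relations \eqref{CKe4.6a}--\eqref{CKe4.6d} between the refined generating functions, at the cost of some additional computation.
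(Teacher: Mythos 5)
You should first be aware that the paper itself contains no proof of Theorem~\ref{CKT4.4} to compare against: it only records that two-rowed arrays may be used to prove the result (citing \cite{KratBL}), and that an elegant alternative proof by a rotation operation is in \cite{GoSeAA}. Your proposal follows the first of these routes, and everything in it up to the decisive step is correct: a path from the origin with $c\ge\mu d$ stays weakly below $x=\mu y$ if and only if every NE-turn satisfies $p_i\ge\mu q_i$; the unrestricted count is $\binom c\ell\binom d\ell$ by \eqref{CKe4.4}; hence \eqref{CKe4.31} is equivalent to the claim that the violating (``bad'') arrays number $\mu\binom{c-1}{\ell-1}\binom{d+1}{\ell+1}$; your check that summing \eqref{CKe4.31} over $\ell$ reproduces Theorem~\ref{CKT1.4} is right; and \eqref{CKe4.32} can indeed be deduced from \eqref{CKe4.31} via \eqref{CKe4.6a}--\eqref{CKe4.6d}, because for $\mu\ge1$ every admissible path begins with a horizontal step, so the ``$v{*}$'' generating functions vanish and the shifted starting point $(1,0)$ reduces to the origin by prepending a horizontal step.

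The genuine gap is the bad-array claim itself, which is where the entire content of the theorem lies. You reduce the theorem to this claim and then invoke ``the slope-$\mu$ reflection for two-rowed arrays of Krattenthaler and Mohanty'', conceding yourself that this citation ``carries all the real work''; since that cited statement \emph{is} the complementary form of the theorem being proved --- and since the Krattenthaler--Mohanty reference \cite{KrMoAC} actually treats the slope-$1$ case of Theorem~\ref{CKT4.1}, the slope-$\mu$ turn result being precisely what \cite{KratBL} and \cite{GoSeAA} establish --- this is circular for a blind proof. Moreover, the mechanism you sketch (cut at the largest violating index $I$, peel off $\rho=p_I\bmod\mu$, swap as in \eqref{CKe4.11}) fails as stated. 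Take $\mu=2$, $(c,d)=(4,2)$, $\ell=2$ and the bad array $\mathbf p=(0,1)$, $\mathbf q=(1,2)$: then $I=2$, $\rho=1$, $\fl{p_2/\mu}=0$, and the would-be bottom row $\big(p_1,\fl{p_2/\mu},q_2\big)=(0,0,2)$ is not strictly increasing, so the map is not even well defined. Worse, the untouched entries $p_1,\dots,p_{I-1}$ are bounded only by $p_I\le\mu q_I-1$, so they may exceed $d$ (e.g.\ $\mu=2$, $(c,d)=(6,3)$, $\mathbf p=(2,4,5)$, $\mathbf q=(1,2,3)$ has $I=3$ and $p_2=4>d$) and therefore cannot sit in a row bounded by $d$ at all. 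So the factor $\mu$ cannot be extracted by a local residue-peeling at the single index $I$: any correct bijection must reorganize the whole initial portion of the array (or of the path, as the rotation proof of \cite{GoSeAA} does), and this construction --- the actual substance of the theorem --- is missing from your proposal.
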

Two-rowed arrays may also be used to prove this result,
see \cite{KratBL}. A very elegant alternative proof using a
rotation operation on paths is given by
\index{Goulden, Ian P.}Goulden and 
\index{Serrano, Luis Guillermo}Serrano \cite{GoSeAA}.

\medskip
We conclude this section by stating results on the enumeration of
families of 
\index{non-intersecting lattice paths}%
\index{paths, non-intersecting}{\it non-intersecting} lattice paths 
with respect to turns. This type of problem
originally arose from the
study of the Hilbert polynomial of certain determinantal and Pfaffian
rings (cf.\ \cite{KrPrAA,KulkAC}). The results are due to 
\index{Krattenthaler, Christian}Krattenthaler \cite{KratBE}.
We do not provide proofs. Suffice it to mention that they work by
using two-rowed arrays.

Let $\A=(A_1,A_2,\dots,A_n)$ and
$\E=(E_1,E_2,\dots,E_n)$ be points in $\Z^2$. How many families
$\P=(P_1,P_2,\dots,P_n)$ of non-intersecting lattice paths, where $P_i$
runs from $A_i$ to $E_i$, $i=1,2,\dots,n$, are there such that the
total number of NE-turns in $\P$ is some fixed number, $\ell$ say? 

We give the following three theorems about
the counting of non-intersecting lattice paths with a given number of
turns. The first theorem concerns counting families of non-intersecting
lattice paths with given starting and end points with a given number
of NE-turns.
\begin{theorem} \label{CKT4.10}%
Let 
$A_i=(a_1^{(i)},a_2^{(i)})$ and\/ $E_i=(e_1^{(i)},e_2^{(i)})$ be lattice points satisfying
$$a_1^{(1)}\le a_1^{(2)}\le\dotsb\le a_1^{(n)},\quad
a_2^{(1)}>a_2^{(2)}>\dotsb>a_2^{(n)},$$
and
$$e_1^{(1)}< e_1^{(2)}<\dotsb< e_1^{(n)},\quad
e_2^{(1)}\ge e_2^{(2)}\ge \dotsb\ge e_2^{(n)}.$$
The number of all families $\P=(P_1,P_2,
\dots,P_n)$ of non-intersecting lattice paths $P_i:A_i\to E_i$,
such that the paths of\/ $\P$ altogether contain exactly $\ell$ NE-turns, is
\begin{equation}\label{CKe4.51}
\sum _{\ell_1+\dotsb+\ell_n=\ell} ^{}\det_{1\le i,j\le n}
\bigg(\binom{e_1^{(j)}-a_1^{(i)}+i-j}{\ell_i+i-j}
\binom{e_2^{(j)}-a_2^{(i)}-i+j}{\ell_i}\bigg).
\end{equation}
\end{theorem}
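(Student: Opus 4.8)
The plan is to prove \eqref{CKe4.51} by the two-rowed array encoding of turns, running a Lindstr\"om--Gessel--Viennot style cancellation not on the paths themselves but on their NE-turn representations. The starting observation is that, for a single path, the proof of \eqref{CKe4.4} splits the datum of the NE-turns into two \emph{independent} strictly increasing sequences — the first row $a_1^{(i)}\le p^{(i)}_1<\dots<p^{(i)}_{\ell_i}\le e_1^{(i)}-1$ and the second row $a_2^{(i)}+1\le q^{(i)}_1<\dots<q^{(i)}_{\ell_i}\le e_2^{(i)}$ of the array in \eqref{CKe4.2}. For a family $\P$, non-intersection couples these sequences, and the idea is to disentangle the coupling by a sign-reversing involution acting on the \emph{entries} of the arrays. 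The crucial advantage over the path-level switching of Theorem~\ref{CKT2.3} (which, as the text notes, mangles turns) is that switching tails of two-rowed arrays redistributes the turn count between two paths but conserves the \emph{total} number of turns; this is exactly why the asserted formula sums over all compositions $\ell_1+\dots+\ell_n=\ell$ rather than fixing the individual $\ell_i$.

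Concretely, I would expand the determinant in \eqref{CKe4.51} over $\mathfrak S_n$ and read each monomial combinatorially. Using the independent-subset interpretation of \eqref{CKe4.4}, together with the standard device of replacing a strictly increasing index $\pi_k$ by $\pi_k-k$ to linearise the inequalities, the factor $\binom{e_1^{(\si(i))}-a_1^{(i)}+i-\si(i)}{\ell_i+i-\si(i)}\binom{e_2^{(\si(i))}-a_2^{(i)}-i+\si(i)}{\ell_i}$ counts a ``generalised array'' joining $A_i$ to $E_{\si(i)}$, whose first row has length $\ell_i+i-\si(i)$ and whose second row has length $\ell_i$. The opposite signs of the shifts $\pm(i-\si(i))$ in the two rows reflect the opposite monotonicities imposed on $\A$ and $\E$ (first coordinates increasing, second coordinates decreasing). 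The key conservation law is then immediate: summed over $i$, the first rows carry $\sum_i(\ell_i+i-\si(i))=\ell$ entries and the second rows carry $\sum_i\ell_i=\ell$ entries, for \emph{every} $\si$ and every composition. This common invariant $\ell$ is what the involution will preserve.

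The involution $\ph$ I would model on the reflection used in the proof of Theorem~\ref{CKT4.1}: on any ``bad'' configuration — one in which $\si\ne\mathrm{id}$, or in which some generalised array fails to be an honest equal-length NE-turn array of a non-intersecting family — locate the lexicographically first offending pair of arrays and the first position at which a strict-increase or boundary constraint is violated, then interchange the corresponding tails of the two arrays. This flips $\si$ by a transposition, hence reverses $\sgn\si$, while merely moving entries between two rows of the same type, and so preserves both row-totals and thus the exponent $\ell$. The fixed points are precisely the configurations with $\si=\mathrm{id}$ and all rows of equal length $\ell_i$, which by the encoding are exactly the non-intersecting families $\P$ with total NE-turn number $\ell$, each counted once. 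Summing the cancelled expansion over all compositions then yields \eqref{CKe4.51}.

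The hard part will be the admissibility bookkeeping for $\ph$: one must check, in each case of the switch, that both resulting rows remain strictly increasing and stay within their prescribed intervals, and that $\ph$ is genuinely an involution whose only fixed points are the desired non-intersecting families — with no spurious fixed points and no double counting. This is precisely the boundary-and-monotonicity case analysis that the hypotheses $a_1^{(1)}\le\dots\le a_1^{(n)}$, $a_2^{(1)}>\dots>a_2^{(n)}$, $e_1^{(1)}<\dots<e_1^{(n)}$, $e_2^{(1)}\ge\dots\ge e_2^{(n)}$ are designed to make work, exactly as the inequalities in Theorem~\ref{CKT4.1} guaranteed that the maps \eqref{CKe4.11} and \eqref{CKe4.12} stayed admissible; carrying it out uniformly across the choices of leading and trailing step type is the main technical burden.
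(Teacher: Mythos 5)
You have chosen the route that the paper itself points to: Theorem~\ref{CKT4.10} is stated in the paper \emph{without proof}, with only the remark that the results are due to Krattenthaler \cite{KratBE} and that the proofs ``work by using two-rowed arrays.'' So there is no detailed argument in the paper to compare against, but your plan is the indicated one, and your reading of the determinant expansion is sound: the $(i,\si(i))$-entry does count pairs of strictly increasing sequences of lengths $\ell_i+i-\si(i)$ and $\ell_i$ in suitably shifted intervals, and the conservation law $\sum_i\big(\ell_i+i-\si(i)\big)=\sum_i\ell_i=\ell$ is exactly why the statement sums over compositions of $\ell$ rather than fixing the individual $\ell_i$.

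The genuine gap is that the entire technical content of the proof --- the involution --- is postulated rather than constructed, and the two ingredients it needs are precisely the nontrivial ones. First, you need an array-level characterization of non-intersection: a lemma stating which inequalities between the entries of the turn representations of $P_i$ and $P_{i+1}$ (under the stated orderings of the $A_i$ and $E_i$) are equivalent to the family being non-intersecting. Without it you cannot define ``offending pair,'' and your claim that the fixed points ``are exactly the non-intersecting families'' is unsupported. Second, the swap itself must be verified: interchanging tails of two unequal-length arrays has to land again in a term of the expansion, i.e.\ the new row lengths must be of the form $\ell'_k+k-\si'(k)$ and $\ell'_k$ for $\si'=\si\circ(i\,j)$ and some new composition of $\ell$; both new rows must remain strictly increasing and inside their (different) bounding intervals; and the map must be involutive with no spurious fixed points. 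Your prescription --- lexicographically \emph{first} offending pair, \emph{first} violated position --- is not checked to do any of this; note that every cancellation argument in the paper (the right-most meeting point in Theorem~\ref{CKT1.1}, the last common point of the lexicographically largest pair in Theorem~\ref{CKT2.3}, the largest violating index in the maps \eqref{CKe4.11} and \eqref{CKe4.12}) pivots on a \emph{last} violation, because that choice is what stays invariant under the switch and hence makes the map an involution. Finally, the model you invoke, the proof of Theorem~\ref{CKT4.1}, is not a sign-reversing involution at all: it is a bijection from violating arrays onto arrays of a \emph{different shape}, used for inclusion--exclusion within a single path. Promoting it to a sign-reversing involution acting across pairs of arrays of a family, compatibly with the permutation sign and the composition bookkeeping, is exactly the case analysis that constitutes the actual proof in \cite{KratBE}; as written, your text is a correct plan together with an acknowledgment that this part remains to be done.
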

The second theorem concerns counting families of non-intersecting
lattice paths staying weakly below $x=y$, with given starting and end points,
by their number of NE-turns.
\begin{theorem}\label{CKT4.11}%
Let $A_i=(a_1^{(i)},a_2^{(i)})$ and
$E_i=(e_1^{(i)},e_2^{(i)})$ be lattice points satisfying
$$a_1^{(1)}\le a_1^{(2)}\le\dotsb\le a_1^{(n)},\quad
a_2^{(1)}>a_2^{(2)}>\dotsb>a_2^{(n)},$$
$$
e_1^{(1)}< e_1^{(2)}<\dotsb< e_1^{(n)},\quad
e_2^{(1)}\ge e_2^{(2)}\ge \dotsb\ge e_2^{(n)},$$
and
$a_1^{(i)}\ge a_2^{(i)},\quad e_1^{(i)}\ge e_2^{(i)},\quad
i=1,2,\dots,n.$
The number of all families $\P=(P_1,P_2,\break\dots,P_n)$ of non-intersecting
lattice paths $P_i:A_i\to E_i$, which stay weakly below the line $x=y$, and
where the paths of\/ $\P$ altogether
contain exactly $\ell$ NE-turns, is
\begin{multline}\label{CKe4.52}
\sum _{\ell_1+\dotsb+\ell_n=\ell} ^{}\det_{1\le i,j\le n}
\bigg(\binom{e_1^{(j)}-a_1^{(i)}+i-j}{\ell_i+i-j}
\binom{e_2^{(j)}-a_2^{(i)}-i+j}{\ell_i}\\
-\binom{e_1^{(j)}-a_2^{(i)}-i-j+1}{\ell_i-j}
\binom{e_2^{(j)}-a_1^{(i)}+i+j-1}{\ell_i+i}\bigg).
\end{multline}
\end{theorem}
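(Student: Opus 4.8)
The plan is to carry the two-rowed-array technique of Theorem~\ref{CKT4.1} over to families of paths, using the determinantal (Lindström--Gessel--Viennot) machinery of Theorem~\ref{CKT2.3} to dispose of the non-intersection condition. First I would encode each path $P_i$ by its NE-turn representation \eqref{CKe4.2}, so that a family $\P=(P_1,\dots,P_n)$ in which $P_i$ has exactly $\ell_i$ NE-turns becomes an $n$-tuple of two-rowed arrays, the $i$-th having $\ell_i$ columns with both rows strictly increasing and confined to the ranges dictated by $A_i$ and $E_i$. Extracting the coefficient of $x^\ell$ in the generating function $\sum_\P x^{NE(\P)}$ then amounts to summing, over all compositions $\ell_1+\dots+\ell_n=\ell$, the number of such array-tuples that are simultaneously non-intersecting and confined weakly below $x=y$, i.e.\ satisfy $p_k\ge q_k$ columnwise as in \eqref{CKe4.9b}.

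For fixed turn numbers $(\ell_1,\dots,\ell_n)$ I would handle the non-intersection as in Theorem~\ref{CKT2.3}: under the hypotheses on $\A$ and $\E$ the only permutation admitting a non-intersecting family is the identity, so the honest count equals a signed sum over $\mathfrak S_n$ that telescopes into a determinant. The essential point --- and the reason one cannot simply feed $x^{NE}$ into Theorem~\ref{CKT2.3} as an edge weight --- is that the number of NE-turns is \emph{not} multiplicative over edges: the geometric tail-swap at a last intersection point $M$ changes the turn count by $([\alpha_i{=}N]-[\alpha_j{=}N])([\beta_i{=}E]-[\beta_j{=}E])$, where $\alpha,\beta$ are the steps into and out of $M$, and so does not preserve the individual $\ell_i$. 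The remedy, which is exactly the content of the no-boundary result Theorem~\ref{CKT4.10}, is to run the sign-reversing involution on the two-rowed arrays rather than on the trails; because the switch then acts on arrays with prescribed column numbers, each $\ell_i$ is preserved, at the cost of the index shifts $i-j$ visible in the entries of \eqref{CKe4.51}. I would take Theorem~\ref{CKT4.10} as the base case, supplying the first product $\binom{e_1^{(j)}-a_1^{(i)}+i-j}{\ell_i+i-j}\binom{e_2^{(j)}-a_2^{(i)}-i+j}{\ell_i}$ of each entry.

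It then remains to impose $x\ge y$. Since this is a constraint on individual paths and the half-plane below $x=y$ is convex, I would rerun the array-level involution of the previous step with the extra requirement $p_k\ge q_k$ enforced throughout, and inside each single-path entry apply the reflection bijection of Theorem~\ref{CKT4.1}, which maps the arrays violating $p_k\ge q_k$ to the reflected arrays of type \eqref{CKe4.10}. For a single path this already turns the plain turn count \eqref{CKe4.4} into the difference \eqref{CKe4.7}, and one expects the same reflection, performed on the shifted arrays coming from Theorem~\ref{CKT4.10}, to convert each determinant entry into a difference of two binomial products, producing the subtracted term of \eqref{CKe4.52}.

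The main obstacle is precisely the bookkeeping where the two involutions meet: one must build a \emph{single} map on array-tuples that is at once sign-reversing (cancelling intersecting families), turn-preserving (fixing every $\ell_i$), and compatible with reflecting across $x=y$ (so that reflection does not reintroduce intersections already cancelled, nor destroy the $p_k\ge q_k$ property of the surviving configurations). That the reflection genuinely reparametrizes the effective index, rather than acting as a naive term-by-term reflection of the first product, is already visible in the shifts: the difference of the two bottom parameters changes from $i-j$ in the first product to $-(i+j)$ in the second, and the upper parameters $e_1^{(j)}-a_2^{(i)}-i-j+1$ and $e_2^{(j)}-a_1^{(i)}+i+j-1$ are not the reflections of $e_1^{(j)}-a_1^{(i)}+i-j$ and $e_2^{(j)}-a_2^{(i)}-i+j$ under $a_1\leftrightarrow a_2$. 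Pinning down these shifted ranges exactly, and proving the combined involution well defined, is the delicate heart of the argument, and is where the careful two-rowed-array analysis of \cite{KratBE} is required.
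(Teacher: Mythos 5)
First, a point of reference: the paper itself offers no proof of this theorem. It states that the results of this section are due to \cite{KratBE} and says only that ``they work by using two-rowed arrays.'' So there is no argument in the paper to compare yours against step by step; what can be said is that your outline reconstructs exactly the strategy the paper attributes to that reference. Your central observation --- that Theorem~\ref{CKT2.3} cannot be invoked directly because the turn statistic is not multiplicative over edges, so the path-switching involution must instead be performed on the two-rowed arrays, where it can be made to respect column counts --- is correct, and your reading of the index shifts in \eqref{CKe4.51} and \eqref{CKe4.52} as the traces of that transplanted involution and of its interaction with the reflection of Theorem~\ref{CKT4.1} is also sound.

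Nevertheless, as a proof the proposal has a genuine gap, which you acknowledge explicitly: the single map on $n$-tuples of two-rowed arrays that is simultaneously sign-reversing (to cancel intersecting families), compatible with the prescribed turn data (so that the determinant entries with their shifted parameters $\ell_i+i-j$, $\ell_i-j$, $\ell_i+i$ come out right), and consistent with the reflection disposing of columns violating $p_k\ge q_k$, is never constructed; you defer it to ``the careful two-rowed-array analysis of \cite{KratBE}.'' That construction, together with the verification that its fixed points are precisely the non-intersecting families weakly below $x=y$ and that the cancelled configurations account exactly for the subtracted products in \eqref{CKe4.52}, is not a technicality surrounding the proof --- it \emph{is} the proof. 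Note also that Theorem~\ref{CKT4.10}, which you propose as the base case, is likewise stated in the paper without proof and rests on the same missing construction, so invoking it does not reduce the outstanding burden. What you have is a correct and well-motivated plan, with an accurate identification of where the difficulty lies, but the difficulty itself is left unresolved.
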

In the third theorem (basically) the same families of non-intersecting
lattice paths as before are counted, but with respect to their number
of EN-turns. By a rotation by $180^\circ$ this could be translated
into a result about counting families of non-intersecting lattice paths
staying {\em above\/} $x=y$, with given starting and end points, with
respect to {\em NE-turns\/}.
\begin{theorem}\label{CKT4.12}%
Let $A_i=(a_1^{(i)},a_2^{(i)})$ and 
$E_i=(e_1^{(i)},e_2^{(i)})$ be lattice points satisfying 
$$a_1^{(1)}< a_1^{(2)}<\dotsb< a_1^{(n)},\quad
a_2^{(1)}\ge a_2^{(2)}\ge \dotsb\ge a_2^{(n)},$$
$$
e_1^{(1)}\le  e_1^{(2)}\le \dotsb\le  e_1^{(n)},\quad
e_2^{(1)}> e_2^{(2)}> \dotsb> e_2^{(n)},$$
and
$a_1^{(i)}\ge  a_2^{(i)},\quad e_1^{(i)}\ge  e_2^{(i)},\quad
i=1,2,\dots,n.$
The number of all families $\P=(P_1,P_2,\break\dots,P_n)$ of non-intersecting
lattice paths $P_i:A_i\to E_i$, which stay weakly below
the line $x=y$, and where the paths of\/
$\P$ altogether
contain exactly $\ell$ EN-turns, is
\begin{multline}\label{CKe4.53}
\sum _{\ell_1+\dotsb+\ell_n=\ell} ^{}\det_{1\le  i,j\le  n}
\bigg(\binom{e_1^{(j)}-a_1^{(i)}+i-j}{\ell_i+i-j}
\binom{e_2^{(j)}-a_2^{(i)}-i+j}{\ell_i}\\
-\binom{e_1^{(j)}-a_2^{(i)}-i-j+3}{\ell_i-j+1}
\binom{e_2^{(j)}-a_1^{(i)}+i+j-3}{\ell_i+i-1}\bigg).
\end{multline}
\end{theorem}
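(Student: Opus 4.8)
The plan is to run the same two-rowed-array version of the Lindström--Gessel--Viennot construction that underlies Theorem~\ref{CKT4.11}, replacing the single-path NE-turn count \eqref{CKe4.7} by its EN-turn analogue \eqref{CKe4.8} everywhere the boundary $x=y$ enters. First I would encode each path $P_i$ by its EN-turn representation \eqref{CKe4.3}, a two-rowed array whose length equals $EN(P_i)$. Fixing a composition $\ell_1+\dots+\ell_n=\ell$ and requiring $EN(P_i)=\ell_i$, the total EN-turn count is automatically $\ell$; summing over all such compositions at the very end produces the outer sum in \eqref{CKe4.53}. Under this encoding a family of paths becomes a family of $n$ two-rowed arrays, and the number of columns is exactly the statistic being tracked.

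Next I would verify, from the monotonicity hypotheses $a_1^{(1)}<\dots<a_1^{(n)}$, $a_2^{(1)}\ge\dots\ge a_2^{(n)}$ (and the analogous conditions on the $E_i$), that a non-intersecting family can only realize the identity permutation of starting and end points. This is precisely the hypothesis of Corollary~\ref{CKT2.2a}, so the generating function for non-intersecting families with prescribed column-lengths $(\ell_1,\dots,\ell_n)$ equals a single $n\times n$ determinant whose $(i,j)$ entry is the EN-turn generating function for one path from a shifted copy of $A_i$ to a shifted copy of $E_j$ that stays weakly below $x=y$ and has exactly $\ell_i$ EN-turns. The unrestricted part of this entry is supplied by \eqref{CKe4.4}, and the correction coming from the line $x=y$ is supplied by the reflection on two-rowed arrays from the proof of Theorem~\ref{CKT4.1}, specialized to EN-turns, i.e.\ by \eqref{CKe4.8}. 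Carrying out the standard Lindström index shift (offsetting the data of the $i$-th row and the $j$-th column) turns the two products $\binom{c-a}{\ell}\binom{d-b}{\ell}$ and $\binom{c-b+1}{\ell}\binom{d-a-1}{\ell}$ of \eqref{CKe4.8} into the two products in \eqref{CKe4.53}; in particular the exact arguments $\ell_i-j+1$, $\ell_i+i-1$ and the constants $-i-j+3$, $i+j-3$ are what the shift produces.

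The main obstacle is the step where the turn statistic must be reconciled with the sign-reversing involution of Theorem~\ref{CKT2.3}. That involution exchanges the initial portions of two paths at their last common vertex $M$, and such a swap can create or destroy a turn precisely at $M$, so the total number of EN-turns is \emph{not} visibly preserved by the path-level involution. The resolution, following Krattenthaler~\cite{KratBE}, is to perform the involution at the level of the two-rowed arrays rather than of the paths: the array encoding records each turn as a column, and the array-column exchange that models an intersection preserves the total number of columns while reversing the sign. Checking that this array-level involution is well defined, is genuinely sign-reversing, and corresponds correctly to intersection of the underlying paths is the technical heart of the argument.

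A useful consistency check, which I would carry out first, is that the difference between the reflected products in \eqref{CKe4.52} and in \eqref{CKe4.53} matches verbatim the difference between the reflected products in \eqref{CKe4.7} and in \eqref{CKe4.8}: in both cases the first factor gains $2$ in its top and $1$ in its bottom, while the second factor loses $2$ in its top and $1$ in its bottom. This confirms that the passage from NE- to EN-turns enters only through the single-path boundary term, so that once the array-level involution is in place \eqref{CKe4.53} follows exactly as \eqref{CKe4.52} does, the sole change being the systematic use of the EN-turn boundary formula \eqref{CKe4.8} in place of the NE-turn formula \eqref{CKe4.7}. I note that one cannot shortcut the argument by a symmetry reducing this to Theorem~\ref{CKT4.11}: reflection across $x=y$ swaps NE- and EN-turns but sends the region $x\ge y$ to $y\ge x$, so it yields an ``above'' statement rather than the ``below'' statement wanted here, which is why the EN case genuinely requires rerunning the construction.
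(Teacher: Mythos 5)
Your proposal is correct and follows exactly the route the paper indicates: the paper itself omits the proof of Theorem~\ref{CKT4.12}, stating only that the results are due to Krattenthaler \cite{KratBE} and ``work by using two-rowed arrays,'' which is precisely what you reconstruct --- encoding via the EN-turn representation \eqref{CKe4.3}, the boundary correction \eqref{CKe4.8}, and, crucially, transferring the sign-reversing involution from paths to two-rowed arrays so that the turn statistic (column count) is preserved. Your consistency check relating the reflected terms of \eqref{CKe4.52}/\eqref{CKe4.53} to those of \eqref{CKe4.7}/\eqref{CKe4.8}, and your closing remark on why a symmetry reduction to Theorem~\ref{CKT4.11} fails, are both accurate.
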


\section{Multidimensional lattice paths}\label{CKsec:multi}

This section and the following three contain 
enumeration results for lattice paths in spaces
of higher dimension. Most of the time, we shall be concerned with
the $d$-dimensional lattice $\Z^d$. 
The coordinates in $d$-dimensional space will be
denoted by $x_1,x_2,\dots,x_d$.

Obviously, as a basis to start with, we need the number of all simple
paths in $\Z^d$
(that is, paths consisting of positive unit steps in the direction
of some coordinate axis) 
from $(a_1,a_2,\dots,a_d)$ to $(e_1,e_2,\dots,e_d)$.
Since these lattice paths can be seen as \index{permutation}permutations 
of $e_1-a_1$
steps in $x_1$-direction, $e_2-a_2$ steps in $x_2$-direction, \dots,
$e_d-a_d$ steps in $x_d$-direction, the answer is a multinomial
coefficient,
\begin{equation} \label{CKe7.1} 
\vv{\LL{(a_1,\dots,a_d)\to (e_1,\dots,e_d)}}=\binom {\sum _{i=1}
^{d}(e_i-a_i)} {e_1-a_1,e_2-a_2,\dots,e_d-a_d}.
\end{equation}

\section{Multidimensional lattice paths bounded by a hyperplane}
\label{CKs7.3}

Here, we consider simple lattice paths in $\Z^{d+1}$
restricted by a hyperplane of the form $x_0=\sum _{i=1} ^{d}\mu_ix_i$
where the $\mu_i$'s, $i=0,1,\dots,d$, are non-negative integers. It should be
noted that the reflection principle does not apply because, in general,
the set of steps is not invariant under reflection
with respect to such a hyperplane (except of course when $\mu_i=1$
for all $i$, in which case the reflection principle does apply).

\begin{theorem} \label{CKT7.4}
Let $\mu_0,\mu_1,\dots,\mu_d$ be non-negative integers and
$c_0,c_1,\dots,c_d$ integers such that $c_0\ge \sum _{i=1}
^{d}\mu_ic_i$. The number of all lattice paths from the origin to
$(c_0,c_1,\dots,c_d)$ not crossing the hyperplane $x_0= \sum _{i=1}
^{d}\mu_ix_i$ is given by
\begin{equation} \label{CKe7.20} 
\vv{\LL{\boldsymbol0\to (c_0,c_1,\dots,c_d)\mid x_0\ge\sum _{i=1}
^{d}\mu_ix_i}} =\frac {c_0-\sum _{i=1} ^{d}\mu_ic_i+1} 
{1+\sum _{i=0} ^{d}c_i}\binom {1+\sum _{i=0}
^{d}c_i} {c_0+1,c_1,c_2,\dots,c_d}.
\end{equation}
\end{theorem}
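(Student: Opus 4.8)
The plan is to reduce this to a higher-dimensional version of the cycle lemma, exactly mirroring the first proof of Theorem~\ref{CKT1.4}. The key device is to read off a single integer ``charge'' from each step via the linear functional $f(x_0,x_1,\dots,x_d)=x_0-\sum_{i=1}^d\mu_ix_i$: a step in the $x_0$-direction carries charge $+1$, while a step in the $x_i$-direction ($i\ge1$) carries charge $-\mu_i$. The restriction $x_0\ge\sum_{i=1}^d\mu_ix_i$ is then precisely the requirement that $f$ stay weakly non-negative along the path, i.e.\ that every partial sum of charges be $\ge0$.

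First I would set up the usual ``prepend a step'' bijection. Given a path $P$ from $\mathbf 0$ to $(c_0,c_1,\dots,c_d)$ with $f\ge0$ throughout, prepend one $x_0$-step to obtain a path $P'$ from $\mathbf 0$ to $(c_0+1,c_1,\dots,c_d)$; since $f$ jumps to $1$ after the first step and the remainder is $P$ shifted so that $f$ increases everywhere by $1$, the path $P'$ satisfies $f>0$ after every one of its steps. Conversely, any path to $(c_0+1,c_1,\dots,c_d)$ with all partial charge-sums strictly positive must open with an $x_0$-step (any other opening step would drop $f$ to $-\mu_i\le0$), and deleting it recovers $P$. Thus the number we want equals the number of arrangements of $c_0+1$ steps of type $x_0$ and $c_i$ steps of type $x_i$ ($1\le i\le d$) all of whose partial charge-sums are strictly positive.

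The counting is then finished by the cycle lemma. Writing $N=1+\sum_{i=0}^dc_i$ for the total number of steps and $k=c_0-\sum_{i=1}^d\mu_ic_i+1\ge1$ for the total charge, I would invoke the Dvoretzky--Motzkin form of the cycle lemma (the common generalization of Lemma~\ref{CKT1.5}, valid for any sequence of integers each $\le1$): among the $N$ cyclic shifts of any such arrangement, exactly $k$ have all partial sums strictly positive. Summing this over all $\binom{N}{c_0+1,c_1,\dots,c_d}$ arrangements, and noting that each strictly-positive arrangement arises from exactly $N$ of the resulting shift-pairs, yields
\begin{equation*}
\frac{k}{N}\binom{N}{c_0+1,c_1,\dots,c_d}
=\frac{c_0-\sum_{i=1}^d\mu_ic_i+1}{1+\sum_{i=0}^dc_i}\binom{1+\sum_{i=0}^dc_i}{c_0+1,c_1,\dots,c_d},
\end{equation*}
which is \eqref{CKe7.20}.

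The main obstacle is that Lemma~\ref{CKT1.5} as stated only handles two step types (charges $+1$ and $-\mu$), whereas here the steps $x_1,\dots,x_d$ carry genuinely different charges $-\mu_1,\dots,-\mu_d$. So the substantive point is to have available, or to prove, the cycle lemma in the generality of arbitrary integer charges $\le1$; once that is in hand, the bijection and the averaging argument above are routine. One should also double-check the degenerate cases where some $\mu_i=0$ (an $x_i$-step then carries charge $0$, which is still $\le1$ and still fails the strict positivity required to begin a good path, so the bijection is unaffected), and confirm that the hypothesis $c_0\ge\sum_{i=1}^d\mu_ic_i$ is exactly what guarantees $k\ge1$, so that the cycle lemma applies.
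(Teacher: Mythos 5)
Your proposal is correct and is exactly the route the paper has in mind: for Theorem~\ref{CKT7.4} the paper omits details and simply remarks that both proofs of Theorem~\ref{CKT1.4} extend, and your argument is precisely the extension of the first (cycle-lemma) proof --- prepend an $x_0$-step to pass from weak to strict positivity, then average over cyclic shifts. You also correctly isolate the one ingredient that must be upgraded, namely the Dvoretzky--Motzkin cycle lemma for arbitrary integer charges $\le 1$ (of which Lemma~\ref{CKT1.5} is the two-charge special case), and your double-counting of (arrangement, shift)-pairs handles possible cyclic symmetries properly.
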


We omit the proof. Both proofs of Theorem~\ref{CKT1.4},
the generating function proof and
the proof by use of the cycle lemma, can be extended to proofs
of the above theorem.

\medskip
To conclude this section, we point out that Sato \cite{SatoAB} has extended
his generating function results for the number of paths in the plane
integer lattice between
two parallel lines that we presented in Section~\ref{CKs1.4} to the
multidimensional case. Similarly, the result of Niederhausen 
on the enumeration of paths in the plane integer lattice subject to
a piece-wise linear boundary, which was presented in Section~\ref{CKs1.3},
has a multidimensional extension, see \cite[Sec.~2.2]{NiedAC}.

\section{Multidimensional paths with a general boundary} \label{CKs7.5}

In this section we generalize the enumeration problem of
Section~\ref{CKs1.6} to arbitrary dimensions. Let $n_1,n_2,\dots,n_d$
be non-negative integers and $\mathbf a$ and
$\mathbf b$ be increasing 
integer functions defined on the box 
$$[\boldsymbol0,\mathbf n]:=\prod _{i=1}
^{d}\{0,1,\dots,n_i\}$$ 
such that $\mathbf a\ge \mathbf b$. $\mathbf a$
is increasing
means that $\mathbf a(\mathbf i)\le \mathbf a(\mathbf j)$ whenever $\mathbf i\le
\mathbf j$ in the usual product order. We ask for the number of all paths
in $\Z^{d+1}$ from $(\boldsymbol 0,\mathbf b(\boldsymbol 0))$ to
$(\mathbf n,\mathbf a(\mathbf n))$ that always stay in the region ``that is
bounded by $\mathbf a$ and $\mathbf b$", by which we mean the region
\begin{equation} \label{CKe7.40} 
\{(\mathbf i,y):\mathbf b(\mathbf i)\le y\le\mathbf a(\mathbf i)\}.
\end{equation}
The generalization of Theorem~\ref{CKT1.15}, due to
\index{Handa, B. R.}Handa and
\index{Mohanty, Sri Gopal}Mohanty \cite{HaMoAB}, reads as follows.

\begin{theorem} \label{CKT7.10}
Let $n_1,n_2,\dots,n_d$ be non-negative integers and $p=\prod _{i=1} ^{d}n_i$.
Assume that the points in the box $[\boldsymbol0,\mathbf n]$ are
$\boldsymbol0=\mathbf u_0,\mathbf u_1,\mathbf u_2,\dots,\mathbf u_p=\mathbf n$,
ordered lexicographically. Then the number of all lattice paths 
in $\Z^{d+1}$ from $(\boldsymbol 0,\mathbf b(\boldsymbol 0))$ to
$(\mathbf n,\mathbf a(\mathbf n))$ that always stay in the region
\eqref{CKe7.40} equals
\begin{multline} \label{CKe7.41} 
\vv{\LL{(\boldsymbol 0,\mathbf b(\boldsymbol 0))\to
(\mathbf n,\mathbf a(\mathbf n))\mid \mathbf a\ge \mathbf y\ge \mathbf b}}\\
=(-1)^{\sum _{i=1} ^{d}n_i+\prod _{i=1} ^{d}n_i}
\det_{0\le i,j\le \sum _{i=1} ^{d}n_i-1}\(\binom {\mathbf a(\mathbf u_{i})-
\mathbf b(\mathbf u_{j+1})+1} {\mathbf u_{j+1}-\mathbf u_{i}}\).
\end{multline}
\end{theorem}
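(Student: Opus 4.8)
The plan is to lift the planar case, Theorem~\ref{CKT1.15}, to arbitrary dimension by the method of non-intersecting lattice paths, following \cite{SulaAC}. The one genuinely new feature in dimension $d\ge2$ is that the starting and end points cannot be matched up by the identity permutation, so one must invoke the full signed form of Lindström's theorem, Theorem~\ref{CKT2.3}, rather than its specialization Corollary~\ref{CKT2.2a} that was used (via Theorem~\ref{CKT2.4}) for $d=1$. Writing $N=\sum_{i=1}^d n_i$, the determinant in \eqref{CKe7.41} is $N\times N$, and $N$ is exactly the number of base steps (steps in the directions $x_1,\dots,x_d$) of a path from $(\boldsymbol0,\mathbf b(\boldsymbol0))$ to $(\mathbf n,\mathbf a(\mathbf n))$, so $N$ will be the number of paths in the non-intersecting family.

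First I would encode a $(d+1)$-dimensional path in the region \eqref{CKe7.40} by the heights at which it performs its $N$ base steps. Reading the base steps in the order in which they occur records, for each of them, the base-box point $\mathbf u$ at which it is taken together with the current height $y$; because the path consists of positive unit steps, these heights are weakly increasing in this temporal order, and the restriction to \eqref{CKe7.40} translates precisely into the two-sided bounds $\mathbf b(\mathbf u)\le y\le\mathbf a(\mathbf u)$. This is the verbatim generalization of the description underlying Theorem~\ref{CKT2.4} and Theorem~\ref{CKT1.15}, where a single monotone planar path was identified with a strictly increasing sequence obtained by adding the index to each horizontal-step height.

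Next I would realize this datum as a family $\P=(P_1,\dots,P_N)$ of non-intersecting paths in the plane integer lattice, one short path attached to each base step, staggered according to the lexicographic index of the base point at which the step is taken, so that two of the paths meet if and only if the corresponding heights violate the strict increase forced by monotonicity together with the bounds; this structural non-intersection is the substitute for the reflection argument of Theorem~\ref{CKT1.1}. The starting point indexed by $\mathbf u_{j+1}$ then carries the lower bound $\mathbf b(\mathbf u_{j+1})$ and the end point indexed by $\mathbf u_i$ the upper bound $\mathbf a(\mathbf u_i)$, so that, up to the standard index shift inherent in the staggering, the count of connecting paths is the multinomial coefficient of \eqref{CKe1.1d}, namely $\binom{\mathbf a(\mathbf u_i)-\mathbf b(\mathbf u_{j+1})+1}{\mathbf u_{j+1}-\mathbf u_i}$ --- exactly the $(i,j)$-entry of \eqref{CKe7.41}. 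Applying Theorem~\ref{CKT2.3} to the directed acyclic graph $G=\Z^{d+1}$ with all weights $1$ then gives $\det_{0\le i,j\le N-1}\!\big(\binom{\mathbf a(\mathbf u_i)-\mathbf b(\mathbf u_{j+1})+1}{\mathbf u_{j+1}-\mathbf u_i}\big)=\sum_{\si\in\mathfrak S_N}(\sgn\si)\,\GF{L_G(\A_\si\to\E\mid \text{non-intersecting})}$.

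The final step, which I expect to be the main obstacle, is purely combinatorial bookkeeping: one must check that among all $\si\in\mathfrak S_N$ only a single permutation $\si_0$ admits a non-intersecting family, so that the right-hand side collapses to $(\sgn\si_0)\cdot\vv{\LL{(\boldsymbol0,\mathbf b(\boldsymbol0))\to(\mathbf n,\mathbf a(\mathbf n))\mid \mathbf a\ge\mathbf y\ge\mathbf b}}$, and then evaluate $\sgn\si_0$. The non-identity of $\si_0$ is caused by the mismatch between the lexicographic listing of the base points and the order in which the associated start and end points are compatible for non-crossing; the sign of this governing permutation should work out to $(-1)^{\sum_{i=1}^d n_i+\prod_{i=1}^d n_i}$, the prefactor in \eqref{CKe7.41}. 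Establishing uniqueness of $\si_0$ and computing its sign --- rather than the passage through Theorem~\ref{CKT2.3}, which is automatic once the encoding is in place --- is where the real work lies, and the case $d=1$, where $\sum n_i=\prod n_i=n_1$ forces the sign to be $+1$, recovers Theorem~\ref{CKT1.15} as a consistency check.
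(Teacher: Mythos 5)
Your high-level plan --- encode each $(d+1)$-dimensional path by the heights of its base steps, convert this data into families of non-intersecting lattice paths, and invoke the signed form of Lindstr\"om's theorem, Theorem~\ref{CKT2.3} --- is indeed the route the paper points to (it omits the proof and refers to Sulanke \cite{SulaAC}). But the combinatorial architecture you set up is not the one that can work, and the defect sits exactly in the step you postpone as ``where the real work lies''. The family cannot consist of $N=\sum_{i=1}^d n_i$ paths, one per base step, with fixed, lexicographically staggered endpoints: in Theorem~\ref{CKT2.3} the starting and end points must be fixed in advance, whereas the bound satisfied by the $k$-th base step of a given path is $\mathbf b(\mathbf v_k)\le y_k\le\mathbf a(\mathbf v_{k-1})$, where $\mathbf v_{k-1},\mathbf v_k$ are consecutive points of that path's base projection --- and the base projection \emph{varies} over the objects being counted. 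A determinant of size $N$ whose entries involve only the first $N+1$ lexicographic points of the box can never see the values of $\mathbf a$ and $\mathbf b$ at the remaining box points, although the count visibly depends on them. In the correct (Handa--Mohanty/Sulanke) formula the matrix has one row for each box point other than $\mathbf n$ and one column for each box point other than $\boldsymbol 0$, so its size is $p=\prod_{i=1}^d(n_i+1)-1$; the definition of $p$ and the index range printed in \eqref{CKe7.41} are typos, and the coincidence $p=N$ for $d=1$ is precisely why your consistency check against Theorem~\ref{CKT1.15} cannot detect the problem.

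Moreover, the deferred step resolves in a way opposite to what you predict: there is no single permutation $\sigma_0$ to which the signed sum collapses, and whenever the box admits more than one monotone base path there cannot be, because two $(d+1)$-dimensional paths with different base projections impose height constraints at different box points and hence correspond to non-intersecting families attached to different permutations. What actually happens is that for every monotone base path $\beta$ in $[\boldsymbol 0,\mathbf n]$ there is exactly one contributing permutation $\sigma_\beta$: the box points lying on $\beta$ carry the ``real'' paths recording the heights of the base steps, while every box point $\mathbf u$ off $\beta$ carries a forced trivial path whose count is the subdiagonal entry $\binom{\mathbf a(\mathbf u)-\mathbf b(\mathbf u)+1}{\boldsymbol 0}=1$ --- the dummy-path device of Lemma~\ref{CKT1.16}. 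The real content of the proof is then a sign-constancy lemma: $\operatorname{sgn}\sigma_\beta$ is the \emph{same} for every $\beta$, namely $(-1)^{N+p}$, so Theorem~\ref{CKT2.3} gives that the determinant equals $(-1)^{N+p}\sum_\beta\#\{\text{admissible height sequences for }\beta\}$, i.e., $(-1)^{N+p}$ times the left-hand side of \eqref{CKe7.41}. This multiplicity of contributing permutations, all of one sign, is exactly what the paper means in Section~\ref{CKsec:nonint} by saying the proof ``fundamentally makes use of the full generality of Theorem~\ref{CKT2.3}''; if your unique-$\sigma_0$ picture were correct, Corollary~\ref{CKT2.2a} adjusted by a global sign would have sufficed. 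As it stands, your proposal is missing both the box-point-indexed construction with dummy paths and the sign-constancy argument, which together constitute the proof.
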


The most elegant and illuminating proof is by the use of
non-intersecting lattice paths, see \cite{SulaAC}. 
\index{Sulanke, Robert A.}Sulanke proves
in fact a $q$-analogue in \cite{SulaAC}.

\section{The reflection principle in full generality}\label{CKsec:refl}

We have explained the reflection principle in the proof of
Theorem~\ref{CKT1.1} in Section~\ref{CKsec:lin1}, where it solved the
problem of counting simple lattice paths in the plane bounded by 
the diagonal. Nothing prevents us from applying the same idea
in a higher-dimensional setting.
It is then natural to ask: how far
can we go with the reflection principle? What is the most general
situation where it applies? This question was raised and answered by
\index{Gessel, Ira Martin}Gessel and \index{Zeilberger, Doron}Zeilberger 
\cite{GeZeAA}, and, independently, by \index{Biane, Philippe}Biane
\cite{BianAB} in a more restricted setting, 
see also \index{Grabiner, David J.}Grabiner and
\index{Magyar, Peter}Magyar \cite{GrMrAA}.

The standard example, which will serve as our running example,
is the problem of counting all paths from
$(a_1,a_2,\dots,a_d)$ to $(e_1,e_2,\dots,e_d)$ which always stay in
the region $x_1\ge x_2\ge \dots\ge x_d$. This problem is equivalent to
several other enumeration problems, the most prominent being 
the {\em $d$-candidate ballot problem}\index{ballot problem,
$d$-candidate} (for the 2-candidate ballot problem see
Section~\ref{CKsec:lin1}) and the problem
of counting {\em standard Young tableaux} of a given shape.

In the {\it $d$-candidate ballot problem} 
there are $d$ candidates in an election,
say $E_1,E_2,\dots,E_d$, $E_1$ receiving $e_1$ votes, $E_2$
receiving $e_2$ votes, \dots, $E_d$ receiving $e_d$ votes. How many
ways of counting the votes are there, such that  at any stage during
the counting $E_1$ has at least as many votes as $E_2$, $E_2$ has at
least as many votes as $E_3$, etc.? It is evident that by encoding
each vote for candidate $E_i$ by a step in $x_i$-direction this ballot
problem is transferred into counting paths from the origin to
$(e_1,e_2,\dots,e_d)$ which are staying in the region 
$x_1\ge x_2\ge \dots \ge x_d$. 

A {\it standard Young tableaux} of skew shape $\la/\mu$,
where $\la=(\la_1,\la_2,\dots,\la_d)$ and
$\mu=(\mu_1,\mu_2,\dots,\mu_d)$ are $d$-tuples of non-negative
integers which are in non-increasing order and satisfy $\la_i\ge\mu_i$
for all $i$, is an arrangement of the
numbers $1,2,\dots,\sum _{i=1} ^{d}(\la_i-\mu_i)$
of the form
$$\begin{matrix} 
&&&\pi_{1,\mu_1+1}&\multicolumn{3}{c}{\dotfill} &\pi_{1,\la_1}\\
&&\pi_{2,\mu_2+1}\quad \dots&\pi_{2,\mu_1+1}&\multicolumn{2}{c}{\dotfill} 
&\pi_{2,\la_2}\\
&\iddots&&\vdots&&\iddots\\
\pi_{d,\mu_d+1}&\multicolumn{3}{c}{\dotfill} &\pi_{d,\la_d}
\end{matrix}
$$
such that numbers along rows and columns are increasing.
See Chapter~[Standard Young Tableaux by Adin and Roichman] 
for more information on these important combinatorial
objects. By encoding an entry $i$ located in row $j$ of the tableau 
by a step in $x_j$-direction, $i=1,2,\dots,
\sum_{i=1} ^{d}(\la_i-\mu_i)$,
it is easy to see that standard tableaux of shape
$\la/\mu$ are in bijection with lattice paths from $\mu=(\mu_1,\mu_2,
\dots,\mu_d)$ to $\la=(\la_1,\la_2,\dots,\la_d)$
consisting of positive unit steps in the direction of some coordinate
axis, and which stay in the
region $x_1\ge x_2\ge \dots \ge x_d$. 

It is a classical result due to
\index{MacMahon, Percy Alexander}MacMahon \cite[p.~175]{MacMAZ}
(see also \cite[\S103]{MacMAA}) that
the solution to the counting problem is given by a determinant, 
see e.g.\ \cite[Prop.~7.10.3 combined with Cor.~7.16.3]{StanBI}.

\begin{theorem} \label{CKT7.1}%
Let $A=(a_1,a_2,\dots,a_d)$ and\/ $E=(e_1,e_2,\dots,e_d)$ be points in
$\Z^d$ with $a_1\ge a_2\ge\dots\ge a_d$ and $e_1\ge e_2\ge\dots\ge
e_d$. The number of all lattice paths from $A$ to $E$
consisting of positive unit steps in the direction of some coordinate
axis and staying in the region
$x_1\ge x_2\ge\dots\ge x_d$ equals
\begin{equation} \label{CKe7.2} 
\vv{L(A\to E\mid x_1\ge x_2\ge\dots\ge x_d)}=\Big(\sum _{i=1}
^{d}(e_i-a_i)\Big)! \det_{1\le i,j\le d}\(\frac {1} {(e_i-a_j-i+j)!}\).
\end{equation}
\end{theorem}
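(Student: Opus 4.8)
The plan is to prove the formula by the reflection principle for the symmetric group $\mathfrak S_d$ acting on $\Z^d$ by permutation of coordinates, in the form due to Gessel and Zeilberger. The reflecting hyperplanes are the $x_i=x_j$, the walls of the fundamental chamber $x_1\ge x_2\ge\dots\ge x_d$ are the $x_i=x_{i+1}$, and the step set $\{e_1,\dots,e_d\}$ of positive unit coordinate steps is permuted by $\mathfrak S_d$, hence invariant. Since the theorem imposes only weak inequalities, my first move is to pass to a strictly-bounded region. Writing $\rho=(d-1,d-2,\dots,1,0)$ and translating every vertex of a path by $\rho$ gives a bijection between paths from $A$ to $E$ staying in $x_1\ge\dots\ge x_d$ and paths from $A+\rho$ to $E+\rho$ that stay in the open chamber $x_1>x_2>\dots>x_d$, i.e.\ never touch a wall. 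Set $A'=A+\rho$ and $E'=E+\rho$, so that $a'_i=a_i+d-i$ and $e'_i=e_i+d-i$; note that $a_1\ge\dots\ge a_d$ forces $a'_1>\dots>a'_d$ strictly, and likewise for $E'$.

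Next I would invoke the reflection principle: the number of paths from $A'$ to $E'$ that never meet a wall equals $\sum_{\sigma\in\mathfrak S_d}\sgn(\sigma)\,N(\sigma A'\to E')$, where $\sigma A'=(a'_{\sigma(1)},\dots,a'_{\sigma(d)})$ and $N(u\to v)$ is the unrestricted path count from \eqref{CKe7.1}. The mechanism is a sign-reversing involution on pairs $(\sigma,P)$ with $P$ a path from $\sigma A'$ to $E'$ that touches some wall: at the first vertex lying on a wall one reflects the initial portion of $P$ in that wall; this replaces $\sigma$ by $\tau\sigma$ for the transposition $\tau$ attached to the wall, flips the sign, and leaves the remainder of $P$ untouched since the wall-vertex is a fixed point of the linear reflection. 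The uncancelled pairs are exactly those where $P$ meets no wall; such a $P$ lies entirely in the open chamber, so its start $\sigma A'$ must have strictly decreasing coordinates, which (because $A'$ already does) happens only for $\sigma=\mathrm{id}$, carrying sign $+1$. Thus the alternating sum collapses precisely to the strict-chamber count.

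The remaining computation is then routine. Because $\sum_k a'_{\sigma(k)}=\sum_k a'_k$ is permutation-invariant, every term shares the total $N=\sum_i(e'_i-a'_i)=\sum_i(e_i-a_i)$, and by the Leibniz expansion of the determinant (with the convention that $1/m!=0$ for $m<0$)
$$\sum_{\sigma\in\mathfrak S_d}\sgn(\sigma)\,N(\sigma A'\to E')=N!\sum_{\sigma\in\mathfrak S_d}\sgn(\sigma)\prod_{i=1}^d\frac{1}{(e'_i-a'_{\sigma(i)})!}=N!\det_{1\le i,j\le d}\Bigl(\frac{1}{(e'_i-a'_j)!}\Bigr).$$
Substituting $e'_i-a'_j=e_i-a_j-i+j$ turns the entries into $1/(e_i-a_j-i+j)!$ and, with the prefactor $\bigl(\sum_i(e_i-a_i)\bigr)!$, gives exactly \eqref{CKe7.2}.

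The hard part is making the sign-reversing involution genuinely canonical at vertices that lie on several of the hyperplanes $x_i=x_j$ at once. One must fix a deterministic tie-breaking rule at such a first wall-vertex (for instance, reflect in the wall $x_i=x_{i+1}$ of least index $i$ among those containing it) and then verify that applying the procedure a second time returns the original pair; this is where the $\mathfrak S_d$-invariance of the step set and the choice of the \emph{first} wall-vertex must be combined with care, and it is the core of the Gessel--Zeilberger argument. Everything else—the staircase shift $\rho$ and the determinant bookkeeping—is elementary. (As an alternative, one could instead deduce \eqref{CKe7.2} from the Lindstr\"om--Gessel--Viennot theorem, Theorem~\ref{CKT2.3}, after encoding a chamber path as a family of non-intersecting paths in the plane with strict column conditions, but the reflection-principle route is the one matching this section.)
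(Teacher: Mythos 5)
Your overall architecture — staircase shift by $\rho=(d-1,\dots,1,0)$, Gessel--Zeilberger reflection for $W=\mathfrak S_d$ with step set $\SS_1$, then the Leibniz/multinomial bookkeeping — is exactly the paper's route (Theorem~\ref{CKT7.1} is derived there as the special case $W=\mathfrak S_d$, $\SS=\SS_1$ of Theorem~\ref{CKT7.3}, after the same shift). However, the involution you describe is genuinely broken. You define ``walls'' as the hyperplanes $x_i=x_{i+1}$ and reflect at the \emph{first} vertex lying on a wall. The problem is that a reflection in a wall permutes the full arrangement $\{x_i=x_j:\ i<j\}$ but does \emph{not} stabilize the set of walls, so the reflected initial segment can acquire a wall-vertex \emph{earlier} than $M$. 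Concretely, take $d=3$ and a path with vertices $(1,3,0),(1,3,1),(1,3,2),(1,3,3),\dots$ continuing wall-free into the open chamber. Its first wall-vertex is $M=(1,3,3)$, on $x_2=x_3$; reflecting the initial portion in $x_2=x_3$ yields the path $(1,0,3),(1,1,3),(1,2,3),(1,3,3),\dots$, whose vertex $(1,1,3)$ lies on the wall $x_1=x_2$ and precedes $M$ (it is the image of $(1,3,1)$, which lies on the non-wall hyperplane $x_1=x_3$). Applying your rule to the image therefore reflects at $(1,1,3)$ and produces a path starting at $(0,1,3)$, not the original one; moreover the paths starting at $(0,1,3)$ and at $(1,3,0)$ both map to the same image, so your map is not even injective and the cancellation argument collapses. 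Note that no vertex here lies on two walls at once, so the difficulty you flag as ``the hard part'' (tie-breaking at multiple walls) is not where the argument fails: it is the combination \emph{first vertex} $+$ \emph{walls only} that is fatal, and no amount of care in tie-breaking repairs it.

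The repair is exactly what the paper's proof of Theorem~\ref{CKT7.3} does: choose the \emph{last} wall-touching vertex $M$ along the path and reflect the initial portion up to $M$, leaving the tail fixed. Since the tail beyond $M$ is wall-free and untouched, $M$ remains the last wall-vertex of the image; and since $M$ is fixed by the reflection, the set of walls through $M$ is unchanged, so any deterministic choice of wall through $M$ is stable and the map is an honest sign-reversing involution. (Alternatively, you may keep ``first'', but then touching must be measured against the full $\mathfrak S_d$-stable arrangement $\{x_i=x_j:\ i<j\}$, reflecting in a canonically chosen hyperplane through the first touching vertex; that arrangement is preserved by every reflection, so first touchings are preserved.) With either repair, the rest of your write-up is correct and matches the paper: the shift by $\rho$, the identification of the sole uncancelled term $\sigma=\mathrm{id}$ (valid because each consecutive difference $x_i-x_{i+1}$ changes by at most $1$ per unit step, so a wall-free path ending in the open chamber stays in it), and the determinant computation with $e'_i-a'_j=e_i-a_j-i+j$.
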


If the starting point $A$ equals the origin, then the above
determinant can be reduced to a Vandermonde determinant by
elementary column operations, and thus it can be evaluated in closed form.
If one rewrites the result appropriately, then
one arrives at the celebrated 
\index{hook formula}\index{formula, hook}{\it hook formula}
due to 
\index{Frame, James Sutherland}Frame,
\index{Robinson, Gilbert de Beauregard}Robinson
and \index{Thrall, Robert McDowell}Thrall \cite{FrRTAA}.
(We refer the reader to \cite[Sec.~3.10]{SagaAQ} or \cite[Cor.~7.21.6]{StanBI}
for unexplained terminology).

\begin{theorem} \label{CKT7.1H}%
Let $E=(e_1,e_2,\dots,e_d)$ be a point in
$\Z^d$ with $e_1\ge e_2\ge\dots\ge
e_d\ge0$. The number of all lattice paths from the origin to $E$
consisting of positive unit steps in the direction of some coordinate
axis and staying in the region
$x_1\ge x_2\ge\dots\ge x_d$ equals
\begin{equation} \label{CKe7.2H} 
\vv{L(A\to E\mid x_1\ge x_2\ge\dots\ge x_d)}=
\frac {\Big(\sum _{i=1}
^{d}e_i\Big)!}
{
\prod _{\rho} ^{}h(\rho)},
\end{equation}
where the product is over all cells $\rho$ in the Ferrers diagram
of the partition $(e_1,e_2,\break\dots,e_d)$, and\/ $h(\rho)$ is
the hook-length of the cell $\rho$.
\end{theorem}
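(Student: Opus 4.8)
The plan is to derive Theorem~\ref{CKT7.1H} from the determinantal formula of Theorem~\ref{CKT7.1} by specializing the starting point to the origin and evaluating the resulting determinant in closed form. Setting $A=\boldsymbol0$ in \eqref{CKe7.2} gives
\[
\vv{L(\boldsymbol0\to E\mid x_1\ge\dots\ge x_d)}
=\Big(\sum_{i=1}^d e_i\Big)!\;\det_{1\le i,j\le d}\Big(\frac{1}{(e_i-i+j)!}\Big),
\]
so everything reduces to evaluating $D:=\det_{1\le i,j\le d}\big(1/(e_i-i+j)!\big)$. It is convenient to introduce the ``first-column hook lengths'' (beta-numbers) $\ell_i:=e_i+d-i$; since $e_1\ge e_2\ge\dots\ge e_d\ge0$, these satisfy $\ell_1>\ell_2>\dots>\ell_d\ge0$, and $\ell_i-\ell_j=e_i-e_j-i+j$.

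First I would clear the factorials from the determinant. Writing $x_i:=e_i-i$ and using $(x_i+j)!=\ell_i!\big/\big((x_i+j+1)(x_i+j+2)\cdots(x_i+d)\big)$, I factor $1/\ell_i!$ out of the $i$-th row, leaving a matrix whose $(i,j)$ entry is the product $(x_i+j+1)(x_i+j+2)\cdots(x_i+d)$, a monic polynomial in $x_i$ of degree $d-j$. Since the $j$-th column consists of one and the same monic polynomial of degree $d-j$ evaluated at $x_1,\dots,x_d$, successive column operations replace each such polynomial by the pure power $x_i^{\,d-j}$ without changing the determinant. What remains is a Vandermonde determinant, whence
\[
D=\prod_{i=1}^d\frac{1}{\ell_i!}\,\det_{1\le i,j\le d}\big(x_i^{\,d-j}\big)
=\frac{\prod_{1\le i<j\le d}(x_i-x_j)}{\prod_{i=1}^d \ell_i!}
=\frac{\prod_{1\le i<j\le d}(\ell_i-\ell_j)}{\prod_{i=1}^d \ell_i!}.
\]
Combining this with the displayed specialization of \eqref{CKe7.2} already produces a closed-form product expression for the path count.

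The substantive step --- the one that turns this product into the hook formula \eqref{CKe7.2H} --- is the identity
\[
\prod_\rho h(\rho)=\frac{\prod_{i=1}^d \ell_i!}{\prod_{1\le i<j\le d}(\ell_i-\ell_j)},
\]
where the product is over all cells $\rho$ of the Ferrers diagram of $(e_1,\dots,e_d)$. I would prove this from the beta-number description of hooks: the set of hook lengths occurring in the $i$-th row of the diagram is exactly $\{1,2,\dots,\ell_i\}\setminus\{\ell_i-\ell_j:j>i\}$. Granting this, the product of the hook lengths in row $i$ equals $\ell_i!\big/\prod_{j>i}(\ell_i-\ell_j)$, and multiplying over $i=1,\dots,d$ collects the denominators into $\prod_{i<j}(\ell_i-\ell_j)$, giving the claimed identity. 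Substituting it into the formula for $D$ and then into the path count yields $\vv{L(\boldsymbol0\to E)}=(\sum_i e_i)!\big/\prod_\rho h(\rho)$, which is \eqref{CKe7.2H}.

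The main obstacle is verifying the beta-number description of the row hook lengths (equivalently, the identity in the last display); the Vandermonde reduction is entirely routine once the correct normalization $\ell_i=e_i+d-i$ has been identified. I expect the cleanest route is to read the hook length of each cell in row $i$ directly off the beta-numbers: such a hook has the form $\ell_i-m$ with $m\in\{0,1,\dots,\ell_i-1\}$, and the omitted values are precisely $\ell_i-\ell_j$ for $j>i$, which record the columns at which a later row $j$ truncates the leg of the hook. Alternatively, one may bypass hooks entirely and simply cite the hook formula of Frame--Robinson--Thrall \cite{FrRTAA} together with the determinant evaluation above; but establishing the beta-number identity keeps the argument self-contained.
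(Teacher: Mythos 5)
Your proof is correct and follows essentially the same route the paper itself indicates: specialize Theorem~\ref{CKT7.1} to $A=\boldsymbol0$, reduce the determinant $\det\big(1/(e_i-i+j)!\big)$ to a Vandermonde determinant by elementary column operations, evaluate it, and rewrite the resulting product as the hook formula. The paper merely sketches these steps, and your beta-number identity $\prod_{\rho}h(\rho)=\prod_{i}\ell_i!\big/\prod_{i<j}(\ell_i-\ell_j)$ is exactly the ``appropriate rewriting'' it alludes to, so you have simply supplied the details left to the reader.
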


It was pointed out by 
\index{Zeilberger, Doron}Zeilberger \cite{ZeilAD} that the formula
in \eqref{CKe7.2} can be proved by means of the reflection principle.
The natural environment for a ``general reflection principle" is within
the setting of \index{reflection group}{\em reflection groups\/}. A
{\em reflection group\/} is a group which is generated by all
reflections with respect to the hyperplanes $H$ in a given set $\mathcal
H$ of hyperplanes (in some $\R^d$). We review the facts about
reflection groups that are relevant for us below. 
For an excellent exposition of the subject see \index{Humphreys, James
Edward}Humphreys \cite{HumpAC}.
As we already said,
the situation of Theorem~\ref{CKT7.1} will be our running example.

As above, let $\mathcal H$ be a (finite) set of hyperplanes in some
$\R^d$. Let $W$ denote the group that is generated by the
corresponding reflections. By definition, $W$ is a subgroup of
$O(d)$. 
Some of the elements of $W$ happen to be reflections
with respect to a hyperplane (not necessarily belonging to $\mathcal H$),
and let $\mathcal R$ denote the collection
of all these hyperplanes. Of course, $\mathcal R$ contains $\mathcal H$. In
the example when $\mathcal H$ is the set of hyperplanes $H_i$ given by
\begin{equation} \label{CKe7.10} 
H_i:\quad x_i-x_{i+1}=0,\quad i=1,2,\dots,d-1,
\end{equation}
(these are the hyperplanes restricting the paths in Theorem~\ref{CKT7.1}),
the group $W$ is the permutation group $\mathfrak S_d$,
acting on $\R^d$ by permuting coordinates. All the reflections in
this group are the interchanges of two coordinates $x_i$ and $x_j$,
$1\le i<j\le d$, corresponding to the transpositions $(i,j)$ in
$\mathfrak S_d$. Hence, the corresponding set $\mathcal R$ of hyperplanes in this
case is 
\begin{equation}\label{CKe7.11}
\mathcal R=\{x_i-x_j=0:\ 1\le i<j\le d\}.
\end{equation} 

The hyperplanes in $\mathcal R$ cut the space into different regions. The
connected components of the complement of $\bigcup
_{H\in\mathcal R} ^{}H$
in $\R^d$  are called \index{chamber}{\em chambers\/}. Each
chamber is enclosed by a set $\mathcal R_0$ of bordering hyperplanes.
Clearly, $\mathcal R_0$ is a subset of $\mathcal R$.
In our
running example a typical chamber is the region
\begin{equation} \label{CKe7.12}
\{(x_1,x_2,\dots,x_d):\ x_1> x_2> \dots> x_d\},
\end{equation}
which is bounded by the hyperplanes in \eqref{CKe7.10}.  As a matter of
fact, in this special case any chamber has the form
\begin{equation} \label{CKe7.12'}
\{(x_1,x_2,\dots,x_d):\ x_{\si(1)}> x_{\si(2)}> \dots> x_{\si(d)}\},
\end{equation}
where $\si$ is some permutation in $\mathfrak S_d$.

It can be
shown that the reflections with respect to the hyperplanes in $\mathcal
R_0$ generate the complete reflection group $W$. Another important
fact is that, given one chamber $C$, all chambers are $w(C)$, where
$w$ runs through the elements of the reflection group $W$, all $w(C)$'s
being distinct.

Now we are in the position to formulate and prove \index{Gessel, Ira
Martin}Gessel and
\index{Zeilberger, Doron}Zeilberger's result \cite[Theorem~1]{GeZeAA}. The motivation for the
technical conditions in the statement of the theorem 
involving $k_H$ and $r_H$ is that they
make sure that it is not possible to ``jump" over a hyperplane
without touching it in a lattice point.

\begin{theorem}\label{CKT7.3}%
Let $C$ be a chamber of some reflection group $W$, determined by the
hyperplanes in the set $\mathcal R_0$. Let $\SS$ be a set of steps which is
invariant under $W$, i.e., $w(\SS)=\SS$, and with the property
that for all hyperplanes $H\in \mathcal R_0$ and all steps $s\in \SS$
the Euclidean inner product $(s,r_H)$ is either $0$ or $\pm k_H$,
where $k_H$ is a fixed constant, $r_H$ is a fixed non-zero vector 
perpendicular to $H$, both depending only on the hyperplane
$H$.
Furthermore, let $A$ and\/ $E$ be lattice points inside
the chamber $C$ such that also $w(A)$ and $w(E)$ are lattice points for all
$w\in W$, and such that for all hyperplanes $H\in \mathcal R_0$ 
the Euclidean inner product $(A,r_H)$ is an integral multiple of
$k_H$.

Then the number of all lattice paths from $A$ to $E$, with exactly $m$ 
steps from
$\SS$, and staying strictly inside the chamber $C$, equals
\begin{equation}\label{CKe7.13}
\vv{L_m(A\to E;\SS\mid \text {\em inside }C)}
=\sum _{w\in W} ^{}(\sgn w) \vv{L_m(w(A)\to E;\SS)},
\end{equation}
where $\sgn w=\det w$, considering $w$ as an orthogonal transformation
of\/ $\R^d$.
\end{theorem}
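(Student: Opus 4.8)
The plan is to establish \eqref{CKe7.13} by a sign-reversing involution, carrying out the reflection argument of the proof of Theorem~\ref{CKT1.1} inside the reflection group $W$. First I would expand the right-hand side as a signed sum over pairs,
$$\sum_{w\in W}(\sgn w)\,\vv{L_m(w(A)\to E;\SS)}=\sum_{(w,P)}\sgn w,$$
where the sum on the right ranges over all pairs $(w,P)$ with $w\in W$ and $P$ a path from $w(A)$ to $E$ using exactly $m$ steps from $\SS$. Each such $P$ makes sense because $w(A)$ is assumed to be a lattice point. The goal is to cancel, in pairs of opposite sign, every contribution except those coming from paths that stay strictly inside $C$; I will show these survive only for $w=\mathrm{id}$, with sign $+1$, and that they are exactly the paths counted on the left-hand side.

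Next I would define the involution $\ph$ on the set $Y$ of pairs $(w,P)$ for which $P$ meets at least one wall $H\in\mathcal{R}_0$ of $C$ in a lattice point. Given such a pair, let $M$ be the last lattice point of $P$ lying on a wall in $\mathcal{R}_0$, and among the walls through $M$ let $H$ be the one smallest in a fixed ordering of $\mathcal{R}_0$. Reflect the initial portion of $P$ from $w(A)$ to $M$ in $H$, leaving the portion from $M$ to $E$ fixed; this produces a path $P'$ from $s_H(w(A))=(s_Hw)(A)$ to $E$, and I set $\ph(w,P)=(s_Hw,P')$. Because $\SS$ is $W$-invariant, each reflected step $s_H(s)$ again lies in $\SS$, and because $(s_Hw)(A)$ is a lattice point while the steps are lattice vectors, the reflected portion is again a genuine lattice path; thus $P'$ is admissible and still passes through $M\in H$, so $\ph(w,P)\in Y$. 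Since $M$ and the set of walls through it are unchanged, $M$ remains the last wall point of $P'$ and $H$ the chosen wall, so $\ph$ is an involution. As $s_H$ is a reflection, $\sgn(s_Hw)=\det(s_Hw)=-\det w=-\sgn w$, so $\ph$ is sign-reversing; and since $P'$ uses the same number $m$ of steps as $P$, the map respects the enumerated set. Hence all pairs in $Y$ cancel in the signed sum.

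It then remains to identify the fixed points, namely the pairs $(w,P)\notin Y$: those $P$ meeting no wall $H\in\mathcal{R}_0$ in a lattice point. Here the technical hypotheses are decisive. For each $H\in\mathcal{R}_0$ the step condition $(s,r_H)\in\{0,\pm k_H\}$ forces the quantity $(x,r_H)$ to change by an integer multiple of $k_H$ at each step, so that, granting the normalization $(w(A),r_H)\in k_H\Z$ at the starting vertex (discussed below), it remains an integer multiple of $k_H$ along the whole path. Consequently $(x,r_H)$ cannot change sign without vanishing at a lattice point, i.e. without $P$ meeting $H$; since $E$ lies strictly inside $C$, a path avoiding all walls must keep $(x,r_H)$ of constant nonzero sign for every $H\in\mathcal{R}_0$, hence stay strictly inside $C$. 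By the simple transitivity of $W$ on the chambers (all $w(C)$ distinct, as recalled before the theorem) this forces $w=\mathrm{id}$. Thus the fixed points are precisely the paths from $A$ to $E$ staying strictly inside $C$, each contributing $+1$, and summing over them gives \eqref{CKe7.13}.

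The main obstacle I anticipate is this ``no jumping over a wall'' step: one must verify that the normalization $(x,r_H)\in k_H\Z$ holds not only at $A$ but at every reflected starting point $w(A)$ occurring in the argument. This is exactly where one needs the hypotheses $(A,r_H)\in k_H\Z$ and $w(A)\in\Z^d$ together with the reflection-group structure. Since the reflections in $\mathcal{R}_0$ generate $W$, it suffices to check that the property ``$(x,r_{H'})\in k_{H'}\Z$ for all $H'\in\mathcal{R}_0$'' is preserved under one generating reflection $s_H$, which via $(s_H(x),r_{H'})=(x,r_{H'})-\tfrac{2(r_{H'},r_H)}{(r_H,r_H)}(x,r_H)$ reduces to the integrality of the Cartan-type coefficients $\tfrac{2(r_{H'},r_H)}{(r_H,r_H)}$ and the compatibility of the constants $k_H$ across $W$-orbits of walls. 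Establishing this propagation, and the parallel fact that $s_H$ preserves the ambient lattice so that reflected paths remain lattice paths, is the only nonroutine ingredient; the remainder is the bookkeeping of the involution.
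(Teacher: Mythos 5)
Your argument is the same as the paper's: the identical expansion of the right-hand side as a signed sum over pairs $(w,P)$, the identical sign-reversing involution that reflects the initial segment of $P$ at the last point $M$ lying on a wall, and the identical identification of the fixed points via the fact that the chambers $w(C)$ are pairwise disjoint. Your write-up is in fact more careful than the paper's at two spots: you disambiguate the choice of the reflecting wall (the minimal wall through $M$ in a fixed order, which is what makes $\ph$ involutive), and you place the lattice-point issue where it belongs, namely in the fixed-point analysis rather than in the definition of $M$.

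The one genuine gap is the step you yourself flag and leave open: that $(w(A),r_{H'})\in k_{H'}\Z$ for \emph{all} $w\in W$ and all $H'\in\mathcal R_0$. Your proposed reduction --- ``integrality of the Cartan-type coefficients $\frac{2(r_{H'},r_H)}{(r_H,r_H)}$'' --- cannot be the end of the argument, because no such integrality is among the hypotheses (for a general reflection group it is simply false); it must itself be derived, and that derivation is the one idea missing from your sketch. It comes from the $W$-invariance of $\SS$: fix $H,H'\in\mathcal R_0$ and a step $s\in\SS$ with $(s,r_H)=\pm k_H$ (if no step has $(s,r_H)\ne0$, then $(x,r_H)$ is constant along every path, such a wall is never crossed, and it can be ignored both here and in the fixed-point analysis). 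Writing $w_H$ for the reflection in $H$ (your $s_H$), the invariance gives $w_H(s)\in\SS$, so both $(s,r_{H'})$ and $(w_H(s),r_{H'})=(s,r_{H'})\mp\frac{2(r_H,r_{H'})}{(r_H,r_H)}\,k_H$ lie in $\{0,\pm k_{H'}\}$, and subtracting shows $\frac{2(r_H,r_{H'})}{(r_H,r_H)}\,k_H\in k_{H'}\Z$. Plugging this into your displayed reflection formula shows that the property ``$(x,r_{H'})\in k_{H'}\Z$ for all $H'\in\mathcal R_0$'' is preserved by every generating reflection $w_H$, and your induction on the length of $w$ then yields the propagation. (Your parallel worry that reflections must preserve the lattice is already disposed of by the hypothesis that $w(A)$ is a lattice point for every $w\in W$, exactly as you used it in the involution step.) For what it is worth, the paper glosses over this same point with the bare assertion that $M$ must be a lattice point ``because of the assumptions''; with the lemma above supplied, your proof is complete.
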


\begin{remark}\em
A weighted version of the above theorem in which steps 
carry weights such that images of steps under $W$ carry the same
weight holds as well.
\end{remark}

\begin{proof}
We may rewrite \eqref{CKe7.13} in the form
\begin{equation} \label{CKe7.14} 
\vv{L_m(A\to E;\SS\mid \text {inside }C)}
=\sum _{(w,P)} ^{}\sgn w ,
\end{equation}
where the sum is over all pairs $(w,P)$ with $w\in W$ and $P\in
L_m(w(A)\to E;\SS)$.
The proof of \eqref{CKe7.14} is by a sign-reversing involution on the set
of all such pairs $(w,P)$,
where $P$ touches at least one of the
hyperplanes in $\mathcal H_0$. Sign-reversing has to be
understood with respect to $\sgn w$. Provided the existence of such an
involution, the only contributions to the sum in \eqref{CKe7.14} would be
by pairs $(w,P)$ where $P$ does not touch any of the hyperplanes in
$\mathcal H_0$. We claim that this 
can only be the case for $w=\text{id}$. In fact, as we already mentioned, it
is one of the properties of a reflection group $W$ that, given one
chamber $C$, all chambers are $w(C)$, $w\in W$, and all $w(C)$'s
are distinct. Therefore, if $A$ is in $C$ and $w\ne \text {id}$, then $w(A)$
must be in a different chamber and so cannot be in $C$. Thus, evidently,
any path from $w(A)$ to $E$, the point $E$ being inside $C$, 
must touch at least one of the bordering hyperplanes.
This would prove \eqref{CKe7.14} and hence the theorem.

Now we construct the promised involution. 
Fix some order of the hyperplanes in $\mathcal H_0$.
Let $(w,P)$ be a pair with $w\in W$, $P\in L_m(w(A)\to E;\SS)$, and
$P$ touching at least one of the hyperplanes in $\mathcal H_0$.
Consider all meeting points of $P$ with hyperplanes in $\mathcal H_0$. Choose the
last meeting point along the path $P$ and denote it by $M$. 
$M$ must be a lattice point
because of the assumptions that involve the constants $k_H$. 
Let $H$ be the first
hyperplane (in the chosen order) that meets $P$.
Then we form the new path
$P'$ by reflecting the portion of $P$ from the starting point
$w(A)$ up to $M$ with respect to $H$ and leaving the portion
from $M$ to $P$ invariant. By assumption, reflection of a step from
$\SS$ is again a step in $\SS$. So, also $P'$ consists of
steps from $\SS$ only. Evidently, the starting point
of $P'$ is $w_Hw(A)$, where $w_H$ denotes the reflection
with respect to $H$. Hence,
$(w_Hw,P')$ is a pair under consideration for the sum in
\eqref{CKe7.14}, and $P'$ touches one of the
hyperplanes in $\mathcal H_0$ (namely $H$). This mapping is an involution since
nothing was changed after $M$. Moreover, we have
$\sgn w_Hw=-\sgn w$. Therefore it is also sign-reversing.
This completes the proof of the theorem.
\end{proof}

In the case of our running example, $W$ is the group of
\index{permutation group}\index{permutation}permutations 
of coordinates, $C$ is given by \eqref{CKe7.12}, the set
of hyperplanes is \eqref{CKe7.10}, the set of steps is
$\{\ep_1,\ep_2,\dots,\ep_d\}$, with $\ep_i$ denoting the
positive unit vector in $x_i$-direction. 
If $H_i$ is the hyperplane $x_i-x_{i+1}=0$,
we may choose $r_{H_i}=\ep_i-\ep_{i+1}$, so that
all constants $k_{H_i}$ are $1$, $i=1,2,\dots,d-1$.
Since the number of lattice paths between two given lattice points is 
given by a multinomial coefficient (see \eqref{CKe7.1}), it is then
not difficult to see that \eqref{CKe7.13} yields \eqref{CKe7.2} in
this special case.

\medskip
Which other examples are covered by the general setup in
Theorem~\ref{CKT7.3}? The answer is that all reflection groups that are
``relevant" in our context are completely classified. The meaning of ``relevant" is as
follows. In order our formula \eqref{CKe7.13} to make sense, the sum on the
right-hand side of \eqref{CKe7.13} should be finite. So, only
reflection groups that are ``discrete" and act ``locally finite" will
be of interest to us. It is exactly these reflection groups that are
precisely known (see \index{Humphreys, James Edward}Humphreys \cite[Sec.~4.10]{HumpAC},
\index{Bourbaki, Nicolas}Bourbaki 
\cite[Ch.~V, VI]{BourAA}). 

The classification of all {\it finite} reflection groups says that
any such finite reflection group decomposes into the direct product
of 
\index{irreducible reflection group}\index{reflection group,
  irreducible}irreducible reflection groups, all of which act
on pairwise orthogonal subspaces. These irreducible reflection
groups do not decompose further. 
There exist four infinite
families of types $I_2(m)$ ($m=1,2,\dots$), 
$A_d$, $B_d=C_d$, $D_d$ ($d=1,2,\dots$) of such groups, and the seven
exceptional groups of types $G_2$, $F_4$, $E_6$, $E_7$, $E_8$, and
$H_3$, $H_4$. (The
indices mark the dimension of the vector space on which they act
faithfully.) In addition, for most of these irreducible finite
reflection groups there exists an
\index{affine reflection group}\index{reflection group, affine}{\em affine\/} 
reflection group, which is infinite.
The finite reflection group is generated by all reflections
with respect to the hyperplanes
which run through a given point (we assume that this is the origin).
The affine reflection
group is generated by a larger set of hyperplanes, which includes the
aforementioned hyperplanes plus certain translates of them.
The reflection groups corresponding to $G_2$ are the same as those for
$I_2(6)$, therefore we need not consider $G_2$. 

\index{Grabiner, David J.}Grabiner and
\index{Magyar, Peter}Magyar \cite[p.~247]{GrMrAA} have determined all
possible step sets (up to dilation) 
for each of the irreducible reflection groups
such that the technical conditions of Theorem~\ref{CKT7.3} are satisfied.
Not for all types do there exist such step sets.
It should be noted however that the ``empty'' step 
$(0,0,\dots,0)$ can always be added
to any possible step set.
The following list describes all possible instances of
Theorem~\ref{CKT7.3} when applied to an irreducible finite or affine
reflection group. The results for lattice paths in chambers of
affine reflection groups have been made explicit by 
\index{Grabiner, David J.}Grabiner \cite{GrabAD}.

\medskip
\index{$I_2(m)$}{\em Types $H_3$, $H_4$, $F_4$, $E_8$, $I_2(m)$}: 
There are no possible step sets.

\medskip
\index{$A_{d-1}$}{\em Type $A_{d-1}$}: 
The set of reflecting hyperplanes is $\mathcal 
R=\{x_i-x_j=0:\ 1\le
i<j\le d\}$. Obviously, the reflection with respect to $x_i-x_j=0$
acts by interchanging the $i$-th and $j$-th coordinate.
So, the associated finite reflection group is the group of
\index{permutation group}\index{permutation}permutations 
of the coordinates $x_1,x_2,\dots,x_{d}$, which is
isomorphic to the permutation group $\mathfrak S_{d}$.
A typical chamber is $C=\{(x_1,x_2,\dots,x_{d}):\ x_1> x_2>\dots>x_{d}\}$.

Possible step sets are the sets
$$
\SS_k:=\{w\cdot(1,\dots,1,0,\dots,0):w\in\mathfrak S_{d}\},
\quad k=1,2,\dots,d,
$$
(with $k$ occurrences of $1$), all compatible with each other,
as well as
$$
\SS_k^\pm:=\{w\cdot(\pm1,\dots,\pm1,0,\dots,0):w\in\mathfrak S_{d}\},
\quad k=1\text{ and }k=d,
$$
(with $k$ occurrences of $\pm1$), which can not be mixed together.

Theorem~\ref{CKT7.1} is a direct consequence of Theorem~\ref{CKT7.3}
with $W=\mathfrak S_d$ and $\SS=\SS_1$.

The second standard application is the one for $\SS=\SS_d^\pm.$

\begin{theorem} \label{CKT7.1b}%
Let $A=(a_1,a_2,\dots,a_d)$ and $E=(e_1,e_2,\dots,e_d)$ be points in
$\Z^d$, with all $a_i$'s of the same parity, all $e_i$'s of the
same parity, $a_1> a_2>\dots> a_d$ and $e_1> e_2>\dots>
e_d$. The number of all lattice paths from $A$ to $E$
consisting of $m$ steps from $\SS_d^\pm$ and staying in the region
$x_1> x_2>\dots> x_d$ equals
\begin{equation} \label{CKe7.2b} 
\vv{L(A\to E; \SS_d^\pm\mid x_1> x_2>\dots> x_d)}=
\det_{1\le i,j\le d}\(\binom {m} {\frac {m+e_i-a_j} {2}}\).
\end{equation}
\end{theorem}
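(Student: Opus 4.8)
The plan is to apply the general reflection principle of Theorem~\ref{CKT7.3} with the reflection group $W=\mathfrak{S}_d$ acting on $\R^d$ by permutation of coordinates, the chamber $C=\{(x_1,\dots,x_d):x_1>x_2>\dots>x_d\}$, and the step set $\SS=\SS_d^\pm$. The first thing I would note is that, since every entry of a vector $(\pm1,\dots,\pm1)$ is already $\pm1$, permuting coordinates produces nothing new, so $\SS_d^\pm$ is simply the full cube $\{+1,-1\}^d$ of all $2^d$ sign vectors; in particular it is manifestly invariant under $W$, as required.

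Next I would verify the technical hypotheses of Theorem~\ref{CKT7.3}. The hyperplanes bordering $C$ are $H_i:x_i-x_{i+1}=0$ for $i=1,\dots,d-1$, and I would take $r_{H_i}=\ep_i-\ep_{i+1}$. For any step $s=(\epsilon_1,\dots,\epsilon_d)\in\SS_d^\pm$ one has $(s,r_{H_i})=\epsilon_i-\epsilon_{i+1}\in\{-2,0,2\}$, so the condition holds with the constant $k_{H_i}=2$. Moreover $(A,r_{H_i})=a_i-a_{i+1}$, which is even because all $a_i$ share the same parity, hence it is an integral multiple of $k_{H_i}=2$; and $w(A)$, being a permutation of the coordinates of $A$, is again a lattice point. (This parity condition is exactly what forbids a step from jumping across a wall $H_i$ without landing on it, since $x_i-x_{i+1}$ then remains even while changing only by $0$ or $\pm2$.) With the hypotheses in place, \eqref{CKe7.13} gives
\begin{equation*}
\vv{L(A\to E;\SS_d^\pm\mid x_1>\dots>x_d)}=\sum_{\si\in\mathfrak{S}_d}(\sgn\si)\,\vv{L_m(\si(A)\to E;\SS_d^\pm)}.
\end{equation*}

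The key observation for evaluating the summands is that, because $\SS_d^\pm$ is a full Cartesian product of independent sign choices, an $m$-step walk decouples into $d$ independent one-dimensional walks with steps $\pm1$: the $j$-th coordinate must run from $a_{\si(j)}$ to $e_j$ in $m$ unit $\pm1$ steps, independently of the other coordinates. The number of such one-dimensional walks is $\binom{m}{(m+e_j-a_{\si(j)})/2}$ (choose which of the $m$ steps are $+1$), so $\vv{L_m(\si(A)\to E;\SS_d^\pm)}=\prod_{j=1}^d\binom{m}{(m+e_j-a_{\si(j)})/2}$, with the usual convention making a factor $0$ whenever the lower index is not an integer in $[0,m]$.

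Substituting this product into the signed sum and reading it backwards through the Leibniz expansion of a determinant then yields \eqref{CKe7.2b} at once (the precise convention for how $\si$ permutes coordinates is immaterial here, since the signed sum over $\mathfrak{S}_d$ produces the same determinant in either case). I do not expect a genuinely hard step: the whole argument is an application of Theorem~\ref{CKT7.3} followed by a factorization of the walk. The only points needing care are the verification of the constant $k_{H_i}=2$ together with the parity hypothesis on $A$ and on $E$ — precisely the conditions guaranteeing validity of the reflection principle — and a final sanity check that the common parity of the $a_i$'s and of the $e_i$'s makes the integrality convention for $\binom{m}{(m+e_i-a_j)/2}$ uniform across all matrix entries, so that both sides vanish together when $m\not\equiv e_1-a_1\pmod 2$.
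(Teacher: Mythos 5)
Your proposal is correct and follows exactly the route the paper intends: Theorem~\ref{CKT7.1b} is presented there as ``the second standard application'' of the general reflection principle (Theorem~\ref{CKT7.3}) with $W=\mathfrak{S}_d$, the chamber $x_1>x_2>\dots>x_d$, and $\SS=\SS_d^\pm$, with the unrestricted counts factoring into independent one-dimensional $\pm1$ walks. Your verification of the technical hypotheses (the constants $k_{H_i}=2$ and the parity conditions on $A$ and $E$) and the Leibniz-expansion step supply precisely the details the paper leaves implicit.
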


\medskip
We should point out that the lattice paths in Theorem~\ref{CKT7.1b} are
in bijection with configurations in the 
\index{lock step model}\index{model, lock step}{\em lock step model},
a frequently studied
\index{vicious walker model}\index{model, vicious walker}{\em vicious
  walker model}. On the other hand, the lattice paths 
in Theorem~\ref{CKT7.1} are
in bijection with configurations in another popular vicious walker model,
the so-called
\index{random turns model}\index{model, random turns}{\em random turns
  model}. We refer the reader to \cite[Sec.~2]{KratBT} for more
detailed comments on these connections.

\medskip
The associated affine reflection group, the affine reflection group
of type $\tilde A_{d-1}$, is generated by the
reflections with respect to the
hyperplanes $\mathcal R=\{x_i-x_j=k:\ 1\le i<j\le d,\ k\in\Z\}$. The
elements of this group are called 
\index{affine permutation}\index{permutation,
affine}{\em affine permutations\/}. They act by permuting the
coordinates $x_1,x_2,\dots,x_{d}$ and adding a vector
$(k_1,k_2,\dots,k_{d})$ with $k_1+k_2+\dots+k_{d}=0$.
A typical chamber\footnote{Actually, the chambers of affine
reflection groups are usually called 
\index{alcove}{\em alcoves}.} 
is $C=\{(x_1,x_2,\dots,x_d):\ x_1> x_2>\dots>x_{d}>x_1-1\}$.
For enumeration purposes, we inflate this chamber, see \eqref{CKeq:alcA}
below. 

The probably first explicitly stated enumeration result for lattice
paths in an affine chamber is the result below due to
\index{Filaseta, Michael}Filaseta \cite{FilaAA}, although it was
not formulated in that way.

\begin{theorem}%
Let $A=(a_1,a_2,\dots,a_d)$ and $E=(e_1,e_2,\dots,e_d)$ be points in
$\Z^d$ with $a_1> a_2>\dots> a_d$ and $e_1> e_2>\dots>
e_d$. The number of all paths from $A$ to $E$
consisting of steps from $\SS_1$ and staying in the chamber
\begin{equation} \label{CKeq:alcA} 
\{(x_1,x_2,\dots,x_d):x_1>x_2>\dots>x_d>x_1-N\}
\end{equation}
of type $\tilde A_{d-1}$, equals
\begin{multline} \label{CKe7.15} 
\vv{L(A\to E;\SS_1\mid x_1> x_2>\dots> x_d> x_1-N)}\\
=\Big(\sum _{i=1}
^{d}(e_i-a_i)\Big)!\sum _{k_1+\dots +k_d=0} ^{} 
\det_{1\le i,j\le d}\(\frac {1} {(e_i-a_j+k_iN)!}\).
\end{multline}
\end{theorem}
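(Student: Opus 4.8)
The plan is to apply the general reflection principle of Theorem~\ref{CKT7.3} to the affine reflection group $W=\tilde A_{d-1}$, whose fundamental alcove, after inflation by the factor $N$, is precisely the chamber $C$ in \eqref{CKeq:alcA}. First I would record the structure of $W$ that is needed: it is generated by the reflections in the $d$ bounding hyperplanes $x_i-x_{i+1}=0$ ($i=1,\dots,d-1$) together with $x_1-x_d=N$; its linear parts are the coordinate permutations $\mathfrak S_d$; and its translation part is the inflated root lattice $N\cdot\{\mathbf k\in\Z^d:k_1+\dots+k_d=0\}$. Thus every $w\in W$ can be written as $w(A)=\si(A)+N\mathbf k$ with $\si\in\mathfrak S_d$ and $\sum_i k_i=0$.

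Before invoking Theorem~\ref{CKT7.3} I would verify its hypotheses for $\SS=\SS_1=\{\ep_1,\dots,\ep_d\}$. Invariance $w(\SS_1)=\SS_1$ is immediate since $W$ permutes coordinates. For each bounding hyperplane I take $r_H=\ep_i-\ep_{i+1}$ (respectively $\ep_1-\ep_d$) with $k_H=1$; then $(s,r_H)\in\{0,\pm1\}$ for every $s\in\SS_1$, while $(A,r_H)$ is an integer and hence an integral multiple of $k_H$. Since $N$ is an integer, $w(A)$ and $w(E)$ remain lattice points. Theorem~\ref{CKT7.3} then gives
\begin{equation*}
\vv{L(A\to E;\SS_1\mid x_1>x_2>\dots>x_d>x_1-N)}=\sum_{w\in W}(\sgn w)\,\vv{\LL{w(A)\to E;\SS_1}}.
\end{equation*}

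The remaining task is to unfold the right-hand side, and two observations make this routine. Because all steps are positive unit vectors, any path from $w(A)$ to $E$ uses exactly $m:=\sum_i(e_i-a_i)$ steps, independently of $w$: the translation satisfies $\sum_i k_i=0$, so $w(A)$ and $A$ have the same coordinate sum. Hence the unrestricted count is the multinomial coefficient of \eqref{CKe7.1}, namely $m!/\prod_i(e_i-w(A)_i)!$. Moreover the sign character collapses to $\sgn w=\sgn\si$, since each defining reflection contributes $-1$ in the Gessel--Zeilberger involution while a translation is a product of two reflections and so is even. Writing $w(A)_i=a_{\si^{-1}(i)}+Nk_i$ and summing over $\si$ for fixed $\mathbf k$ identifies the inner sum as a determinant,
\begin{equation*}
\sum_{\si\in\mathfrak S_d}\sgn\si\,\frac{m!}{\prod_i(e_i-a_{\si^{-1}(i)}-Nk_i)!}=m!\det_{1\le i,j\le d}\left(\frac{1}{(e_i-a_j-Nk_i)!}\right),
\end{equation*}
after which the substitution $\mathbf k\mapsto-\mathbf k$, a bijection of $\{\mathbf k:\sum_i k_i=0\}$, replaces $-Nk_i$ by $+Nk_i$ and produces \eqref{CKe7.15}.

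The one genuine obstacle is getting the group right: one must confirm that the affine Weyl group attached to the alcove \eqref{CKeq:alcA} has translation lattice $N\cdot\{\sum_i k_i=0\}$ — this is the origin of the factor $N$ in $k_iN$ — and that the affine sign reduces to $\sgn\si$. A secondary point worth a short remark, needed to reconcile the manifestly finite reflection sum with the infinite-looking index set of \eqref{CKe7.15}, is that only finitely many $\mathbf k$ contribute: under the convention $1/(\text{negative})!=0$, row $i$ of the determinant vanishes unless $Nk_i\ge a_d-e_i$, and these lower bounds together with $\sum_i k_i=0$ bound each $k_i$ from above as well, so the sum is finite.
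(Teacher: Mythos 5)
Your proof is correct and follows exactly the route the paper intends: this theorem appears in Section~\ref{CKsec:refl} precisely as an instance of the Gessel--Zeilberger reflection principle (Theorem~\ref{CKT7.3}) applied to the affine reflection group of type $\tilde A_{d-1}$ acting on the inflated chamber \eqref{CKeq:alcA}, with step set $\SS_1$. The paper leaves the unfolding of $\sum_{w\in W}(\sgn w)\,\vv{L_m(w(A)\to E;\SS_1)}$ into the sum of determinants \eqref{CKe7.15} implicit (deferring to Filaseta and Grabiner), and your verification of the hypotheses of Theorem~\ref{CKT7.3}, the factorization $w(A)=\si(A)+N\mathbf{k}$ with $\sum_i k_i=0$, the reduction of the affine sign to $\sgn\si$, and the finiteness remark supply exactly those omitted details.
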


See \cite{KrMoAB} for a $q$-analogue.
It should be noted that
Theorem~\ref{CKT1.3} follows from the special case of the
above theorem where $d=2$.

For the step set $\SS_1^\pm$ consisting of positive {\it and\/}
negative unit steps in the direction of some coordinate axis,
we obtain the following result.

\begin{theorem}\label{CKthm:2}%
Let $m$ and $N$ be positive integers. Furthermore,
let $A=(a_1,a_2,\dots, a_d)$ and 
$E=(e_1,e_2,\dots,e_d)$ be vectors of integers in the
chamber \eqref{CKeq:alcA} of type $\tilde A_{d-1}$.
Then the number of lattice paths from $A$ to $E$ with exactly $m$
steps from $\SS_1^\pm$,
which stay in the alcove \eqref{CKeq:alcA}, is given by
the coefficient of $x^m/m!$ in
\begin{equation} \label{CKeq:A+-e}
\sum _{k_1+\dots+k_d=0} ^{}\det_{1\le i,j\le
d}\(I_{e_j-a_i+Nk_i}(2x)\),
\end{equation}
where $I_\al(x)$ is the modified Bessel function of the first kind
$$I_\al(x)=\sum _{j=0} ^{\infty}\frac {(x/2)^{2j+\al}}
{j!\,(j+\al)!}.$$
\end{theorem}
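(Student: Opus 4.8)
The plan is to apply the general reflection principle, Theorem~\ref{CKT7.3}, to the affine reflection group $W$ of type $\tilde A_{d-1}$, with step set $\SS=\SS_1^\pm$ and chamber $C$ the inflated alcove \eqref{CKeq:alcA}. Recall that $W$ consists of the \emph{affine permutations}: each $w\in W$ acts on $\R^d$ by permuting the coordinates according to some $\si\in\mathfrak S_d$ and then translating by a vector $N\mathbf k=(Nk_1,\dots,Nk_d)$ with $k_1+\dots+k_d=0$, its sign being $\sgn w=\sgn\si$ (the determinant of the linear part). Although $W$ is infinite, for a fixed number $m$ of steps only those $w$ for which $w(A)$ lies within $m$ steps of $E$ contribute to the right-hand side of \eqref{CKe7.13}, so the signed sum is effectively finite and the sign-reversing involution from the proof of Theorem~\ref{CKT7.3} carries over verbatim (this is the affine case made explicit by Grabiner, cf.\ \cite{GrabAD,GrMrAA}). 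First, though, I would verify the hypotheses.

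The bordering hyperplanes of the alcove \eqref{CKeq:alcA} are $x_i-x_{i+1}=0$ ($i=1,\dots,d-1$) together with the affine wall $x_1-x_d=N$. For the finite walls one may take $r_H=\ep_i-\ep_{i+1}$ and for the affine wall $r_H=\ep_1-\ep_d$; in either case the Euclidean inner product of $r_H$ with any step $\pm\ep_j\in\SS_1^\pm$ lies in $\{0,\pm1\}$, so that the constant $k_H=1$ for every $H$. The step set $\SS_1^\pm$ is invariant under $W$, since permuting coordinates sends $\pm\ep_j$ to $\pm\ep_{\si(j)}\in\SS_1^\pm$, while translations act trivially on steps (which are difference vectors). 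Finally, $A$ and $E$ are integral, hence so are all $w(A)$, and $(A,r_H)$ is automatically an integral multiple of $k_H=1$. Thus Theorem~\ref{CKT7.3} applies and gives
\begin{equation*}
\vv{L_m(A\to E;\SS_1^\pm\mid \text{inside }C)}
=\sum_{w\in W}(\sgn w)\,\vv{L_m(w(A)\to E;\SS_1^\pm)}.
\end{equation*}

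Next I would pass to the exponential generating function in the number of steps. The crucial one-dimensional fact is that the number of $\pm1$ walks of length $n$ with net displacement $\al$ is $\binom{n}{(n+\al)/2}$, whence
\begin{equation*}
\sum_{n\ge0}\binom{n}{(n+\al)/2}\frac{x^n}{n!}=I_\al(2x),
\end{equation*}
as one checks by setting $n=2j+\al$ in the defining series of $I_\al(2x)$. Since a path with steps from $\SS_1^\pm$ is an interleaving of its steps in the $d$ coordinate directions, and the exponential generating function of a shuffle of labelled structures is the product of the individual ones (the multinomial coefficient governing the interleaving being exactly what the product rule produces), one obtains the factorisation
\begin{equation*}
\sum_{m\ge0}\vv{L_m(A\to E;\SS_1^\pm)}\frac{x^m}{m!}
=\prod_{i=1}^d I_{e_i-a_i}(2x).
\end{equation*}

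Assembling the two steps, the exponential generating function of the left-hand side of the theorem equals $\sum_{w\in W}(\sgn w)\prod_{i=1}^d I_{e_i-(w(A))_i}(2x)$. Writing $w=(\si,\mathbf k)$ with $(w(A))_i=a_{\si^{-1}(i)}+Nk_i$ and $\sgn w=\sgn\si$, I would split the sum over $W$ into the translation sum over $\{\mathbf k:k_1+\dots+k_d=0\}$ and the sum over $\si\in\mathfrak S_d$; the latter is precisely the permutation expansion of a $d\times d$ determinant. Using $I_{-n}=I_n$ for integer $n$ to absorb the signs of the orders, and relabelling the translation index accordingly, the inner sum becomes $\det_{1\le i,j\le d}\big(I_{e_j-a_i+Nk_i}(2x)\big)$, yielding the coefficient of $x^m/m!$ in \eqref{CKeq:A+-e} as claimed. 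The main obstacle I anticipate is purely bookkeeping: getting the coordinate-permutation and translation conventions, together with the reindexing needed to attach the translation $Nk_i$ to the row index $i$ of the determinant, to match \eqref{CKeq:A+-e} exactly on the nose --- the order-reflection symmetry $I_{-n}=I_n$ of the Bessel functions is what renders these sign ambiguities harmless. A secondary point requiring a word of justification is the interchange of the a priori infinite translation summation with extraction of the coefficient of $x^m$, which is legitimate because, as noted above, only finitely many translates contribute at each order in $x$.
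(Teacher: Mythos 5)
Your proposal is correct and follows exactly the route the paper intends: Theorem~\ref{CKthm:2} is stated there (without proof) as an instance of the Gessel--Zeilberger reflection principle, Theorem~\ref{CKT7.3}, applied to the affine reflection group of type $\tilde A_{d-1}$ acting on the inflated alcove \eqref{CKeq:alcA}, with the Bessel-function form made explicit as in Grabiner's work. Your verification of the hypotheses, the shuffle/EGF factorisation $\prod_i I_{e_i-a_i}(2x)$, the splitting of the sum over $W$ into permutation and translation parts, and the reindexing of the translation vector (permuting the $k_i$ along with $\si$ and replacing $\mathbf k$ by $-\mathbf k$, both of which preserve the zero-sum lattice) all check out, so the determinant lands in the stated form \eqref{CKeq:A+-e}.
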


The last result for type $A_{d-1}$ which we state is the one for
paths in an affine chamber of type $\tilde A_{d-1}$ with
steps from $\SS_d^\pm$.

\begin{theorem}[{{\cite[Eq.~(35)]{GrabAD}}}] \label{CKthm:3}
Let $m$ and $N$ be positive integers. Furthermore,
let $A=(a_1,a_2,\dots,a_d)$ and 
$E=(e_1,e_2,\dots,e_d)$ be vectors of integers 
in the chamber \eqref{CKeq:alcA}
of type $\tilde A_{d-1}$ such that all $a_i$'s have the same
parity, and all $e_i$'s have the same parity.
Then the number of lattice paths
from $A$ to $E$ with exactly $m$ steps from $\SS_d^\pm$,
which stay in the chamber \eqref{CKeq:alcA}, is given by
\begin{multline} \label{CKeq:Ad}
\vv{L_m(A\to E;\SS_d^\pm\mid x_1> x_2>\dots> x_d> x_1-N)}\\
=\sum _{k_1+\dots+k_d=0} ^{}\det_{1\le i,j\le
d}\(\binom m {\frac {m+e_i-a_j} {2}+Nk_j}\).
\end{multline}
\end{theorem}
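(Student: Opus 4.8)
The plan is to recognize Theorem~\ref{CKthm:3} as an instance of the reflection principle of Theorem~\ref{CKT7.3}, applied not to a finite reflection group but to the \emph{affine} Weyl group $W=\tilde A_{d-1}$ whose fundamental alcove is the chamber $C$ given by \eqref{CKeq:alcA}. As explained in the paragraphs following Theorem~\ref{CKT7.3} (and carried out by Grabiner \cite{GrabAD}), the involution underlying the proof of \eqref{CKe7.13} never uses finiteness of $W$: it only reflects the terminal portion of a path in the first wall it meets and flips the sign of the group element. Hence, once the admissibility hypotheses on steps and endpoints are in place, one has
\begin{equation*}
\vv{L_m(A\to E;\SS_d^\pm\mid \text{inside }C)}
=\sum_{w\in \tilde A_{d-1}}(\sgn w)\,\vv{L_m(w(A)\to E;\SS_d^\pm)}.
\end{equation*}
First I would verify that $\SS_d^\pm$ and the points $A,E$ satisfy the conditions of Theorem~\ref{CKT7.3}: for a wall $H$ with normal $r_H=\ep_i-\ep_j$ one has $(s,r_H)\in\{0,\pm2\}$ for every $s\in\SS_d^\pm$, so $k_H=2$, and the equal-parity assumptions on the $a_i$ and on the $e_i$ guarantee that $(A,r_H)$ and $(E,r_H)$ are even, i.e.\ integral multiples of $k_H$. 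This is exactly what prevents a path from jumping across a wall without landing on it.

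Next I would compute the unrestricted count $\vv{L_m(P\to E;\SS_d^\pm)}$ for an arbitrary starting point $P$. Since every step of $\SS_d^\pm$ has the form $(\pm1,\pm1,\dots,\pm1)$, a path of length $m$ is nothing but a sequence of $m$ sign vectors, and the signs chosen in distinct coordinates are completely independent of one another. In coordinate $i$ the signs must sum to $e_i-P_i$, which can be done in $\binom{m}{(m+e_i-P_i)/2}$ ways; multiplying over the coordinates gives
\begin{equation*}
\vv{L_m(P\to E;\SS_d^\pm)}=\prod_{i=1}^d\binom{m}{\frac{m+e_i-P_i}{2}},
\end{equation*}
with the convention that the binomial vanishes when its lower index is not a non-negative integer --- this is precisely how the images $w(A)$ lying in the ``wrong'' parity class are automatically discarded. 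Then I would expand the group sum: writing $W=\tilde A_{d-1}=\mathfrak S_d\ltimes T$, every $w$ factors as a coordinate permutation $\sigma$ followed by a translation from the lattice $T$, with $\sgn w=\sgn\sigma$ and $w(A)_j=a_{\sigma^{-1}(j)}+(\text{translation})_j$. Summing the product formula over $\sigma\in\mathfrak S_d$ with signs collapses, exactly as in the Lindstr\"om--Gessel--Viennot expansion \eqref{CKe2.2}, into the determinant $\det_{1\le i,j\le d}\big(\binom{m}{(m+e_i-a_j-(\text{translation})_j)/2}\big)$, while the sum over $T$ produces the outer sum over $\mathbf k$ with $k_1+\dots+k_d=0$; substituting the correct translation vector turns the lower index into $\frac{m+e_i-a_j}{2}+Nk_j$, yielding \eqref{CKeq:Ad}.

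The hard part will be the correct normalization of the translation lattice $T$. Geometrically the walls $x_i-x_{i+1}=0$ together with the affine wall $x_1-x_d=N$ generate translations by $N$ times the root lattice; however, because $\SS_d^\pm$ reaches only the sublattice of points whose coordinates all share a common parity, the reflection principle must be run on that sublattice, and the admissible translations turn out to be $2N$ times the root lattice (equivalently, the naive translations of odd ``parity'' contribute determinants with an entire vanishing column and drop out). Getting this bookkeeping right --- reconciling the geometric affine Weyl group with the parity constraint imposed by the step set, so that the shift landing in the binomial is exactly $Nk_j$ rather than $Nk_j/2$ --- is the delicate point, and it is where the equal-parity hypotheses do their real work. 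I would check the computation first in the dihedral case $d=2$, where $T$ is one-dimensional and the whole identity reduces to a single sum of $2\times2$ determinants, before treating general $d$.
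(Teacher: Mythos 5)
Your overall strategy---recognizing Theorem~\ref{CKthm:3} as the instance of Theorem~\ref{CKT7.3} for the affine group of type $\tilde A_{d-1}$, verifying the admissibility hypotheses with $k_H=2$, computing the unrestricted count as a product of binomial coefficients, and expanding the signed group sum over $\mathfrak S_d\ltimes T$ into an outer sum over translations and an inner determinant---is exactly the route the chapter intends (it gives no proof and quotes the result from Grabiner, presenting it as an application of Theorem~\ref{CKT7.3}). Those parts of your argument are correct.

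The gap is precisely at the point you flag as delicate, and your resolution of it is wrong. The reflection group attached to the chamber \eqref{CKeq:alcA} is generated by the reflections in $x_i-x_{i+1}=0$ and in $x_1-x_d=N$, and its translation subgroup is $N$ times the root lattice, \emph{not} $2N$ times it; a translation by $N(k_1,\dots,k_d)$ shifts the lower index of the $j$-th column's binomial by $Nk_j/2$, since that index is $(m+e_i-a_j-Nk_j)/2$. Your parity argument for discarding the translations with some $k_j$ odd is valid only when $N$ is odd; but then the wall $x_1-x_d=N$ can never be met by a path with steps from $\SS_d^\pm$ (the value of $x_1-x_d$ along such a path stays even), so the no-jumping requirement underlying Theorem~\ref{CKT7.3} fails at the affine wall and the reflection involution breaks down. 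When $N$ is even---the only case in which Theorem~\ref{CKT7.3} genuinely applies here---the shifts $Nk_j/2$ are integers for \emph{all} $k_j$, those terms do not vanish, and the group sum produces
\begin{equation*}
\sum_{k_1+\dots+k_d=0}\det_{1\le i,j\le d}\left(\binom{m}{\frac{m+e_i-a_j}{2}+\frac{N}{2}k_j}\right),
\end{equation*}
not \eqref{CKeq:Ad}. The $d=2$ check you propose detects this immediately: for $N=4$, $A=E=(2,0)$, $m=2$ there are exactly $2$ admissible walks; the displayed sum gives $3-1=2$, whereas \eqref{CKeq:Ad} gives $3$. In fact the sum with shifts $Nk_j$ counts the walks confined to the doubled chamber $x_1>x_2>\dots>x_d>x_1-2N$ (there are $3$ such walks in the example). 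So your plan, executed correctly, proves Grabiner's formula for the chamber with parameter $2N$; the statement \eqref{CKeq:Ad} as printed carries a factor-of-two normalization slip between the chamber and the determinant sum, and no parity bookkeeping can reconcile the two.
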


\medskip
\index{$B_d$}\index{$C_d$}{\em Types $B_d$, $C_d$}: 
The finite reflection groups of types $B_d$ and $C_d$ are identical.
The set of reflecting hyperplanes is $\mathcal 
R=\{\pm x_i\pm x_j=0:\ 1\le
i<j\le d\}\cup\{x_i=0:1\le i\le d\}$. 
Obviously, the reflection with respect to $x_i-x_j=0$
acts by interchanging the $i$-th and $j$-th coordinate, 
the reflection with respect to $x_i+x_j=0$
acts by interchanging the $i$-th and $j$-th coordinate and changing
the sign of both, while the 
reflection with respect to $x_i=0$ acts by changing sign of the $i$-th
coordinate. 

Here, the possible step sets are only $\SS_1^\pm$ and $\SS_d^ \pm$.

The associated finite reflection group is the group of
\index{permutation, signed}\index{signed permutation}{\em signed
permutations\/} of the coordinates $x_1,x_2,\dots,x_{d}$, which acts
by permuting and changing signs of (some of) the coordinates
$x_1,x_2,\dots,x_d$. 
It is frequently called the
\index{hyperoctahedral group}\index{group,
  hyperoctahedral}{\em hyperoctahedral group}, since it is the symmetry
group of a $d$-dimensional octahedron.
It is furthermore
isomorphic to the semidirect product $(\Z/2\Z)^d\rtimes\mathfrak S_d$. 
A typical chamber is $C=\{(x_1,x_2,\dots,x_d):\ x_1> x_2>\dots>x_{d}>0\}$.
We have the following enumeration result for lattice paths
staying in this chamber.


\begin{theorem} \label{CKT7.1c}%
Let $A=(a_1,a_2,\dots,a_d)$ and $E=(e_1,e_2,\dots,e_d)$ be points in
$\Z^d$, with all $a_i$'s of the same parity, all $e_i$'s of the
same parity, $a_1> a_2>\dots> a_d>0$ and $e_1> e_2>\dots>
e_d>0$. The number of all lattice paths from $A$ to $E$
consisting of $m$ steps from $\SS_d^\pm$ and staying in the region
$x_1> x_2>\dots> x_d>0$ equals
\begin{equation} \label{CKe7.2c} 
\vv{L_m(A\to E; \SS_d^\pm\mid x_1> x_2>\dots> x_d)}=
\det_{1\le i,j\le d}\(\binom {m} {\frac {m+e_i-a_j} {2}}
-\binom {m} {\frac {m+e_i+a_j} {2}}\).
\end{equation}
\end{theorem}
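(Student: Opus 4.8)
The plan is to recognize Theorem~\ref{CKT7.1c} as the instance of the general reflection principle, Theorem~\ref{CKT7.3}, associated with the finite reflection group of type $B_d=C_d$, namely the hyperoctahedral group $W$ acting on $\R^d$ by permutations and sign changes of the coordinates. The relevant chamber is $C=\{(x_1,\dots,x_d):x_1>x_2>\dots>x_d>0\}$, bordered by the hyperplanes in $\mathcal R_0=\{x_i-x_{i+1}=0:1\le i\le d-1\}\cup\{x_d=0\}$, and the step set is $\SS_d^\pm$, the $2^d$ diagonal steps $(s_1,\dots,s_d)$ with each $s_i\in\{+1,-1\}$. First I would verify the hypotheses of Theorem~\ref{CKT7.3}.

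The set $\SS_d^\pm$ is plainly $W$-invariant. For the hyperplane $H_i:x_i-x_{i+1}=0$ I choose $r_{H_i}=\ep_i-\ep_{i+1}$, so that every step $s\in\SS_d^\pm$ satisfies $(s,r_{H_i})=s_i-s_{i+1}\in\{-2,0,2\}$, whence $k_{H_i}=2$; for $H_d:x_d=0$ I choose $r_{H_d}=\ep_d$, so that $(s,r_{H_d})=s_d\in\{\pm1\}$, whence $k_{H_d}=1$. The requirement that $(A,r_H)$ be an integral multiple of $k_H$ then reduces to $a_i-a_{i+1}\in 2\Z$ for $i<d$ (which is exactly the hypothesis that all $a_i$ have the same parity) together with $a_d\in\Z$ (automatic). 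Since signed permutations preserve $\Z^d$, the points $w(A)$ and $w(E)$ are lattice points for all $w\in W$, and Theorem~\ref{CKT7.3} yields
$$\vv{L_m(A\to E;\SS_d^\pm\mid \text{inside }C)}=\sum_{w\in W}(\sgn w)\,\vv{L_m(w(A)\to E;\SS_d^\pm)}.$$

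Next I would evaluate the unrestricted count on the right-hand side. Because $\SS_d^\pm$ contains all sign vectors, the $\pm1$ choices in distinct coordinates are independent, so a path of $m$ steps decouples into $d$ independent one-dimensional walks of length $m$ with steps $\pm1$. Hence for any lattice point $X=(x_1,\dots,x_d)$,
$$\vv{L_m(X\to E;\SS_d^\pm)}=\prod_{j=1}^d\binom{m}{\tfrac{m+e_j-x_j}{2}},$$
with the standing convention that a binomial coefficient vanishes unless its lower entry is a non-negative integer not exceeding $m$.

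Finally I would sum over the group and factorize. Writing $w\in W$ as $w(A)_j=\eta_j\,a_{\tau(j)}$ with $\tau\in\mathfrak S_d$ and $\eta\in\{\pm1\}^d$, and using $\sgn w=\sgn(\tau)\prod_j\eta_j$, the sum becomes $\sum_{\tau}\sgn(\tau)\sum_{\eta}\bigl(\prod_j\eta_j\bigr)\prod_j\binom{m}{(m+e_j-\eta_j a_{\tau(j)})/2}$. The $\eta$-sum factors across $j$, and for each $j$
$$\sum_{\eta_j=\pm1}\eta_j\binom{m}{\tfrac{m+e_j-\eta_j a_{\tau(j)}}{2}}=\binom{m}{\tfrac{m+e_j-a_{\tau(j)}}{2}}-\binom{m}{\tfrac{m+e_j+a_{\tau(j)}}{2}}.$$
What remains, $\sum_{\tau}\sgn(\tau)\prod_j\bigl(\cdots\bigr)$, is precisely the Laplace expansion of the determinant in \eqref{CKe7.2c}; the parity hypothesis on the $e_i$ is used only to ensure that all entries share one parity pattern, so that the answer is the clean difference of binomials rather than a degenerate mixture. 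I expect the one genuinely delicate point to be the sign bookkeeping inside the hyperoctahedral group — correctly matching $\sgn w$ with $\sgn(\tau)\prod_j\eta_j$ and confirming that it is exactly the reflection across $x_d=0$ (the sign flips $\eta_j$) that produces the subtracted binomial $\binom{m}{(m+e_i+a_j)/2}$ in each matrix entry. Everything else is the routine decoupling of the diagonal steps.
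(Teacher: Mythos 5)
Your proposal is correct and is exactly the route the paper intends: Theorem~\ref{CKT7.1c} is presented there as the standard application of the general reflection principle (Theorem~\ref{CKT7.3}) to the hyperoctahedral group of type $B_d=C_d$ with step set $\SS_d^\pm$, and your verification of the constants $k_H$, the decoupling of the diagonal steps into $d$ independent one-dimensional walks, and the factorization of the sign-change sum into the difference of binomials expanding the determinant supplies precisely the details the paper leaves implicit.
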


\medskip
The associated affine reflection group
now comes in two flavours, types $\tilde B_d$ and $\tilde C_d$.
A typical chamber of type $\tilde C_d$ is 
$C=\{(x_1,x_2,\dots,x_d):\ 1>x_1> x_2>\dots>x_{d}>0\}$,
while a typical chamber of type $\tilde B_d$ is 
$C=\{(x_1,x_2,\dots,x_d):\ x_1> x_2>\dots>x_{d}>0
\text{ and }x_1+x_2<1\}$,

Next we quote the two results from \cite{GrabAD} on the enumeration
of lattice paths in chambers of type $\tilde C_d$. 

\begin{theorem}[{{\cite[Eq.~(23)]{GrabAD}}}] \label{CKthm:6}
Let $m$ and $N$ be positive integers. Furthermore,
let $A=(a_1,a_2,\dots,a_d)$ and 
$E=(e_1,e_2,\dots,e_d)$ be vectors of integers in the
chamber
\begin{equation} \label{CKeq:alcC}
\{(x_1,x_2,\dots,x_n):N>x_1>x_2>\dots>x_d>0\}
\end{equation}
of type $\tilde C_d$.
Then the number of lattice paths from $A$ to $E$ with exactly $m$
steps from $\SS_1^\pm$,
which stay in the chamber \eqref{CKeq:alcC}, is given by
the coefficient of $x^m/m!$ in
\begin{equation} \label{CKeq:C+-e}
\det_{1\le i,j\le d}\(\frac {1} {N}\sum _{r=0} ^{2N-1}
\sin\frac {\pi r e_i} {N}\cdot\sin\frac {\pi r a_j} {N}\cdot
\exp\(2x\cos\frac {\pi r} {N}\)\).
\end{equation}
\end{theorem}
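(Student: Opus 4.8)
The plan is to deploy the general reflection principle of Theorem~\ref{CKT7.3} with $W$ the affine reflection group of type $\tilde C_d$ whose fundamental alcove is the chamber \eqref{CKeq:alcC}, and then to collapse the resulting infinite alternating sum by a discrete Fourier (``method of images'') computation that manufactures the sines and cosines. The chamber \eqref{CKeq:alcC} is bounded by the walls $x_d=0$, $x_i=x_{i+1}$ ($1\le i\le d-1$), and the affine wall $x_1=N$; the group they generate is $W=\mathfrak S_d\ltimes (D_\infty)^d$, where $D_\infty$ is the infinite dihedral group acting on a single coordinate line by the reflections $t\mapsto -t$ and $t\mapsto 2N-t$, hence with reflecting points at all multiples of $N$. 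I would first check the hypotheses of Theorem~\ref{CKT7.3}: the step set $\SS_1^\pm$ is $W$-invariant, and for every wall normal $r_H$ (one of $\ep_i-\ep_j$, $\ep_i+\ep_j$, or $\ep_i$) the inner product of a unit step with $r_H$ lies in $\{0,\pm1\}$, so $k_H=1$; since $A\in\Z^d$, the condition $(A,r_H)\in k_H\Z$ holds automatically, and there is no parity restriction because a single-coordinate step is always available. Theorem~\ref{CKT7.3}, in the form \eqref{CKe7.13}, then gives
\[
\vv{L_m(A\to E;\SS_1^\pm\mid \text{inside }C)}=\sum_{w\in W}(\sgn w)\,\vv{L_m(w(A)\to E;\SS_1^\pm)},
\]
the sum being locally finite for each fixed $m$.

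Next I would pass to exponential generating functions. As already used in the derivation of Theorem~\ref{CKthm:2}, for the step set $\SS_1^\pm$ the coordinates move independently, so
\[
\sum_{m\ge0}\vv{L_m(B\to E;\SS_1^\pm)}\frac{x^m}{m!}=\prod_{i=1}^d I_{e_i-B_i}(2x),
\]
where $I_\al$ is the modified Bessel function of Theorem~\ref{CKthm:2}. Writing each $w\in W$ as a pair $(\si,(h_1,\dots,h_d))$ with $\si\in\mathfrak S_d$ and $h_i\in D_\infty$, so that $w(A)_i=h_i(a_{\si(i)})$ and $\sgn w=\sgn\si\prod_i\sgn h_i$, the product over $i$ and the independence of the $h_i$ let the sum factor: it becomes $\sum_{\si}\sgn\si\prod_i M_{i,\si(i)}(x)=\det_{1\le i,j\le d}M_{ij}(x)$, where
\[
M_{ij}(x)=\sum_{h\in D_\infty}(\sgn h)\,I_{e_i-h(a_j)}(2x)=\sum_{k\in\Z}\Bigl(I_{e_i-a_j-2Nk}(2x)-I_{e_i+a_j-2Nk}(2x)\Bigr),
\]
the two families corresponding to the translations ($\sgn+1$, giving $h(a_j)=a_j+2Nk$) and the reflections ($\sgn-1$, giving $h(a_j)=-a_j+2Nk$) in $D_\infty$.

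The heart of the argument is then the evaluation of $M_{ij}(x)$. I would insert the Fourier representation $I_\al(2x)=\frac1{2\pi}\int_{-\pi}^{\pi}e^{2x\cos\theta}e^{-i\al\theta}\,d\theta$ (valid for integer $\al$, being the Jacobi--Anger expansion of $e^{2x\cos\theta}$) and carry out the sum over $k$ by Poisson summation, $\sum_{k\in\Z}e^{2iNk\theta}=\frac{\pi}{N}\sum_{r\in\Z}\delta(\theta-\pi r/N)$, which confines $\theta$ to the $2N$ points $\pi r/N$. This turns the infinite sum into
\[
M_{ij}(x)=\frac1{2N}\sum_{r}e^{2x\cos(\pi r/N)}\Bigl(e^{-i(e_i-a_j)\pi r/N}-e^{-i(e_i+a_j)\pi r/N}\Bigr)=\frac iN\sum_{r}e^{2x\cos(\pi r/N)}\,e^{-i e_i\pi r/N}\sin\frac{\pi r a_j}{N}.
\]
Finally I would symmetrize in $r\mapsto -r$: since $\cos(\pi r/N)$ is even and $\sin(\pi r a_j/N)$ is odd in $r$, the contribution of $\cos(\pi r e_i/N)$ cancels over a full period (the self-paired terms $r=0$ and $r=N$ vanish because the relevant sine is zero), while the contribution of $\sin(\pi r e_i/N)$ survives, leaving exactly
\[
M_{ij}(x)=\frac1N\sum_{r=0}^{2N-1}\sin\frac{\pi r e_i}{N}\,\sin\frac{\pi r a_j}{N}\,\exp\Bigl(2x\cos\frac{\pi r}{N}\Bigr).
\]
Reading off the coefficient of $x^m/m!$ in $\det_{1\le i,j\le d}M_{ij}(x)$ then yields \eqref{CKeq:C+-e}.

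The main obstacle I anticipate is the rigorous justification of the two analytic manipulations in the last step: legitimizing the interchange of the infinite $k$-sum with the coefficient extraction (for which the local finiteness at each fixed $m$ guaranteed by Theorem~\ref{CKT7.3} is the key), and the Poisson/Dirac-comb collapse, which is cleanest to phrase as a finite discrete sine-transform identity on the residues modulo $2N$ rather than through distributions. A secondary point requiring care is the precise identification of $W$ as $\mathfrak S_d\ltimes(D_\infty)^d$ together with the bookkeeping $\sgn w=\sgn\si\prod_i\sgn h_i$, and the verification that the coupling walls $x_i+x_j=0$ introduce no reflections outside this description.
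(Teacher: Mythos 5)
Your proof is correct and follows exactly the route the paper points to for this result (which it quotes from Grabiner without reproducing the argument): the Gessel--Zeilberger reflection principle of Theorem~\ref{CKT7.3} applied to the affine reflection group of type $\tilde C_d$ --- correctly identified as $\mathfrak{S}_d\ltimes(D_\infty)^d\cong B_d\ltimes(2N\mathbb{Z})^d$, with the walls $x_i+x_j=2kN$ indeed accounted for by transposition-plus-reflection pairs --- followed by factorization of the signed sum into a determinant of one-dimensional Bessel-function exponential generating functions and a finite Fourier evaluation of each entry. The two analytic points you flag are handled as you suggest: the $k$-sums converge coefficientwise because $I_\alpha(2x)=O\big(x^{\vert\alpha\vert}\big)$, and the Dirac-comb step is rigorously just the roots-of-unity filter $\sum_{k\in\mathbb{Z}}I_{\beta+2Nk}(2x)=\frac{1}{2N}\sum_{r=0}^{2N-1}e^{-i\pi r\beta/N}e^{2x\cos(\pi r/N)}$, obtained by evaluating $e^{x(t+1/t)}=\sum_{\alpha\in\mathbb{Z}}I_\alpha(2x)\,t^\alpha$ at $t=e^{i\pi r/N}$, after which your $r\mapsto 2N-r$ symmetrization yields precisely the entries in \eqref{CKeq:C+-e}.
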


The result for lattice paths with steps from $\SS_d^\pm$ is the following.

\begin{theorem}[{{\cite[Eq.~(18)]{GrabAD}}}] \label{CKthm:7}
Let $m$ and $N$ be positive integers. Furthermore,
let $A=(a_1,a_2,\dots,a_d)$ and 
$E=(e_1,e_2,\dots,e_d)$ be vectors of integers in the
chamber \eqref{CKeq:alcC} of type $\tilde C_d$ such that all $a_i$'s
are of the same parity, and all $e_i$'s are of the same parity.
Then the number of lattice paths
from $A$ to $E$ with exactly $m$ steps from $\SS_d^\pm$,
which stay in the chamber \eqref{CKeq:alcC}, is given by
\begin{equation} \label{CKeq:Cd}
\det_{1\le i,j\le d}\(\frac {2^{m-1}} {N}\sum _{r=0} ^{4N-1}
\sin\frac {\pi r\la_t} {N}\cdot\sin\frac {\pi r\et_h} {N}\cdot
\cos^m\frac {\pi r} {2N}\).
\end{equation}
\end{theorem}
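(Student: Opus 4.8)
The plan is to derive this formula as an instance of the general reflection principle of Theorem~\ref{CKT7.3}, applied to the affine reflection group $W$ of type $\tilde C_d$ whose fundamental chamber is the alcove \eqref{CKeq:alcC}. The essential preliminary observation is that the step set $\SS_d^\pm=\{(\pm1,\dots,\pm1)\}$ makes the count of \emph{unrestricted} paths factorise across coordinates: a path of $m$ steps is nothing but an independent choice, for each coordinate $i$, of a sequence of $m$ signs, and the path runs from $A$ to $E$ exactly when these signs sum to $e_i-a_i$ in each coordinate. Hence $\vv{L_m(A\to E;\SS_d^\pm)}=\prod_{i=1}^d\binom m{(m+e_i-a_i)/2}$. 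I would first verify that the hypotheses of Theorem~\ref{CKT7.3} hold: $\SS_d^\pm$ is invariant under signed permutations (the linear parts of $W$), and for each wall $H$ of the alcove the inner product $(s,r_H)$ takes only the values $0,\pm k_H$ --- namely $k_H=2$ for the diagonal walls $x_i=x_{i+1}$, and $k_H=1$ for the walls $x_d=0$ and $x_1=N$. The integrality condition $(A,r_H)\in k_H\mathbb Z$ for the diagonal walls is precisely the hypothesis that all $a_i$ (and all $e_i$) share a common parity, which guarantees that a path cannot cross a diagonal wall without meeting it in a lattice point.

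With these checks done, Theorem~\ref{CKT7.3} gives
$$\vv{L_m(A\to E;\SS_d^\pm\mid\text{inside }C)}=\sum_{w\in W}(\sgn w)\prod_{i=1}^d\binom m{\tfrac{m+e_i-(wA)_i}2}.$$
The next step is to make the group structure explicit. The affine group of type $\tilde C_d$ decomposes as $W=(2N\mathbb Z)^d\rtimes B_d$, where $B_d$ is the hyperoctahedral group acting by signed permutations, and the translation sublattice $(2N\mathbb Z)^d$ arises because in each coordinate the reflections in the walls $x=0$ and $x=N$ generate an infinite dihedral group with translation period $2N$ (reflection at $N$ in each coordinate being obtained from the wall $x_1=N$ by conjugation with permutations). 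Writing a general element as a signed permutation $(\sigma,\epsilon)$ followed by a translation, one has $(wA)_i=\epsilon_i a_{\sigma(i)}+2Nk_i$ and $\sgn w=\sgn(\sigma)\prod_i\epsilon_i$.

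The third step is the standard reduction of a signed sum over $B_d$ to a determinant: because the unrestricted count factorises over coordinates, the sum over $\sigma$ weighted by $\sgn\sigma$ turns the product into a determinant, while the independent sums over the signs $\epsilon_i$ and the translation indices $k_i$ descend into the individual matrix entries. This yields
$$\vv{L_m(A\to E;\SS_d^\pm\mid\text{inside }C)}=\det_{1\le i,j\le d}(Q_{ij}),\qquad Q_{ij}=\sum_{\epsilon\in\{\pm1\}}\sum_{k\in\mathbb Z}\epsilon\binom m{\tfrac{m+e_i-\epsilon a_j}2-Nk}.$$
Separating the two signs, $Q_{ij}$ is exactly the one-dimensional count of $\pm1$ walks of length $m$ from $a_j$ to $e_i$ confined to the open interval $(0,N)$, expressed through the method of images as an alternating sum of binomials over the dihedral orbit $\pm a_j+2N\mathbb Z$. (This is the $\tilde C_d$ analogue of the entry $\binom m{(m+e_i-a_j)/2}-\binom m{(m+e_i+a_j)/2}$ appearing in the finite case, Theorem~\ref{CKT7.1c}.)

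The final and hardest step is to evaluate each entry $Q_{ij}$ in closed trigonometric form. The cleanest route is spectral: $Q_{ij}$ is the $(a_j,e_i)$ entry of the $m$-th power of the adjacency matrix of the path graph on the vertices $\{1,\dots,N-1\}$, whose eigenvalues are $2\cos(\pi r/N)$ with sine eigenvectors $x\mapsto\sin(\pi rx/N)$; diagonalisation immediately produces a finite trigonometric sum carrying a factor $\cos^m$. Equivalently, one may apply a roots-of-unity filter directly to the image sum, using the expansion $(1+\zeta)^m$ at $4N$-th roots of unity $\zeta$, which produces the half-angle factor $\cos^m(\pi r/2N)$ and the range $r=0,\dots,4N-1$; the parity hypotheses on the $a_i$ and $e_i$ are what make the residues align and collapse the phase into the product of sines $\sin(\pi re_i/N)\sin(\pi ra_j/N)$. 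I expect this last identity --- reconciling the binomial image sum with the precise normalisation $\tfrac{2^{m-1}}N\sum_{r=0}^{4N-1}\sin\tfrac{\pi re_i}N\sin\tfrac{\pi ra_j}N\cos^m\tfrac{\pi r}{2N}$ --- to be the main technical obstacle, since it demands careful bookkeeping of the folding of the $4N$-term sum and of the phase coming from $(1+\zeta)^m=\zeta^{m/2}(2\cos)^m$, rather than any new conceptual ingredient.
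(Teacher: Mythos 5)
Your overall strategy is exactly the intended derivation: the chapter gives no proof of this theorem (it is quoted from Grabiner), but the route it envisages is the one you follow, namely Theorem~\ref{CKT7.3} applied to the affine reflection group of type $\tilde C_d$. Your verification of the hypotheses, the identification of that group as $(2N\mathbb{Z})^d\rtimes(\text{signed permutations})$ with $\operatorname{sgn} w=\operatorname{sgn}(\sigma)\prod_i\epsilon_i$, the factorization of the signed sum into $\det_{1\le i,j\le d}(Q_{ij})$, and the recognition of $Q_{ij}$ as the one-dimensional image sum counting $\pm1$-walks from $a_j$ to $e_i$ confined to $\{1,\dots,N-1\}$ are all correct. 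The spectral evaluation is also correct and yields
\begin{equation*}
Q_{ij}=\frac{2^{m+1}}{N}\sum_{r=1}^{N-1}\sin\frac{\pi ra_j}{N}\,\sin\frac{\pi re_i}{N}\,\cos^m\frac{\pi r}{N}
=\frac{2^{m}}{N}\sum_{r=0}^{2N-1}\sin\frac{\pi ra_j}{N}\,\sin\frac{\pi re_i}{N}\,\cos^m\frac{\pi r}{N}.
\end{equation*}

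The gap is in your last step, and it is not the bookkeeping problem you anticipate: the identity you would need --- that this expression equals the entry printed in \eqref{CKeq:Cd} under the natural reading $\lambda_t=e_i$, $\eta_h=a_j$ --- is false. A root-of-unity filter applied to the image sum, whether at $2N$-th or at $4N$-th roots of unity (in the latter case the odd-indexed terms cancel identically), always reproduces the full-angle factor $\cos^m(\pi r/N)$; no rearrangement produces the half-angle $\cos^m(\pi r/2N)$ paired with full-angle sines. Concretely, take $d=1$, $N=2$, $a_1=e_1=1$, $m=2$: the true count is $0$ (from $x=1$ every step exits the open interval $(0,2)$), and your formula gives $0$, but the printed entry gives $\frac{2^{1}}{2}\sum_{r=0}^{7}\sin^{2}(\pi r/2)\cos^{2}(\pi r/4)=2$. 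In fact the printed expression equals the number of $\pm1$-walks from $2a_j$ to $2e_i$ confined to $(0,2N)$, i.e.\ the correct count for the dilated data $(2a_j,2e_i,2N)$: display \eqref{CKeq:Cd} quotes Grabiner's Eq.~(18) verbatim in Grabiner's own variables (note the otherwise undefined subscripts $t$ and $h$), whose normalization of the lock-step model differs from the chapter's conventions by exactly this dilation by $2$, and this translation was never carried out. So your proof is complete once you stop at the determinant of the $Q_{ij}$ displayed above; that determinant is the correct statement of the theorem, and trying to reach \eqref{CKeq:Cd} literally would mean chasing an identity that fails.
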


Enumeration results for lattice paths in a chamber of
type $\tilde B_d$ can be
also derived from Theorem~\ref{CKT7.3}. We omit to state them
here, but instead refer to \cite{GrabAD} and \cite[Theorems~8 and
  9]{KratBT}. 

\medskip
\index{$D_d$}{\em Type $D_d$}: 
The set of reflecting hyperplanes is $\mathcal 
R=\{\pm x_i\pm x_j=0:\ 1\le
i<j\le d\}$. Obviously, $D_d$ is a subset of $B_d$ or
$C_d$. The action of the reflection with respect to a hyperplane $\pm
x_i\pm x_j=0$ was already explained there.

The associated finite reflection group is the group of
\index{permutation, signed}\index{signed permutation}signed
permutations of the coordinates $x_1,x_2,\dots,x_{d}$ {\em with an
even number of sign changes\/}. It acts
by permuting the coordinates
$x_1,x_2,\dots,x_d$ and changing an even number of signs thereof. 
A typical chamber is $C=\{(x_1,x_2,\dots,x_d):\ x_1> x_2>\dots>x_{d-1}>\vv{x_d}\}$.

The associated affine reflection group is generated by the
reflections with respect to the
hyperplanes $\mathcal R=\{\pm x_i\pm x_j=k:\ 1\le i<j\le d,\ k\in\Z\}$. 
The elements of this group act by permuting the
coordinates $x_1,x_2,\dots,x_{d}$, changing an even number of signs
thereof, and adding a vector
$(k_1,k_2,\dots,k_{d})$ with $k_1+k_2+\dots+k_{d}\equiv
0\ (\text{mod }2)$.
A typical chamber is $C=\{(x_1,x_2,\dots,x_d):x_1>x_2>\dots>x_{d-1}>\vert
x_d\vert ,\text { and }
x_1+x_2<1\}$.

We omit the explicit statement of enumeration results for
types $D_d$ and $\tilde D_d$ which one may derive from
Theorem~\ref{CKT7.3}, and instead refer to \cite{GrabAD} and 
\cite[Theorems~10 and 11]{KratBT}.



\medskip
\index{$E_6$}\index{$E_7$}{\em Types $E_6$ and $E_7$}: 
There are possible step sets (see \cite[p.~247]{GrMrAA}),
but since this does not yield interesting enumeration results,
we refrain from discussing these two cases further.

\medskip
A non-example for the application of the reflection principle
has been discussed in Section~\ref{CKsec:var}, see Theorem~\ref{CKT7.2}.

\section{$q$-Counting of lattice paths and 
Rogers--Ra\-ma\-nu\-jan identities}\label{CKsec:RR}

In this section, we discuss some $q$-analogues of earlier (plain)
enumeration results, and we briefly present work showing the
close link between lattice path enumeration and the celebrated
Rogers--Ramanujan identities.

\begin{figure}
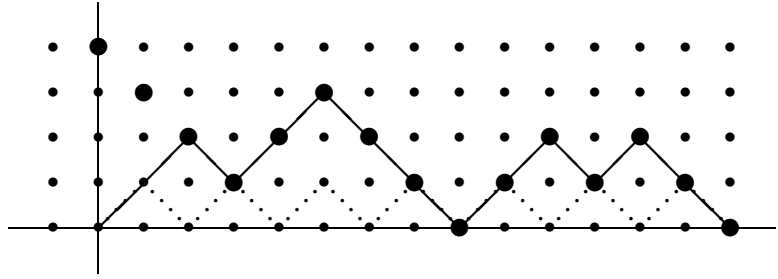
 
$$
\Gitter(15,5)(-1,0)
\Koordinatenachsen(15,5)(-1,0)
\Pfad(0,0),33433444334344\endPfad
\SPfad(0,0),34343434343434\endSPfad
\DickPunkt(0,4)
\DickPunkt(1,3)
\DickPunkt(2,2)
\DickPunkt(3,1)
\DickPunkt(4,2)
\DickPunkt(5,3)
\DickPunkt(6,2)
\DickPunkt(7,1)
\DickPunkt(8,0)
\DickPunkt(9,1)
\DickPunkt(10,2)
\DickPunkt(11,1)
\DickPunkt(12,2)
\DickPunkt(13,1)
\DickPunkt(14,0)
\hbox{\hskip6.5cm}
$$
\caption{A Dyck path}
\label{CKF3.D}
\end{figure}

As we have already seen in the introduction, one source of $q$-analogues is
area counting of lattice paths. This idea has also been used to
construct a $q$-analogue of 
\index{$q$-Catalan number}\index{number,
  $q$-Catalan}Catalan numbers. Given a Dyck path $P$
(see Section~\ref{CKs3.1}) from
$(0,0)$ to $(2n,0)$, let $\tilde a(P):=\frac {1} {2}\big(a(P)-n)$. 
In other words,
$\tilde a(P)$ is half of the area between $P$ and the ``lowest"
Dyck path from $(0,0)$ and $(2n,0)$, that is, the zig-zag path
in which up-steps and down-steps alternate. Alternatively, 
$\tilde a(P)$ counts the squares with side length $\sqrt 2$
(rotated by $45^\circ$) which fit between $P$ and the
zig-zag path.
In Figure~\ref{CKF3.D} this zig-zag path is indicated as the dotted
path. For the Dyck path shown with full lines in the figure,
we have $\tilde a(\,.\,)=6$. This (modified) area statistics is
now used to define the {\it $q$-Catalan number} $C_n(q)$ 
as the generating function for Dyck paths of length $2n$ with respect
to the statistics $\tilde a(\,.\,)$,
\begin{equation} \label{CKeq:qCat}
C_n(q)=\GF{\LL{(0,0)\to(2n,0);\{(1,1),\,(1,-1)\}};q^{\tilde a(P)}}. 
\end{equation}
By decomposing a given Dyck path $P$ uniquely into
$$
P=s_uP_1s_dP_2,
$$
where $s_u$ denotes an up-step, $s_d$ denotes a down-step,
and $P_1$ and $P_2$ are Dyck paths, one obtains the recurrence 
\begin{equation} \label{CKeq:qCatrek}
C_n(q)=\sum_{k=0}^{n-1}q^{k}C_k(q)C_{n-k-1}(q),
\quad n\ge1,
\end{equation}
with initial condition $C_0(q)=1$. These $q$-Catalan numbers
have been originally introduced by
\index{Carlitz, Leonard}Carlitz and
\index{Riordan, John}Riordan \cite{CaRiAA}.
We shall say more about these further below.

A different statistics can be derived from turn enumeration
(cf.\ Section~\ref{CKsec:turn}). In the geometry which we are
considering here, turns are peaks and valleys of a Dyck path.
For a peak at lattice point $S$, denote by $x(S)$ the number
of steps along the path from the origin to $S$.
(Equivalently, $x(S)$ is the ordinate of $S$.) In the Dyck path
in Figure~\ref{CKF3.D}, the peaks are at $(2,2)$, $(5,3)$, $(10,2)$,
and $(12,2)$. The ordinates are $x\big((2,2)\big)=2$,
$x\big((5,3)\big)=5$,
$x\big((10,2)\big)=10$,
$x\big((12,2)\big)=12$,
The 
\index{major index}\index{index, major}{\em major index} of a
Dyck path $P$, denoted by $\maj(P)$, 
is the sum of all values $x(S)$ over all peaks $S$ of
$P$. For the Dyck path in Figure~\ref{CKF3.D}, we have
$\maj(\,.\,)=2+5+10+12=29$.
\index{F\"urlinger, Johannes}\index{Hofbauer, Josef}F\"urlinger
and Hofbauer \cite{FuHoAA} used this statistic to define
alternative 
\index{$q$-Catalan number}\index{number,
  $q$-Catalan}$q$-Catalan numbers, namely
\begin{equation} \label{CKeq:qCat2}
c_n(q)=\GF{\LL{(0,0)\to(2n,0);\{(1,1),\,(1,-1)\}};q^{\maj(P)}} .
\end{equation}
They showed that
\begin{equation} \label{CKeq:qCat-qbin}
c_n(q)=\frac {1-q} {1-q^{n+1}}\qbinom {2n}nq, 
\end{equation}
the ``natural" $q$-analogue of the Catalan number in view of its
explicit formula\break $\frac {1} {n+1}\binom {2n}n$.
More on these $q$-Catalan numbers and further work in this
direction can be found in
\cite{FuHoAA,KratAF,KrMoAC}. These ideas have been extended to
Schr\"oder paths and numbers by 
\index{Bonin, Joseph E.}\index{Shapiro, Louis W.}\index{Simion, Rodica}%
Bonin, Shapiro and Simion in \cite{BoSSAA},

Returning to the $q$-Catalan numbers of Carlitz and Riordan, we see
that by the choice of $b_i=0$, $i=0,1,\dots$, $\la_i=q^ {i-1}z$,
$i=1,2,\dots$, in Theorem~\ref{CKT3.4}, we obtain a continued fraction
for the generating function of $q$-Catalan numbers $C_n(q)$, namely
\begin{equation} \label{CKeq:Cnq}
\sum _{n=0} ^{\infty}C_n(q)z^n
=\cfrac 1{1-
\cfrac {z}{1-
\cfrac {qz}{1-
\cfrac {q^2z}{1-
\cfrac {q^3z}{1-\ddots}}}}}\ . 
\end{equation}

If one substitutes $z=-q$ in this continued fraction, then it
becomes the reciprocal of the celebrated
\index{Ramanujan Iyengar, Srinivasa}%
Ramanujan continued fraction (cf.\ \cite[Ch.~7]{AndrAF})
\begin{equation} \label{CKeq:Ram}
1+\cfrac{q}
		{1+\cfrac{
			q^2}
				{1+\cfrac{
					q^3}
						{1+\cfrac{
					q^4}
\ddots}}}
=\frac {\sum_{n=0}^\infty \frac {q^{n^2}} {(q;q)_n}} 
{\sum_{n=0}^\infty \frac {q^{n(n+1)}} {(q;q)_n}} ,
\end{equation}
where $(\alpha;q)_n:=(1-\alpha)(1-\alpha q)\cdots (1-\alpha q^{n-1})$,
$n\ge1$, and $(\alpha;q)_0:=1$.

Numerator and denominator on the right-hand side of this
identity feature in the equally celebrated
\index{Rogers, Leonard James}\index{Ramanujan Iyengar, Srinivasa}%
Rogers--Ramanujan identities (cf.\ also \cite[Ch.~7]{AndrAF})
\begin{equation} \label{CKeq:Ram1} 
{\sum_{n=0}^\infty \frac {q^{n^2}} {(q;q)_n}} 
=
\frac {1} {(q;q^5)_\infty\,(q^4;q^5)_\infty}
\end{equation}
and
\begin{equation} \label{CKeq:Ram2} 
{\sum_{n=0}^\infty \frac {q^{n(n+1)}} {(q;q)_n}} 
=
\frac {1} {(q^2;q^5)_\infty\,(q^3;q^5)_\infty}.
\end{equation}
The fact that we came across the left-hand sides of these
identities by starting with lattice path counting problems
may indicate that the Rogers--Ramanujan identities themselves
may be linked with lattice path enumeration.
\index{Bressoud, David Marius}Bressoud \cite{BresAJ} was the 
first to actually set up such a link. Since then, 
this connection has been explored much further and extended
in various directions, particularly so in the physics
literature, see \cite{BePaAA,CiglAX,CoMaAA,MathAA,WelsAA}
and the references therein.

\section{Self-avoiding walks}\label{CKsec:SAW}

A path (walk) in a lattice in $d$-dimensional Euclidean space is called
\index{self-avoiding walk}\index{path, self-avoiding}%
\index{walk, self-avoiding}{\em self-\break avoiding} if it visits each 
point of the lattice at most once. One cannot expect useful
formulas for the exact enumeration of self-avoiding paths
(except in extremely simple lattices).
This is the reason why, with a few exceptions, research in this
area concentrates on {\it asymptotic} counting: how many
self-avoiding walks are there in a particular lattice,
consisting of $n$ steps from a given step set, 
asymptotically as $n$ tends to infinity? This is a notoriously
difficult question, which has been investigated mainly in the
physics and probability literature. In fact, the self-avoiding walk
constitutes a fascinating, vast subject area, which would need a chapter by
itself. We refer the reader to the standard book \cite{MaSlAA},
and to the more recent volumes \cite{GuttAB,JRenAA} which contain more recent
material on or relating to self-avoiding walks.

\bibliographystyle{plain}
\bibliography{encylatt}

}
\end{document}